\documentclass[12pt]{amsart}
\voffset=-.5cm
\textwidth=15cm
\textheight=23cm
\oddsidemargin=1cm
\evensidemargin=1cm
\usepackage{amsmath, mathabx}
\usepackage{amsxtra}
\usepackage{amscd}
\usepackage{amsthm}
\usepackage{amsfonts}
\usepackage{amssymb}
\usepackage{mathrsfs}
\usepackage{mathbbol}


\usepackage {pstricks}
\usepackage {tikz}
\usepackage{pstricks,pst-node}
\usepackage[all]{xy}
\usepackage{mathbbol}


\usepackage{verbatim}


\newtheorem{theorem}{Theorem}[section]

\newtheorem{lemma}[theorem]{Lemma}

\newtheorem{corollary}[theorem]{Corollary}
\newtheorem{proposition}[theorem]{Proposition}

\theoremstyle{definition}
\newtheorem{definition}[theorem]{Definition}

\theoremstyle{remark}
\newtheorem{remark}[theorem]{Remark}

\numberwithin{equation}{section}
\numberwithin{figure}{section}



\newcommand{\mc}{\mathcal}

\newcommand{\lr}{\longrightarrow}
\newcommand{\inv}{^{-1}}

\newcommand{\be}{\begin{equation}}
\newcommand{\ee}{\end{equation}}

\newcommand{\B}{{\mathbb B}}
\newcommand{\C}{{\mathbb C}}

\newcommand{\Z}{{\mathbb Z}}
\newcommand{\N}{{\mathbb N}}

\newcommand{\K}{{\mathbb K}}

\newcommand{\CA}{{\mathcal A}}
\newcommand{\CB}{{\mathcal B}}

\newcommand{\CF}{{\mathcal F}}

\newcommand{\CO}{{\mathcal O}}

\newcommand{\CN}{{\mathcal N}}
\newcommand{\CT}{{\mathcal T}}

\newcommand{\CJ}{\mc J}

\newcommand{\OB}{{\mathcal{OB}}}

\newcommand{\mf}{\mathfrak}

\newcommand{\fg}{{\mf g}}
\newcommand{\fh}{{\mf h}}
\newcommand{\fb}{{\mf b}}

\newcommand{\fsl}{{\mathfrak {sl}}}


\newcommand{\Ker}{{\rm{Ker}}}
\newcommand{\id}{{\rm{id}}}

\newcommand{\Str}{{\rm{Str}}}

\newcommand{\ve}{\varepsilon}

\newcommand{\U}{{\rm{U}}}
\newcommand{\End}{{\rm{End}}}
\newcommand{\Hom}{{\rm{Hom}}}

\newcommand{\TL}{{\rm{TL}}}

\newcommand{\GL}{{\rm{GL}}}

\newcommand{\Sym}{{\rm{Sym}}}

\newcommand{\wt}{\widetilde}

\newcommand{\sdim}{{\rm{sdim}}}







\newcommand{\Sp}{{\rm Sp}}
\newcommand{\Or}{{\rm O}}
\newcommand{\ot}{\otimes}
\newcommand{\OSp}{{\rm OSp}}

\newcommand{\SO}{{\rm SO}}


\newcommand{\TLB}{{\rm{TLB}}}


\newcommand{\bA}{{\mathbb{A}}}
\newcommand{\bU}{{\mathbb{U}}}



\begin{document}

\normalfont

\title[Diagram categories and invariant theory]{Diagram categories and invariant theory for classical groups and supergroups}

\author{G.I. Lehrer and R.B. Zhang}
\thanks{This research was supported by the Australian Research Council}
\address{School of Mathematics and Statistics,
University of Sydney, N.S.W. 2006, Australia}
\email{gustav.lehrer@sydney.edu.au, ruibin.zhang@sydney.edu.au}
\date{\today}

\begin{abstract}
We introduce the notion of a diagram category and discuss its application to the invariant theory of classical groups and supergroups, 
with some indications concerning extensions to quantum groups and quantum supergroups. Tensor functors from various 
diagram categories to categories of representations are introduced and their properties are investigated, leading to first and second fundamental theorems (FFT and SFT) of invariant theory for classical supergroups, which include the FFTs and SFTs of the classical groups as special cases. Application of diagrammatic methods enables the construction of a  presentation for endomorphism algebras for the orthogonal and symplectic groups, leading to the solution of a problem raised by the work of Brauer and Weyl.  
\end{abstract}
\maketitle

\tableofcontents

\section{Introduction} 

Schur-Weyl duality has its origin in the 1901 thesis of I. Schur (see \cite{Sch}). It is shown there that 
if $V=\C^n$, then the homomorphism $\mu_r:\C\Sym_r\lr\End_{\GL(V)}(V^{\ot r})$ which is defined by the action of 
the symmetric group $\Sym_r$ on $V^{\ot r}$ by place permutations is surjective for all $r\geq 1$, a fact which is known 
as the first fundamental theorem (FFT) of invariant theory for the pair $(\GL(V), V)$. In {\it op. cit.} Schur also proved
that if $r\leq n$ the kernel $\Ker(\mu_r)$ is  zero
while for $r\geq n+1$ the kernel generated by the alternating idempotent 
\[
a_{n+1}^-:=\sum_{w\in\Sym_{n+1}\subseteq\Sym_r}\ve(w)w\in\C\Sym_r,
\]
where $\Sym_{n+1}$ is thought of as a subgroup of $\Sym_r$ for $r\geq n+1$ in the usual way, as the group of permutations of the first $n+1$ symbols. 
This is the second fundamental theorem (SFT) for the pair 
$(\GL(V), V)$ in the present setting. 

These facts were put to use in a series of papers by H. Weyl, who used them to study the representation theory of $\GL(V)$ \cite{W} using 
Frobenius' theory for the symmetric groups.

It was Richard Brauer \cite{B} who first pointed out that the invariant theory of the 
orthogonal and symplectic groups $G=\Or(V)$ or $G=\Sp(V)$ over $\C$ could be studied with the aid of certain diagrams, now known as ``Brauer diagrams''.
He and his contemporaries, such as Hermann Weyl, understood that Brauer's diagrams could be used to define an abstract
algebra $B_r(V)$ (see, e.g., \cite[\S5]{B}), with a surjective homomorphism  $\nu_r: B_r(V)\lr\End_G(V^{\ot r})$, for all $r\geq 1$,
where $G$ is one of the above groups and $V\cong\C^n$ is its natural 
representation. In \cite{W}, the algebras $B_r(V)$ are referred to as ``enigmatic", presumably because for $r\geq n+1$ (resp. $r\geq \frac{n}{2}+1)$)
if $G=\Or(V)$ (resp. $G=\Sp(V)$), the algebras $B_r(V)$ are not semi-simple \cite{We} (see also \cite{RS}). Furthermore, the kernels $\Ker(\nu_r)$ have only recently 
been determined (in 2012), and proved to be generated by a single idempotent, which is described in terms of the Brauer diagrams \cite{LZ4, LZ5}.

The Brauer diagrams may best be thought of categorically \cite{LZ5} and this category has been shown to be connected to 
several different representation categories, such as categories of representations of Lie superalgebras or Lie supergroups. 
In these latter cases, we have versions of the FFT and SFT in which isomorphisms are asserted between two non-semi-simple algebras \cite{DLZ, LZ6, LZ7}.

Many other types of diagrams occur in descriptions of various representation categories. 
Examples include  the ``up-down algebras'' of Brundan and Stroppel (cf. \cite{BS}), the Temperley-Lieb algebras of various types, and more importantly, the tangle diagrams \cite{FY, RT,RT1}, which have played a fundamental role in constructing quantum invariants of knots \cite{RT,RT1, ZGB, Z93, Z95}.

A particular advantage  \cite{CZ} of categorical descriptions of algebras in terms of diagrams is that in many cases, 
the bipartite nature of the diagrams (they have a ``top''
and ``bottom'') leads to a cellular structure for the endomorphism algebras \cite{GL96, GL98}, which in turn makes possible the study of base
change in a systematic way. Thus characteristic $p>0$ versions of the above theorems are available, and this remains a fruitful method
in the study of modular (i.e. non-semi-simple) representations, as in \cite{ALZ}, where there are still many open questions.

The application of diagrammatic methods in physics is discussed in \cite{PM}. 

In this work we shall give a general setting for such diagrammatic methods suitable for studying the invariant theory of classical groups and classical supergroups (cf. \cite{GW}), and explain the application of the methods in various cases. In addition, we shall touch on how the discussion below may be generalised to the case of modules for quantum groups \cite{D, Ji} and quantum supergroups 
\cite{BGZ, Z93, Z98}. 

We should mention that Karoubi envelopes (i.e., Cauchy completion) of the various diagram categories discussed here correspond to Deligne's categories \cite{Del}, 
which have been much studied in recent years (see, e.g., \cite{CW, Ck}), and applied, in particular, to study connections between the representation theory of Brauer algebras 
and parabolic category $\CO$ of $D$ type Lie algebras \cite{ES}. 

We intend to discuss diagrammatic methods and results in the invariant theory of quantum groups and quantum supergroups \cite{LZ1,LZZ1, LZZ2,LZ5} in the future, including connections 
with quantum topology \cite{Jo, RT,  T1, Wi, ZGB, Z93, Z95}.   




\part{Diagram categories}

\section{The Brauer category}

We consider first the Brauer category introduced in \cite{LZ5}. 
It is a ``categorified''  generalisation of the Brauer algebras which Richard Brauer introduced \cite{B} when studying the invariant theory of the orthogonal and symplectic groups.

\subsection{The Brauer category}\label{subsect:tangles}

Let $\N=\{0, 1, 2, \dots\}$ throughout the paper.

\begin{definition}\label{def:brauer-diag}
For any pair $k, \ell \in\N$,
a Brauer diagram from $k$ to $\ell$, or simply a $(k, \ell)$ Brauer diagram,  
is a partitioning of the set $\{1,2,\dots,k+\ell\}$ as a disjoint union of pairs.
\end{definition}
This is thought of as a diagram where $k+\ell$ points (the nodes, or vertices) are placed on
two parallel horizontal lines,
$k$ on the lower line and $\ell$ on the upper, with
arcs drawn to join points which are paired. We shall speak of the lower and upper nodes or vertices
of a diagram. The pairs will be known as {\em arcs} or {\em strings}. If $k=\ell=0$, there is by convention just one
Brauer $(0,0)$-diagram.

Figure \ref{fig1} below is a $(6, 4)$ Brauer diagram.

\begin{figure}[h]
\begin{center}
\begin{picture}(160, 60)(0,0)

\qbezier(0, 60)(30, 0)(60, 60)
\qbezier(0, 0)(60, 60)(90, 0)

\qbezier(30, 60)(90, 40)(150, 0)
\qbezier(90, 60)(60, 30)(30, 0)

\qbezier(60, 0)(90, 40)(120, 0)
\end{picture}
\end{center}
\caption{ }
\label{fig1}
\end{figure}

There are two operations on Brauer diagrams: {\em composition} defined using
concatenation of diagrams and {\em tensor product} defined using juxtaposition
(see below).

\begin{definition} \label{def:bkl} Let $\K$ be a commutative ring with identity,
and fix $\delta\in \K$.
Denote by ${B}_k^\ell(\delta)$ the free
$\K$-module with a basis consisting of
$(k, \ell)$ Brauer diagrams. Note that
${B}_k^\ell(\delta)\neq 0$ if and only if $k+\ell$ is even,
since the free $\K$-module with basis the empty set is zero.
By convention there is one diagram in $B_0^0(\delta)$, viz. the empty diagram.
Thus $B_0^0(\delta)=\K$.
\end{definition}

There are two $\K$-bilinear operations on diagrams.
\begin{eqnarray}\label{eq:products}
\begin{aligned}
&&\text{composition} & \quad \circ:  & & B_\ell^p(\delta)
\times B_k^\ell(\delta)\longrightarrow B_k^p(\delta), \quad and  \\
&&\text{tensor product} & \quad  \otimes: & & B_p^q(\delta)
\times B_k^\ell(\delta)\longrightarrow B_{k+p}^{q+\ell}(\delta)
\end{aligned}
\end{eqnarray}

These operations are defined as follows.
\begin{enumerate}
\item The composite $D_1\circ D_2$ of the Brauer diagrams $D_1\in B_\ell^p(\delta)$ and
$D_2\in B_k^\ell(\delta)$ is defined as follows.
First, the concatenation $D_1\#D_2$ is obtained by placing $D_1$ above $D_2$,
and identifying the $\ell$ lower nodes of $D_1$ with the corresponding upper nodes of $D_2$.
Then $D_1\#D_2$ is the union of a Brauer $(k,p)$ diagram $D$ with a certain number, $f(D_1,D_2)$ say,
of free loops. The composite $D_1\circ D_2$ is the element $\delta^{f(D_1,D_2)}D\in B_k^p(\delta)$.

\item The tensor product $D\otimes D'$ of
any two Brauer diagrams $D\in B_p^q(\delta)$
and $D'\in B_k^l(\delta)$ is the $(p+k, q+l)$
diagram obtained by juxtaposition, that is,
placing $D'$ on the right of $D$ without overlapping.
\end{enumerate}
Both operations are clearly associative.

\begin{definition}[\cite{LZ5}]
The {\em Brauer category with parameter $\delta$}, denoted by $\CB(\delta)$,
is the following $\K$-linear small category equipped with a bi-functor $\otimes$
(which will be called the tensor product):
\begin{enumerate}
\item the set of objects is $\N=\{0, 1, 2, \dots\}$,  and for any pair of objects $k, l$,
$\Hom_{\CB(\delta)}(k, l)$ is the $\K$-module $B_k^l(\delta)$; the composition of morphisms is
given by the composition of Brauer diagrams defined by \eqref{eq:products};
\item the tensor product $k\otimes l$ of objects  $k, l$ is  $k+l$ in $\N$, and the
tensor product of morphisms is given by the tensor product (juxtaposition) of Brauer diagrams of \eqref{eq:products}.
\end{enumerate}
\end{definition}
It follows from the associativity of composition of Brauer diagrams that
$\CB(\delta)$ is indeed a pre-additive category.

\subsection{Involutions}\label{ssec:involutions}
The category $\CB(\delta)$ has a {\em duality functor} $^*:\CB(\delta)\to \CB(\delta)^{\text{op}}$,
which takes each object to itself, and takes each diagram to its reflection in a horizontal line.
More formally, for any $(k,\ell)$ diagram $D$, $D^*$ is the $(\ell,k)$ diagram with precisely the same
pairs identified as $D$. Further, there is an involution $^\sharp:\CB(\delta)\to \CB(\delta)$
which also takes objects to themselves, but takes a diagram $D$ to its reflection in a vertical line.
Formally, if the upper nodes of the diagram $D$ are labelled $1,2.\dots,\ell$ and the lower
nodes are labelled $1',2',\dots,k'$, we apply the permutation $i\mapsto \ell+1-i,j'\mapsto k+1-j'$
to the nodes to get the arcs of $D^\sharp$. We shall meet the contravariant functor
$D\mapsto *D:=D^{*\circ\sharp}$ later.

It is easily checked that $(D_1\circ D_2)^*=D_2^*\circ D_1^*$, $(D_1\ot D_2)^*=D_1^*\ot D_2^*$,
and that
$(D_1\circ D_2)^\sharp=D_1^\sharp\circ D_2^\sharp$ and $(D_1\ot D_2)^\sharp=D_2^\sharp\ot D_1^\sharp$.

\subsection{Generators and relations} The exposition in this section and in Appendix \ref{sect:proof-presentation} is based on \cite{LZ5}.
The next theorem describes the Brauer diagrams in terms of generators and relations.  There is a corresponding description for tangle diagrams in
\cite{FY, T1, RT, T2}. 

\begin{theorem}[\cite{LZ5}]\label{thm:presentation}
\begin{enumerate}
\item\label{generators}
The four Brauer diagrams
\begin{center}
\begin{picture}(205, 40)(-5,0)
\put(0, 0){\line(0, 1){40}}
\put(5, 0){,}

\put(40, 0){\line(1, 2){20}}
\put(60, 0){\line(-1, 2){20}}
\put(65, 0){,}

\qbezier(100, 0)(115, 60)(130, 0)
\put(135, 0){,}

\qbezier(170, 30)(185, -30)(200, 30)
\put(200, 0){,}
\end{picture}
\end{center}
generate all Brauer diagrams by composition and tensor product (i.e., juxtaposition).
We shall refer to these generators as the elementary Brauer diagrams,
and denote them by $I$, $X$, $A$ and $U$
respectively. Note that
these diagrams are all fixed by $^\sharp$, and that $^*$ fixes $I$ and $X$, while
$A^*=U$ and $U^*=A$.
\item \label{relations} A complete set of relations among
these four generators is given by the following, and their transforms under $^*$ and $^\sharp$. This means that any
equation relating two words in these four generators can be deduced from the given relations.
\begin{eqnarray}
&I\circ I= I,\:(I\ot I)\circ X= X, \;(I\ot I)\circ A=A,\;(I\ot I)\circ U=U, \label{eq:identity}\\
&X\circ X= I,\label{eq:XX} \\
&(X\ot I)\circ(I\ot X)\circ(X\ot I) = (I\ot X)\circ(X\ot I)\circ(I\ot X),
\label{eq:braid}\\
&A\circ X = A,\label{eq:AX}\\
&A\circ U = \delta, \label{eq:AU}\\
&(A\ot I)\circ(I\ot X)=(I\ot A)\circ(X\ot I)\label{eq:slide}\\
&(A\ot I)\circ(I\ot U)=I.\label{eq:straight}
\end{eqnarray}
\end{enumerate}
The relations \eqref{eq:XX}-\eqref{eq:straight} are depicted diagrammatically in Figures \ref{fig:braid},
\ref{fig:loop} and \ref{fig:slide}.
\end{theorem}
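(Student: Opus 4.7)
My plan for part (1) is constructive. Given a Brauer $(k,\ell)$-diagram $D$, draw it in the horizontal strip and perturb it so that all its crossings, cups, and caps occur at distinct heights. Slicing by horizontal lines between these critical heights presents $D$ as a composition of ``elementary slices'', each of which is a tensor product of several $I$'s with exactly one of $X$, $A$ or $U$. Equivalently, one brings $D$ into a three-layer normal form, read from bottom to top: first a stack of caps of the form $I^{\ot a} \ot U \ot I^{\ot b}$ reducing the $k$ lower nodes to $m$ through-strands, where $m$ equals the number of arcs of $D$ connecting bottom to top; then a word in $X$ and $I$ realising the required permutation of the through-strands (possible since the $X$'s and $I$'s generate all symmetric group diagrams under composition); and finally a stack of cups of the form $I^{\ot c} \ot A \ot I^{\ot d}$ creating the arcs between upper nodes. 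This proves $\{I, X, A, U\}$ generates every morphism of $\CB(\delta)$ under composition and tensor product.

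For part (2), the plan is to assign to each Brauer diagram $D$ a canonical word $n(D)$ in the four generators (for example, the word produced by the algorithm above, with a fixed reduced expression chosen for the through-strand permutation), and then to show that any word $w$ representing $D$ can be transformed into $n(D)$ using only the listed relations together with their images under $^*$ and $^\sharp$. The relations are tailored to carry this out: \eqref{eq:identity} removes redundant identity strands; \eqref{eq:XX} and \eqref{eq:braid} are precisely the Coxeter relations of the symmetric group, so they already account for every relation among words in the crossing generators; \eqref{eq:AX} and the slide relation \eqref{eq:slide} (together with their $^*$- and $^\sharp$-transforms) allow one to push each $A$ above all crossings and each $U$ below them; the straightening relation \eqref{eq:straight} eliminates adjacent cup-cap pairs forming zigzags; and \eqref{eq:AU} evaluates closed loops to the scalar $\delta$, matching the definition of composition in $\CB(\delta)$.

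The main obstacle is confluence and termination of this reduction procedure: one must show that no matter in which order the moves are applied, every word in the generators reduces to the \emph{same} normal form $n(D)$, where $D$ is the diagram the word represents. A clean way to handle this is to introduce a complexity measure on words -- for example a lexicographic tuple consisting of the number of as-yet unreduced closed loops, then the total ``height'' of occurrences of $A$, then the length of the through-strand permutation, then the total ``depth'' of occurrences of $U$ -- that strictly decreases under any non-trivial application of a reduction move, so that every reduction sequence terminates. The residual ambiguity, namely the possibility of different reduced expressions for the same through-strand permutation, is resolved by the braid relation \eqref{eq:braid} alone (Matsumoto's theorem for the symmetric group). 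Combined with part (1), this identifies $\CB(\delta)$ with the quotient of the free $\K$-linear strict monoidal category on $\{I,X,A,U\}$ by the congruence generated by the listed relations and their $^*$-, $^\sharp$-transforms, yielding the desired completeness. The full combinatorial argument is developed in Appendix \ref{sect:proof-presentation}, following \cite{LZ5}.
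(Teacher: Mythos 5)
Your proof of part (1) is essentially the paper's: slice the diagram by horizontal lines so each elementary slice contains one crossing, cup or cap; this is exactly the ``regular expression'' construction of Appendix~\ref{sect:proof-presentation}.

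For part (2) your route is genuinely different from the paper's, and the difference matters. You propose a normal-form/rewriting argument: exhibit a canonical word $n(D)$ (caps, then a reduced permutation word, then cups), show a complexity measure forces termination of reductions, and then deduce uniqueness of the normal form. The paper instead never attempts confluence. It proves two reduction lemmas --- Lemma~\ref{lem:red-perm} (pre/post-composing by a permutation diagram is an invertible operation on equivalence classes of regular expressions, because $X\circ X=I$) and Lemma~\ref{lem:red-raise} (the raising and lowering operators $R$ and $L$ are mutually inverse up to equivalence, via the straightening relation) --- and uses them to reduce \emph{once and for all} to the single family of scaled diagrams $\delta^N U^{\ot r}$. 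It then shows by induction on $r$ and on the position of the first cup in the word, using the workhorse Lemma~\ref{lem:stack}, that every regular expression for $\delta^N U^{\ot r}$ can be shortened or brought to the minimal one. This sidesteps the entire confluence question.

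The gap in your plan is precisely where you wave at confluence. You assert that ``the residual ambiguity, namely the possibility of different reduced expressions for the same through-strand permutation, is resolved by the braid relation alone (Matsumoto's theorem).'' But after termination the residual ambiguity is not confined to the permutation layer: the interactions among cups, caps and crossings produce many critical pairs (e.g.\ an $A$ adjacent to an $X$ can be reduced by \eqref{eq:AX} or slid by \eqref{eq:slide}; two overlapping slide moves; a slide followed by a straightening, etc.), and local confluence of all of these must be checked before Newman's lemma applies. Matsumoto's theorem handles only the symmetric-group word problem, which is one piece of the picture. Until you verify all those overlaps, the claim that every word reduces to the \emph{same} $n(D)$ is unsupported. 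This is exactly the combinatorial burden the paper's $L$-$R$-and-conjugation reduction was designed to avoid: once everything is pushed to $\delta^N U^{\ot r}$, only a small, tractable family of configurations (captured in Lemma~\ref{lem:stack}) needs to be analysed. If you want to keep the rewriting approach, you should either enumerate and resolve the critical pairs explicitly, or borrow the paper's trick of normalising by the invertible moves first so that the remaining rewriting happens in a much narrower setting.
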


\begin{figure}[h]
\begin{center}
\begin{picture}(100, 70)(90,-10)
\qbezier(0, 60)(50, 30)(0, 0)
\qbezier(30, 60)(-10, 30)(30, 0)

\put(40, 30){$=$}

\qbezier(60, 60)(60, 30)(60, 0)
\qbezier(90, 60)(90, 30)(90, 0)
\put(95, 0){;}

\put(10, -20){Double crossing}

\qbezier(150, 60)(130, 30)(150, 0)
\qbezier(120, 60)(170, 40)(180, 0)
\qbezier(120, 0)(170, 20)(180, 60)

\put(190, 30){$=$}

\qbezier(240, 60)(260, 30)(240, 0)
\qbezier(270, 0)(220, 20)(210, 60)
\qbezier(270, 60)(220, 40)(210, 0)

\put(180, -20){Braid relation}
\end{picture}
\end{center}
\caption{Relations \eqref{eq:XX} and \eqref{eq:braid}}
\label{fig:braid}
\end{figure}


\begin{figure}[h]
\begin{center}
\begin{picture}(100, 80)(40,-10)
\qbezier(0, 40)(15, 90)(30, 40)
\qbezier(0, 40)(3, 15)(30, 0)
\qbezier(30, 40)(27, 15)(0, 0)
\put(40, 30){$=$}
\qbezier(60, 10)(75, 90)(90, 10)
\put(95, 0){;}

\put(30, -20){De-looping}
\qbezier(120, 33)(135, 90)(150, 33)
\qbezier(120, 33)(135, -20)(150, 33)
\put(160, 30){$=\delta$}

\put(115, -20){Loop Removal}

\end{picture}
\end{center}
\caption{Relations \eqref{eq:AX} and \eqref{eq:AU}}
\label{fig:loop}
\end{figure}

\begin{figure}[h]
\begin{center}
\begin{picture}(100, 80)(50,-10)
\qbezier(0, 0)(5, 120)(40,0)
\qbezier(20, 0)(30, 30)(40,60)
\put(45, 30){$=$}
\qbezier(60, 0)(95, 120)(100,0)
\qbezier(60, 60)(70, 30)(80,0)

\put(105, 0){;}

\put(30, -20){Sliding}
\qbezier(130, 0)(135, 90)(150, 30)
\qbezier(150, 30)(165, -30)(170, 60)
\put(180, 30){$=$}
\qbezier(200, 0)(200, 30)(200, 60)

\put(130, -20){Straightening}

\end{picture}
\end{center}
\caption{Relations \eqref{eq:slide} and \eqref{eq:straight}}
\label{fig:slide}
\end{figure}
\begin{proof}
We will provide a purely algebraic proof
of the theorem in  Appendix \ref{sect:proof-presentation}. 
\end{proof}

\begin{remark}\label{rem:quotient-cat}
The operations in $\CB(\delta)$ mirror the operations in the tangle category
considered in \cite{FY, T1, RT, T2} and
the Brauer diagrams is a quotient category (in the sense of \cite[\S II.8]{MacL})
of the category of tangles. 
\end{remark}

\subsection{The Brauer algebra}\label{sect:Brauer-algebra}

We can recover  Brauer's algebra from  $\CB(\delta)$.

For any object $r$ in $\CB(\delta)$,  the $\K$-module $B_r^r(\delta)$ of morphisms forms
a unital associative $\K$-algebra under composition of Brauer diagrams. This is the Brauer algebra \cite[\S 5]{B} of 
degree $r$ with parameter $\delta$, which we will denote by $B_r(\delta)$.
The first two results of the following lemma are well known.

\begin{lemma}\label{lem:brprops}
\begin{enumerate}
\item For $i=1,\dots,r-1$, let $s_i$ and $e_i$ respectively be the $(r, r)$ Brauer diagrams shown in
Figure \ref{s-e} below.
\begin{figure}[ht]
\begin{center}
\begin{picture}(350, 60)(0,0)
\put(0, 0){\line(0, 1){60}}
\put(40, 0){\line(0, 1){60}}
\put(18, 30){...}
\put(10, 0){$i-1$}

\qbezier(60, 0)(70, 30)(80, 60)
\qbezier(60, 60)(70, 30)(80, 0)

\put(100, 0){\line(0, 1){60}}
\put(140, 0){\line(0, 1){60}}
\put(118, 30){...}
\put(150, 0){, }

\put(200, 0){\line(0, 1){60}}
\put(240, 0){\line(0, 1){60}}
\put(218, 30){...}
\put(210, 0){$i-1$}

\qbezier(260, 0)(270, 45)(280, 0)
\qbezier(260, 60)(270, 15)(280, 60)

\put(300, 0){\line(0, 1){60}}
\put(340, 0){\line(0, 1){60}}
\put(318, 30){...}
\put(350, 0){. }
\end{picture}
\end{center}
\caption{ }
\label{s-e}
\end{figure}
Then $B_r(\delta)$ has the following presentation as $\K$-algebra with anti-involution $*$.
The generators are $\{s_i, e_i \mid i=1, 2, \dots, r-1\}$, with relations
\[
\begin{aligned}
&s_is_j=s_js_i,\;s_ie_j=e_js_i,\;e_ie_j=e_je_i, \quad\text{if $|i-j|\geq 2$},\\
&s_i^2=1,\; s_is_{i+1}s_i=s_{i+1}s_is_{i+1},\\
&s_ie_i=e_is_i=e_i,\\
&e_i^2=\delta e_i,\\
&e_ie_{i\pm 1}e_i=e_i,\\
&s_ie_{i+1}e_i=s_{i+1}e_i,
\end{aligned}
\]
where the last five relations are valid for all applicable $i$.

\item The elements $s_1,\dots,s_{r-1}$ generate a subalgebra of $B_r(\delta)$, isomorphic to
the group algebra $\K\Sym_r$ of the symmetric group $\Sym_r$.

\item  The map $\ast$ of Lemma \ref{anti-invol} restricts to an anti-involution of the Brauer
algebra.
\end{enumerate}
\end{lemma}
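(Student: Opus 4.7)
My plan is to derive all three parts from the categorical presentation of $\CB(\delta)$ in Theorem~\ref{thm:presentation}. Write the algebra generators as tensor products of elementary Brauer diagrams,
\[
s_i = I^{\ot(i-1)}\ot X\ot I^{\ot(r-i-1)}, \qquad e_i = I^{\ot(i-1)}\ot (U\circ A)\ot I^{\ot(r-i-1)},
\]
so that multiplication in $B_r(\delta)$ corresponds to composition in $\CB(\delta)$ restricted to the object $r$.

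For part (1), checking that the listed relations hold in $B_r(\delta)$ is a direct translation from Theorem~\ref{thm:presentation}: the commutations for $|i-j|\ge 2$ follow from the interchange law for tensor products at disjoint factors; $s_i^2=1$ and the braid relation come from \eqref{eq:XX} and \eqref{eq:braid}; $e_i^2=\delta e_i$ uses \eqref{eq:AU}; $s_ie_i=e_is_i=e_i$ follows from \eqref{eq:AX} together with its $^*$-image $X\circ U=U$ (valid because $U=A^*$); and the mixed relations $e_ie_{i\pm 1}e_i=e_i$ and $s_ie_{i+1}e_i=s_{i+1}e_i$ are the local forms of the straightening relation \eqref{eq:straight} and the sliding relation \eqref{eq:slide}. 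The substantive content is completeness. I would let $F$ be the abstract $\K$-algebra on the generators $s_i,e_i$ modulo the listed relations, obtain a surjection $F\twoheadrightarrow B_r(\delta)$ using the above identification (surjectivity following because Theorem~\ref{thm:presentation} allows every $(r,r)$-Brauer diagram to be expressed as a composite of elementary tensors, and moves such as \eqref{eq:slide}, \eqref{eq:straight} then let one group the occurrences of $A$ and $U$ into $U\circ A$ blocks at consecutive positions), and then prove injectivity by a normal-form argument: using the listed relations, reduce each word in $s_i,e_i$ to a canonical expression (permutation)$\cdot$(product of $e_i$'s in a fixed order) indexed by a pair-partition of $\{1,\dots,2r\}$. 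Since there are exactly $(2r-1)!!=\dim_\K B_r(\delta)$ such pair-partitions, the rank bound on $F$ forces the surjection to be an isomorphism.

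Part (2) is immediate: the relations on the $s_i$ alone are precisely the Coxeter presentation of $\Sym_r$, yielding a surjection $\K\Sym_r\twoheadrightarrow \langle s_1,\dots,s_{r-1}\rangle$; its kernel vanishes since the $r!$ permutation diagrams (those with no horizontal arcs) are linearly independent in $B_r(\delta)$ and coincide with the images of the group elements. Part (3) is also immediate: the involution $^*$ of Section~\ref{ssec:involutions} is horizontal reflection and fixes each $s_i$ (a symmetric crossing) and each $e_i$ (a symmetric cup-cap); the identity $(D_1\circ D_2)^*=D_2^*\circ D_1^*$ recorded there then shows that $^*$ restricts to a $\K$-linear anti-involution of $B_r(\delta)$.

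The main obstacle is the injectivity step in part (1). The categorical presentation freely permits ``widening'' diagrams by adding auxiliary strands, so many manipulations available in $\CB(\delta)$ have no direct counterpart inside the fixed-width algebra $F$ and must be rederived from the listed algebraic relations alone. Making the normal-form reduction airtight, so that any two words representing the same Brauer diagram can be shown to be equal using only the given algebraic relations, is where the genuine work lies.
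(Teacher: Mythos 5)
Your treatment of parts (1) and (2) follows essentially the same route as the paper, which also derives them from Theorem~\ref{thm:presentation}; the paper gives no further details. You correctly flag that the injectivity/completeness argument in part (1) is not a mechanical consequence of the categorical presentation (since that presentation permits moves through objects of varying width); the counting-by-normal-forms strategy you sketch, using $(2r-1)!!$ pair-partitions, is the right kind of argument, though you do not carry it out.

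There is, however, a genuine error in your part (3). The lemma statement refers to the map $\ast$ of Lemma~\ref{anti-invol}, which is $D\mapsto {}^{*\circ\sharp}D$: horizontal reflection composed with vertical reflection. You have instead used the plain horizontal reflection ${}^*$ of Section~\ref{ssec:involutions}. These are different maps: the composite $\ast$ satisfies $\ast s_i=s_{r+1-i}$ and $\ast e_i=e_{r+1-i}$, as the paper explicitly records, whereas horizontal reflection alone fixes $s_i$ and $e_i$. The paper even points out that $\ast$ is ``different from the standard cellular anti-involution ${}^*$ of the Brauer algebra.'' Your claim that $\ast$ ``fixes each $s_i$ \dots\ and each $e_i$'' is therefore false for the map in question. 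The conclusion that $\ast$ restricts to an anti-involution is still true (both ${}^*$ and ${}^\sharp$ reverse/preserve composition appropriately on $B_r^r$, so their composite is again an anti-automorphism of the algebra squaring to the identity), but the justification must be corrected and the fixed-point claim removed.
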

Parts (1) and (2) follow from Theorem \ref{thm:presentation}.
Part (3) is easy to prove. However we note that
$*s_i= s_{r+1-i}$ and $*e_i= e_{r+1-i}$. This is different from
the standard cellular anti-involution $^*$ of the Brauer algebra.

We remark that multiplying the last relation above by $e_i$ on the left and using
two of the earlier relations, we obtain
\[
e_is_{i+ 1}e_i=e_i,
\]
a relation which we shall often use, together with its transform under $*$:
$e_is_{i-1}e_i=e_i$.

%
%
%
%
%
\subsection{Some useful diagrams}
We shall find the following diagrams useful in later sections of this work.
Let $A_q=A\circ (I\ot A\ot I)\dots (I^{\ot (q-1)}\ot A\ot I^{\ot (q-1)})$,
$U_q=(I^{\ot (q-1)}\ot U\ot I^{\ot (q-1)})\circ\dots\circ (I\ot U\ot I)\circ U$
and $I_q=I^{\ot q}$. These are depicted as diagrams in Figure \ref{Omega-U},
\begin{figure}[ht]
\begin{center}
\begin{picture}(340, 40)(-5,0)
\put(-5, 10){$A_q=$}
\qbezier(25, 0)(55, 60)(85, 0)
\put(33, 10){...}
\put(30, 0){\tiny$q$}
\qbezier(40, 0)(55, 40)(70, 0)
\put(90, 0){,}

\put(140, 10){$U_q=$}
\qbezier(165, 30)(195, -35)(225, 30)
\put(172, 25){\tiny$q$}
\put(172, 20){...}
\qbezier(180, 30)(195, -15)(210, 30)
\put(215, 0){,}

\put(270, 10){$I_q=$}
\put(300, 0){\line(0, 1){40}}
\put(310, 20){...}
\put(314, 0){\tiny$q$}
\put(330, 0){\line(0, 1){40}}
\put(335, 0){.}
\end{picture}
\end{center}
\caption{ }
\label{Omega-U}
\end{figure}

The following results are easy to prove diagrammatically.

\begin{lemma} \label{isomorphism-pri}
\begin{enumerate}
\item For any Brauer diagrams $D_1\in B_k^r(\delta)$ and
$D_2\in B_r^q(\delta)$, we have $I_r\circ D_1 =D_1$ and $D_2\circ I_r =D_2$.
That is, $I_r=\id_r$ for any object $r$ of $\CB(\delta)$.
\item  The following relation holds.
\[(I_q\otimes A_q)\circ(U_q\otimes I_q)=(U_q\otimes I_q)\circ(I_q\otimes A_q)=I_q.\]
\end{enumerate}
\end{lemma}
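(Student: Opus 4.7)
My plan for Part~(1) is to argue from Theorem~\ref{thm:presentation} that $I_r=I^{\ot r}$ is literally $r$ parallel vertical strands, so stacking it on or under any $(k,\ell)$ Brauer diagram simply extends each arc by a straight segment and produces no free loops. The formal content sits in relations \eqref{eq:identity} together with their $^*$ and $^\sharp$ transforms: these assert that each of the four elementary generators $I,X,A,U$ is absorbed by left- and right-composition with $I\ot I$, and that $I\circ I=I$. Since every Brauer diagram is a word in these four generators under $\circ$ and $\ot$, a short induction on word length gives $I_r\circ D_1=D_1$ and $D_2\circ I_r=D_2$ whenever the compositions are defined, so that $I_r=\id_r$.

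For Part~(2), my plan is a direct combinatorial trace through the composite. Label the $q$ bottom and $q$ top boundary nodes of $(I_q\ot A_q)\circ(U_q\ot I_q)$ by $b_1,\dots,b_q$ and $t_1,\dots,t_q$, and the $3q$ internal nodes where the two halves are glued by $m_1,\dots,m_{3q}$ from left to right. The nested caps of $U_q$ contribute the pairings $\{m_k,m_{2q+1-k}\}$ for $k=1,\dots,q$ (all within positions $1,\dots,2q$); the right copy of $I_q$ in the bottom half contributes $\{b_i,m_{2q+i}\}$; the top half similarly gives $\{m_i,t_i\}$ from its left $I_q$ together with the nested cups $\{m_{q+j},m_{3q+1-j}\}$ for $j=1,\dots,q$ from $A_q$ (all within positions $q+1,\dots,3q$). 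Starting from $b_i$ and following the unique outgoing edge at each middle node produces the chain
\[
b_i\;\longmapsto\;m_{2q+i}\;\longmapsto\;m_{2q+1-i}\;\longmapsto\;m_i\;\longmapsto\;t_i,
\]
so the composite consists of the $q$ pairings $\{b_i,t_i\}$ with no free loops formed, i.e.\ equals $I_q$. The second equality in the statement is to be read as the mirror zig-zag $(A_q\ot I_q)\circ(I_q\ot U_q)=I_q$, and it follows at once by applying $^\sharp$ to the first: since $I$, $A$, $U$ are each $^\sharp$-fixed, so are $I_q$, $A_q$, $U_q$, and since $^\sharp$ preserves $\circ$ and reverses the order in $\ot$, we obtain $\bigl((I_q\ot A_q)\circ(U_q\ot I_q)\bigr)^\sharp=(A_q\ot I_q)\circ(I_q\ot U_q)=I_q^\sharp=I_q$.

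The only delicate point in the trace is verifying that the three nested families of pairings are \emph{compatible}, namely that each middle node participates in exactly one pairing from each half so that the composite graph is a disjoint union of paths between boundary nodes, ruling out any internal cycle that would introduce an unwanted factor of $\delta$. This is immediate by counting: the bottom-half pairings cover middle positions $1,\dots,2q$ (from $U_q$) and $2q+1,\dots,3q$ (from $I_q$) exactly once each, while the top-half pairings cover $1,\dots,q$ (from $I_q$) and $q+1,\dots,3q$ (from $A_q$) exactly once each. Hence every $m_j$ has degree exactly $2$ in the composite graph, and the string-tracing above accounts for all $q$ boundary-to-boundary paths, so no free loops remain and the lemma follows.
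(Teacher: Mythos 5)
Your proof is correct, and it fills a gap that the paper leaves open: the paper offers no argument at all, stating only that both parts are ``easy to prove diagrammatically.'' Your Part~(1) argument via the relations \eqref{eq:identity} and Theorem~\ref{thm:presentation}(1) is a valid formalisation of that phrase, and your Part~(2) trace is a careful and correct rendering of the diagrammatic verification: the pairing data for $U_q\ot I_q$ and $I_q\ot A_q$ is exactly as you list, the chain $b_i\mapsto m_{2q+i}\mapsto m_{2q+1-i}\mapsto m_i\mapsto t_i$ checks out, and the degree-count showing that every middle node lies on a boundary-to-boundary path (hence no closed loops, hence no spurious $\delta$ factor) is precisely the point that needs to be made.

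You are also right to flag the statement itself. As written, $(U_q\ot I_q)\circ(I_q\ot A_q)$ is a morphism $3q\to 3q$, so it cannot literally equal $I_q\colon q\to q$; moreover it factors through the object $q$ and so is not even an identity in $B_{3q}^{3q}$. The intended second equality is, as you say, the mirror zig-zag $(A_q\ot I_q)\circ(I_q\ot U_q)=I_q$ (compare relation \eqref{eq:straight}, which is the $q=1$ case, and Corollary~\ref{isomorphism}, which consumes both zig-zag identities to produce the inverse pair $\mathbb{U}_p^q$, $\mathbb{A}^r_q$). Your derivation of the second identity from the first by applying $^\sharp$ is clean and exploits exactly the structure set up in \S\ref{ssec:involutions}: $^\sharp$ fixes $I$, $A$, $U$ (hence $I_q$, $A_q$, $U_q$ by inspection of Figure~\ref{Omega-U}), preserves $\circ$, and reverses the order in $\ot$. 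One could of course also just repeat the trace for the mirror diagram, but the $^\sharp$ argument is shorter and makes the symmetry explicit.
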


\begin{corollary}\label{isomorphism}
The following linear maps are inverses of each other for all $p, q$ and $r$ in $\N$.
\[
\begin{aligned}
{\mathbb U}_p^q=(-\otimes I_q)\circ(I_p\otimes U_q):
B_{p+q}^r(\delta)\longrightarrow B_p^{r+q}(\delta)\\
{\mathbb A}^r_q=(I_{r+q}\otimes A_q)\circ(- \otimes I_q):
B_p^{r+q}(\delta)\longrightarrow B_{p+q}^r(\delta).
\end{aligned}
\]
\end{corollary}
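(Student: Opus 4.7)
The plan is to verify the two identities $\mathbb{A}^r_q \circ \mathbb{U}_p^q = \id$ and $\mathbb{U}_p^q \circ \mathbb{A}^r_q = \id$ by direct algebraic manipulation that ultimately reduces everything to the straightening relation of Lemma \ref{isomorphism-pri}(2). For the first identity, given $D \in B_{p+q}^r(\delta)$, I would unfold the definitions to get
\[
\mathbb{A}^r_q(\mathbb{U}_p^q(D)) = (I_r \otimes A_q) \circ \bigl(\bigl((D \otimes I_q) \circ (I_p \otimes U_q)\bigr) \otimes I_q\bigr),
\]
and then exploit the bifunctorial interchange law $(f_1 \otimes g_1) \circ (f_2 \otimes g_2) = (f_1 \circ f_2) \otimes (g_1 \circ g_2)$, which is available in $\CB(\delta)$ because $\otimes$ is specified as a bi-functor.

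The key rearrangements run as follows. Writing $I_q = I_q \circ I_q$ lets the outer $(-) \otimes I_q$ distribute through the inner composition, producing $(D \otimes I_{2q}) \circ (I_p \otimes U_q \otimes I_q)$. A second interchange absorbs $(I_r \otimes A_q) \circ (D \otimes I_{2q})$ into $D \otimes A_q$, which is then re-expressed as $D \circ (I_{p+q} \otimes A_q)$ by one further interchange. After regrouping, the composite takes the form
\[
D \circ \bigl(I_p \otimes \bigl[(I_q \otimes A_q) \circ (U_q \otimes I_q)\bigr]\bigr),
\]
and the bracketed morphism equals $I_q$ by Lemma \ref{isomorphism-pri}(2). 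Together with Lemma \ref{isomorphism-pri}(1) this collapses the whole expression to $D$.

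The reverse identity $\mathbb{U}_p^q \circ \mathbb{A}^r_q = \id$ is proved by an entirely parallel calculation that ends instead with the mirror-image straightening $(A_q \otimes I_q) \circ (I_q \otimes U_q) = I_q$. This mirror identity is a consequence of Lemma \ref{isomorphism-pri}(2): one may deduce it either by rerunning the diagrammatic argument on the other side, or by applying the involution $^\sharp$ of Section \ref{ssec:involutions}, which fixes $I$, $U$, $A$ and reverses tensor-product order, and hence exchanges the two straightening relations. The main obstacle, such as it is, is purely bookkeeping: one must shepherd the indices and the tensor positions through the repeated interchanges so that the $U_q$ and $A_q$ factors wind up adjacent and ready to be straightened. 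Indeed the whole statement admits a transparent pictorial proof in which $\mathbb{U}_p^q$ bends the last $q$ lower strands of $D$ upward into the top row and $\mathbb{A}^r_q$ bends them back down, with the resulting adjacent cup–cap pairs straightening by Lemma \ref{isomorphism-pri}(2).
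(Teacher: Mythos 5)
Your argument is correct and follows the same route the paper intends: the paper leaves the corollary as an immediate consequence of Lemma~\ref{isomorphism-pri}, and your interchange-law bookkeeping makes the reduction to $(I_q\otimes A_q)\circ(U_q\otimes I_q)=I_q$ and its $\sharp$-mirror $(A_q\otimes I_q)\circ(I_q\otimes U_q)=I_q$ explicit. (You have also implicitly corrected a typo in the statement, reading $\mathbb{A}^r_q=(I_r\otimes A_q)\circ(-\otimes I_q)$ rather than $I_{r+q}$, which is indeed what the source/target counts require.)
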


We also note that ${\mathbb U}_p^q=R^q$ and ${\mathbb A}^r_q=L^q$ in the notation of Definition \ref{def:raising}, and 
$L^q$ and $R^q$ are mutually inverse as shown in Lemma \ref{lem:red-raise}.

\begin{lemma}\label{anti-invol}
Let $\ast: B_p^q(\delta) \longrightarrow B_q^p(\delta)$ be the linear map defined for any
$D \in B_p^q(\delta)$ by
$
*D=(I_p\otimes A_q)\circ(I_p\otimes D\otimes I_q)\circ(U_p\otimes I_q).
$ 
Then $\ast$ coincides with the anti-involution
$D\mapsto *D:=D^{*\circ\sharp}$ discussed in \S\ref{ssec:involutions}.
\end{lemma}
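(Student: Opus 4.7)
The approach is diagrammatic. Since both maps $D\mapsto *D$ (via the formula) and $D\mapsto D^{*\circ\sharp}$ are $\K$-linear, it suffices to check the equality on a single basis Brauer diagram $D\in B_p^q(\delta)$. The plan is to explicitly draw the composite $(I_p\otimes A_q)\circ(I_p\otimes D\otimes I_q)\circ(U_p\otimes I_q)$ as three stacked layers: a bottom layer with $p$ nested cups on the left (from $U_p$) and $q$ vertical strands on the right; a middle layer with $p$ identity strands, the diagram $D$, and $q$ identity strands; and a top layer with $p$ identity strands on the left and $q$ nested caps on the right (from $A_q$). The resulting picture presents a single $(q,p)$ Brauer diagram plus some possible loops, and the task is to identify it.

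Next I would trace each strand of this composite. Using the nested pairing $(j,2p+1-j)$ of $U_p$ together with the leftmost $I_p$ blocks in the middle and top layers, the $i$-th lower node of $D$ should be routed to the $(p+1-i)$-th upper node of the composite, for $i=1,\dots,p$. Symmetrically, using the nested pairing $(j,2q+1-j)$ of $A_q$ together with the rightmost $I_q$ blocks, the $i$-th upper node of $D$ is routed to the $(q+1-i)$-th lower node of the composite, for $i=1,\dots,q$. A lower-lower pair $(i,j)$ in $D$ thereby becomes the upper-upper pair $(p+1-i,p+1-j)$ in $*D$; an upper-upper pair $(i,j)$ becomes $(q+1-i,q+1-j)$; and a lower-upper pair $(i,j)$ becomes a mixed pair $(p+1-i,q+1-j)$. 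Crucially, every cup in $U_p$ has one leg that extends to an external top node through the leftmost $I_p$ and one leg that enters $D$, and symmetrically for the caps of $A_q$, so no closed free loops are produced and no power of $\delta$ appears.

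I then compare with $D^{*\circ\sharp}$. Reflecting $D$ in a horizontal line interchanges the upper and lower rows while preserving left-to-right order, and the subsequent vertical reflection reverses that order; the composite is thus a $180^\circ$ rotation of $D$, sending the $i$-th lower node to the $(p+1-i)$-th upper node and the $i$-th upper node to the $(q+1-i)$-th lower node. This matches the routing of every arc computed above, so $*D$ and $D^{*\circ\sharp}$ coincide as elements of $B_q^p(\delta)$.

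The main obstacle will be careful bookkeeping of node positions through the three layers, in particular navigating the nested pairings of $U_p$ and $A_q$ against the index shifts introduced by the flanking $I_p$ and $I_q$ blocks. A cleaner alternative, if the direct strand-tracing proves cumbersome, is to invoke the presentation of $\CB(\delta)$ from \thmref{thm:presentation}: verify that the formula for $*$ agrees with $^{*\circ\sharp}$ on the four generators $I,X,A,U$, and that it satisfies $*(D_1\circ D_2)=*D_2\circ *D_1$ and $*(D_1\otimes D_2)=*D_2\otimes *D_1$; the latter identities reduce, after bending strands, to the straightening relation \eqref{eq:straight}, its $^\sharp$-transform, and the sliding relation \eqref{eq:slide}.
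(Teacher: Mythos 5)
Your proof is correct, and it is essentially the same argument as the paper's (which gives no written proof — only Figure~\ref{D-star}, the picture of the composite you describe). The strand-tracing bookkeeping in your second paragraph is accurate: the nested pairing $(i,2p+1-i)$ in $U_p$ routes $D$'s lower node $i$ through the two flanking $I_p$'s to composite upper node $p+1-i$, and symmetrically $A_q$'s pairing $(i,2q+1-i)$ routes $D$'s upper node $j$ to composite lower node $q+1-j$; each cup and cap having exactly one external leg rules out free loops and powers of $\delta$; and the resulting node map $(i\text{-th lower})\mapsto (p+1-i)\text{-th upper}$, $(j\text{-th upper})\mapsto (q+1-j)\text{-th lower}$ is exactly the $180^\circ$ rotation $D^{*\circ\sharp}$. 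Your fallback route via Theorem~\ref{thm:presentation} would work too, but is unnecessary.
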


Pictorially, $*D$ is obtained from $D$ as in Figure \ref{D-star}.
\begin{figure}[h]
\begin{center}
\begin{picture}(100, 60)(-20,0)

\put(-5, 20){\line(1, 0){35}}
\put(-5, 20){\line(0, 1){20}}
\put(30, 20){\line(0, 1){20}}
\put(-5, 40){\line(1, 0){35}}
\put(8, 25){D}

\qbezier(5, 40)(40, 95)(60, 0)
\qbezier(20, 40)(35, 70)(45, 0)
\put(45, 10){...}

\qbezier(5, 20)(-15, -10)(-20, 60)
\qbezier(20, 20)(-20, -35)(-35, 60)
\put(-30, 50){...}

\end{picture}
\end{center}
\caption{$*D$}
\label{D-star}
\end{figure}

The Brauer diagrams $X_{s, t}$ shown in Figure \ref{X} give rise to a braiding of $\CB(\delta)$.
\begin{figure}[h]
\begin{center}
\begin{picture}(50, 60)(0,0)

\qbezier(0, 60)(15, 30)(30, 0)
\qbezier(15, 60)(30, 30)(45, 0)

\qbezier(0, 0)(15, 30)(30, 60)
\qbezier(15, 0)(30, 30)(45, 60)

\put(5, 5){...}
\put(5, -5){\tiny$s$}

\put(30, 5){...}
\put(35, -5){\tiny$t$}
\end{picture}
\end{center}
\caption{Braiding}
\label{X}
\end{figure}

\noindent
Thus $\CB(\delta)$ has the structure of  a braided tensor category with all objects being self dual.

\begin{lemma} \label{lem:Sigma-1} \label{lem:red}
Let $\Sigma_\epsilon(r)=\sum_{\sigma\in \Sym_r} (-\epsilon)^{|\sigma|} \sigma\in B_r(\delta)$,
where $\epsilon= \pm 1$ and $|\sigma|$ is the length of $\sigma$.
Represent $\Sigma_\epsilon(r)$  pictorially by Figure \ref{Sigma-r}.

\begin{figure}[h]
\begin{center}
\begin{picture}(100, 60)(-5,0)
\put(20, 40){\line(0, 1){20}}
\put(35, 50){...}
\put(60, 40){\line(0, 1){20}}

\put(0, 20){\line(1, 0){80}}
\put(0, 20){\line(0, 1){20}}
\put(80, 20){\line(0, 1){20}}
\put(0, 40){\line(1, 0){80}}
\put(40, 28){$r$}

\put(20, 20){\line(0, -1){20}}
\put(35, 10){...}
\put(60, 20){\line(0, -1){20}}

\put(85, 0){.}
\end{picture}
\end{center}
\caption{}
\label{Sigma-r}
\end{figure}

Then the following relations hold for all $r$.
\begin{enumerate}
\item
\[
\begin{picture}(80, 60)(0, -30)
\put(0, 10){\line(1, 0){60}}
\put(0, -10){\line(1, 0){60}}
\put(0, 10){\line(0, -1){20}}
\put(60, 10){\line(0, -1){20}}
\put(25, -3){$r$}

\put(10, 10){\line(0, 1){15}}
\put(23, 15){$\cdots$}
\put(50, 10){\line(0, 1){15}}

\put(10, -10){\line(0, -1){15}}
\put(23, -20){$\cdots$}
\put(50, -10){\line(0, -1){15}}

\put(70, -2){$=$}
\end{picture}
\begin{picture}(80, 60)(-10, -30)
\put(0, 10){\line(1, 0){50}}
\put(0, -10){\line(1, 0){50}}
\put(0, 10){\line(0, -1){20}}
\put(50, 10){\line(0, -1){20}}
\put(15, -3){$r-1$}

\put(10, 10){\line(0, 1){15}}
\put(18, 15){$\cdots$}
\put(40, 10){\line(0, 1){15}}

\put(10, -10){\line(0, -1){15}}
\put(18, -20){$\cdots$}
\put(40, -10){\line(0, -1){15}}

\put(60, -25){\line(0, 1){50}}

\put(70, -3){$-$}

\end{picture}
\begin{picture}(80, 60)(-70, -30)

\put(-60, -3){$\epsilon (r-2)!^{-1}$}

\put(0, 10){\line(1, 0){40}}
\put(0, 25){\line(1, 0){40}}
\put(0, 10){\line(0, 1){15}}
\put(40, 10){\line(0, 1){15}}
\put(12, 15){\tiny$r-1$}

\put(5, 10){\line(0, -1){20}}
\put(25, 10){\line(0, -1){20}}
\put(8, -3){$\cdots$}

\qbezier(35,10)(45, 5)(50, -35)
\qbezier(35,-10)(45, -5)(50, 35)

\put(0, -10){\line(1, 0){40}}
\put(0, -25){\line(1, 0){40}}
\put(0, -10){\line(0, -1){15}}
\put(40, -10){\line(0, -1){15}}
\put(12, -20){\tiny$r-1$}

\put(5, 25){\line(0, 1){10}}
\put(35, 25){\line(0, 1){10}}
\put(14, 28){$\cdots$}

\put(5, -25){\line(0, -1){10}}
\put(35, -25){\line(0, -1){10}}
\put(14, -33){$\cdots$}

\put(55, -35){.}
\end{picture}
\]

\item
\[
\begin{picture}(80, 60)(0, -30)
\put(0, 10){\line(1, 0){60}}
\put(0, -10){\line(1, 0){60}}
\put(0, 10){\line(0, -1){20}}
\put(60, 10){\line(0, -1){20}}
\put(25, -3){$r$}

\put(10, 10){\line(0, 1){15}}
\put(20, 15){$\cdots$}
\put(40, 10){\line(0, 1){15}}

\qbezier(50, -10)(60, -40)(65, 0)
\qbezier(50, 10)(60, 40)(65, 0)

\put(10, -10){\line(0, -1){15}}
\put(20, -20){$\cdots$}
\put(40, -10){\line(0, -1){15}}

\put(75, -3){$=$}
\end{picture}
\begin{picture}(80, 60)(-90, -30)
\put(-80, -5){$-\epsilon(r-1-\epsilon\delta)$}
\put(0, 10){\line(1, 0){60}}
\put(0, -10){\line(1, 0){60}}
\put(0, 10){\line(0, -1){20}}
\put(60, 10){\line(0, -1){20}}
\put(20, -3){$r-1$}

\put(10, 10){\line(0, 1){15}}
\put(23, 15){$\cdots$}
\put(50, 10){\line(0, 1){15}}

\put(10, -10){\line(0, -1){15}}
\put(23, -20){$\cdots$}
\put(50, -10){\line(0, -1){15}}

\put(60, -25){.}
\end{picture}
\]

\item
\[
\begin{picture}(80, 60)(0, -30)
\put(0, 10){\line(1, 0){60}}
\put(0, -10){\line(1, 0){60}}
\put(0, 10){\line(0, -1){20}}
\put(60, 10){\line(0, -1){20}}
\put(25, -3){$r$}

\put(10, 10){\line(0, 1){15}}
\put(20, 15){$\cdots$}
\put(40, 10){\line(0, 1){15}}

\put(50, -10){\line(0, -1){15}}
\qbezier(50, 10)(65, 40)(70, -25)

\put(10, -10){\line(0, -1){15}}
\put(20, -20){$\cdots$}
\put(40, -10){\line(0, -1){15}}

\put(75, -3){$=$}
\end{picture}
\begin{picture}(80, 60)(-65, -30)
\put(-55, -5){$\sum_{i=0}^{r-1} (-\epsilon)^i$}
\put(0, 10){\line(1, 0){60}}
\put(0, -10){\line(1, 0){60}}
\put(0, 10){\line(0, -1){20}}
\put(60, 10){\line(0, -1){20}}
\put(20, -3){$r-1$}

\put(10, 10){\line(0, 1){15}}
\put(23, 15){$\cdots$}
\put(50, 10){\line(0, 1){15}}

\put(10, -10){\line(0, -1){15}}
\put(11, -20){\tiny{...}}
\put(20, -10){\line(0, -1){15}}
\put(40, -10){\line(0, -1){15}}
\put(41, -20){\tiny{...}}
\put(42, -28){\tiny$i$}
\put(50, -10){\line(0, -1){15}}

\qbezier(30, -25)(45, 0)(60, -25)

\put(65, -25){.}
\end{picture}
\]
\end{enumerate}
\end{lemma}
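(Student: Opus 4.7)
My plan is to derive all three identities from a single coset decomposition of $S_r$ relative to $S_{r-1}$. Set $\chi_\epsilon(\sigma) := (-\epsilon)^{|\sigma|}$, which (since $\epsilon=\pm 1$) is a character of $S_r$ -- the sign character when $\epsilon=1$ and the trivial character when $\epsilon=-1$ -- so that $\Sigma_\epsilon(r) = \sum_{\sigma \in S_r} \chi_\epsilon(\sigma)\,\sigma$. Taking the minimal-length left coset representatives $c_j := s_j s_{j+1}\cdots s_{r-1}$ of $S_{r-1}$ in $S_r$ for $1 \leq j \leq r$ (with $c_r = 1$), length additivity $|c_j \tau| = |c_j| + |\tau|$ for $\tau \in S_{r-1}$ yields
\[
\Sigma_\epsilon(r) = \sum_{j=1}^r (-\epsilon)^{r-j}\, c_j \cdot \bigl(\Sigma_\epsilon(r-1) \otimes I_1\bigr).
\]

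For part (3), I apply this decomposition diagrammatically. The cycle $c_j = (j, j+1, \ldots, r)$ sends top-$r$ to bottom-$j$, so when the top-$r$ strand of $\Sigma_\epsilon(r)$ is bent down to become the $(r+1)$-th bottom strand, each term $c_j (\Sigma_\epsilon(r-1) \otimes I_1)$ produces exactly the diagram $D_i$ on the right-hand side with $i = r - j$: the bent arc becomes a cap between bottom positions $j$ and $r+1$, the residual $\Sigma_\epsilon(r-1)$-box absorbs the remaining $r-1$ strands, and the cyclic shift built into $c_j$ produces the illustrated ``skip'' at output position $r-i$. The coefficient $(-\epsilon)^{r-j} = (-\epsilon)^i$ matches, and summing over $j$ (reindexed by $i$) gives the right-hand side.

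For part (1), I compute $Y := (\Sigma_\epsilon(r-1) \otimes I_1)\, s_{r-1}\, (\Sigma_\epsilon(r-1) \otimes I_1)$ directly in $\K S_r$. Every $\sigma \in S_r$ with $\sigma(r) \neq r$ admits exactly $(r-2)!$ factorizations $\sigma = \tau_1 s_{r-1} \tau_2$ with $\tau_1, \tau_2 \in S_{r-1}$ (parameterized by the stabilizer of $r-1$ in $S_{r-1}$), while $\sigma \in S_{r-1}$ admits none. Using $\tau_1 \tau_2 = \sigma \cdot (\tau_2^{-1} s_{r-1} \tau_2)$ and the fact that every conjugate of the transposition $s_{r-1}$ has $\chi_\epsilon$-value $-\epsilon$, we obtain $\chi_\epsilon(\tau_1)\chi_\epsilon(\tau_2) = -\epsilon\,\chi_\epsilon(\sigma)$. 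Hence $Y = -\epsilon(r-2)!\,\bigl(\Sigma_\epsilon(r) - \Sigma_\epsilon(r-1) \otimes I_1\bigr)$, and rearranging (using $\epsilon^2 = 1$) yields part (1).

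Part (2) follows by closing off the last strand externally in the identity of part (1). Closing the crossing $s_{r-1}$ straightens it into a single vertical strand at position $r-1$, while closing a straight identity strand produces a free loop worth $\delta$. Since $\Sigma_\epsilon(r-1) \otimes I_1$ acts trivially on strand $r$, the closure commutes with left and right multiplication by it, so using $\Sigma_\epsilon(r-1)^2 = (r-1)!\,\Sigma_\epsilon(r-1)$ we get
\[
\text{LHS of (2)} \;=\; \delta\,\Sigma_\epsilon(r-1) \;-\; \tfrac{\epsilon\,(r-1)!}{(r-2)!}\,\Sigma_\epsilon(r-1) \;=\; -\epsilon(r-1-\epsilon\delta)\,\Sigma_\epsilon(r-1).
\]
The main obstacle will be the diagrammatic bookkeeping in part (3): one must verify carefully that the cap position \emph{and} the shifted pattern of vertical strands produced by the coset representative $c_j$ genuinely reproduce the pictorial $D_i$. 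Parts (1) and (2) are then essentially routine, with the sign identity $\chi_\epsilon(\tau_1)\chi_\epsilon(\tau_2) = -\epsilon\,\chi_\epsilon(\tau_1 s_{r-1}\tau_2)$ being the crux of part (1).
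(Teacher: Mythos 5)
Your proofs of (1) and (2) are correct. For (1), the counting of factorisations $\sigma = \tau_1 s_{r-1}\tau_2$ is in substance the double coset decomposition $\Sym_r = \Sym_{r-1}\amalg\Sym_{r-1}s_{r-1}\Sym_{r-1}$ with $(r-2)!$ the size of the relevant stabiliser, which is exactly what the paper invokes for part (1); and deriving (2) by closing the last strand matches the paper's ``immediate from (1).''

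There is a genuine gap in (3). With the left coset decomposition $\Sigma_\epsilon(r)=\sum_j(-\epsilon)^{r-j}c_j(\Sigma_\epsilon(r-1)\otimes I)$, the factor $c_j$ sits \emph{above} the box. Bending the top-$r$ strand therefore first traverses $c_j$, and since $c_j(r-1)=r$ for every $j<r$, the cap ends up attached to the $(r-1)$-st \emph{output} of the $\Sigma_\epsilon(r-1)$-box, not to a free boundary node; meanwhile the former $I$-strand travels from bottom node $r$ up through $c_j$ to top position $j$. The result is a diagram in which the cap is entangled with the box, structurally unlike $D_i$, where the cap joins two bottom nodes freely and all $r-1$ box outputs run straight to the top. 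You can see the failure concretely at $r=3$, $j=2$: there $c_2(\Sigma_\epsilon(2)\otimes I)=s_2-\epsilon s_2s_1$, and after bending the summand from $s_2$ caps bottom nodes $2$ and $4$ while the summand from $s_2s_1$ caps bottom nodes $1$ and $4$, whereas every summand of $D_1$ caps bottom nodes $2$ and $4$. (The totals over $j$ and $i$ agree, of course, but your claim is term-by-term.) The fix is to use the \emph{right} coset decomposition
\[
\Sigma_\epsilon(r)=\sum_{j=1}^r(-\epsilon)^{r-j}\,(\Sigma_\epsilon(r-1)\otimes I)\,c_j^{-1},\qquad c_j^{-1}=s_{r-1}s_{r-2}\cdots s_j,
\]
which puts the residual permutation \emph{below} the box. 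Then the $I$-strand lies at the top, so bending caps it directly, and tracing through $c_j^{-1}$ at the bottom carries the cap's other leg to bottom node $j$, while the box inputs land at bottom nodes $1,\ldots,j-1,j+1,\ldots,r$ --- exactly $D_{r-j}$, with the coefficient $(-\epsilon)^{r-j}$ matching. With this correction your route to (3) is a direct coset computation, genuinely different from the paper's, which instead deduces (3) from (1) by induction on $r$.
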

\begin{proof}
Part (1) generalises \cite[Lemma 5.1 (i)]{LZ4} and is a simple consequence of the double coset decomposition of
$\Sym_r$ as $\Sym_r=\Sym_{r-1}\amalg \Sym_{r-1}s_{r-1}\Sym_{r-1}$. Part (2) is immediate from
(1) and the statement (3) may be obtained from (1) by induction on $r$.
\end{proof}

\begin{lemma} \label{lem:Sigma}
Set $\epsilon=-1$. Then for all $k\ge 0$,
\begin{eqnarray}\label{eq:Sigma}
\begin{aligned}
\begin{picture}(80, 50)(50, -25)
\put(0, 10){\line(1, 0){60}}
\put(0, -10){\line(1, 0){60}}
\put(0, 10){\line(0, -1){20}}
\put(60, 10){\line(0, -1){20}}
\put(25, -3){$r$}

\put(5, 10){\line(0, 1){15}}
\put(15, 15){$\cdots$}
\put(35, 10){\line(0, 1){15}}
\qbezier(45, 10)(50, 35)(55, 10)

\put(5, -10){\line(0, -1){15}}
\put(7, -20){...}
\put(20, -10){\line(0, -1){15}}
\qbezier(45, -10)(50, -35)(55, -10)
\qbezier(25, -10)(30, -35)(35, -10)
\put(35, -20){...}
\put(38, -30){\tiny $k$}

\put(65, -3){$=$}
\end{picture}
\begin{picture}(80, 50)(-45, -25)
\put(-100, -5){$4k(r+\frac{\delta}{2}-k-1)$}
\put(-10, 10){\line(1, 0){60}}
\put(-10, -10){\line(1, 0){60}}
\put(-10, 10){\line(0, -1){20}}
\put(50, 10){\line(0, -1){20}}
\put(10, -3){$r-2$}

\put(-5, 10){\line(0, 1){15}}
\put(13, 15){$\cdots$}
\put(45, 10){\line(0, 1){15}}

\put(-5, -10){\line(0, -1){15}}
\put(-2, -20){...}
\put(10, -10){\line(0, -1){15}}
\qbezier(35, -10)(40, -35)(45, -10)
\qbezier(15, -10)(20, -35)(25, -10)
\put(25, -20){...}
\put(22, -30){\tiny $k-1$}

\put(55, -3){$+$}
\end{picture}
\begin{picture}(80, 60)(-105, -25)
\put(-75, -3){$(r-2-2k)!^{-1}$}

\put(0, 10){\line(1, 0){55}}
\put(0, 25){\line(1, 0){55}}
\put(0, 10){\line(0, 1){15}}
\put(55, 10){\line(0, 1){15}}
\put(18, 15){\tiny $r-2$}

\put(5, 10){\line(0, -1){20}}
\put(15, 10){\line(0, -1){20}}
\put(5, -3){...}

\qbezier(20,10)(25, -7)(30, 10)
\qbezier(40,10)(45, -7)(50, 10)
\put(30, 7){\tiny ...}
\put(33, -2){\tiny$k$}
\qbezier(20,-10)(25, 7)(30, -10)

\put(0, -10){\line(1, 0){40}}
\put(0, -25){\line(1, 0){40}}
\put(0, -10){\line(0, -1){15}}
\put(40, -10){\line(0, -1){15}}
\put(10, -20){\tiny $r-2k$}

\put(5, 25){\line(0, 1){10}}
\put(50, 25){\line(0, 1){10}}
\put(20, 28){$\cdots$}

\put(5, -25){\line(0, -1){10}}
\put(35, -25){\line(0, -1){10}}
\put(14, -33){$\cdots$}

\put(50, -35){.}
\end{picture}
\end{aligned}
\end{eqnarray}
\end{lemma}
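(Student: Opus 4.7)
I plan to prove the identity by induction on $k$, with Lemma~\ref{lem:red} providing the key reduction steps.

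For the base case $k=0$, the coefficient $4k$ kills the first summand on the RHS, so only the second term survives. The LHS reduces to $\Sigma_r$ composed with a single cap at the top on positions $(r-1, r)$. I would apply Lemma~\ref{lem:red}(1) to $\Sigma_r$, expressing it as $\Sigma_{r-1}\otimes I + (r-2)!^{-1}$ times the correction diagram described there. Attaching the top cap to the first summand $\Sigma_{r-1}\otimes I$ creates a configuration in which the rightmost identity strand loops back through the cap; this can be unbent by the slide/straighten relations \eqref{eq:slide}, \eqref{eq:straight}. Further reduction using Lemma~\ref{lem:red}(3) (the ``bent strand'' formula) then matches this with the two-box structure of the second RHS term at $k=0$, with the correct $(r-2)!^{-1}$ prefactor.

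For the inductive step, assume the identity for $k-1$. Substitute $\Sigma_r = \Sigma_{r-1}\otimes I + (r-2)!^{-1}(\text{correction})$ from Lemma~\ref{lem:red}(1) into the LHS. In the ``straight'' piece $\Sigma_{r-1}\otimes I$, the extra identity strand on position $r$ together with the rightmost cup at the bottom and the top cap forms a D-arc on the right of $\Sigma_{r-1}$. Lemma~\ref{lem:red}(2) then yields a factor $(r-2+\delta)$ and replaces $\Sigma_{r-1}$ by $\Sigma_{r-2}$, leaving $\Sigma_{r-2}$ with the remaining $k-1$ cups at the bottom (and no top cap). This is essentially the $(k-1)$-analogue of the LHS without the top cap, and can be analysed further using the inductive hypothesis, giving the first RHS term. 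The correction piece from Lemma~\ref{lem:red}(1), with its $(r-2)!^{-1}$ prefactor, interacts with the $k$ cups to produce the second RHS term, whose coefficient $(r-2-2k)!^{-1}$ arises after absorbing factors from the $k-1$ remaining cup interactions.

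The principal obstacle is obtaining the exact coefficient $4k(r+\delta/2-k-1) = 2k(2r-2k-2+\delta)$. The reduction above gives only a factor of $(r-2+\delta)$ from Lemma~\ref{lem:red}(2), while the target coefficient has an extra $2k$ factor and a shifted linear piece in $r$. The $k$-factor must come from a combinatorial symmetry (any one of the $k$ cups is an equally valid ``rightmost'' candidate for pairing with the cap), and the shift from $(r-2+\delta)$ to $2(r-k-1)+\delta$ should arise from the inductive hypothesis applied to the configuration with $k-1$ cups. Matching these contributions against the $(r-2-2k)!^{-1}$ prefactor in the second RHS term without double-counting, and confirming that no spurious terms survive the double application of Lemmas~\ref{lem:red}(1) and \ref{lem:red}(2), is the most delicate bookkeeping in the argument. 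This mirrors the derivation of Jones--Wenzl-type recursions in the Temperley--Lieb setting, but is complicated here by the $\delta$-loops that Lemma~\ref{lem:red}(2) introduces on each closure.
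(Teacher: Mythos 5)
Your proposal has two substantive problems that make it, as written, incomplete.

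First, you overcomplicate the base case. When $k=0$ the right-hand side's first term vanishes because of the factor $4k$, and the second term is $(r-2)!^{-1}\,\Sigma(r-2)\circ(I^{\ot r-2}\ot A)\circ\Sigma(r)$. Because $\Sigma(r)$ is the total symmetriser (recall $\epsilon=-1$), left-multiplication by $\Sigma(r-2)\ot I^{\ot 2}$ simply reproduces $\Sigma(r)$ with a factor $(r-2)!$, which cancels the $(r-2)!^{-1}$. So the $k=0$ case holds by the absorbing property of $\Sigma$ alone --- the paper's phrase ``the formula is an identity'' means precisely this, and no appeal to Lemma~\ref{lem:red}(1), (3), or the slide/straighten relations is needed. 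Your proposed chain of reductions at $k=0$ is on the wrong track.

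Second, and more seriously, your inductive step does not close. You propose to insert $\Sigma_r=\Sigma_{r-1}\ot I+(r-2)!^{-1}(\text{correction})$ from Lemma~\ref{lem:red}(1) into the LHS, and apply Lemma~\ref{lem:red}(2) to the ``straight'' piece. That step is legitimate in itself, but it yields a factor of the form $(r-2+\delta)$. You candidly acknowledge that this does not match the target coefficient $4k\bigl(r+\frac{\delta}{2}-k-1\bigr)=2k(2r-2k-2+\delta)$: the factor $2k$ and the shift $-2k$ inside the linear factor are unaccounted for, and you defer their origin to ``a combinatorial symmetry'' and ``delicate bookkeeping'' without carrying it out. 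That is exactly the gap: without a precise mechanism producing the $2k$ multiplicity and the $-2k$ shift (and verifying that nothing spurious survives), the induction is not complete. In the paper this difficulty is handled differently: they first establish the \emph{$k=1$ case} in full, by expanding $\Sigma_r$ once and then applying Lemma~\ref{lem:Sigma-1}(2) to one resulting summand and Lemma~\ref{lem:Sigma-1}(3) (and its $*$-transform) to the other, which yields the factor $4(r-2+\frac{\delta}{2})$ cleanly. For general $k$ they then induct by applying the established $k$-identity and afterwards applying the $k=1$ formula~\eqref{eq:k-1} (with $r\mapsto r-2k$) to the bottom block of the resulting second term; the product of the two coefficients, after collecting terms, produces precisely $4(k+1)(r+\frac{\delta}{2}-k-2)$. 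This two-layer scheme sidesteps the direct bookkeeping you would need, and is where your proposal falls short.
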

\vspace{.2cm}
\begin{proof} For $k=0$, the formula is an identity.
The important case is $k=1$, where the formula becomes
\begin{eqnarray}\label{eq:k-1}
\begin{aligned}
\begin{picture}(80, 50)(20, -25)
\put(0, 10){\line(1, 0){60}}
\put(0, -10){\line(1, 0){60}}
\put(0, 10){\line(0, -1){20}}
\put(60, 10){\line(0, -1){20}}
\put(25, -3){$r$}

\put(5, 10){\line(0, 1){15}}
\put(15, 15){$\cdots$}
\put(35, 10){\line(0, 1){15}}
\qbezier(45, 10)(50, 35)(55, 10)

\put(5, -10){\line(0, -1){15}}
\put(15, -20){$\cdots$}
\put(35, -10){\line(0, -1){15}}
\qbezier(45, -10)(50, -35)(55, -10)

\put(65, -3){$=$}
\end{picture}
\begin{picture}(80, 50)(-40, -25)
\put(-65, -5){$4(r-2+\frac{\delta}{2})$}
\put(0, 10){\line(1, 0){50}}
\put(0, -10){\line(1, 0){50}}
\put(0, 10){\line(0, -1){20}}
\put(50, 10){\line(0, -1){20}}
\put(15, -3){$r-2$}

\put(10, 10){\line(0, 1){15}}
\put(18, 15){$\cdots$}
\put(40, 10){\line(0, 1){15}}

\put(10, -10){\line(0, -1){15}}
\put(18, -20){$\cdots$}
\put(40, -10){\line(0, -1){15}}

\put(60, -3){$+$}
\end{picture}
\begin{picture}(80, 60)(-90, -25)

\put(-55, -3){$(r-4)!^{-1}$}

\put(0, 10){\line(1, 0){50}}
\put(0, 25){\line(1, 0){50}}
\put(0, 10){\line(0, 1){15}}
\put(50, 10){\line(0, 1){15}}
\put(15, 15){\tiny $r-2$}

\put(5, 10){\line(0, -1){20}}
\put(25, 10){\line(0, -1){20}}
\put(8, -3){$\cdots$}

\qbezier(35,10)(40, -7)(45, 10)
\qbezier(35,-10)(40, 7)(45, -10)

\put(0, -10){\line(1, 0){50}}
\put(0, -25){\line(1, 0){50}}
\put(0, -10){\line(0, -1){15}}
\put(50, -10){\line(0, -1){15}}
\put(15, -20){\tiny $r-2$}

\put(5, 25){\line(0, 1){10}}
\put(45, 25){\line(0, 1){10}}
\put(18, 28){$\cdots$}

\put(5, -25){\line(0, -1){10}}
\put(45, -25){\line(0, -1){10}}
\put(18, -33){$\cdots$}

\put(55, -35){.}
\end{picture}
\end{aligned}
\end{eqnarray}
To prove it, we first obtain from Lemma \ref{lem:Sigma-1}(1) with $\epsilon=-1$
the following relation:
\[
\begin{picture}(80, 50)(0, -30)
\put(0, 10){\line(1, 0){60}}
\put(0, -10){\line(1, 0){60}}
\put(0, 10){\line(0, -1){20}}
\put(60, 10){\line(0, -1){20}}
\put(25, -3){$r$}

\put(5, 10){\line(0, 1){15}}
\put(15, 15){$\cdots$}
\put(35, 10){\line(0, 1){15}}
\qbezier(45, 10)(50, 35)(55, 10)

\put(5, -10){\line(0, -1){15}}
\put(15, -20){$\cdots$}
\put(35, -10){\line(0, -1){15}}
\qbezier(45, -10)(50, -35)(55, -10)

\put(65, -3){$=$}
\end{picture}
\begin{picture}(80, 60)(0, -30)
\put(0, 10){\line(1, 0){60}}
\put(0, -10){\line(1, 0){60}}
\put(0, 10){\line(0, -1){20}}
\put(60, 10){\line(0, -1){20}}
\put(20, -3){$r-1$}

\put(10, 10){\line(0, 1){15}}
\put(20, 15){$\cdots$}
\put(40, 10){\line(0, 1){15}}

\qbezier(50, -10)(60, -40)(65, 0)
\qbezier(50, 10)(60, 40)(65, 0)

\put(10, -10){\line(0, -1){15}}
\put(20, -20){$\cdots$}
\put(40, -10){\line(0, -1){15}}

\put(75, -3){$+$}
\end{picture}
\begin{picture}(80, 60)(-70, -30)
\put(-60, -3){$(r-2)!^{-1}$}

\put(0, 10){\line(1, 0){40}}
\put(0, 25){\line(1, 0){40}}
\put(0, 10){\line(0, 1){15}}
\put(40, 10){\line(0, 1){15}}
\put(12, 15){\tiny $r-1$}

\put(5, 10){\line(0, -1){20}}
\put(25, 10){\line(0, -1){20}}
\put(8, -3){$\cdots$}

\qbezier(35,10)(45, 0)(50, -15)
\qbezier(35,-10)(45, 0)(50, 15)
\qbezier(35,25)(45, 45)(50, 15)
\qbezier(35,-25)(45, -45)(50, -15)

\put(0, -10){\line(1, 0){40}}
\put(0, -25){\line(1, 0){40}}
\put(0, -10){\line(0, -1){15}}
\put(40, -10){\line(0, -1){15}}
\put(12, -20){\tiny $r-1$}

\put(5, 25){\line(0, 1){10}}
\put(25, 25){\line(0, 1){10}}
\put(10, 28){$\cdots$}

\put(5, -25){\line(0, -1){10}}
\put(25, -25){\line(0, -1){10}}
\put(10, -33){$\cdots$}

\put(55, -35){.}
\end{picture}
\]
Applying Lemma \ref{lem:Sigma-1}(2) to the first diagram on the right hand side, and further applying
Lemma \ref{lem:Sigma-1}(3) and the corresponding relation under the anti-involution $\ast$ to the
second diagram, we obtain \eqref{eq:k-1}.

The general case is proved by induction on $k$. From
\eqref{eq:Sigma} at $k$, we obtain
\[
\begin{picture}(80, 50)(50, -30)
\put(0, 10){\line(1, 0){60}}
\put(0, -10){\line(1, 0){60}}
\put(0, 10){\line(0, -1){20}}
\put(60, 10){\line(0, -1){20}}
\put(25, -3){$r$}

\put(5, 10){\line(0, 1){15}}
\put(15, 15){$\cdots$}
\put(35, 10){\line(0, 1){15}}
\qbezier(45, 10)(50, 35)(55, 10)

\put(5, -10){\line(0, -1){15}}
\put(7, -20){...}
\put(20, -10){\line(0, -1){15}}
\qbezier(45, -10)(50, -35)(55, -10)
\qbezier(25, -10)(30, -35)(35, -10)
\put(35, -20){...}
\put(35, -30){\tiny $k+1$}

\put(65, -3){$=$}
\end{picture}
\begin{picture}(80, 50)(-45, -30)
\put(-100, -5){$4k(r+\frac{\delta}{2}-k-1)$}
\put(-10, 10){\line(1, 0){60}}
\put(-10, -10){\line(1, 0){60}}
\put(-10, 10){\line(0, -1){20}}
\put(50, 10){\line(0, -1){20}}
\put(10, -3){$r-2$}

\put(-5, 10){\line(0, 1){15}}
\put(13, 15){$\cdots$}
\put(45, 10){\line(0, 1){15}}

\put(-5, -10){\line(0, -1){15}}
\put(-2, -20){...}
\put(10, -10){\line(0, -1){15}}
\qbezier(35, -10)(40, -35)(45, -10)
\qbezier(15, -10)(20, -35)(25, -10)
\put(25, -20){...}
\put(25, -30){\tiny $k$}

\put(55, -3){$+$}
\end{picture}
\begin{picture}(80, 60)(-105, -30)
\put(-75, -3){$(r-2-2k)!^{-1}$}

\put(0, 10){\line(1, 0){55}}
\put(0, 25){\line(1, 0){55}}
\put(0, 10){\line(0, 1){15}}
\put(55, 10){\line(0, 1){15}}
\put(18, 15){\tiny $r-2$}

\put(5, 10){\line(0, -1){20}}
\put(15, 10){\line(0, -1){20}}
\put(5, -3){...}

\qbezier(20,10)(25, -7)(30, 10)
\qbezier(40,10)(45, -7)(50, 10)
\put(30, 7){\tiny ...}
\put(33, -2){\tiny$k$}
\qbezier(20,-10)(25, 7)(30, -10)

\put(0, -10){\line(1, 0){40}}
\put(0, -25){\line(1, 0){40}}
\put(0, -10){\line(0, -1){15}}
\put(40, -10){\line(0, -1){15}}
\put(10, -20){\tiny $r-2k$}

\put(5, 25){\line(0, 1){10}}
\put(50, 25){\line(0, 1){10}}
\put(20, 28){$\cdots$}

\put(5, -25){\line(0, -1){10}}
\put(15, -25){\line(0, -1){10}}
\put(6, -33){...}
\qbezier(20,-25)(25, -45)(30, -25)
\put(50, -35){.}
\end{picture}
\]
Applying \eqref{eq:k-1} to the lower part of the second term on the right hand side, we arrive, after collecting
terms, at \eqref{eq:Sigma} with $k$ replaced by $k+1$. This completes the proof.
\end{proof}

\section{The enhanced Brauer category $\wt\CB(\delta)$.}\label{s:eb}

Given an element $\delta\in \K$, and a positive integer $m$, we shall, following \cite{LZ8}, define a tensor category $\wt\CB(\delta)$,  the  enhanced Brauer category, 
which contains a quotient of the usual Brauer category $\CB(\delta)$ as a subcategory. We shall see that the 
relations we impose imply a relationship between $\delta$ and $m$, so that for each $m$ there are only finitely
many values of $\delta$ which make our relations consistent. 
Both categories have objects $\N=\{0,1,2,\dots\}$ and morphisms which may be described diagrammatically.
There is an involution $^*:\Hom_{\wt\CB(\delta)}(k,\ell)\lr\Hom_{\wt\CB(\delta)}(\ell,k)$ which is described on diagrams by reflecting diagrams
in a horizontal line. This map may be interpreted as a functor from the category
to its opposite.

\subsection{Definition of $\wt\CB(\delta)$}\label{ss:enhanced} 
We have seen that $\CB(\delta)$ may be presented as the category with 
object set $\N$ and morphisms which are generated by the four morphisms $I,U,A$ and $X$ under composition,
tensor product and duality, subject to certain relations, which are described in Theorem \ref{thm:presentation}.
In the definition below, we shall make extensive use of the total anti-symmetriser $\Sigma_r=\Sigma_+(r)\in B_r^r(\delta)$ defined in Lemma \ref{lem:Sigma-1}, i.e., 
\be\label{eq:sigr}
\Sigma_r=\sum_{\pi\in\Sym_r}(-1)^{|\pi|}\pi,
\ee 
which is depicted diagramatically in Figure \ref{Sigma-r}. 

\begin{definition}[\cite{LZ8}] \label{def:wbrcat} Let $\K$ be a ring, $\delta\in R$ and $m\geq 2$ a positive integer.
The enhanced Brauer category $\wt\CB(\delta)$ is a category with a duality functor
$^*:\wt\CB(\delta)\to \wt\CB(\delta)^{\text{op}}$,
which takes each object to itself, and takes each diagram to its reflection in a horizontal line.
The object set of $\wt\CB(\delta)$ is $\N$. 
The $\K$-modules of morphisms are generated by the Brauer morphisms $I,U,A,X$ and new generators
$\Delta_m\in\wt B_0^m(\delta)$ and $\Delta^*_m$ where $\Delta^*_m=(\Delta_m)^*$, 
subject to the following relations and their transforms under $^*$, which describe the interaction of the new generators with the
Brauer morphisms.
\begin{enumerate}
\item The relations in Theorem \ref{thm:presentation} for the generators $I,U,A$ and $X$.
\item (Harmonicity) For each positive integer $r$ with $0\leq r\leq m-2$, $(I^{\ot r}\ot A\ot I^{\ot m-r-2})\circ\Delta_m=0$.
\item For each positive integer $r$ with $0\leq r\leq m-2$, $(I^{\ot r}\ot X\ot I^{\ot m-r-2})\circ\Delta_m=-\Delta_m$.
\item $\Delta_m\circ \Delta_m^*=\Sigma_m$.
\item $\Delta_m\ot I\ot \Delta_m=(c_{m+1}\ot I^{\ot m})\circ (\Delta_m\ot\Delta_m\ot I)$, where $c_{m+1}$ is the 
$(m+1)$-cycle $(m+1,m,m-1,\dots,1)\in\Sym_{m+1}$.
\end{enumerate}
\end{definition}
The new generator $\Delta_m$ will be depicted diagrammatically (as a morphism from $0$ to $m$) as follows.

\centerline{
\begin{tikzpicture}
\draw (-1,0).. controls (0,-1) .. (1,0);
\draw (-1,0)-- (1,0);
\draw (-.7,0)-- (-.7,1);
\draw (.7,0)-- (.7,1);
\node at (0,.5) {$m$};
\node at (0,-.5) {$m$};
\node at (-.5,.5) {$...$};
\node at (.5,.5) {$...$};
\end{tikzpicture}}

\noindent
The relations above have suggestive diagrammatical interpretations, which are helpful in performing computations in the category
$\wt\CB(\delta)$. For example, the relation (4) may be depicted diagrammatically as in Fig. \ref{fig:Delta-1}, and the relation (5) is shown in Fig. \ref{fig:Delta-2}.

\begin{figure}[h]
\begin{tikzpicture}[scale=0.7]
\draw (-1,0).. controls (0,-1) .. (1,0);
\draw (-1,0)-- (1,0);
\draw (-.7,0)-- (-.7,1);
\draw (.7,0)-- (.7,1);
\node at (0,.5) {$m$};
\node at (0,-.5) {$m$};
\node at (-.5,.5) {\small$...$};
\node at (.5,.5) {\small$...$};

\draw (-1,-2).. controls (0,-1) .. (1,-2);
\draw (-1,-2)-- (1,-2);
\draw (-.7,-2)-- (-.7,-3);
\draw (.7,-2)-- (.7,-3);
\node at (0,-2.5) {$m$};
\node at (0,-1.5) {$m$};
\node at (-.5,.-2.5) {\small$...$};
\node at (.5,-2.5) {\small$...$};

\node at (2,-1) {$=$};

\node at (5,-2) {$m$};
\node at (4,-2) {$...$};
\node at (6,-2) {$...$};
\node at (5,-1) {$m$};
\node at (5,0) {$m$};
\node at (4,0) {$...$};
\node at (6,0) {$...$};
\draw (3,-1.5) rectangle (7,-.5);
\draw (3.5,-1.5)--(3.5,-2.5);\draw (6.5,-1.5)--(6.5,-2.5);
\draw (3.5,0.5)--(3.5,-.5);\draw (6.5,0.5)--(6.5,-.5);
\node at (7.5,-1.5) {$.$};
\end{tikzpicture}
\caption{The relation (4).}
\label{fig:Delta-1}
\end{figure}
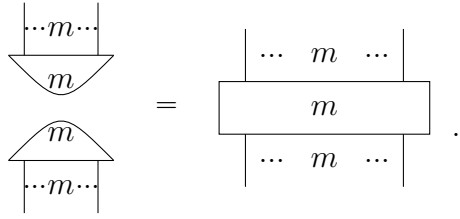

\begin{figure}[h]
\begin{tikzpicture}[scale=0.7]
\draw (0,0).. controls (1,-2) .. (2,0);
\draw (0,0)--(2,0);

\draw (3,0).. controls (4,-2) .. (5,0);
\draw (3,0)--(5,0);

\draw (0.25,0)--(0.25,2);
\draw (4.75,0)--(4.75,2);

\draw (1.75,0)--(1.75,2);
\draw (3.25,0)--(3.25,2);

\draw (2.5,-1.5)--(2.5,2);
\node at (1,1) {\small$...m...$};
\node at (4,1) {\small$...m...$};



\node at (1,-1) {\small$m$};
\node at (4,-1) {\small$m$};

\node at (6,0) {$=$};

\draw (8,0).. controls (9,-2) .. (10,0);
\draw (8,0)--(10,0);

\draw (11,0).. controls (12,-2) .. (13,0);
\draw (11,0)--(13,0);

\draw (8.25,0)--(8.25,2);
\draw (12.75,0)--(12.75,2);

\draw (9.75,0)--(9.75,2);
\draw (11.25,0)--(11.25,2);

\draw (2.5,-1.5)--(2.5,2);
\draw (7,-1.5).. controls (7.5,1) and (10,1) .. (10.5,2);
\node at (9,1) {\small$...m...$};
\node at (12,1) {\small$...m...$};



\node at (9,-1) {\small$m$};
\node at (12,-1) {\small$m$};




\end{tikzpicture}
\caption{The relation (5)}
\label{fig:Delta-2}
\end{figure}
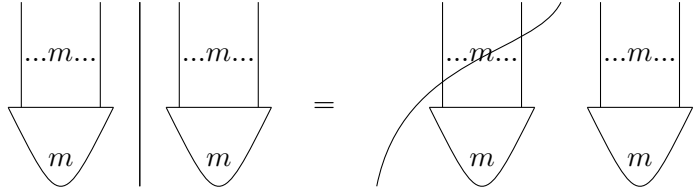

\begin{remark}\label{rem:nonzero}
With our application to invariant theory in mind, we shall assume that the base ring $\K$ is an integral domain, and that  $m!\neq 0$ in $\K$.

For any integers $r>0$ and $i=1,2,\dots,r-1$, write $\sigma_i=(I^{i-1})\ot X\ot (I^{\ot r-i-1})\in \wt{B}_r^r$. Evidently 
the $\sigma_i$ generate the symmetric group $\Sym_r\subseteq\wt{B}_r^r$,
and condition (3) of Definition \ref{def:wbrcat} asserts that the generators $\sigma_i$ of $\Sym_m$ satisfy
$\sigma_i\circ\Delta_m=-\Delta_m$, whence $w\circ \Delta_m=\ve(w)\Delta_m$
for $w\in\Sym_m$, where $\ve$ is the alternating character of $\Sym_m$.
It follows that $\Sigma_m\circ\Delta_m=m!\Delta_m$, and hence by the above assumptions, 
that, if $\Sigma_m=0$, then $\Delta_m=0$. If $\Sigma_m=0$, the category $\wt\CB(\delta)$ is therefore 
 just a quotient category of $\CB(\delta)$.

To avoid this degeneracy, we shall therefore assume that $\Sigma_m\neq 0$.
\end{remark}

\begin{remark}
Note that although $\delta$ does not appear explicitly in the definition above, it is inherent in the definition of $\CB(\delta)$, where 
it is stipulated that $U\circ A=\delta$ (note that $B_0^0=\K$). The integer $m$ enters only in the definition of $\wt\CB$.
\end{remark}

\subsection{Some properties of the category $\wt\CB(\delta)$} 

The defining relations of $\wt\CB(\delta)$ implies stringent conditions on the morphisms and on the parameter $\delta$.  In particular, we have the following results, which are extracted from \cite[Theorem 5.7]{LZ8}. 
\begin{theorem}[\cite{LZ8}]\label{thm:comp}  Assume that $m!\neq 0$ in $\K$ and that $\Sigma_m\neq 0$ as a morphism in $\wt\CB(\delta)$. Then 
\[
\delta=m,  \ \  \text{  and  }  \  \  \Sigma_{m+1}=0.
\]
\end{theorem}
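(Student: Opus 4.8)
The plan is to extract the two assertions $\delta = m$ and $\Sigma_{m+1}=0$ from the defining relations (2)–(5) of $\wt\CB(\delta)$ by composing $\Delta_m$ and $\Delta_m^*$ in two different ways and comparing. The key combinatorial inputs are the identities of \lemref{lem:Sigma-1} and \lemref{lem:Sigma}, which compute what happens when one caps off strands of the total antisymmetriser $\Sigma_r$, together with Remark \ref{rem:nonzero}, which tells us that $w\circ\Delta_m=\ve(w)\Delta_m$ for $w\in\Sym_m$ and $\Sigma_m\circ\Delta_m=m!\,\Delta_m$.

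First I would establish $\delta=m$. The natural handle is relation (5), $\Delta_m\ot I\ot \Delta_m=(c_{m+1}\ot I^{\ot m})\circ(\Delta_m\ot\Delta_m\ot I)$, which relates two morphisms in $\wt B_0^{2m+1}(\delta)$. The idea is to ``close up'' the picture: apply a suitable capping morphism built from $A$'s to both sides so as to form a loop, and evaluate. Concretely, compose on the left with $\Delta_m^*\ot I\ot \Delta_m^*$ (so as to use relation (4), $\Delta_m\circ\Delta_m^*=\Sigma_m$, on each factor), obtaining on the left-hand side $\Sigma_m\ot I\ot \Sigma_m$ up to the identification of the middle strand, and on the right-hand side a morphism of the form $(c_{m+1}\ot I^{\ot m})$ sandwiched between antisymmetrisers after using (4). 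Then cap off appropriately, using the harmonicity relation (2) to kill all unwanted terms in the expansion of $\Sigma_m$ via \lemref{lem:Sigma-1}, so that only the scalar terms survive. On one side the closure of the $(m+1)$-cycle against two copies of $\Sigma_m$ produces (after the reductions of \lemref{lem:Sigma-1}(2),(3), with $\epsilon=-1$) a scalar linear in $\delta$ times $\Sigma_m$; on the other side closing a single loop produces a factor of $\delta$ directly. Matching the two scalars, and using $\Sigma_m\neq 0$ together with $m!\neq 0$ to cancel, forces $\delta=m$. This is the step I expect to be the main obstacle: one has to choose the capping morphism and keep track of the combinatorial coefficients in \lemref{lem:Sigma-1}(2),(3) carefully enough that the two closures can actually be compared, and the bookkeeping of the $(m+1)$-cycle interacting with the two antisymmetrisers is delicate.

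For $\Sigma_{m+1}=0$, once $\delta=m$ is known the argument should be shorter. The point is that $\Sigma_{m+1}\circ(\Delta_m\ot I)$ and its relatives can be computed two ways. On one hand, $\Delta_m\ot I\in\wt B_1^{m+1}(\delta)$, and composing with $\Sigma_{m+1}$ on the left, we may expand $\Sigma_{m+1}=\Sigma_m\otimes I - (\text{correction terms})$ via \lemref{lem:Sigma-1}(1); since $\Sigma_m\circ\Delta_m=m!\,\Delta_m$ and $\Delta_m$ is killed by the relevant caps (relation (2)) and is $\ve$-isotypic under $\Sym_m$ (relation (3)), all the correction terms either vanish or collapse, giving an explicit scalar multiple of $\Delta_m\ot I$. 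On the other hand, relation (5) together with (4) and \lemref{lem:Sigma} (with $k=1$, which now reads in terms of $\delta=m$) gives a second expression for the same morphism, or for $\Delta_m\ot I\ot\Delta_m$ composed with $\Sigma_{m+1}\otimes I^{\otimes m}$. Comparing, one finds that $\Sigma_{m+1}\circ(\Delta_m\ot I)$ must vanish; since $\Delta_m\neq 0$ (guaranteed by $\Sigma_m\neq 0$ and Remark \ref{rem:nonzero}) and the map $\mathbb U$, $\mathbb A$ of Corollary \ref{isomorphism} realise an isomorphism $\wt B_1^{m+1}(\delta)\cong \wt B_{m+1}^{m+1}(\delta)$ under which $\Sigma_{m+1}\circ(\Delta_m\ot I)$ corresponds (up to the nonzero factor coming from $\Delta_m$) to $\Sigma_{m+1}$ itself, we conclude $\Sigma_{m+1}=0$. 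The only subtlety here is checking that the isomorphism of Corollary \ref{isomorphism} indeed carries the composite to a nonzero multiple of $\Sigma_{m+1}$, which follows by a direct diagrammatic manipulation using \lemref{isomorphism-pri}(2) and the fact that $\Sigma_{m+1}$ is fixed (up to sign) by the relevant symmetric-group actions.
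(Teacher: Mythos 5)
Your plan has the right ingredients (relations (4), (5), harmonicity, and \lemref{lem:Sigma-1}), but there is a concrete gap in the second half, and the first half is too vague to verify.

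\textbf{The gap in the argument for $\Sigma_{m+1}=0$.} You claim that after expanding $\Sigma_{m+1}$ via \lemref{lem:Sigma-1}(1) and composing with $\Delta_m\ot I$, ``all the correction terms either vanish or collapse, giving an explicit scalar multiple of $\Delta_m\ot I$.'' This is not the case. Writing $\Sigma_{m+1}=\bigl(\sum_{j=1}^{m+1}\ve(\tau_j)\tau_j\bigr)\Sigma_m$ for left coset representatives $\tau_j$ of $\Sym_m$ in $\Sym_{m+1}$, relation (3) and the $\ve$-isotypicity give
\[
\Sigma_{m+1}\circ(\Delta_m\ot I)=m!\sum_{j=1}^{m+1}\ve(\tau_j)\,\tau_j\circ(\Delta_m\ot I).
\]
Only the term with $\tau_j=\mathrm{id}$ is a multiple of $\Delta_m\ot I$; the other $m$ terms move the through-strand into the $\Delta_m$ block and are genuinely distinct diagrams in $\wt B_1^{m+1}$. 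Nothing in relations (2)--(5) collapses them onto $\Delta_m\ot I$. So the ``scalar multiple'' you would compare against does not exist, and the conclusion $\Sigma_{m+1}\circ(\Delta_m\ot I)=0$ does not follow from the two computations you describe. (The subsequent step---post-compose with $\Delta_m^*\ot I$ and use $\Delta_m\Delta_m^*=\Sigma_m$, $\Sigma_{m+1}\circ(\Sigma_m\ot I)=m!\,\Sigma_{m+1}$ to conclude $\Sigma_{m+1}=0$---is fine once the vanishing is granted; your framing in terms of the isomorphism of \corref{isomorphism} is imprecise, but the idea is salvageable that way.)

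\textbf{What the paper does instead, and why it avoids this.} The paper (\lemref{lem:dreln}(3)) first proves the purely diagrammatic identity $\Sigma_{m+1}=f_m(\delta)\,\Sigma_m\ot I$, where $f_m(\delta)=(\delta-(m-1))\cdots(\delta-1)-(m-1)!$, using relation (5) together with the isomorphisms $\bU,\bA$ and \lemref{lem:red}; this step never caps. It then kills $f_m(\delta)$ by post-composing with the \emph{Brauer} cup $I^{\ot m-1}\ot U$: the left side $\Sigma_{m+1}\circ(I^{\ot m-1}\ot U)$ is automatically zero because the cup is fixed by the transposition $s_m$ while $\Sigma_{m+1}$ is anti-invariant. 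Observe the key difference: the cup used here does not touch $\Delta_m$, so the vanishing is for free; using $\Delta_m\ot I$ as the ``probe'' (as you do) introduces the coset-sum obstruction above. Once $f_m(\delta)=0$, both conclusions fall out: $\Sigma_{m+1}=f_m(\delta)\Sigma_m\ot I=0$, and combining $f_m(\delta)=0$ with the separately-derived constraint $\delta(\delta-1)\cdots(\delta-(m-1))=m!$ (which is \lemref{lem:dreln}(1)--(2), from relation (4) and a closure) gives $\delta\cdot(m-1)!=m!$, i.e.\ $\delta=m$.

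\textbf{On the argument for $\delta=m$.} Your description of closing up relation (5), using (4) and harmonicity to collapse and then ``matching the two scalars,'' is plausible as a route to $f_m(\delta)=0$, but it is not pinned down. Be aware that the single constraint $\delta(\delta-1)\cdots(\delta-(m-1))=m!$ (coming from closing up relation (4) alone) has multiple roots and does not by itself force $\delta=m$---for $m=2$ it also admits $\delta=-1$. You must genuinely extract a \emph{second} independent polynomial identity from relation (5); the paper's mechanism for doing so is exactly the pair (\lemref{lem:dreln}(3) followed by capping). As written, your sketch does not make clear that both identities are produced and separated, so the step ``forces $\delta=m$'' is not established. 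In short: reverse the order (derive the identity $\Sigma_{m+1}=f_m(\delta)\Sigma_m\ot I$ first, then cap with a Brauer cup), and the two conclusions drop out simultaneously without the problematic use of $\Delta_m$ as a probe.
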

This follows from the lemma below.

\begin{lemma}\label{lem:dreln}
Assume that $m!\neq 0$ in $\K$ and that $\Sigma_m\neq 0$.  Then the following hold in the category $\wt\CB(\delta)$.
\begin{enumerate}
\item We have $\Delta_m^*\Delta_m=m!\in \K$ as morphism in $\wt\CB(\delta)$.
\item 
The parameter $\delta$ satisfies the polynomial equation 
\be\label{eq:deltarel}
\delta(\delta-1)\dots(\delta-(m-1))=m!.
\ee
\item We have the equality of morphisms
$\Sigma_{m+1}=f_m(\delta) \Sigma_m\ot I$. Here $m$ is the positive integer occurring in the definition of $\wt\CB(\delta)$
and $f_m$ is the polynomial in $\delta$ given by $f_m(\delta)=(\delta-(m-1))(\delta-(m-2))\dots(\delta-1)-(m-1)!$.
\end{enumerate}
\end{lemma}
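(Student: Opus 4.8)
The plan is to establish the three statements in turn, the main ingredients being the defining relations (3), (4), (5) of Definition~\ref{def:wbrcat}, the reduction formulae of Lemma~\ref{lem:Sigma-1} (which hold already in $\CB(\delta)$), and the cyclic Jones trace $J(D)=A_r\circ(D\ot I_r)\circ U_r$ on $\wt B_r^r(\delta)$. Recall first that relation (3) gives $\sigma_i\circ\Delta_m=-\Delta_m$, hence $w\circ\Delta_m=\ve(w)\Delta_m$ for all $w\in\Sym_m$ and so $\Sigma_m\circ\Delta_m=m!\,\Delta_m$; in the same way $w\circ\Sigma_m=\ve(w)\Sigma_m$, whence $\Sigma_m\circ\Sigma_m=m!\,\Sigma_m$.

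\emph{Part (1).} By relation (4), $\Delta_m^*\circ\Delta_m$ lies in $\wt B_0^0(\delta)=\K$; call it $c$. Then, using relation (4) twice,
\[
\Sigma_m\circ\Sigma_m=\Delta_m\circ(\Delta_m^*\circ\Delta_m)\circ\Delta_m^*=c\,\Delta_m\circ\Delta_m^*=c\,\Sigma_m,
\]
while $\Sigma_m\circ\Sigma_m=m!\,\Sigma_m$ by the remarks above. Hence $(c-m!)\Sigma_m=0$, and since $\K$ is an integral domain with $m!\ne0$, $\Sigma_m\ne0$, and the relevant Hom-module is torsion-free over $\K$ (cf.\ Remark~\ref{rem:nonzero}), we get $c=m!$.

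\emph{Part (2).} Apply $J$ to $\Sigma_m=\Delta_m\circ\Delta_m^*$. By cyclicity of the trace and part (1),
\[
J(\Sigma_m)=J(\Delta_m\circ\Delta_m^*)=J(\Delta_m^*\circ\Delta_m)=\Delta_m^*\circ\Delta_m=m!,
\]
the last equality because the trace of an endomorphism of the object $0$ is that endomorphism. On the other hand, closing $\Sigma_m$ up one strand at a time and applying Lemma~\ref{lem:Sigma-1}(2) with $\epsilon=1$, the $j$-th stage contributes the factor $\delta-(m-j)$, so $J(\Sigma_m)=\delta(\delta-1)\cdots(\delta-(m-1))$. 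Comparing the two expressions for $J(\Sigma_m)$ yields \eqref{eq:deltarel}.

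\emph{Part (3).} Lemma~\ref{lem:Sigma-1}(1) with $\epsilon=1$ and $r=m+1$ (equivalently the double-coset decomposition $\Sym_{m+1}=\Sym_m\amalg\Sym_m s_m\Sym_m$) gives
\[
\Sigma_{m+1}=\Sigma_m\ot I-\frac{1}{(m-1)!}\,(\Sigma_m\ot I)\circ s_m\circ(\Sigma_m\ot I),\qquad s_m=I^{\ot m-1}\ot X .
\]
Substituting $\Sigma_m=\Delta_m\circ\Delta_m^*$ (relation (4)) in the correction term rewrites it as $(\Delta_m\ot I)\circ M\circ(\Delta_m^*\ot I)$ with $M=(\Delta_m^*\ot I)\circ s_m\circ(\Delta_m\ot I)\in\wt B_1^1(\delta)=\K\cdot I$; writing $M=\lambda I$ we obtain $\Sigma_{m+1}=\bigl(1-\tfrac{\lambda}{(m-1)!}\bigr)(\Sigma_m\ot I)$, a scalar multiple of $\Sigma_m\ot I$ as asserted. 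To evaluate $\lambda$ one uses relation (5): composing relation (5) on the left with $\Delta_m^*\ot\Delta_m^*\ot I$ and simplifying both sides — the left side via $\Delta_m^*\circ\Delta_m=m!$ from part (1), the right side via relation (4) together with Lemma~\ref{lem:Sigma-1}(2), which evaluates the partial closure of $\Sigma_m$ over $m-1$ of its $m$ strands as $(\delta-1)(\delta-2)\cdots(\delta-(m-1))\,I_1$ — produces an identity in $\K$ determining $\lambda$, and hence, after collecting the signs and factorial normalisations and invoking \eqref{eq:deltarel}, the coefficient $f_m(\delta)$. Here the harmonicity relation (2) is essential: it forces every diagram containing a cap on $\Delta_m$ to vanish, so the answer is a genuine polynomial in $\delta$.

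The delicate point — and the one I expect to be the main obstacle — is this last computation: verifying $\wt B_1^1(\delta)=\K I$ (so that $M$ really is a scalar), and, above all, carrying the signs and factorial coefficients correctly through the use of relation (5) so that the scalar comes out to be exactly $f_m(\delta)=(\delta-(m-1))\cdots(\delta-1)-(m-1)!$. Parts (1) and (2) are routine once these identities are assembled.
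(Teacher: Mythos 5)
Your parts (1) and (2) are essentially the paper's own argument. Part (1) is identical: one computes $(\Delta_m\Delta_m^*)^2$ once via relation~(4) as $\Sigma_m^2=m!\,\Sigma_m$, and once by pulling out the scalar $\Delta_m^*\Delta_m$, then compares coefficients of the nonzero $\Sigma_m$. Part (2) is the same computation in slightly different language: your invocation of cyclicity of the Jones trace is exactly what the paper encodes in its graphical ``relations (6) and (7)'' --- namely that the full Markov closure of $\Delta_m\Delta_m^*$ may be evaluated either as $\Delta_m^*\Delta_m=m!$ or, via relation~(4) and $m$ iterations of Lemma~\ref{lem:Sigma-1}(2) with $\epsilon=1$, as $\delta(\delta-1)\cdots(\delta-(m-1))$. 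The paper does not prove that closure--cyclicity either; both of you are implicitly appealing to the compact closed symmetric structure carried over from $\CB(\delta)$.

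Part (3) shares the paper's skeleton but is not carried through, and two points need repair. First, $\wt B_1^1(\delta)=\K I$ is false for $m=2$ (there the summand $\wt B_{1,1}^1$ of Lemma~\ref{lem:str}(2) is nonzero). What you actually need, and what does hold for all $m$, is that $M=(\Delta_m^*\ot I)\circ s_m\circ(\Delta_m\ot I)$ is a scalar multiple of $I$: $M$ contains two $\Delta$-occurrences, hence by Lemma~\ref{lem:str}(1) (which uses relations (5) and then (4)) lies in $\wt\CB_0$, and $B_1^1(\delta)=\K I$ there. Second, your proposed evaluation of the scalar $\lambda$ --- precomposing relation~(5) with $\Delta_m^*\ot\Delta_m^*\ot I$ and simplifying --- is plausible in spirit (relation~(5) is indeed the tool for collapsing two $\Delta$'s), but you have left it unverified and it is exactly where the work lies. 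The paper instead proceeds more directly: it applies $\bU_0^m$ to relation~(4) itself to obtain an identity between partial closures of $\Delta_m\ot\Delta_m$ and of $\Sigma_m$ (its ``relation (11)''), and then closes $m-1$ of the $m$ strands via Lemma~\ref{lem:Sigma-1}(2), cleanly producing the scalar $(\delta-(m-1))\cdots(\delta-1)$. So while you have the right decomposition of $\Sigma_{m+1}$ and correctly identify the crux, the concrete computation of the coefficient is missing, and the route you sketch is different from (and less explicit than) the paper's.
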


\begin{proof}
We shall compute $\Delta_m^*\Delta_m$ in two different ways. First,
observe that by inspection of the relevant diagrams, it is evident that
 $(\Delta_m\Delta_m^*)^2=(\Delta_m(\Delta_m^*\Delta_m)\Delta_m^*)$, where $\Delta_m^*\Delta_m$ is a scalar;
thus applying Relation (4), $(\Delta_m\Delta_m^*)^2=(\Delta_m^*\Delta_m)\Sigma_m.$

But again by Relation (4), $(\Delta_m\Delta_m^*)^2=\Sigma_m^2=m!\Sigma_m$, whence
comparing coefficients of the non-zero element $\Sigma_m$, it follows that
\be\label{eq:dstd}
\Delta_m^*\Delta_m=m!.
\ee

Next, note that
we have the relations (6) and (7) depicted in Fig. \ref{fig:Relation-(8-1)} and Fig. \ref{fig:Relation-(8-2)} respectively.

\vspace{-1.5cm}
\begin{figure}[h]
\begin{tikzpicture}[scale=0.72]
\draw (-5,-.5).. controls (-4,0.5) .. (-3,-.5);
\draw (-5,-.5)-- (-3,-.5);

\draw (-5,-1.5).. controls (-4,-2.5) .. (-3,-1.5);
\draw (-5,-1.5)-- (-3,-1.5);

\node at (-4,0) {$m$};
\node at (-4,-2) {$m$};
\node at (-4,-1) {$...$};

\node at (-2.4,-1) {$=$};

\draw (-4.75,-.5)-- (-4.75,-1.5);
\draw (-3.25,-.5)-- (-3.25,-1.5);

\draw (-1,0).. controls (0,-1) .. (1,0);
\draw (-1,0)-- (1,0);
\draw (-.7,0).. controls (-1.5,1) and (-1.5,-3).. (-.7,-2);
\node at (0,-.5) {$m$};

\draw (-1,-2).. controls (0,-1) .. (1,-2);
\draw (-1,-2)-- (1,-2);
\draw (.7,0).. controls (-2.5,3) and (-2.5,-5).. (.7,-2);
\node at (0,-1.5) {$m$};
\node at (0,.25) {$...$};
\node at (0,-2.25) {$...$};


\end{tikzpicture}
\vspace{-1.5cm}
\caption{Relation (6)}
\label{fig:Relation-(8-1)}
\end{figure}
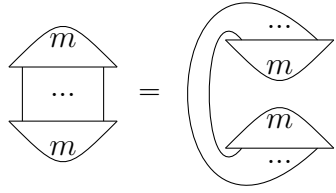

\vspace{-2.2cm}
\begin{figure}[h]
\begin{tikzpicture}[scale=0.7]
\draw (-1,0).. controls (0,-1) .. (1,0);
\draw (-1,0)-- (1,0);
\draw (-.7,0).. controls (-1.5,1) and (-1.5,-3).. (-.7,-2);
\node at (0,-.5) {$m$};

\draw (-1,-2).. controls (0,-1) .. (1,-2);
\draw (-1,-2)-- (1,-2);
\draw (.7,0).. controls (-2.5,3) and (-2.5,-5).. (.7,-2);
\node at (0,-1.5) {$m$};
\node at (0,.25) {$...$};
\node at (0,-2.25) {$...$};

\node at (1.5,-1) {$=$};

\node at (5,-1) {$m$};
\node at (5,0) {$...$};
\node at (5,-2) {$...$};

\draw (3,-1.5) rectangle (7,-.5);

\draw (3.5,-1.5).. controls (2,-3) and (2,1).. (3.5,-.5);
\draw (6.75,-1.5).. controls (0.6,-6) and (0.6,4).. (6.75,-.5);
\end{tikzpicture}
\vspace{-2cm}
\caption{Relation (7)}
\label{fig:Relation-(8-2)}
\end{figure}
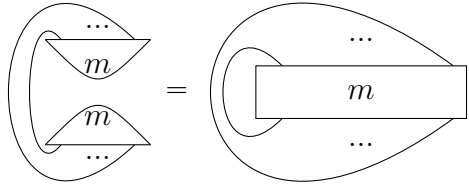

The right side of relation (7) is, by $m$ applications of Lemma \ref{lem:red}, equal to $(\delta-(m-1))(\delta-(m-2))\dots(\delta-1)\delta$,
while the left side of relation (6) is just $\Delta_m^*\Delta_m$. Equation \eqref{eq:deltarel} is now clear from \eqref{eq:dstd} by combining relations (6) and (7).

This proves part (1) and part (2). 

To prove part (3), we shall make liberal use, both explicit and implicit, 
of the mutually inverse isomorphisms $\bU_p^q:\wt B_{p+q}^r\to \wt B_p^{r+q}$
and $\bA_q^r:\wt B_p^{r+q}\to\wt B_{p+q}^r$ defined in Corollary \ref{isomorphism}. 
Note that these isomorphisms involve only operations (tensor product and composition) 
with the Brauer morphisms in $\wt\CB(\delta)$.

Note that in our situation, the relation (1) of Lemma \ref{lem:red} implies  the relation (8) given in Figure \ref{fig:Relation-(6)}.
\begin{figure}[h]
\setlength{\unitlength}{0.4mm}
\begin{picture}(80, 60)(0, -30)
\put(0, 10){\line(1, 0){60}}
\put(0, -10){\line(1, 0){60}}
\put(0, 10){\line(0, -1){20}}
\put(60, 10){\line(0, -1){20}}
\put(20, -3){$m+1$}

\put(10, 10){\line(0, 1){15}}
\put(23, 15){$\cdots$}
\put(50, 10){\line(0, 1){15}}

\put(10, -10){\line(0, -1){15}}
\put(23, -20){$\cdots$}
\put(50, -10){\line(0, -1){15}}

\put(70, -2){$=$}
\end{picture}
\begin{picture}(80, 60)(-10, -30)
\put(0, 10){\line(1, 0){50}}
\put(0, -10){\line(1, 0){50}}
\put(0, 10){\line(0, -1){20}}
\put(50, 10){\line(0, -1){20}}
\put(15, -3){$m$}

\put(10, 10){\line(0, 1){15}}
\put(18, 15){$\cdots$}
\put(40, 10){\line(0, 1){15}}

\put(10, -10){\line(0, -1){15}}
\put(18, -20){$\cdots$}
\put(40, -10){\line(0, -1){15}}

\put(60, -25){\line(0, 1){50}}

\put(70, -3){$-$}

\end{picture}
\begin{picture}(80, 60)(-70, -30)

\put(-60, -3){$ (m-1)!^{-1}$}

\put(0, 10){\line(1, 0){40}}
\put(0, 25){\line(1, 0){40}}
\put(0, 10){\line(0, 1){15}}
\put(40, 10){\line(0, 1){15}}
\put(12, 15){\tiny$m$}

\put(5, 10){\line(0, -1){20}}
\put(25, 10){\line(0, -1){20}}
\put(8, -3){$\cdots$}

\qbezier(35,10)(45, 5)(50, -35)
\qbezier(35,-10)(45, -5)(50, 35)

\put(0, -10){\line(1, 0){40}}
\put(0, -25){\line(1, 0){40}}
\put(0, -10){\line(0, -1){15}}
\put(40, -10){\line(0, -1){15}}
\put(12, -20){\tiny$m$}

\put(5, 25){\line(0, 1){10}}
\put(35, 25){\line(0, 1){10}}
\put(14, 28){$\cdots$}

\put(5, -25){\line(0, -1){10}}
\put(35, -25){\line(0, -1){10}}
\put(14, -33){$\cdots$}

\end{picture}
\caption{Relation (8)}
\label{fig:Relation-(6)}
\end{figure}
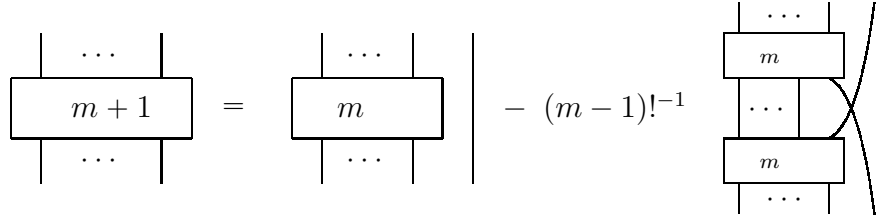

We replace each of the two rectangles in the second summand on the right side by the left side of Figure \ref{fig:Delta-1}. 
A little manipulation then shows that the result will follow if we prove the relation (9) in Fig. \ref{fig:Relation-(9)}.

\medskip
{
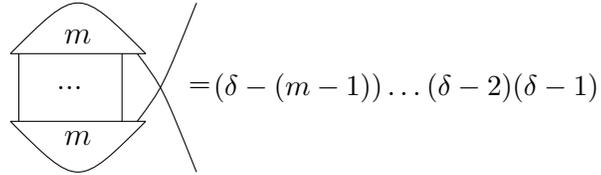
\begin{figure}[h]
\begin{tikzpicture}[scale=0.450]
\draw (1,1).. controls (3,3) .. (5,1);
\draw (1,1)--(5,1);

\draw (1,-1).. controls (3,-3) .. (5,-1);
\draw (1,-1)--(5,-1);

\draw (1.25,-1)--(1.25,1);
\draw (4.3,-1)--(4.3,1);

\draw (4.75,1).. controls (5.5,0) .. (6.5,-2.5);
\draw (4.75,-1).. controls (5.5,0) .. (6.5,2.5);

\node at (3,1.5) {$m$};
\node at (3,-1.5) {$m$};
\node at (2.75,0) {$...$};


\node at (6.6,0) {$=$};
\node at (12.7,0) {\small$(\delta-(m-1))\dots(\delta-2)(\delta-1)$};

\draw (18.8,-2.5)--(18.8,2.5);

\end{tikzpicture}
\caption{Relation (9)}
\label{fig:Relation-(9)}
\end{figure}
}

Next, observe that by rotating the top half of the left side of relation (9) in Fig. \ref{fig:Relation-(9)} anticlockwise by 
$\pi$ and then applying the isomorphism $\bU_0^1$
from $\CB_1^1$ to $\CB_0^2$, the relation (9) is equivalent to relation (10) in Fig. \ref{fig:Relation-(10)}.

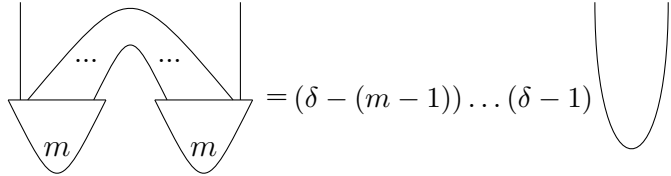
\begin{figure}[h]
\begin{tikzpicture}[scale=0.65]
\draw (0,0).. controls (1,-2) .. (2,0);
\draw (0,0)--(2,0);

\draw (3,0).. controls (4,-2) .. (5,0);
\draw (3,0)--(5,0);

\draw (0.25,0)--(0.25,2);
\draw (4.75,0)--(4.75,2);

\draw (1.75,0).. controls (2.5,1.5) .. (3.25,0);
\draw (0.4,0).. controls (2.5,2.5) .. (4.6,0);

\node at (1,-1) {$m$};
\node at (4,-1) {$m$};

\node at (1.6,.8) {\small$...$};
\node at (3.3,.8) {\small$...$};

\node at (5.5,0) {$=$};
\node at (8.9,0) {\small$(\delta-(m-1))\dots(\delta-1)$};

\draw (12,2).. controls (12,-2) and (13.5,-2).. (13.5,2);

\end{tikzpicture}
\vspace{-.3cm}
\caption{Relation (10)}
\label{fig:Relation-(10)}
\end{figure}

Now to prove relation (10), observe first that applying the isomorphism $\bU_0^m$ to both sides of
the relation (4) as shown in Fig. \ref{fig:Delta-1}, we obtain the relation (11) in Fig. \ref{fig:Relation-(11)}.

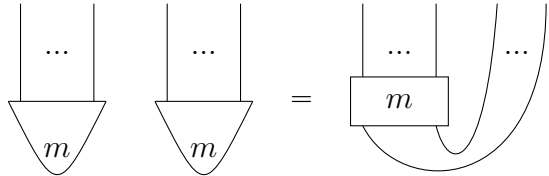
\begin{figure}[h]
\begin{tikzpicture}[scale=0.65]
\draw (0,0).. controls (1,-2) .. (2,0);
\draw (0,0)--(2,0);

\draw (3,0).. controls (4,-2) .. (5,0);
\draw (3,0)--(5,0);

\draw (0.25,0)--(0.25,2);
\draw (4.75,0)--(4.75,2);

\draw (1.75,0) -- (1.75,2);
\draw (3.25,0) -- (3.25,2);

\node at (1,-1) {$m$};
\node at (4,-1) {$m$};

\node at (6,0) {$=$};

\draw (7,-0.5) rectangle (9,0.5);
\draw (7.25,.5) -- (7.25,2);
\draw (8.75,.5) -- (8.75,2);

\draw (8.75,-.5).. controls (9,-1.5) and (9.75,-1.5).. (10,2);
\draw (7.25,-.5).. controls (8,-2) and (11,-2).. (11,2);

\node at (1,1) {$...$};
\node at (4,1) {$...$};
\node at (8,0) {$m$};
\node at (10.4,1) {$...$};
\node at (8,1) {$...$};

\end{tikzpicture}
\vspace{-.3cm}
\caption{Relation (11)}
\label{fig:Relation-(11)}
\end{figure}

Then, applying $I\ot A^{\ot m-1}\ot I$ to both sides of relation (11), and applying Lemma \ref{lem:red} (7) $m-1$ times, 
we obtain the relation (10) of Fig. \ref{fig:Relation-(10)}, and the proof of part (3) is complete.

This proves the lemma. 
\end{proof}

\begin{proof}[Proof of Theorem \ref{thm:comp}]
Note that it follows from part (3) of Lemma \ref{lem:dreln} that
\[
\Sigma_{m+1}\circ (I^{\ot m-1}\ot U)=f_m(\delta) (\Sigma_m\ot I)\circ(I^{\ot m-1}\ot U).
\]
But the left side of this equation is evidently zero, while the right side is an invetible multiple of $f_m(\delta)\Sigma_m\ot I$.
It follows that $f_m(\delta)=0$, and hence by (1), that $\Sigma_{m+1}=0$.

To prove the second statement, observe that $f_m(\delta)=0$ implies that $\delta(\delta-1)\dots(\delta-(m-1))=\delta(m-1)!$.
Comparing this to the relation $\delta(\delta-1)\dots(\delta-(m-1))=m!$ of  \eqref{eq:deltarel}, we see that $\delta=m$.
\end{proof}

We will also need the following result.
\begin{lemma}\label{lem:str} Let $\wt\CB_0$ be the subcategory of $\wt\CB(m)$ generated by all Brauer 
diagrams (i.e. by the morphisms $I,X,A$ and $U$). 

\begin{enumerate}
\item Each diagram of $\wt\CB(m)$ is either in $\wt\CB_0$ or is obtained 
from $\Delta_m$ by tensoring and composing with elements of $\wt\CB_0$.
\item Let $s,t\in\N$. Then 
\[
\wt B_s^t=\wt B_{s,0}^t\oplus \wt B_{s,1}^t,
\]
where $\wt B_{s,0}^t$ is the span of the Brauer diagrams in $\wt B_s^t$, and 
$\wt B_{s,1}^t$ is the span of diagrams of the {second} type described in (1).
\end{enumerate}
\end{lemma}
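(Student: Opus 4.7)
The plan is to establish part (1) first by a reduction procedure based on the defining relations of $\wt\CB(m)$, and then to derive part (2) via a $\Z/2$-grading argument using part (1).

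For part (1), I would show that every word in the generators $I, X, A, U, \Delta_m, \Delta_m^*$ can be rewritten, using the defining relations, into a word that uses $\Delta_m^*$ zero times and $\Delta_m$ at most once. Two reductions would be needed. The first would express $\Delta_m^*$ in terms of $\Delta_m$ and Brauer morphisms, using the self-duality of the Brauer subcategory: under a suitable cap/cup bending of $\Delta_m$ (formally, applying the isomorphism $\bA_m^0$ of Corollary \ref{isomorphism}) and using relation (3), which forces $\Delta_m$ to transform by the alternating character of $\Sym_m$, one obtains $\Delta_m^*$ up to a sign. Hence $\Delta_m^*$ can be eliminated in favor of $\Delta_m$, and we may assume the only non-Brauer generator appearing is $\Delta_m$. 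The second reduction would eliminate pairs of $\Delta_m$'s: given two $\Delta_m$'s in a word, I would first use relation (5) together with the interchange law of monoidal categories to reposition them into the configuration $\Delta_m \ot \Delta_m$. Applying the self-duality isomorphism $\bA_m^m$ to $\Delta_m \ot \Delta_m$ yields $\Delta_m \circ \Delta_m^*$ up to a permutation in $\Sym_m$; relation (4) then identifies this with $\Sigma_m$, a Brauer morphism. Hence $\Delta_m \ot \Delta_m$ lies in $\wt\CB_0$, and by induction on the number of $\Delta_m$ generators in a word, every morphism is reducible as required.

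For part (2), I would introduce a $\Z/2$-grading on $\wt\CB(m)$, assigning weight $1$ to each of $\Delta_m$ and $\Delta_m^*$, weight $0$ to the Brauer generators, and declaring a word's weight to be the sum of letter weights modulo $2$. One must verify that each defining relation preserves this grading. Relations (1)--(3) of Definition \ref{def:wbrcat} involve either no $\Delta$-generator or a single $\Delta_m$ on both sides, and so trivially preserve the grading; relation (4) equates a weight-$2$ expression with a weight-$0$ expression, both of which are even; relation (5) equates two weight-$2$ expressions. Hence the grading descends to a well-defined $\Z/2$-grading on each morphism space $\wt B_s^t$, giving a direct sum decomposition of it into its even and odd parts. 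By part (1), the even part coincides with $\wt B_{s,0}^t$ (spanned by Brauer diagrams, of weight $0$) and the odd part with $\wt B_{s,1}^t$ (spanned by diagrams involving one $\Delta_m$, of weight $1$), yielding the decomposition stated in (2).

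The main obstacle would be step two of the reduction in part (1). Relation (5) does not literally decrease the number of $\Delta_m$'s in a word; it merely rearranges them. Handling the case where two $\Delta_m$'s in a word are separated by complicated interleaving Brauer morphisms (or by further $\Delta_m$ generators) will require a careful algorithm, perhaps by induction on the relative positions of the $\Delta_m$'s, to bring any pair into the $\Delta_m \ot \Delta_m$ configuration modulo Brauer operations. The self-dualization trick in the first reduction, although conceptually clean, will also require careful bookkeeping of signs and of the permutations (acting on $\Delta_m$ via the alternating character, by relation (3)) introduced by the diagrammatic manipulation.
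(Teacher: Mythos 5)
Your argument for part (1) is essentially the paper's: both proofs eliminate $\Delta_m^*$ in favour of $\Delta_m$ via the bending map $\bA_m^0$, use relation (5) to bring any two occurrences of $\Delta_m$ adjacent, and invoke relation (4) to collapse the adjacent pair to $\Sigma_m\in\wt\CB_0$, finishing by induction on the number of $\Delta$'s. The paper is in fact terse precisely where you flag obstacles: it asserts without elaboration that relation (5) makes any two occurrences adjacent, and writes $\Delta_m^*=\bA_m^0(\Delta_m)$ without noting the sign $\ve(w_0)$ that the vertical reflection implicit in bending contributes (harmless for the induction, since the span is unaffected, but worth tracking, as you observe). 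For part (2), however, your route is genuinely different from, and more careful than, the paper's. The paper disposes of (2) in one sentence, asserting that the two types of diagram from (1) are ``complementary'' without justifying that their $\K$-linear spans intersect trivially inside $\wt B_s^t$. Your $\Z/2$-grading argument supplies precisely this missing direct-sum justification: declaring $\Delta_m,\Delta_m^*$ odd and $I,X,A,U$ even, you check that every defining relation of $\wt\CB(m)$ is $\Z/2$-homogeneous (the harmonicity relations have $0$ on one side, which is homogeneous of every degree; relations (4) and (5) equate expressions of even weight), so the grading descends to the quotient and $\wt B_s^t$ decomposes as a direct sum of its even and odd pieces. Together with part (1), which identifies the even piece with $\wt B_{s,0}^t$ and the odd piece with $\wt B_{s,1}^t$, this proves (2) with the directness of the sum fully established, something the paper's proof leaves implicit.
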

\begin{proof}
If the diagram $D\in\wt\CB(m)$ is not in $\wt\CB_0$, then it may be expressed as a `word' in the generators $I,X,A,U$ and $\Delta_m$, 
with connectives $\ot$ (tensor product) and $\circ$ (composition), since $\Delta_m^*=\bA_m^0(\Delta_m)$. 
But the relation (5) in Fig. \ref{fig:Delta-2} above shows that any diagram with two occurrences 
of $\Delta_m$, is equal in $\wt\CB$ to an element where the occurrences are adjacent. Hence by the relation
(4) in Fig. \ref{fig:Delta-1}, this diagram is equal to one in $\wt\CB_0$. Thus we may assume that there is precisely one
occurrence of $\Delta_m$ in the word expression for $D$. This proves (1).

The statement (2) is an immediate consequence of (1), since each $\Hom$ space is spanned by diagrams, and the two
types of diagrams in (1) are complementary.
\end{proof}

\section{The oriented Brauer category}\label{sect:orient-bc}

We describe an oriented Brauer category, the category of Brauer diagrams with oriented arcs, which is a categorification of the walled Brauer algebras.

\subsection{Definition of the oriented Brauer category}

Let $\CN$ be the set of sequences
$(\varepsilon_1, \varepsilon_2, ..., \varepsilon_k)$, where  $k\in \N$
and $\varepsilon_i\in \{+, \ - \}$, with the $k=0$ case corresponding to the empty sequence. 
Define two functions $sl:\CN\longrightarrow \N\times \N$ and $l: \CN\longrightarrow \N$ as follows. Let $\#_\pm(\eta)$ denote the numbers 
of $\pm$'s in $\eta\in\CN$ respectively. Then $sl(\eta)=(\#_+(\eta), \#_-(\eta))$, and $l(\eta)=\#_+(\eta)+\#_-(\eta)$. 
We have the following three operations on $\CN$.
\begin{enumerate}
\item Joining sequences.  
Any two given sequences $\eta, \zeta\in\CN$ can be {\it  joined} by concatenation to form  a new sequence $(\eta, \zeta)$.
Clearly $\#_\pm(\eta, \zeta)=\#_\pm(\eta)+\#_\pm(\zeta)$, where addition of pairs is component-wise. 
\item If we write $+^\vee=-$ and  $-^\vee=+$, the negative $^\vee:\CN\longrightarrow\CN$ is defined by 
\[
\eta=(\varepsilon_1, \varepsilon_2, \dots, \varepsilon_k)\mapsto \eta^\vee=(\varepsilon^\vee_1, \varepsilon^\vee_2, \dots, \varepsilon^\vee_k).
\]
\item The opposite (or reverse) of a sequence $\eta=(\ve_1,\dots,\ve_k)\in\CN$ is $op(\eta):=(\ve_k,\dots, \ve_1)$.
\end{enumerate}

Given any $(k, \ell)$ Brauer diagram, we arbitrarily assign an orientation to each of its arcs, indicated by an arrow. 

\begin{definition}
 An {\em oriented Brauer diagram} is defined to be a Brauer diagram with oriented arcs. Each oriented arc will be said to have starting point
 and end point, with the obvious meaning.
 \end{definition}

We now associate to any oriented $(k, \ell)$ diagram $\Gamma$ two elements of $\CN$ as follows:
the source $s(\Gamma)=(\varepsilon_1,   \varepsilon_2, ...,
\varepsilon_k)$ and target $t(\Gamma)=(\varepsilon'_1,  \varepsilon'_2, ...,
 \varepsilon'_\ell)$,
in the following way.
If the $p$-th bottom vertex is an end point of a string with the arrow pointing outward (resp. inward), then $\varepsilon_p=+$ (resp. $\varepsilon_p=-$). Similarly, if 
the $q$-th top vertex is an end point of a string with the arrow pointing inward (resp. outward), then $\varepsilon'_q=+$ (resp. $\varepsilon'_q=-$).

Figure \ref{fig:orient-diagram} below is an oriented $(5, 5)$ Brauer diagram, which has source $(-, - + + -)$ and target $(- - + - +)$. 
\begin{figure}[h]
\begin{center}
\begin{picture}(160, 60)(0,0)

\qbezier(0, 60)(30, 0)(60, 60)
\qbezier(0, 0)(60, 60)(90, 0)

\qbezier(30, 60)(90, 20)(120, 60)
\qbezier(90, 60)(60, 30)(30, 0)

\qbezier(60, 0)(90, 40)(120, 0)

\put(88, 3){\vector(1,-1){5}}
\put(62, 3){\vector(-1, -1){5}}

\put(2, 58){\vector(-1, 1){5}}
\put(32, 58){\vector(-1, 1){5}}
\put(88, 58){\vector(1, 1){5}}
\end{picture}
\end{center}
\caption{An oriented Brauer diagram }
\label{fig:orient-diagram}
\end{figure}
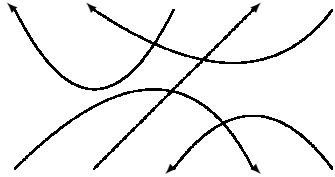

\begin{lemma}\label{lem:numbers}
For any oriented Brauer diagram $\Gamma$, we have
\[
\#_+(t(\Gamma))+\#_-(s(\Gamma))=\#_+(s(\Gamma))+\#_-(t(\Gamma))=\frac{l(t(\Gamma))+l(s(\Gamma))}{2}.
\]
\end{lemma}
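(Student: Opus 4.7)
The plan is to interpret each of the two sums combinatorially as counting (respectively) the arrow heads and the arrow tails of the diagram, and then to observe that each of these counts must equal the number of arcs.

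First I would unpack the sign convention of $s(\Gamma)$ and $t(\Gamma)$. Every vertex of $\Gamma$ (whether on the top or bottom row) is an endpoint of exactly one arc, and, because arcs are oriented, each vertex plays either the role of \emph{tail} (starting point of the arc) or \emph{head} (end point of the arc). Reading off the definitions: a bottom vertex labelled $+$ has the local arrow pointing outward, i.e.\ downward out of the diagram, so the vertex is the head of its arc; a bottom vertex labelled $-$ has the arrow pointing inward, so the vertex is the tail. Dually, a top vertex labelled $+$ has the arrow pointing inward (downward at the top), so the vertex is the tail, and a top vertex labelled $-$ has the arrow pointing outward (upward), making the vertex the head.

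With this dictionary the two sides of the stated identity are transparent:
\[
\#_+(s(\Gamma))+\#_-(t(\Gamma))=\#\{\text{arrow heads in }\Gamma\},\qquad
\#_-(s(\Gamma))+\#_+(t(\Gamma))=\#\{\text{arrow tails in }\Gamma\}.
\]
Each oriented arc contributes exactly one head and exactly one tail, so both sums equal the number of arcs of $\Gamma$. Since the underlying (unoriented) Brauer diagram pairs up its $l(s(\Gamma))+l(t(\Gamma))$ vertices into arcs, the number of arcs is $\tfrac12\bigl(l(s(\Gamma))+l(t(\Gamma))\bigr)$, giving the claimed common value.

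The only real content is the bookkeeping for the sign convention in the first step; once that is verified against the definition (and cross-checked against the example in Figure~\ref{fig:orient-diagram}, where both mixed sums equal $5$, the number of arcs), the remainder is a one-line counting argument. I do not anticipate any obstacle beyond taking care with the orientation conventions at the top versus the bottom row.
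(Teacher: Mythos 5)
Your argument is correct and matches the paper's own (one-line) proof: both interpret the two mixed sums as counting, respectively, the heads and tails of the oriented arcs, each of which obviously equals the number of arcs, i.e.\ $\tfrac12\bigl(l(s(\Gamma))+l(t(\Gamma))\bigr)$. Note only that your careful bookkeeping identifies $\#_+(s(\Gamma))+\#_-(t(\Gamma))$ as the head count and $\#_-(s(\Gamma))+\#_+(t(\Gamma))$ as the tail count, which is the \emph{reverse} of the labelling in the paper's sentence ``the left side counts the number of end points \dots while the middle term counts the number of starting points''; your version is the one consistent with the example in Figure~\ref{fig:orient-diagram}, so the paper's phrasing appears simply to have the two words swapped, and of course this has no bearing on the equality since both counts equal the number of arcs.
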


This is evident from the fact that the left side counts the number of end points of the arcs in $\Gamma$, while the middle term counts the number 
of starting points of arcs. Clearly, both are equal to the number of arcs, which is the right hand term.

Fix $\delta\in \K$. Denote by ${OB}_s^t(\delta)$ the free
$\K$-module with a basis consisting of
oriented Brauer diagrams with source $s$ and target $t$. 
The $\K$-module $OB_\eta^\zeta(\delta)$ is $0$ unless $l(\eta)+l(\zeta)$ is even. 

As for usual Brauer diagrams, we have the $\K$-linear map 
\begin{eqnarray}\label{eq:products-or}
\begin{aligned}
&&\text{composition} & \quad \circ:  & & OB_t^u(\delta)
\times OB_s^t(\delta)\longrightarrow OB_s^u(\delta), 
\end{aligned}
\end{eqnarray} 
given by concatenation of diagrams followed by loop removal. The concatenation $\Gamma\#\Gamma'$ of two oriented Brauer diagrams is defined only if $t(\Gamma')=s(\Gamma)$,  which is also an oriented Brauer diagram. Any loop in the concatenation is also oriented; loop removal replaces each loop by a factor $\delta$ irrespective of the orientation. 
We also have the $\K$-linear map 
\begin{eqnarray}\label{eq:tensor-or}
\text{tensor product} & \quad  \otimes:  & OB_u^v(\delta)
\times OB_s^t(\delta)\longrightarrow OB_{(u, s)}^{(v, t)}(\delta)
\end{eqnarray} 
defined by juxtaposition of oriented Brauer diagrams in exactly the same way as for ordinary Brauer diagrams. 

\begin{definition}
The {\em oriented Brauer category} with parameter $\delta\in\K$, denoted by $\OB(\delta)$,  is the $\K$-linear category, which has the object set 
$\mathcal N$ and morphism sets $\Hom(\eta, \eta')=OB_\eta^{\eta'}(\delta)$  as defined above
for any $\eta, \eta'\in\CN$. The composition of morphisms
is defined  by \eqref{eq:products-or}
\end{definition}

\begin{remark}\label{rem:or-tang} The oriented Brauer category can be obtained as a quotient category of the oriented tangle category \cite{FY, RT} (see also \cite[\S XII.2.2]{Ka} )  by identifying over-crossings with under-crossings and imposing the loop-removal relation given in Theorem \ref{thm:tensor-cat-or}(3)(g) below. 
\end{remark}

\subsection{Generators and relations}
Oriented Brauer diagrams can also be described in terms generators and relations as for ordinary Brauer diagrams. The following theorem mirrors  \cite[Theorem XII.2.2]{Ka} for oriented tangles  \cite{FY, T1, RT, T2}. 

\begin{theorem} \label{thm:tensor-cat-or}
The category $\OB(\delta)$ has the following properties. 
\begin{enumerate} 
\item There is a bi-functor $\otimes: \OB(\delta)\times \OB(\delta)\longrightarrow \OB(\delta)$, called the tensor product, which is defined as follows. 
For any pair of objects $\eta$ and $\eta'$, we have
$
\eta\otimes\eta'=(\eta, \eta').
$
The tensor product of morphisms is given by the bilinear map \eqref{eq:tensor-or}. 

\item The morphisms are generated by the following elementary diagrams under tensor product and composition, 

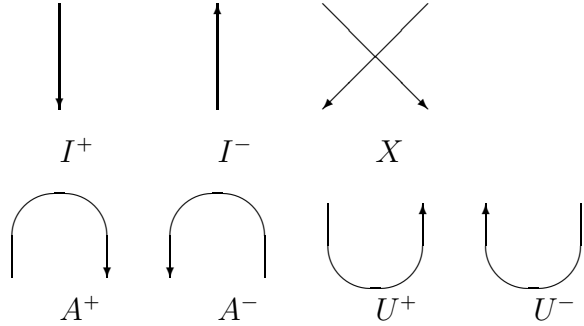
\begin{figure}[h]
\setlength{\unitlength}{.7mm}
\begin{picture}(140, 65)(-15, 0)
\put(10,0){$A^+$}
\put(40,0){$A^-$}
\put(70,0){$U^+$}
\put(100,0){$U^-$}
\put(10, 30){$I^+$}
\put(40,30){$I^-$}
\put(70,30){$X$}

\put(10,60){\vector(0,-1){20}}
\put(40,40){\vector(0,1){20}}

\put(60,60){\vector(1,-1){20}}
\put(80,60){\vector(-1,-1){20}}


\put(10,15){\oval(18,18)[t]}
\put(1,15){\line(0,-1){7}}
\put(19,15){\vector(0,-1){7}}

\put(40,15){\oval(18,18)[t]}
\put(31,15){\vector(0,-1){7}}
\put(49,15){\line(0,-1){7}}

\put(70,15) {\oval(18,18)[b]}
\put(61,15){\line(0,1){7}}
\put(79,15){\vector(0,1){7}}

\put(100,15){\oval(18,18)[b]}
\put(91,15){\vector(0,1){7}}
\put(109,15){\line(0,1){7}}
\end{picture}
\caption{Generators of oriented Brauer diagrams}
\label{fig:generators}
\end{figure}

\item The defining relations among the above generators are as follows.
\begin{enumerate}
\item Involution property of crossing: 
\[
\setlength{\unitlength}{0.3mm}
\begin{picture}(70, 90)(65, 0)
\qbezier(60, 0)(110, 40)(60, 80)
\put(62, 2){\vector(-1, -1){5}}

\qbezier(90, 0)(40, 40)(90, 80)
\put(88, 2){\vector(1, -1){5}}

\put(105, 35){$=$}
\end{picture}
\begin{picture}(50, 90)(110, 0)
\put(120, 0){\line(0, 1){80}}
\put(135, 0){\line(0, 1){80}}
\put(120, 3){\vector(0, -1){5}}
\put(135, 3){\vector(0, -1){5}}
\end{picture}
\]

\item Braid relation:  
\[\setlength{\unitlength}{0.25mm}
\begin{picture}(85, 100)(0, 0)
\qbezier(30, 90)(0, 45)(30, 0)
\put(28, 2){\vector(1, -1){5}}

\qbezier(0, 90)(45, 45)(60, 0)
\put(59, 2){\vector(0, -1){5}}

\qbezier(60, 90)(50, 60)(0,0)
\put(2, 2){\vector(-1, -1){5}}

\put(75, 40){$=$}
\end{picture}
\begin{picture}(85, 100)(-30, 0)
\qbezier(0, 90)(20, 45)(60, 0)
\put(59, 2){\vector(1, -1){5}}

\qbezier(30, 90)(70, 45)(30, 0)
\put(32, 2){\vector(-1, -1){5}}

\qbezier(60, 90)(30, 55)(0, 0)

\put(2, 2){\vector(-1, -1){5}}
\put(70, 0){;}
\end{picture}
\]

\item Straightening relations:
\[
\begin{aligned}
\setlength{\unitlength}{0.3mm}
\begin{picture}(150, 80)(0,0)

\qbezier(0, 0)(10, 80)(20, 30)
\put(0, 2){\vector(0, -1){5}}

\qbezier(20, 30)(30, -30)(40, 70)
\put(50, 30){$=$}
\put(70, 0){\line(0, 1){70}}
\put(70, 2){\vector(0, -1){5}}

\put(85, 30){$=$}
\qbezier(105, 70)(115, -30)(125, 30)
\qbezier(125, 30)(135, 80)(145, 0)
\put(145, 2){\vector(0, -1){5}}
\put(155, 0){;}
\end{picture}
\\
\setlength{\unitlength}{0.3mm}
\begin{picture}(150, 80)(0,0)

\qbezier(0, 0)(10, 80)(20, 30)
\put(40, 68){\vector(0, 1){5}}

\qbezier(20, 30)(30, -30)(40, 70)
\put(50, 30){$=$}
\put(70, 0){\line(0, 1){70}}
\put(70, 68){\vector(0, 1){5}}

\put(85, 30){$=$}
\qbezier(105, 70)(115, -30)(125, 30)
\qbezier(125, 30)(135, 80)(145, 0)
\put(105, 68){\vector(0, 1){5}}
\put(155, 0){;}
\end{picture}
\end{aligned}
\]

\item Orientation reversed crossing:
\[
 \setlength{\unitlength}{0.22mm}
 \begin{picture}(80, 130)(40,0)
 
\qbezier(40, 70)(40, 70)(60, 40)
\qbezier(60, 70)(60, 70)(40, 40)

\qbezier(60, 40)(75, 10) (80, 40)
\put(80, 40){\vector(0, 1){70}}
\put(100, 40){\vector(0, 1){70}}
\put(0, 0){\line(0, 1){70}}
\put(20, 0){\line(0, 1){70}}
\qbezier(20, 70)(25, 90) (40, 70)


\qbezier(40, 40)(70, -20) (100, 40)
\qbezier(60, 70)(30, 120) (0, 70)
 


\put(130, 50){$=$}
\end{picture}
%
%
 \setlength{\unitlength}{0.22mm}
 \begin{picture}(80, 130)(-60,0)
 
\qbezier(40, 70)(40, 70)(60, 40)
\qbezier(60, 70)(60, 70)(40, 40)

\put(80, 0){\line(0, 1){70}}
\put(100, 0){\line(0, 1){70}}

\put(20, 40){\vector(0, 1){70}}
\put(0, 40){\vector(0, 1){70}}


\qbezier(60, 70)(75, 90)(80, 70)
\qbezier(40, 70)(70, 120)(100, 70)

\qbezier(40, 40)(25, 20)(20, 40)
\qbezier(60, 40)(30, -10)(0, 40)
\put(110, 0){;}
\end{picture}
\]

\item Sliding relations: 
 \[
 \begin{aligned}
 \setlength{\unitlength}{0.22mm}
 \begin{picture}(80, 130)(0,0)
 
 \qbezier(0, 100)(0, 100)(20, 70)
\qbezier(0, 70)(0, 70)(20, 100)
\qbezier(20, 50)(0, 20)(0, 20)
\qbezier(0, 50)(20, 20)(20, 20)
 

\qbezier(0, 70)(-5, 60)(0, 50)

\qbezier(0, 100)(-15, 120)(-20, 100)
\qbezier(0, 20)(-15, 0)(-20, 20)
\put(-20, 100){\line(0, -1){80}}

\qbezier(20, 50)(33, 60)(40, 20)
\qbezier(20, 70)(33, 60)(40, 100)


\qbezier(20, 100)(24, 110)(25, 120)
\qbezier(40, 100)(41, 110)(43, 120)
\qbezier(20, 20)(24, 10)(25, 0)
\qbezier(40, 20)(41, 10)(43, 0)

\put(43, 120){\vector(0, 1){5}}
\put(25, 0){\vector(0, -1){5}}
\put(55, 60){$=$}
\end{picture}
\begin{picture}(20, 130)(-15,20)
\put(0, 140){\vector(0, -1){120}}
\put(20, 20){\vector(0, 1){120}}
\put(30, 20){;}
\end{picture}
%
%
 \setlength{\unitlength}{0.22mm}
 \begin{picture}(80, 130)(-100,0)
 
\qbezier(0, 100)(0, 100)(20, 70)
\qbezier(0, 70)(0, 70)(20, 100)
\qbezier(20, 50)(0, 20)(0, 20)
\qbezier(0, 50)(20, 20)(20, 20)

\put(40, 100){\line(0, -1){80}}

\qbezier(20, 70)(25, 60)(20, 50)
\qbezier(20, 100)(35, 120)(40, 100)
\qbezier(20, 20)(35, 0)(40, 20)

\qbezier(0, 100)(-2, 105)(-5, 120)
\qbezier(0, 20)(-2, 15)(-5, 0)

\qbezier(0, 70)(-15, 60)(-25, 120)
\qbezier(0, 50)(-15, 60)(-25, 0)

\put(-5, 0){\vector(0, -1){5}}
\put(-25, 120){\vector(0, 1){5}}

\put(55, 60){$=$}

\end{picture}
\begin{picture}(120, 130)(-115,20)
\put(20, 140){\vector(0, -1){120}}
\put(00, 20){\vector(0, 1){120}}

\put(30, 20){;}
\end{picture}
\end{aligned}
 \]

 \item De-loop:
  \[
 \setlength{\unitlength}{0.2mm}
 \begin{picture}(80, 120)(0,20)
 
\qbezier(0, 100)(-10, 115)(-15, 130)
 
\qbezier(0, 100)(30, 60)(30, 60)
\qbezier(30, 100)(30, 100)(0, 60) 
\qbezier(60, 100)(60, 100)(60, 60)
 
\qbezier(30, 100)(55, 130)(60, 100)
\qbezier(30, 60)(55, 30)(60, 60)
 
\qbezier(0, 60)(-10, 45)(-15, 30)

\put(-13, 35){\vector(-1, -2){5}}
\put(80, 70){$=$}
\put(120, 130){\vector(0, -1){110}}
\put(140, 30){;}
\end{picture}
 \]
\item Loop removal:
\[
 \setlength{\unitlength}{0.22mm}
 \begin{picture}(200, 80)(0,0)
\put(40, 40){\circle{80}}
\put(72, 40){\vector(0, 1){5}}
\put(85, 40){$=\delta$}
\put(120, 40){$=$}
\put(180, 40){\circle{80}}
\put(212, 40){\vector(0, -1){5}}
\put(212, 10){.}
\end{picture}
\] 
 \end{enumerate}
\end{enumerate}
\begin{proof}
Part (1) of the theorem is clear. 

To prove part (2), we can reason similarly as in Appendix \ref{sect:proof-presentation} to show that the   generators given in Figure \ref{fig:generators} together with the following diagrams 
\[
\setlength{\unitlength}{.7mm}
\begin{picture}(140, 25)(-15, 0)
\put(0,0){\vector(1,1){20}}
\put(20,0){\vector(-1,1){20}}
\put(25, 0){,}

\put(50,0){\vector(1,1){20}}
\put(50,20){\vector(1,-1){20}}
\put(75, 0){,}

\put(120,20){\vector(-1,-1){20}}
\put(120,0){\vector(-1,1){20}}

\end{picture}
\]
generate all oriented Brauer diagrams. The first diagram above can be expressed in terms of generators in Figure \ref{fig:generators} as one of the diagrams in part (3)(d) of the theorem. The other two diagrams are respectively the top half and bottom half of the left most diagram in part (3)(e).  This shows that the generators in Figure \ref{fig:generators}  indeed suffice to generate all the oriented Brauer diagrams.

We prove part (3) by regarding $\OB(\delta)$ as a quotient category of the category of framed tangles following Remark \ref{rem:or-tang}.  Take the presentation of the latter category given in \cite[Theorem XII.2.2]{Ka} and impose the relevant conditions to obtain the quotient category. Then the relations \cite[Theorem XII.2.2]{Ka}  reduce to our relations (a)--(f). The relation (g) is one of the conditions imposed in taking the quotient. 
\end{proof}
\end{theorem}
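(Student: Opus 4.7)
Part (1) is essentially a formal verification. The tensor product on objects is defined by concatenation of sequences in $\CN$, which is strictly associative with unit the empty sequence; the tensor product on morphisms is juxtaposition of oriented diagrams, and one need only check that juxtaposition is compatible with composition in each argument (interchange law). Since loops are produced only by concatenation and never by juxtaposition, the $\delta$-factors introduced by loop removal respect juxtaposition, and bifunctoriality follows.

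For part (2), my plan is to reduce the generation statement to the unoriented case (Theorem~\ref{thm:presentation}) by a ``forget the orientation'' argument. Given an oriented Brauer diagram $\Gamma$, its underlying unoriented diagram $|\Gamma|$ can, by Theorem~\ref{thm:presentation}, be written as a composition of tensor products of $I$, $X$, $A$, $U$. I would show that each factor in such an expression can be lifted to an oriented version built from the generators listed in Figure~\ref{fig:generators}. The caps $A$ and cups $U$ lift to $A^\pm$ and $U^\pm$ directly (choosing orientations to match the neighbouring factors). The identity $I$ lifts to $I^+$ or $I^-$. The subtle point is $X$: there are four oriented versions of a crossing according to the orientations of the two strands, but only one generator $X$ (with both strands downward) is listed. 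The other three oriented crossings must be expressed via $X$ composed with caps and cups using the same scheme that produces the left-hand side of relation~(d); once relation~(d) is in place, these three oriented crossings become derivable, and so every oriented Brauer diagram lies in the subcategory generated by Figure~\ref{fig:generators}.

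For part (3), my strategy is to realise $\OB(\delta)$ as a quotient of the oriented (framed) tangle category $\mathcal{T}^{or}$, for which a complete presentation is given in \cite[Theorem XII.2.2]{Ka}. The relevant quotient is obtained by imposing exactly two extra relations on $\mathcal{T}^{or}$: (i) the positive and negative crossings coincide (collapsing a braided structure to a symmetric one), and (ii) a closed loop is removed at the cost of a factor $\delta$ (our relation (g)). I would then go through the Kassel relations one by one and check that, after imposing (i) and (ii), they reduce precisely to relations (a)--(f) above: the Reidemeister~II type relations collapse to (a); the Reidemeister~III type relation becomes the braid relation (b); the Reidemeister~I and zig-zag relations give the straightening relations (c); the relations involving orientation reversal of crossings give (d); the interaction of a crossing with a cap/cup (slides) gives (e); and the twisted zig-zag relation gives the de-loop identity~(f).

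The main obstacle, as usual in such presentations, is the \emph{completeness} of the relation list in part (3), not their validity. Checking validity is a direct diagrammatic inspection. Completeness requires that after collapsing over/under crossings and closed loops in the Kassel presentation, no relation is lost and none of the remaining relations is stronger than what (a)--(g) and their reflections/transforms under $^*$ encode. The careful book-keeping is that in the tangle category there are many oriented variants of each Reidemeister move, and one must verify that the oriented variants not listed explicitly in (a)--(f) are derivable from the listed ones together with the sliding/straightening relations. Once this is done, the universal property of the quotient gives the required presentation of $\OB(\delta)$.
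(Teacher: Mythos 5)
Your proposal follows essentially the same route as the paper: part (1) by direct verification, part (2) by showing that the three undrawn orientations of the crossing reduce to the listed generators, and part (3) by realising $\OB(\delta)$ as a quotient of Kassel's oriented tangle category and then checking that the Reidemeister-type relations collapse to (a)--(g). One small imprecision to note: you attribute all three extra oriented crossings to the scheme of relation (d), whereas the paper derives only the both-upward crossing from (d) and obtains the two mixed-orientation crossings as the top and bottom halves of the left-hand side of the first sliding relation in (e); this does not change the structure of the argument.
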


\begin{remark}
One can also prove Theorem \ref{thm:tensor-cat-or}(3) directly as in Appendix \ref{sect:proof-presentation}. It is instructive to write down such an algebraic proof. 
\end{remark}

The proof of following lemma is straightforward. 
\begin{lemma}\label{lem:walled-Br}
Fix $k, \ell\in\N$, and let $(+)^k=(\underbrace{+, \dots, +}_k)$, $(-)^\ell=(\underbrace{-, \dots, -}_\ell)$ and $\eta=((+)^k, (-)^\ell)$. Then  as associative algebras,
$OB_{(+)^k}^{(+)^k}(\delta)\cong\K\Sym_k$, $
OB_{(-)^\ell}^{(-)^\ell}(\delta)\cong\K\Sym_\ell$, and 
$OB_{k, \ell}(\delta):=OB_\eta^\eta(\delta)$ is isomorphic to the walled Brauer algebra of type $(k, \ell)$ with parameter $\delta$.
\end{lemma}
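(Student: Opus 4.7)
The plan is combinatorial and rests on a single principle: each arc in an oriented Brauer diagram has a unique \emph{start} (from which the arrow departs) and a unique \emph{end} (at which the arrow arrives). Reading off the conventions of Section~\ref{sect:orient-bc}, a label $+$ at a bottom vertex marks it as an arc-end (``arrow outward'' means the arrow points down, out of the diagram), while $-$ marks it as an arc-start; dually, $+$ at a top vertex is an arc-start (``arrow inward'' means it points down into the diagram), and $-$ at a top vertex is an arc-end.

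For $OB_{(+)^k}^{(+)^k}(\delta)$, all $k$ top vertices are starts and all $k$ bottom vertices are ends. Hence no top-top arc (which would require two starts on one arc) and no bottom-bottom arc can occur, and every arc is a through-strand from a top vertex to a bottom vertex. Such diagrams are therefore parametrized by bijections in $\Sym_k$, and conversely every permutation yields an admissible oriented diagram. Under concatenation, stacks of through-strands compose to through-strands with no loops ever appearing, so the resulting multiplication is literally that of $\K\Sym_k$. The identical argument, with every arrow reversed, handles $OB_{(-)^\ell}^{(-)^\ell}(\delta)\cong\K\Sym_\ell$.

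For the third isomorphism, apply the same analysis to $\eta=((+)^k,(-)^\ell)$. The starts are the top vertices in $\{1,\dots,k\}$ together with the bottom vertices in $\{k+1,\dots,k+\ell\}$; the ends are the complementary set. Since every arc joins a start to an end, a top-top arc must connect a position in $\{1,\dots,k\}$ to one in $\{k+1,\dots,k+\ell\}$ and so must cross the wall between positions $k$ and $k+1$; a bottom-bottom arc must likewise cross the wall; and every vertical strand either joins a top and bottom vertex both in $\{1,\dots,k\}$ or both in $\{k+1,\dots,k+\ell\}$, hence stays on one side of the wall. These are precisely the defining combinatorial conditions for walled Brauer diagrams of type $(k,\ell)$. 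Moreover, once $\eta$ is fixed, the start/end role of each vertex is determined, so the orientation of every arc in an admissible diagram is forced; the forgetful map erasing the orientation therefore yields a $\K$-linear bijection between the distinguished bases of $OB_{k,\ell}(\delta)$ and of the walled Brauer algebra.

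It remains to check multiplicativity. Composition in $OB_\eta^\eta(\delta)$ is concatenation followed by loop removal, each loop contributing $\delta$ irrespective of orientation (by definition); multiplication in the walled Brauer algebra of type $(k,\ell)$ is also concatenation followed by loop removal at the same weight $\delta$. Orientations glue consistently at the interface of two stacked diagrams because the target of the lower factor coincides with the source of the upper factor (both equal to $\eta$). The two multiplications therefore match on basis elements. The only point requiring care, and the ``main obstacle'' such as it is, is the bookkeeping confirming that the source/target data $\eta$ force a \emph{unique} consistent orientation on every admissible underlying (walled) diagram; once this is in hand, the bijection of bases and the matching of products are immediate.
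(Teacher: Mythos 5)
Your argument is correct, and it is the natural combinatorial proof. The key observation --- that a source/target pair $\eta$ assigns to every vertex a unique role (arrow-tail or arrow-head), and that an oriented arc must join a tail to a head, so that once $\eta$ is fixed the admissible underlying diagrams are exactly the permutation diagrams (for $(+)^k$ or $(-)^\ell$) or the walled Brauer diagrams (for $((+)^k,(-)^\ell)$) with a uniquely forced orientation --- is precisely what makes the lemma "straightforward," which is all the paper says before stating it. One minor difference in stance: the paper does not actually \emph{prove} the third isomorphism; it explicitly declares that it will \emph{take} $OB_\eta^\eta(\delta)$ as its definition of the walled Brauer algebra of type $(k,\ell)$, rather than reproduce the standard definition from the literature. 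You instead recall the standard definition (vertical strands on one side of the wall, horizontal arcs crossing the wall) and exhibit the basis bijection and matching multiplication, i.e.\ you verify that the paper's convention agrees with the one in the references it cites. That is a slightly stronger thing to do than the paper itself does, but entirely in the same spirit; there is no gap.
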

The walled Brauer algebra has been much studied in the literature (see \cite{CW, BS} and the references there). 
We will not present its standard definition here; instead, we take the third isomorphism in the lemma as the definition. 

Given any $\eta\in\CN$, let $A_\eta$ and $U_\eta$ be the oriented Brauer diagrams of shape $A_q$ and $U_q$ in Figure \ref{Omega-U}
  respectively with $s(A_\eta)=(\eta, op(\eta^\vee))$, $t(A_\eta)=\emptyset$, and 
$s(U_\eta)=\emptyset$, $t(U_\eta)=(op(\eta^\vee), \eta)$. Let $I_\eta$ and $I_{\eta^\vee}$ be oriented Brauer diagrams of the shape $I_q$ in Figure \ref{Omega-U} with $s(I_\eta)=t(I_\eta)=\eta$ and $s(I_{\eta^\vee})=t(I_{\eta^\vee})=\eta^\vee$ respectively.  The following oriented analogue of Lemma \ref{isomorphism-pri}, 
is easily verified. 
\begin{lemma}\label{lem:iso-or} For any $\eta\in\CN$, 
\[
(A_\eta\ot I_\eta)(I_\eta\ot U_\eta)= I_\eta, \quad  (I_{op(\eta^\vee)}\ot A_\eta)(U_\eta\ot  I_{op(\eta^\vee)})= I_{op(\eta^\vee)}. 
\]
\end{lemma}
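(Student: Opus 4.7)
The plan is to prove both identities by induction on $k = l(\eta)$, using the straightening relations of Theorem~\ref{thm:tensor-cat-or}(3)(c) as the base case and the nested-arc structure of $U_\eta$ and $A_\eta$ for the inductive step.

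For the base case $k=0$, both sides are the identity on the empty object. For $k=1$ with $\eta=(\varepsilon)$, the morphisms $U_\eta$ and $A_\eta$ each reduce to one of the oriented elementary cups and caps $U^\pm$, $A^\pm$ (the precise sign being determined by $\varepsilon$ and $\varepsilon^\vee$). Substituting these into the two claimed identities reproduces exactly the two straightening relations depicted in Theorem~\ref{thm:tensor-cat-or}(3)(c), one for each orientation of the strand.

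For the inductive step, write $\eta = (\varepsilon_1, \eta')$ with $l(\eta') = k-1$. The nested-arc construction of $U_\eta$ allows the decomposition
\[
U_\eta \;=\; \bigl(I_{op((\eta')^\vee)} \otimes U_{(\varepsilon_1)} \otimes I_{\eta'}\bigr) \circ U_{\eta'},
\]
with the analogous formula for $A_\eta$ obtained by applying the duality functor $^*$. Plugging these decompositions into the left-hand side of $(A_\eta\otimes I_\eta)\circ(I_\eta\otimes U_\eta)$ and repeatedly using the interchange law for the tensor bifunctor, the expression factors into an inner piece governed by the inductive hypothesis applied to $\eta'$, and an outer piece which is the $k=1$ straightening identity for the single strand of type $\varepsilon_1$. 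Collapsing each of these yields $I_\eta$. The second identity is proved in the same way after interchanging the roles of $\eta$ and $op(\eta^\vee)$.

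The main obstacle is purely bookkeeping: carefully tracking the effect of the operations $op(\cdot)$ and $(\cdot)^\vee$ on tensor factors when peeling off the outermost arc, and correctly identifying which strand of $I_\eta$ (resp.\ $I_{op(\eta^\vee)}$) participates in the straightening. Conceptually the argument is an immediate oriented analogue of the diagrammatic proof of Lemma~\ref{isomorphism-pri}(2); in fact, forgetting orientations reduces each identity to that lemma, so the only genuine content is that the chosen orientations on $U_\eta$ and $A_\eta$ are consistent along the straight strand produced by the cancellation, which follows directly from the definitions of $s(\cdot)$ and $t(\cdot)$.
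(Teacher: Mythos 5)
The paper offers no written proof of this lemma beyond the remark that it is ``easily verified'' (diagrammatically, the concatenation of $(A_\eta\ot I_\eta)$ with $(I_\eta\ot U_\eta)$ is a planar diagram in which each bottom node $i$ is joined to top node $i$ and no closed loops are created, so the composite \emph{is} $I_\eta$ by definition of composition in $\OB(\delta)$). Your induction instead verifies the equality from the generator--relation presentation of Theorem~\ref{thm:tensor-cat-or}(3), which is a legitimate alternative strategy, and in outline it does go through: peel one strand, use the interchange law together with the observation that $f\ot g = g\circ f$ whenever $f\colon a\to\emptyset$ and $g\colon\emptyset\to b$, then apply the inductive hypothesis and the $k=1$ straightening relation. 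That observation (itself a consequence of the interchange law) is what makes the factorisation you describe actually happen; it is worth making it explicit, since the naive factor-by-factor interchange does not split $A_{\eta'}\ot U_{(\varepsilon_1)}$ away from $A_{(\varepsilon_1)}$ and $U_{\eta'}$ on its own.

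The one genuine slip is the claim that the companion decomposition of $A_\eta$ is obtained ``by applying the duality functor $^*$.'' No such functor is defined on $\OB(\delta)$ in the paper, and if one defines $^*$ by reflection in a horizontal line, the orientation of every arc is reversed; one then finds $(U_\eta)^* = A_{op(\eta)}$, not $A_\eta$, since the source of the reflected diagram is $(op(\eta^\vee),\eta)^\vee = (op(\eta),\eta^\vee)$. The decomposition that actually makes your induction run (peeling the same strand $\varepsilon_1$ on both sides) is the \emph{outermost}-cap peeling
\[
A_\eta = A_{(\varepsilon_1)}\circ\bigl(I_{(\varepsilon_1)}\ot A_{\eta'}\ot I_{(\varepsilon_1^\vee)}\bigr),
\]
which should be written down directly rather than derived from $U_\eta$ via $^*$. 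With that correction the argument is sound. Note also that your final appeal to ``the only genuine content is that the chosen orientations $\dots$ are consistent'' is indeed automatic from the definitions of $s(\cdot)$ and $t(\cdot)$; this is precisely why the paper can dispose of the lemma in one line, so it is worth being aware that the inductive route, while correct, is doing considerably more work than the direct diagrammatic verification requires.
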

Using this lemma, we  prove the following analogue of Lemma \ref{isomorphism}.
\begin{lemma} \label{lem:iso-OB}
Given any $\eta, \zeta\in\CN$, write $N=\l(\eta)+l(\zeta)$. Then for any integer $k$ such that  $0\le k\le N$, there exist the following $\K$-module isomorphisms.
\[
 OB_\eta^\zeta(\delta)\cong OB_{(+)^{\frac{N}{2}}}^{(+)^{\frac{N}{2}}}(\delta)\cong OB_{(+)^{\frac{N}{2}}(-)^{\frac{N}{2}}}^{\emptyset}(\delta). 
 \]
 Furthermore,  $OB_\eta^\eta(\delta)$ is isomorphic to the walled Brauer algebra $OB_{r, s}(\delta)$ of type $(r, s)=(\#_+(\eta), \#_-(\eta))$ as $\K$-algebra.
\begin{proof}
The first statement is deduced from Lemma \ref{lem:iso-or} by reasoning similar to that required for Corollary \ref{isomorphism}, taking into
account Lemma \ref{lem:numbers}.

Now we prove the last statement. Let $\bar\eta=((+)^{\#_+(\eta)}, (-)^{\#_-(\eta)})$. Then there exist  unique  oriented Brauer diagrams
 $X_\eta^{\bar\eta}$ from $\eta$ to $ {\bar\eta}$ and $X_{\bar\eta}^\eta$ from $ {\bar\eta}$ to $\eta$, which have vertical arcs only and have the smallest numbers of crossings.  They satisfy 
\[
X_{\bar\eta}^\eta  X_\eta^{\bar\eta} =I_\eta, \quad X_\eta^{\bar\eta} X_{\bar\eta}^\eta =I_{\bar\eta}.
\]
The isomorphism $OB_\eta^\eta(\delta)\longrightarrow OB_{\bar\eta}^{\bar\eta}(\delta)$ is given by $D\mapsto  X_\eta^{\bar\eta} D X_{\bar\eta}^\eta$. 
\end{proof}
\end{lemma}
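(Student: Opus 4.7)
The plan is to prove the two parts of the lemma by (a) explicit bending-and-permuting constructions built from the oriented snake identities of Lemma \ref{lem:iso-or} together with oriented crossings, and (b) producing a distinguished pair of mutually inverse sorting diagrams that conjugate $OB_\eta^\eta(\delta)$ onto the walled Brauer algebra.

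For the first part, I would mirror the unoriented argument of Corollary \ref{isomorphism}. The first step transfers all target vertices to the source by the map
\[
D\longmapsto A_\zeta\circ\bigl(D\otimes I_{op(\zeta^\vee)}\bigr),
\]
whose inverse bends the added block back up by tensoring with $U_\zeta$ at the bottom; by the oriented snake identity in Lemma \ref{lem:iso-or} these are mutually inverse, giving $OB_\eta^\zeta(\delta)\cong OB_{(\eta,\,op(\zeta^\vee))}^\emptyset(\delta)$. The second step composes with a suitable oriented permutation diagram to sort the new source into any desired order: by Theorem \ref{thm:tensor-cat-or}(3)(a,b) the crossings are involutive and braid-compatible, so every permutation diagram is invertible, whence pre- or post-composition with a sorting diagram is a $\K$-module isomorphism between Hom spaces that differ only by a reordering of the source. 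Lemma \ref{lem:numbers} ensures that $(\eta,\,op(\zeta^\vee))$ has exactly $N/2$ pluses and $N/2$ minuses, so sorting yields $OB_{(+)^{N/2}(-)^{N/2}}^\emptyset(\delta)$. An entirely analogous procedure, this time bending up only enough vertices so that source and target each have length $N/2$ and then sorting each side to $(+)^{N/2}$, yields the isomorphism with $OB_{(+)^{N/2}}^{(+)^{N/2}}(\delta)$.

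For the second part, let $r=\#_+(\eta)$, $s=\#_-(\eta)$, and $\bar\eta=((+)^r,(-)^s)$. Define $X_\eta^{\bar\eta}\colon\eta\to\bar\eta$ to be the unique oriented Brauer diagram with only vertical arcs, minimal number of crossings, and sending the $i$-th $+$ of $\eta$ to position $i$ and the $j$-th $-$ of $\eta$ to position $r+j$; take $X_{\bar\eta}^\eta$ to be its horizontal reflection. Because their underlying permutations are mutually inverse, Theorem \ref{thm:tensor-cat-or}(3)(a,b) reduces the two compositions to $I_\eta$ and $I_{\bar\eta}$ respectively. Conjugation $D\mapsto X_\eta^{\bar\eta}\circ D\circ X_{\bar\eta}^\eta$ is then a $\K$-algebra isomorphism $OB_\eta^\eta(\delta)\to OB_{\bar\eta}^{\bar\eta}(\delta)$, and by Lemma \ref{lem:walled-Br} the latter is the walled Brauer algebra $OB_{r,s}(\delta)$.

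The hardest part will be the orientation bookkeeping in the snake argument: bending an arc across a cap or cup reverses its arrow, which is precisely why $op(\zeta^\vee)$ (and not $\zeta$) appears in the intermediate object, and the matching of signs after sorting must be tracked carefully using Lemma \ref{lem:numbers}. A minor subsidiary point, needed to avoid vacuity, is that $OB_\eta^\zeta(\delta)=0$ unless $\#_+(\eta)-\#_-(\eta)=\#_+(\zeta)-\#_-(\zeta)$; the claimed isomorphisms should therefore be read under this implicit signature-matching hypothesis.
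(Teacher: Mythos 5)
Your proof follows essentially the same route as the paper's: both parts use the oriented snake identities of Lemma~\ref{lem:iso-or} to bend target strings down to the source (with the required $op(\zeta^\vee)$ re-typing), followed by sorting via invertible oriented permutation diagrams, and the walled-Brauer statement is proved by conjugation with the minimal-crossing sorting diagrams $X_\eta^{\bar\eta},\,X_{\bar\eta}^\eta$ exactly as in the paper. Your write-up is somewhat more explicit than the paper's, which simply refers to ``reasoning similar to Corollary~\ref{isomorphism}'', and your closing remark about the necessary signature condition $\#_+(\eta)-\#_-(\eta)=\#_+(\zeta)-\#_-(\zeta)$ is a reasonable observation the paper leaves implicit.
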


\part{Invariant theory of classical groups and supergroups}

\section{Tensor functors from Brauer categories to representation categories}

In this section, we construct functors from the Brauer categories to certain representation categories of classical groups and supergroups. 

Let $\K$ be a field. A vector superspace $V$ over $\K$ is a $\Z_2$-graded vector space $V=V_{\bar0}\oplus V_{\bar1}$, where $V_{\bar0}$ and $V_{\bar1}$ are the even and odd subspaces respectively. The dimension of $V$ is $\dim{V}=\dim{V_{\bar0}}+\dim{V_{\bar1}}$, and the superdimension is $\sdim{V}=\dim{V_{\bar0}} - \dim{V_{\bar1}}$. Denote by $[v]\in\Z_2$ the degree of a homogeneous element $v$. 

Assume that $d=\dim{V}<\infty$. 
Choose a homogeneous basis $\{b_i\mid1\le i\le d\}$ for $V$, and let $\{b^*_i\mid 1\le i \le d\}$ be the dual basis for $V^*$, that is, $b^*_i(b_j)=\delta_{i j}$ for all $i, j$.   Let 
\[
C_0=\sum_{i=1}^d b_i\otimes b_i^*. 
\]
This is canonical in the sense that it is independent of the basis chosen. 

We denote by $\GL(V)$ the general linear supergroup on $V$ defined as an affine group scheme over the category of vector superspaces.

We may assume that
the vector superspace $V$ is equipped with  a non-degenerate bilinear form $( - , -)$, 
which is homogeneous of degree $0$ and is orthosymplectic, i.e., 
\begin{eqnarray}\label{eq:orthsym-form}
\begin{aligned}
(i). &  \text{ $(V_{\bar0}, V_{\bar1})=(V_{\bar1}, V_{\bar0})=0$}; \\
(ii). &  \text{ $(v, v')=(-1)^{\alpha} (v' v)$ for all $v, v'\in V_{\alpha}$, $\alpha=0, 1$}. 
\end{aligned}
\end{eqnarray}

Let $G$ be the affine group scheme preserves the non-degenerate orthosymplectic form. This is the supergroup scheme $\OSp(V)$ of $\GL(V)$, known as the orthosymplectic supergroup.   

\begin{remark}\label{rem:super-classical}
If $V$ is purely even or purely odd, the general linear supergroup $\GL(V)$ becomes the ordinary general linear group, and $\OSp(V)$ reduces to the orthogonal group $\Or(V_{\bar 0})$ if $V=V_{\bar0}$, and to the symplectic group $\Sp(V_{\bar1})$ if $V=V_{\bar1}$. 
\end{remark}

In view of the remark, results proved for the supergroups in this section apply to the classical groups in the special cases when $V$ is purely even or purely odd. In particulra, we recover the classical FFTs and SFTs for classical groups from the FFTs and SFTs for $\GL(V)$ and $\OSp(V)$ given in Theorem \ref{thm:fts-GL} and Theorem \ref{thm:fft-sft} respectively. 

\subsection{The  functor from $\CB$ to the category of $\OSp(V)$ modules}\label{sect:functor}

Assume that the vector superspace $V$ has a non-degenerated orthosymplectic form satisfying \eqref{eq:orthsym-form}. 
The non-degeneracy of the form requires in particular $\dim{V_{\bar1}}$ be even.  
The form enables us to identify  $V$ with the dual space $V^*$ via the map $\phi: V\longrightarrow V^*$, $v\mapsto \phi_v$, where $\phi_v(x):=(v,x)$ for all $x\in V$. 

For any positive integer $t$, the vector superspace $V^{\otimes t}$
is a $G$-module in the usual way: $g(v_1\otimes\dots\otimes v_t)
= g v_1\otimes gv_2\otimes\dots\otimes gv_t$.
Moreover the form on $V$ induces a non-degenerate bilinear form 
\begin{align}\label{eq:multiform}
(( \ , \ )):  V^{\otimes t}\times V^{\otimes t} \longrightarrow \K, \quad ((v_1\otimes\dots\otimes v_t, w_t\otimes\dots\otimes w_1)) =\prod_{i=1}^t (v_i, w_i). 
\end{align}
This permits the identification of $V^{\otimes t}$ with its dual space
${V^{\otimes t}}^*=\Hom_\K(V^{\otimes t}, \K)$. Furthermore, for any homomorphism $\Phi: V^{\otimes t} \longrightarrow V^{\otimes s}$, there exists a corresponding map 
$\Phi^*: V^{\otimes s} \longrightarrow V^{\otimes t}$, which is uniquely defined by 
\begin{align}\label{eq:adjoint-map}
((\Phi(v) , w))=((v, A^*(w))), \quad \forall v\in V^{\otimes t}, \ w\in V^{\otimes s}. 
\end{align}

Let  ${\bar b}_i=\varphi^{-1}(b^*_i)$ for all $i$. Then $\{{\bar b}_i \mid 1\le i\le d\}$ is also a basis of $V$, which staisfies $( {\bar b}_i,  b_j)=\delta_{ij}$.
Define $c_0:=(\id_V\otimes\varphi^{-1})(C_0)\in V\ot V$, which can be expressed as $c_0=\sum_{i=1}^m b_i\otimes {\bar b}_i$. Then $c_0$
is canonical in that it is independent of the basis, and is invariant under $G$.
We  consider the following $G$-equivariant maps.
\begin{eqnarray}\label{P-C-C}
\begin{aligned}
&&P: V\otimes V\longrightarrow V\otimes V, &\quad v\otimes w \mapsto (-1)^{[v][w]} w\otimes v, \\
&&\check{C}: \K \longrightarrow V\otimes V,  &\quad 1\mapsto c_0, \\
&&\hat{C}: V\otimes V \longrightarrow \K, &\quad v\otimes w\mapsto (v, w).
\end{aligned}
\end{eqnarray}

They have the following properties.

\begin{lemma} \label{lem:PAU}
Let $G=\OSp(V)$. Denote the identity map on $V$ by $\id$.
\begin{enumerate}
\item The element $c_0$ belongs to
$(V\otimes V)^G$ and satisfies $P(c_0)=\epsilon c_0$.

\item The maps $P$, $\check{C}$ and $\hat{C}$ are all $G$-equivariant,
and
\begin{eqnarray}
&P^2=\id^{\ot 2}, \quad
(P\ot \id)(\id\ot P)(P\ot \id) = (\id\ot P)(P\ot\id)(\id\ot P),
\label{eq:PPP}\\
&P \check{C} =  \check{C}, \quad
\hat{C} P =  \hat{C}, \label{eq:fse-es}\\
&\hat{C}\check{C}=\sdim V,  \quad (\hat{C}\ot\id)(\id\ot\check{C})=\id=(\id\ot\hat{C})(\check{C}\ot\id), \label{eq:CC}\\
&(\hat{C}\ot\id)\circ (\id\ot P)= (\id\ot\hat{C})\circ (P\ot\id), \label{eq:CPC-I}\\
&(P\ot \id)\circ(\id\ot \check{C})=(\id\ot P)\circ(\check{C}\ot \id). \label{eq:CPC-P}
\end{eqnarray}
\end{enumerate}
\end{lemma}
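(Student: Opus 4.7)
The plan is to verify each item by direct computation on a fixed homogeneous basis $\{b_i\}$ of $V$ together with its dual basis $\{\bar b_i\}$, characterised by $(\bar b_i, b_j) = \delta_{ij}$; since the isomorphism $\phi:V\to V^*$ is homogeneous of degree $0$, we have $[\bar b_i]=[b_i]$ throughout, a fact that will be used implicitly in every sign computation.

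First I would handle part (1). The $G$-invariance of $c_0$ is essentially a restatement of the definition of $\OSp(V)$: the form $(-,-)$ is $G$-invariant, and $c_0$ is the canonical tensor dual to this form under the identification $\phi$, so it is fixed by $G$. For the sign $\epsilon$ I would compute
\[
P(c_0) = \sum_i (-1)^{[b_i][\bar b_i]}\bar b_i \otimes b_i = \sum_i (-1)^{[b_i]}\bar b_i \otimes b_i,
\]
and then re-express $\sum_i \bar b_i \otimes b_i$ in the form $\sum_j b_j \otimes \bar b_j$ by applying the orthosymplectic symmetry \eqref{eq:orthsym-form}(ii); the signs that appear are precisely the $\epsilon$ of the statement.

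For part (2), the equivariance claims are formal: $P$ is the Koszul super-flip, which commutes with the diagonal action of any affine group scheme; $\check C$ sends $1$ to $c_0\in(V\ot V)^G$, so it is $G$-equivariant; and $\hat C$ is equivariant by the defining property of $\OSp(V)$. Relations \eqref{eq:PPP} are the standard super-symmetric-group identities for the Koszul flip and are checked directly on $v_1\ot v_2$ and $v_1\ot v_2\ot v_3$. The identity $P\check C = \check C$ in \eqref{eq:fse-es} is just part (1), and $\hat C P = \hat C$ is its $\hat C$-side analogue, equivalent to \eqref{eq:orthsym-form}(ii) once the Koszul sign in $P$ is absorbed.

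To prove \eqref{eq:CC}, I would start from
\[
\hat C \check C(1) = \hat C(c_0) = \sum_i (b_i, \bar b_i) = \sum_i (-1)^{[b_i]}(\bar b_i, b_i) = \sum_i (-1)^{[b_i]} = \sdim V,
\]
and then check the first snake identity on $v\in V$: $(\hat C\ot\id)(\id\ot\check C)(v) = \sum_i (v,b_i)\bar b_i = v$ by the definition of the dual basis. The second snake follows similarly, using part (1) to swap the order of the two factors of $c_0$. Finally, \eqref{eq:CPC-I} and \eqref{eq:CPC-P} are proved by inserting $\check C(1)=c_0$ into both sides and comparing on basis tensors, again using \eqref{eq:orthsym-form}(ii) to reconcile the Koszul signs from $P$ with those arising from interchanging $b_i$ and $\bar b_i$. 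The principal obstacle is not any single calculation but the systematic bookkeeping of Koszul signs alongside the sign $\epsilon$ from part (1); all identities reduce, after careful sign tracking, to the dual-basis relation $(\bar b_i,b_j)=\delta_{ij}$ and the super-symmetry of the form.
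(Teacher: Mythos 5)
Your proof is correct and follows essentially the same route as the paper: direct verification on a homogeneous basis $\{b_i\}$ and its $\phi$-dual $\{\bar b_i\}$, using $(\bar b_i,b_j)=\delta_{ij}$, the orthosymplectic symmetry \eqref{eq:orthsym-form}(ii), and careful Koszul sign bookkeeping. The paper works out only the sample computation $\hat C\check C=\sdim V$ and declares the rest similar, which is precisely what you have spelled out.
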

\begin{proof}
Equation \eqref{eq:PPP} reflects standard properties of permutations,
and the relations \eqref{eq:fse-es} are evident.
We prove the other relations.
Consider for example $\hat{C}\check{C}=\hat{C}(\sum_i b_i\ot\bar{b}_i)=\sum_i (b_i, \bar{b}_i)$. The far right hand side
is $\sum_i (-1)^{[b_i]}(\bar{b}_i, b_i) =\sum_i (-1)^{[b_i]}= \sdim V$. This proves the first relation of \eqref{eq:CC}.
The proofs of the remaining relations are similar, and therfore omitted.
\end{proof}

\begin{definition}
We denote by $\CT_G(V)$ the full subcategory of $G$-modules with objects $V^{\otimes r}$ ($r=0, 1, \dots$),
where $V^{\otimes 0}=\K$ by convention. The usual tensor product of $G$-modules and
of $G$-equivariant maps is a bi-functor $\CT_G(V)\times \CT_G(V)\longrightarrow \CT_G(V)$, which will be
called the tensor product of the category. We call $\CT_G(V)$ the {\em category of
tensor representations of $G$}.
\end{definition}
Note that $\Hom_G(V^{\otimes r}, V^{\otimes t})=0$ unless $r+t$ is even. The zero module is not an object of $\CT_G(V)$,
thus the category is only pre-additive but not additive.

\begin{remark}
The category $\CT_G(V)$ is also a strict monoidal category with a symmetric braiding in the sense of \cite{JS}, where the braiding is given by the permutation maps $V^{\otimes r}\otimes V^{\otimes t}\longrightarrow  V^{\otimes t}\otimes V^{\otimes r}$, $v\ot w\mapsto w\ot v$.
\end{remark}

We have the following result.
\begin{theorem}\label{thm:functor}
Let $\delta_V=\sdim{V}$, and denote $G=\OSp(V)$. 
There is a unique additive covariant functor $F: \CB(\delta_V) \longrightarrow \CT_G(V)$
of pre-additive categories with the following properties:
\begin{enumerate}
\item[(i)] $F$ sends the object
$r$ to $V^{\otimes r}$ and morphism $D: k \to \ell$ to
$F(D): V^{\otimes k}\longrightarrow V^{\otimes l}$ where $F(D)$
is defined on the generators of Brauer diagrams by
\begin{eqnarray}\label{eq:F-generating}
\begin{aligned}
F\left(
\begin{picture}(30, 20)(0,0)
\put(15, -15){\line(0, 1){35}}
\end{picture}\right)=\id_V,
\quad&
F\left(
\begin{picture}(30, 20)(0,0)
\qbezier(5, -15)(15, 3)(25, 20)
\qbezier(5, 20)(15, 3)(25, -15)
\end{picture}\right) = P, \\
F\left(
\begin{picture}(30, 20)(0,0)
\qbezier(5, 20)(15, -50)(25, 20)
\end{picture}\right) = \check{C}, \quad&
F\left(
\begin{picture}(30, 20)(0,0)
\qbezier(5, -15)(15, 50)(25, -15)
\end{picture}\right) = \hat{C};
\end{aligned}
\end{eqnarray}
\item[(ii)] $F$ respects tensor products, so that for any
objects $r, r'$ and morphisms $D, D'$ in $\CB(\delta_V)$,
\[
\begin{aligned}
&F(r\otimes r')=V^{\otimes r}\otimes V^{\otimes r'}=F(r)\ot F(r'), \\
&F(D\otimes D')= F(D)\otimes F(D').
\end{aligned}
\]
\end{enumerate}
\end{theorem}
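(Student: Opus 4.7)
The plan is to use the presentation of $\CB(\delta_V)$ by generators and relations given in Theorem \ref{thm:presentation}, so that defining $F$ reduces to specifying its values on the four elementary morphisms $I,X,A,U$ and verifying that the images in $\CT_G(V)$ satisfy the corresponding relations \eqref{eq:identity}--\eqref{eq:straight}. Uniqueness of $F$ is then automatic, since these four morphisms, together with the operations of composition and tensor product, generate every Hom-space $B_k^\ell(\delta_V)$.

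First I would define $F$ on objects by $F(r)=V^{\otimes r}$, and on the four elementary diagrams by the formulas in (i). I would then extend $F$ to arbitrary Brauer diagrams by declaring it to be compatible with tensor product and composition: given any diagram $D$, write it as a word in the generators $I,X,A,U$ under $\otimes$ and $\circ$, and let $F(D)$ be the corresponding word in the maps $\id_V, P, \check C, \hat C$. To see that this is well-defined, I would check that any two such words representing the same element of $B_k^\ell(\delta_V)$ produce the same morphism in $\CT_G(V)$; by Theorem \ref{thm:presentation}(\ref{relations}) this reduces to verifying the images of the defining relations \eqref{eq:identity}--\eqref{eq:straight} and their transforms under ${}^*$ and ${}^\sharp$.

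The verification step is essentially bookkeeping: each defining relation of $\CB(\delta_V)$ has been matched by a relation among $P,\check C,\hat C$ in Lemma \ref{lem:PAU}. Explicitly, \eqref{eq:identity} is trivial; \eqref{eq:XX} corresponds to $P^2=\id^{\otimes 2}$ and \eqref{eq:braid} to the braid identity, both in \eqref{eq:PPP}; the relation \eqref{eq:AX} is the first equation of \eqref{eq:fse-es}, while \eqref{eq:AU} is the first part of \eqref{eq:CC} once one recalls $\delta_V=\sdim V$; the sliding relation \eqref{eq:slide} corresponds to \eqref{eq:CPC-I}, and the straightening relation \eqref{eq:straight} is the second part of \eqref{eq:CC}. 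The transforms under ${}^*$ and ${}^\sharp$ follow symmetrically from \eqref{eq:CPC-P} together with the other halves of \eqref{eq:fse-es} and \eqref{eq:CC}, using the self-duality of $V$ provided by the form $(-,-)$.

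The step I expect to require the most care is the consistent handling of the Koszul signs in the super setting, especially when checking that tensor products of $G$-equivariant maps compose correctly. Concretely, when a diagram $D_1\ot D_2$ is translated into $F(D_1)\ot F(D_2)$, the evaluation on a tensor $v\ot w$ introduces a factor $(-1)^{[F(D_2)][v]}$, and one must verify that these signs cancel correctly in each of the relations above, particularly in the sliding relation \eqref{eq:slide} and in the identity $P(c_0)=\ve c_0$ underlying the consistency of the cap/cup pair with $P$. Once the sign bookkeeping is set up systematically (e.g.\ by expanding both sides in the chosen homogeneous basis $\{b_i\}$ and tracking degrees), each relation reduces to a short direct check using the definitions in \eqref{P-C-C} and the orthosymplectic symmetry \eqref{eq:orthsym-form} of $(-,-)$.
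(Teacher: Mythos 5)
Your proposal follows exactly the same strategy as the paper's proof: define $F$ on the four elementary morphisms via \eqref{eq:F-generating}, extend by functoriality using the presentation of Theorem \ref{thm:presentation}, and verify well-definedness by matching each defining relation of Theorem \ref{thm:presentation}(\ref{relations}) with its image under $F$, which is precisely the content of Lemma \ref{lem:PAU}(2). Two small points worth noting: $F(A\circ X)=\hat C\,P=\hat C$ is the \emph{second} equation of \eqref{eq:fse-es}, not the first (the first, $P\check C=\check C$, is the image of the $*$-transform $X\circ U=U$); and your concern about Koszul signs in $F(D_1\ot D_2)$ is in fact moot here, since all four generator images $\id_V,P,\check C,\hat C$ are morphisms of degree $\bar0$, so the sign $(-1)^{[F(D_2)][v]}$ in the super tensor product of morphisms is always $+1$ --- the only nontrivial signs appear inside the definitions of $P,\check C,\hat C$ themselves and are already absorbed into the basis-level verification in Lemma \ref{lem:PAU}.
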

\begin{proof}
We want to show that the functor $F$
is uniquely defined, and gives rise
to an additive covariant functor from $\CB(\delta_V)$ to $\CT_G(V)$.

By Lemma \ref{lem:PAU}, the linear maps in \eqref{eq:F-generating}
are all $G$-module maps, and by Theorem \ref{thm:presentation}(1),
the above requirements define $F$ on all objects of $\CB(\delta_V)$;
it is clear that $F$ respects tensor products of objects.
As a covariant functor, $F$ preseves
composition of Brauer diagrams, and by (ii) $F$ respects
tensor products of morphisms. It remains only to show that
$F$ is well-defined.

To prove this, we need to show that the images of the generators satisfy
the relations in Theorem \ref{thm:presentation}(\ref{relations}). This is precisely
the content of equations
\eqref{eq:CC}-\eqref{eq:CPC-P} in Lemma \ref{lem:PAU}(2).

Hence for any morphism $D$ in $\CB(\delta_V)$,
$F(D)$ is indeed a well defined morphism in $\CT_G(V)$.
\end{proof}

\begin{remark}
The functor $F$ is a tensor functor between braided strict monoidal categories.
\end{remark}

\begin{lemma}\label{lem:transfrom}
Let $H_s^t = \Hom_G(V^{\otimes s}, V^{\otimes t})$ for all $s, t\in \N$.
\begin{enumerate}
\item The $\K$-linear maps
\[
\begin{aligned}
&F{\mathbb U}_p^q:=(-\otimes \id_V^{\otimes q})(\id_V^{\otimes p}\otimes F(U_q)):
H_{p+q}^r \longrightarrow H_p^{r+q}, \\
&F{\mathbb A}^r_q:=(\id_V^{\otimes r}\otimes F(A_q))(- \otimes \id_V^{\otimes q}):
H_p^{r+q} \longrightarrow H_{p+q}^r
\end{aligned}
\]
are well defined and are mutually inverse isomorphisms.
\item For each pair $k, \ell$ of objects in $\CB(\delta_V)$, the functor $F$ induces a linear map
\begin{eqnarray}\label{eq:functionF}
\begin{aligned}
{F}_k^\ell: B_k^\ell(\delta_V)\longrightarrow H_k^\ell=\Hom_G(V^{\otimes k}, V^{\otimes \ell}), \quad
D \mapsto F(D),
\end{aligned}
\end{eqnarray}
and the following diagrams are commutative.
\begin{displaymath}
    \xymatrix{
        B_p^{r+q}(\delta_V) \ar[r]^{{\mathbb A}^r_q} \ar[d]_{F_p^{r+q}} &  B_{p+q}^r(\delta_V) \ar[d]^{F_{p+q}^r} \\
         H_p^{r+q}  \ar[r]_{F{\mathbb A}^r_q}  & H_{p+q}^r}
\quad\quad
\xymatrix{
         B_{p+q}^r(\delta_V) \ar[r]^{{\mathbb U}_p^q} \ar[d]_{F_{p+q}^r} &  B_p^{r+q}(\delta_V)  \ar[d]^{F_p^{r+q}} \\
         H_{p+q}^r \ar[r]_{F{\mathbb U}_p^q}  & H_p^{r+q}. }
\end{displaymath}
\end{enumerate}
\end{lemma}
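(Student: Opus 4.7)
I address the two parts in turn.

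For part (1), the well-definedness of both $F{\mathbb U}_p^q$ and $F{\mathbb A}^r_q$ follows at once: $F(U_q)$ and $F(A_q)$ are $G$-equivariant as images under $F$ of morphisms in $\CB(\delta_V)$, and tensoring with $\id_V^{\otimes \bullet}$ and composing preserve $G$-equivariance, so the two operations carry $H_{p+q}^r$ and $H_p^{r+q}$ into their target $\Hom_G$ spaces.

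The key point is to show that the two maps are mutually inverse. The plan is to apply the functor $F$ to the snake identities of Lemma~\ref{isomorphism-pri}(2), namely $(I_q \otimes A_q)\circ(U_q \otimes I_q) = I_q$ and its partner; since $F$ respects composition and tensor products, this yields
\[
(\id_V^{\otimes q} \otimes F(A_q))\circ(F(U_q) \otimes \id_V^{\otimes q}) = \id_V^{\otimes q}
\]
in $\CT_G(V)$, together with the symmetric relation. For $\Phi \in H_{p+q}^r$, I would then unfold $F{\mathbb A}^r_q(F{\mathbb U}_p^q(\Phi))$ from the definitions, and repeatedly apply the interchange law $(\alpha\otimes\gamma)\circ(\beta\otimes\delta)=(\alpha\circ\beta)\otimes(\gamma\circ\delta)$ in the strict monoidal category $\CT_G(V)$ to isolate a factor of $(\id_V^{\otimes q}\otimes F(A_q))\circ(F(U_q)\otimes\id_V^{\otimes q})$ acting on the last $q$ tensor slots. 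Collapsing this factor to $\id_V^{\otimes q}$ via the snake identity (tensored with $\id_V^{\otimes p}$ on the left) reduces the whole expression to $\Phi$. The opposite composition $F{\mathbb U}_p^q\circ F{\mathbb A}^r_q = \id$ is handled by the symmetric calculation using the partner snake identity.

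For part (2), the commutativity of the two diagrams is a direct consequence of functoriality. For $D \in B_p^{r+q}(\delta_V)$, using $F(I_q)=\id_V^{\otimes q}$ together with $F(D_1\circ D_2)=F(D_1)\circ F(D_2)$ and $F(D_1\otimes D_2)=F(D_1)\otimes F(D_2)$, one computes
\[
F\bigl({\mathbb A}^r_q(D)\bigr) = F\bigl((I_r\otimes A_q)\circ(D\otimes I_q)\bigr) = \bigl(\id_V^{\otimes r}\otimes F(A_q)\bigr)\circ\bigl(F(D)\otimes \id_V^{\otimes q}\bigr) = F{\mathbb A}^r_q\bigl(F(D)\bigr),
\]
which is the commutativity of the first square. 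The second square follows by the analogous computation for ${\mathbb U}_p^q$.

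The chief (and modest) obstacle lies in part (1): one must carefully track which tensor slots are affected by each interchange-law rearrangement, so that the snake identity is applied to the correct pair of factors. Once the right reshuffling is identified, no input beyond the snake identities of Lemma~\ref{isomorphism-pri}(2) is needed.
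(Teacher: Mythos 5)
Your proposal is correct and follows essentially the same route as the paper: part (1) is obtained by transporting the snake identity of Lemma~\ref{isomorphism-pri}(2) through $F$ (the paper phrases this as ``applying $F$ to Corollary~\ref{isomorphism}'', which is the same computation one level up), and part (2) is the same one-line functoriality calculation that appears in the paper. Incidentally, your version of the part (2) computation silently corrects a typographical slip in the paper's Corollary~\ref{isomorphism} and its proof, where $I_{r+q}\otimes A_q$ should read $I_r\otimes A_q$ for the valencies to match.
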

\begin{proof}
Part (1) follows by applying the functor $F$ to Corollary \ref{isomorphism}, using
Theorem \ref{thm:functor}.

Now for any $D\in B_p^{r+q}(\delta_V)$,
${\mathbb A}^r_q(D)=(I_{r+q}\otimes A_q)\circ(D \otimes I_q)$. Since
$F$ preserves both composition and tensor product of Brauer diagrams,
\[
\begin{aligned}
F({\mathbb A}^r_q(D)) &= (\id_V^{\otimes (r+q)}\otimes F(A_q))(F(D) \otimes \id_V^{\otimes q})\\
                    &=F{\mathbb A}^r_q(F(D)).
\end{aligned}
\]
This proves the commutativity of the first diagram in part (2). The commutativity of the other diagram
is proved in the same way.
\end{proof}

The following result also holds. 
\begin{lemma}\label{lem:F-involution}
Let $^*: B_t^s(\delta_V) \longrightarrow B_s^t(\delta_V)$ be the anti-involution defined in Section \ref{ssec:involutions}.
Then for any $D\in  B_t^s(\delta_V)$, we have $F(D^*)= F(D)^*$, where 
$ F(D)^*: V^{\ot s} \longrightarrow V^{\ot t}$ is defined by \eqref{eq:adjoint-map}. 
\end{lemma}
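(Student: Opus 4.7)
The plan is to verify $F(D^\ast) = F(D)^*$ by reducing to the four generators $I, X, A, U$ of $\CB(\delta_V)$ and invoking the presentation in \thmref{thm:presentation}. Both operations $\ast$ (on the Brauer side, from Section \ref{ssec:involutions}) and $(\cdot)^*$ (the adjoint with respect to \eqref{eq:multiform}) will be shown to be anti-involutive with respect to composition and to \emph{reverse} tensor products, so that agreement on generators propagates to every morphism.

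On the Brauer side, the identities $(D_1 \circ D_2)^\ast = D_2^\ast \circ D_1^\ast$ and $(D_1 \ot D_2)^\ast = D_2^\ast \ot D_1^\ast$ are already recorded in Section \ref{ssec:involutions}. On the representation side, the compositional identity $(\Phi \circ \Psi)^* = \Psi^* \circ \Phi^*$ is routine from the defining relation \eqref{eq:adjoint-map}. I will derive the tensor reversal $(\Phi_1 \ot \Phi_2)^* = \Phi_2^* \ot \Phi_1^*$ directly from the reversed-index pairing in \eqref{eq:multiform}: writing $u = u_1 \ot u_2 \in V^{\ot a_1} \ot V^{\ot a_2}$ and $w = w_2 \ot w_1 \in V^{\ot b_2} \ot V^{\ot b_1}$, the multiform on $V^{\ot(b_1+b_2)}$ factorises as $((\Phi_1(u_1) \ot \Phi_2(u_2),\, w_2 \ot w_1)) = ((\Phi_1(u_1), w_1)) \cdot ((\Phi_2(u_2), w_2))$, which, upon applying the adjoint on each factor and recomposing, forces the tensor swap.

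Next I will verify the identity on generators. By direct diagrammatic inspection (or via Lemma \ref{anti-invol}), $I^\ast = I$, $X^\ast = X$, $A^\ast = U$, and $U^\ast = A$; the corresponding identities on $F$-images are $\id^* = \id$ (trivial), $P^* = P$ (a short calculation using the supersymmetry $(v, w) = (-1)^{[v][w]}(w, v)$ together with the degree-zero property that $(v, w) = 0$ unless $[v] = [w]$), and the pair $\hat C^* = \check C$ together with $\check C^* = \hat C$. The key computation for the latter pair is
\[
((v \ot w, c_0)) = \sum_i (v, \bar b_i)(w, b_i) = (v, w) = ((\hat C(v \ot w), 1)),
\]
which uses the biorthogonality $(\bar b_i, b_j) = \delta_{ij}$ together with the expansion $v = \sum_j (\bar b_j, v)\, b_j$; this identifies $\check C^* = \hat C$, and the dual identity follows by applying $(\cdot)^*$ once more.

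Finally, an induction on the word length of $D$ in $I, X, A, U$ under $\circ$ and $\ot$ extends the identity to every morphism in $\CB(\delta_V)$. The main obstacle in executing this plan is the careful tracking of super-signs: one must verify that the factor $(-1)^{[v][w]}$ in the definition of $P$ cancels against the supersymmetry sign of the form, and that the reversed-index convention in the multiform \eqref{eq:multiform} produces exactly the tensor-reversal predicted by the diagrammatic rule $\ast = {*\circ\sharp}$. Modulo these sign checks, every step is elementary.
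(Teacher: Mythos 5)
Your proposal is essentially correct and takes the reduction-to-generators route that the paper's one-line proof alludes to (``the lemma easily follows from Theorem \ref{thm:presentation} and Theorem \ref{thm:functor}''), but it fills in the substance that the paper leaves implicit, and it drops the paper's appeal to Theorem \ref{thm:fft-sft}(1). That citation of the FFT is not logically needed for this local identity — the adjoint of a $G$-equivariant map with respect to the $G$-invariant multiform is automatically $G$-equivariant, so surjectivity of $F$ plays no role in proving $F(D^\ast)=F(D)^\ast$ once you check it on generators — and your version is therefore cleaner and more self-contained. You also isolate the key structural point that the paper elides: the anti-involution in play must be the composite $\ast = {}^{*\circ\sharp}$ (so $(D_1\ot D_2)^\ast = D_2^\ast\ot D_1^\ast$), precisely because the reversed-index pairing \eqref{eq:multiform} forces $(\Phi_1\ot\Phi_2)^* = \Phi_2^*\ot\Phi_1^*$; with the plain horizontal reflection ${}^*$ (which preserves tensor order) the statement would be false, so the lemma's notation $D^*$ has to be read as $D^{*\circ\sharp}$, exactly as you do.

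Two small points worth tightening. First, your step ``the dual identity $\hat C^* = \check C$ follows by applying $(\cdot)^*$ once more'' tacitly invokes involutivity of the adjoint with respect to the multiform; this is true but deserves a word (or, more simply, verify $\hat C^* = \check C$ directly by the same calculation, using $\sum_i (v,\bar b_i)(w,b_i) = (v,w)$, which in fact covers both identities at once). Second, you flag but defer the super-sign bookkeeping; for completeness you should note that in the computation of $P^*$ the Koszul sign $(-1)^{[v][w]}$ cancels because the form's homogeneity forces parity matching between paired slots, and that the tensor factors of $F(D)$ are all even morphisms so no Koszul signs intervene in the tensor product of maps — once these two observations are made the sign issues disappear.
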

\begin{proof}
Given part (1) of Theorem \ref{thm:fft-sft}, which states that $\Hom_G(V^{\ot s}, V^{\ot t})=F(B_s^t(\delta_V))$,  
the lemma easily follows from Theorem \ref{thm:presentation} and Theorem \ref{thm:functor}.  
We note that the proof of Theorem \ref{thm:fft-sft} (1) in \cite{DLZ, LZ6} does not involve the anti-involutions.
\end{proof}

\subsection{The functor from $\OB$ to the category of $\GL(V)$ modules}\label{sect:functor-GL}

Given a vector superspace $V$ of dimension $d=\dim{V_{\bar0}}+\dim{V_{\bar1}}<\infty$, denote by $V^*$ its dual space.  
Let $\{b_i\mid 1\le i\le d\}$ be a homogeneous basis of $V$ and $\{b^*_i\mid 1\le i\le d\}$ be the dual basis of $V^*$. 

Denote by $\GL(V)$ the general linear supergroup on $V$ defined as an algebraic group. 
It naturally acts on $V$, and also acts on $V^*$ by 
$(g.\overline{w})(v)=\overline{w})(g^{-1}.v)$ for all $v\in V$, $\overline{w}\in V^*$ and $g\in\GL(V)$. The actions extend 
to any repeated tensor products of $V$ and $V^*$ by 
$g(v_1\otimes\dots\otimes v_t) = g v_1\otimes gv_2\otimes\dots\otimes gv_t$, where $v_i$ belongs to $V$ and $V^*$, and $g\in\GL(V)$.

As before, we set $C_0=\sum_{i=1}^d b_i\otimes b^*_i$,  which is independent of the bases for $V$ and $V^*$.  
Note that $C_0$ is an $\GL(V)$-invariant in $V\otimes V^*$. 

Denote the identity maps on $V$ and $V^*$ by $\id_V$ and $\id_{V^*}$ respectively. For convenience, we write $V^+=V$ and $V^-=V^*$. 

Define the following linear maps,  
\begin{align}
 &P^{\varepsilon\varepsilon'}: V^\varepsilon\otimes V^{\varepsilon'}\longrightarrow V^{\varepsilon'}\otimes V^\varepsilon, \quad v\otimes w \mapsto (-1)^{[v][w]} w\otimes v, \\
 & \check{C}^{+-}: \K\longrightarrow V\otimes V^*,  \quad 1\mapsto C_0=\sum_{i=1}^d b_i\otimes b^*_i, \\
& \check{C}^{-+}: \K\longrightarrow V^*\otimes V,  \quad 1\mapsto P^{+-}(C_0)=\sum_{i=1}^d (-1)^{[b_i]}b^*_i\otimes b_i, \\
&\hat{C}_{-+}: V^*\otimes V\longrightarrow\K, \quad \overline{w}\otimes  v\mapsto \overline{w}(v),   \\
& \hat{C}_{+-}: V\otimes V^*\longrightarrow\K, \quad v \otimes \overline{w}\mapsto (-1)^{[v][\overline{w}]} \overline{w}(v), 
\end{align}
where $\varepsilon,  \varepsilon=\pm$. 
Let
\[
\begin{aligned}
\hat{C}_{-+}^{(2)}=\hat{C}_{-+}(\id_{V^*}\otimes\hat{C}_{-+}\otimes\id_V), \\
\hat{C}_{+-}^{(2)}=\hat{C}_{+-}(\id_V\otimes\hat{C}_{-+}\otimes\id_{V^*}), \\
\check{C}^{+-(2)}= (\id_V\otimes\check{C}^{+-}\otimes \id_{V^*})\check{C}^{+-}, \\
\check{C}^{-+(2)}= (\id_{V^*}\otimes\check{C}^{-+}\otimes \id_V)\check{C}^{-+}.
\end{aligned}
\]

Note that the above maps are all $\GL(V)$-equivariant. 

\begin{lemma}\label{lem:P-C-GL}
Write $P=P^{++}$. 
 The following relations hold. 
\begin{eqnarray} 
&P^{\varepsilon'\varepsilon} P^{\varepsilon\varepsilon'}=\id_{V^\varepsilon}\otimes \id_{V^{\varepsilon'}}, \label{eq:PP-GL}\\
&(P\ot \id_V)(\id_V\ot P)(P\ot \id_V) = (\id_V\ot P)(P\ot\id_V)(\id_V\ot P),
\label{eq:PPP-GL}\\
&P^{--}=(\hat{C}_{-+}^{(2)}\otimes \id_{V^*}^{\otimes 2})(\id_{V^*}^{\otimes2}\otimes P\otimes \id_{V^*}^{\otimes2} )(\id_{V^*}^{\otimes2}\otimes\check{C}^{+-(2)}),  \label{eq:PC-1}\\
&\phantom{P^{---}}= (\id_{V^*}^{\otimes 2}\otimes \hat{C}_{+-}^{(2)})(\id_{V^*}^{\otimes 2}\otimes P\otimes\id_{V^*}^{\otimes 2})
(\check{C}^{-+(2)}\otimes\id_{V^*}^{\otimes2}),  \label{eq:PC-2} \\
&P^{-+}=(\hat{C}_{-+}\ot\id_V\ot\id_{V^*})(\id_V\ot P\ot\id_{V^*})(\id_{V^*}\ot \id_V\ot\check{C}^{+-}),  \label{eq:PC-3}\\
&P^{+-}=(\id_{V^*}\ot\id_V\ot\hat{C}_{-+})(\id_V\ot\ot P\ot\id_V)(\check{C}^{+-}\ot\id_{V^*}\ot\id_V),   \label{eq:PC-4}\\
&\hat{C}_{+-}\check{C}^{+-}=\sdim V = \hat{C}_{-+}\check{C}^{-+},   \label{eq:CC-GL}\\
&(\hat{C}_{+-}\otimes\id_V)(\id_V\otimes \check{C}^{-+}) = \id_V = (\id_V\otimes\hat{C}_{-+})(\check{C}^{+-}\otimes\id_V), \label{eq:slide-GL+}\\
& (\hat{C}^{-+}\ot\id_{V^*})(\id_{V^*}\ot\check{C}^{+-}) =\id_{V^*} =(\id_{V^*}\ot \hat{C}^{+-})(\check{C}^{-+}\ot \id_{V^*}), \label{eq:slide-GL-}\\
&(\id_V\ot\hat{C}_{+-})(P\ot \id_{V^*})(\id_V\ot\check{C}^{+-})=\id_V.  \label{eq:twist-GL}
\end{eqnarray}

\begin{proof}
Relations \eqref{eq:PP-GL} and \eqref{eq:PPP-GL} are well known, and easy to prove.  To prove \eqref{eq:PC-1}, we take any $\overline{v}, \overline{w}\in V^*$ and apply the right hand side of \eqref{eq:PC-1}
to $\overline{v}\otimes \overline{w}$. We obtain 
\[
\begin{aligned}
&(\hat{C}_{-+}^{(2)}\otimes \id_{V^*}^{\otimes 2})(\id_{V^*}^{\otimes2}\otimes P\otimes \id_{V^*}^{\otimes2} )(\overline{v}\otimes \overline{w}\otimes\check{C}^{+-(2)}(1))\\
&= \sum_{i, j=1}^d (-1)^{[b_i][b_j]} \hat{C}_{-+}^{(2)}(\overline{v}\otimes \overline{w}\otimes b_j\ot b_i)\ot b_i^*\ot b_j^*)=(-1)^{[\overline{v}][\overline{w}]}\overline{w}\ot \overline{v}. 
\end{aligned}
\]
The last expression is clearly equal to $P^{--}(\overline{v}\otimes \overline{w})$, proving \eqref{eq:PC-1}. 
The proofs of the remaining relations are similar, and thus omitted.
\end{proof}
\end{lemma}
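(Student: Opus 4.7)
The plan is to verify each identity by a direct computation on homogeneous basis elements, using only the explicit definitions of $P^{\varepsilon\varepsilon'}$, $\check{C}^{\varepsilon\varepsilon'}$ and $\hat{C}_{\varepsilon\varepsilon'}$. All of the maps are $\GL(V)$-equivariant by construction, so equivariance is not an issue; what must be checked is that the stated compositions coincide as linear maps. Since both sides of every relation are $\K$-linear, I will evaluate on a basis element $b_{i_1}^{\varepsilon_1}\otimes\dots\otimes b_{i_r}^{\varepsilon_r}$ (where $b_i^{+}=b_i$ and $b_i^{-}=b_i^*$) and compare.

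First, the self-inverse identity \eqref{eq:PP-GL} and the braid relation \eqref{eq:PPP-GL} are immediate from the definition of $P^{\varepsilon\varepsilon'}$: sign cancellations give $(-1)^{2[v][w]}=1$ for the former, while the latter is the standard Koszul-sign braid identity verified term by term on $v_1\otimes v_2\otimes v_3$. Next, the four ``rotation'' formulas \eqref{eq:PC-1}--\eqref{eq:PC-4} are all instances of the same principle: the cup $\check{C}^{\varepsilon(-\varepsilon)}$ and cap $\hat{C}_{(-\varepsilon)\varepsilon}$ furnish the duality $V\cong V^{**}$ (up to sign), so composing with $P$ in the middle and contracting with cups/caps at the outside transports the permutation on $V\otimes V$ to the permutation on the opposite factors. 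The check is mechanical: apply both sides of, e.g., \eqref{eq:PC-1} to $\bar v\otimes \bar w\in V^{*}\otimes V^{*}$, expand $\check{C}^{+-(2)}(1)=\sum_{i,j}(-1)^{[b_i][b_j]}\,b_i\otimes b_j\otimes b_j^{*}\otimes b_i^{*}$ (double cup) and evaluate; the result is $(-1)^{[\bar v][\bar w]}\bar w\otimes \bar v=P^{--}(\bar v\otimes \bar w)$, exactly as in the sample computation shown at the end of \eqref{eq:PC-1}. The remaining three formulas follow by the same bookkeeping with signs.

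The trace identity \eqref{eq:CC-GL} is a one-line calculation: $\hat{C}_{+-}\check{C}^{+-}(1)=\sum_i(-1)^{[b_i]}b_i^{*}(b_i)=\sum_i(-1)^{[b_i]}=\sdim V$, and similarly for $\hat{C}_{-+}\check{C}^{-+}$. The two pairs of snake (zig-zag) relations \eqref{eq:slide-GL+} and \eqref{eq:slide-GL-} are verified by applying each side to $b_j$ (respectively $b_j^{*}$): for example the first half of \eqref{eq:slide-GL+} sends $b_j\mapsto \sum_i b_j\otimes b_i^{*}\otimes b_i\mapsto \sum_i (-1)^{[b_j][b_i^{*}]}b_i^{*}(b_j)b_i=b_j$, and likewise on the other side, the signs conspiring correctly because the cup $\check{C}^{-+}$ already carries the parity factor $(-1)^{[b_i]}$. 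Finally, \eqref{eq:twist-GL} is obtained by expanding the cup on the right, applying $P$ to move $V$ past a $V$, and then using the pairing $\hat{C}_{+-}$; the sign produced by $P$ cancels with the sign in $\hat{C}_{+-}$, giving the identity on $V$.

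The calculations are all routine, and the only mild obstacle is sign bookkeeping: because $\check{C}^{-+}$ carries an explicit $(-1)^{[b_i]}$ while $\check{C}^{+-}$ does not, and $\hat{C}_{+-}$ carries $(-1)^{[v][\bar w]}$ while $\hat{C}_{-+}$ does not, one must be careful to insert the correct Koszul sign whenever a homogeneous element crosses another in a tensor product. Organising the computations so that, in each identity, the cups/caps used on the two sides are matched by type (i.e., $+-$ with $-+$) makes the signs come out automatically, and gives the cleanest verification of all seven relations.
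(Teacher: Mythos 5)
Your proposal takes essentially the same route as the paper's proof: verify each identity by direct evaluation on homogeneous basis elements, and the key sample computation — applying the right side of \eqref{eq:PC-1} to $\bar v\ot\bar w$, expanding the double cup, sliding through $P$, and contracting with the cap — is the one the paper actually carries out. So the methods coincide.

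One small correction to your write-up. You expand the double cup as
\[
\check{C}^{+-(2)}(1)=\sum_{i,j}(-1)^{[b_i][b_j]}\,b_i\otimes b_j\otimes b_j^{*}\otimes b_i^{*},
\]
but this is not correct: since $\check{C}^{+-}(1)=\sum_j b_j\otimes b_j^*$ is even, the insertion $\bigl(\id_V\otimes\check{C}^{+-}\otimes\id_{V^*}\bigr)$ produces no Koszul sign, and the actual expansion is
\[
\check{C}^{+-(2)}(1)=\sum_{i,j}b_i\otimes b_j\otimes b_j^{*}\otimes b_i^{*}
\]
with no sign. The factor $(-1)^{[b_i][b_j]}$ only appears after you apply $P$ to the middle two tensor slots; if you build it into the cup expansion as written, the two occurrences of $(-1)^{[b_i][b_j]}$ cancel and you would arrive at $\bar w\ot\bar v$ rather than the required $(-1)^{[\bar v][\bar w]}\bar w\ot\bar v$. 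Your claimed final answer is correct, so this is presumably just a slip in transcription, but the formula as stated would mislead anyone following the recipe literally.
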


Given any $\eta=(\varepsilon_1, \dots, \varepsilon_r)\in\CN$, we denote $V^\eta=V^{\varepsilon_1}\otimes V^{\varepsilon_2} \otimes\dots \otimes V^{\varepsilon_r}$, with $V^\emptyset=\K$ by convention. 

\begin{definition}
We denote by $\CT_\GL(V)$ the full subcategory of $\GL(V)$-modules with objects $V^\eta$ for all $\eta\in\CN$. 
The usual tensor product of $\GL(V)$-modules and
of $\GL(V)$-equivariant maps is a bi-functor $\CT_\GL(V)\times \CT_\GL(V)\longrightarrow \CT_\GL(V)$, which will be
called the tensor product of the category. We call $\CT_\GL(V)$ the {\em category of
tensor representations of $\GL(V)$}.
\end{definition}

\begin{theorem}\label{thm:functor-GL}
Let $\delta_V=\sdim{V}$. 
There is a unique additive covariant functor $F: \OB(\delta_V) \longrightarrow \CT_\GL(V)$
of pre-additive categories with the following properties:
\begin{enumerate}
\item[(i)] $F$ sends the object
$\eta\in\CN$ to $V^\eta$,  and morphism $D \in\Hom(\eta, \eta')$ to $F(D): V^\eta\longrightarrow V^{\eta'}$,  where $F(D)$
is defined on the generators of the oriented Brauer diagrams by
\begin{eqnarray}\label{eq:F-generating-GL}
\begin{aligned}
F\left(
\begin{picture}(30, 25)(0,0)
\put(15, 20){\vector(0, -1){35}}
\end{picture}\right)=\id_V,
\quad&  F\left(
\begin{picture}(30, 25)(0,0)
\put(15, -15){\vector(0, 1){35}}
\end{picture}\right)=\id_{V^*},\\
F\left(
\begin{picture}(30, 25)(0,0)
\qbezier(5, -15)(15, 3)(25, 20)
\put(22, -10){\vector(2, -3){5}}
\qbezier(5, 20)(15, 3)(25, -15)
\put(8, -10){\vector(-2, -3){5}}
\end{picture}\right) = P, \quad &\\
F\left(
\begin{picture}(30, 25)(0,0)
\qbezier(5, 20)(15, -50)(25, 20)
\put(24, 16){\vector(1, 4){2}}
\end{picture}\right) = \check{C}^{+-}, \quad& 
F\left(
\begin{picture}(30, 25)(0,0)
\qbezier(5, 20)(15, -50)(25, 20)
\put(6, 16){\vector(-1, 4){2}}
\end{picture}\right) = \check{C}^{-+},\\
F\left(
\begin{picture}(30, 25)(0,0)
\qbezier(5, -15)(15, 50)(25, -15)
\put(24, -10){\vector(1, -4){2}}
\end{picture}\right)= \hat{C}_{-+}, \quad& 
F\left(
\begin{picture}(30, 25)(0,0)
\qbezier(5, -15)(15, 50)(25, -15)
\put(6, -10){\vector(-1, -4){2}}
\end{picture}\right) = \hat{C}_{+-}; 
\end{aligned}
\end{eqnarray}
\item[(ii)] $F$ respects tensor products, so that for any
objects $\eta, \eta'$ and morphisms $D, D'$ in $\OB(\delta_V)$,
\[
\begin{aligned}
&F(\eta\otimes \eta')=V^{\eta}\otimes V^{\eta'}=F(\eta)\ot F(\eta'), \\
&F(D\otimes D')= F(D)\otimes F(D').
\end{aligned}
\]
\end{enumerate}
\begin{proof}  The proof is similar to that of Theorem \ref{thm:functor}, so we confine ourselves to some brief comments. The key point of the proof is again to show that $F$ as defined preserves the relations among the generators  of oriented Brauer diagrams. Consider, e.g., the relation of orientation reversing. Applying $F$ to the diagrams on the left and right sides of relation $(d)$ in Theorem \ref{thm:tensor-cat-or}(3),  we obtain the following maps respectively. 
\[
\begin{aligned}
&(\hat{C}_{-+}^{(2)}\otimes \id_{V^*}^{\otimes 2})(\id_{V^*}^{\otimes2}\otimes P\otimes \id_{V^*}^{\otimes2} )(\id_{V^*}^{\otimes2}\otimes\check{C}^{+-(2)}), 
\quad \text{and} \\
&(\id_{V^*}^{\otimes 2}\otimes \hat{C}_{+-}^{(2)})(\id_{V^*}^{\otimes 2}\otimes P\otimes\id_{V^*}^{\otimes 2})
(\check{C}^{-+(2)}\otimes\id_{V^*}^{\otimes2}).
\end{aligned}
\]
By relations \eqref{eq:PC-1} and \eqref{eq:PC-2}  in Lemma \ref{lem:P-C-GL}, both maps are equal to $P^{--}$. 
This proves the  orientation reversing relation. The other relations can be proved similarly by using Lemma \ref{lem:P-C-GL}. 
\end{proof}
\end{theorem}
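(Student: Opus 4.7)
The plan is to mirror the proof strategy of Theorem~\ref{thm:functor}, replacing the Brauer presentation of Theorem~\ref{thm:presentation} with the oriented presentation of Theorem~\ref{thm:tensor-cat-or} and the target identities of Lemma~\ref{lem:PAU} with those of Lemma~\ref{lem:P-C-GL}. These two ingredients together supply essentially everything one needs: Theorem~\ref{thm:tensor-cat-or} tells us exactly which relations must be checked, and Lemma~\ref{lem:P-C-GL} has already done most of the checking on the target side.

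First I would define $F$ on objects by $F(\eta)=V^{\eta}$; compatibility with the tensor product of objects is immediate from the juxtaposition rule $\eta\otimes\eta'=(\eta,\eta')$ together with $V^{(\eta,\eta')}=V^{\eta}\otimes V^{\eta'}$. Next I would set $F$ on the elementary oriented diagrams by the formulas \eqref{eq:F-generating-GL}, and extend to arbitrary morphisms by insisting that $F$ respect composition and tensor product. Because Theorem~\ref{thm:tensor-cat-or}(2) asserts that these elementary diagrams generate all morphisms under $\otimes$ and $\circ$, this prescription both pins the functor down uniquely (giving the uniqueness claim) and yields a well-defined map of $\mathrm{Hom}$-sets \emph{provided} the images satisfy the defining relations of Theorem~\ref{thm:tensor-cat-or}(3).

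The substantive step is therefore the verification of relations (a)--(g) of Theorem~\ref{thm:tensor-cat-or}(3) for the images in $\CT_{\GL}(V)$. Each of these is supplied by Lemma~\ref{lem:P-C-GL}: relation (a) by \eqref{eq:PP-GL}; the braid relation (b) by \eqref{eq:PPP-GL}; the two straightening relations (c) by \eqref{eq:slide-GL+} and \eqref{eq:slide-GL-}; the orientation-reversed crossings (d) by \eqref{eq:PC-1}--\eqref{eq:PC-4}; the sliding relations (e) by \eqref{eq:twist-GL} (applied in both orientations, which is where \eqref{eq:PC-3}--\eqref{eq:PC-4} re-enter); the de-loop relation (f) again by \eqref{eq:slide-GL+}; and the loop-removal relation (g) by \eqref{eq:CC-GL} together with the hypothesis $\delta_V=\sdim V$. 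Each of $P$, $\check C^{\pm\mp}$ and $\hat C_{\pm\mp}$ is manifestly $\GL(V)$-equivariant, so the images land in $\CT_{\GL}(V)$ as required.

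The main obstacle I anticipate is the orientation-reversed crossing relation (d). Unlike the other relations, this identity involves cancelling four mixed-orientation cups and caps against a copy of the even-even braiding $P$, and the Koszul signs contributed by $\check C^{-+}$ and $\hat C_{+-}$ (which differ from those of $\check C^{+-}$ and $\hat C_{-+}$ by factors of $(-1)^{[b_i]}$) must combine to yield precisely the graded swap $P^{--}$ on $V^{*}\otimes V^{*}$. This is exactly the content of \eqref{eq:PC-1} and \eqref{eq:PC-2}, and it is the only point at which the super grading plays an active role; once these identities are in hand, the remainder of the verification reduces to short bookkeeping with the dual basis.
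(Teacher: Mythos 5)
Your proposal follows exactly the same route as the paper's own proof: define $F$ on the elementary oriented diagrams by \eqref{eq:F-generating-GL}, invoke the generating statement in Theorem~\ref{thm:tensor-cat-or}(2) for uniqueness and well-definition up to relations, and then verify the defining relations (a)--(g) via Lemma~\ref{lem:P-C-GL}, with the orientation-reversing relation (d) being the only one spelled out explicitly through \eqref{eq:PC-1} and \eqref{eq:PC-2}. The paper's proof is essentially this argument stated more tersely, so your version is a faithful (and somewhat more itemised) account of it.
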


\subsection{The  functor from $\wt\CB$ to the category of $\SO(V)$ modules}\label{sect:functor-SO}

We now return to the setting of Section \ref{sect:functor}, but assume that $V=V_{\bar0}$ is purely even of dimension $m$. Let $G=\SO(V)$, and denote by $\CT_G(V)$ the category of tensor modules of $G$. Now we choose an orthonormal basis $\{e_1, \dots, e_m\}$ for $V$, and let 
\[
\Lambda = e_1\wedge e_2\wedge\dots \wedge e_m:=\Sigma_m(e_1\ot e_2\ot \dots \ot e_m).
\]
Then  $g.\Lambda = \det(g)\Lambda=\Lambda$ for all $g\in G$. We still denote by $\Lambda$ the map $\C \longrightarrow V^{\ot m}$ such that $1\mapsto \Lambda$. 

\begin{theorem}[\cite{LZ8}]\label{thm:functor-SO} Let $G=\SO(V)$ with $\dim V=m$. There exists a tensor functor $F: \wt\CB(m)\longrightarrow \CT_G(V)$,  which coincides with the corresponding functor in Theorem \ref{thm:functor} on all objects and the morphisms in $\wt\CB_0(m)=\CB(m)$,  and 
\[
F(\Delta_m) = \Lambda, \quad F(\Delta^*_m)=\Lambda^*. 
\]
Furthermore, $F(D^*)= F(D)^*$  for any morphism $D\in \wt\CB(m)$. 
\begin{proof}
It is easy to see that  $\Lambda$ is harmonic and $\Lambda\circ \Lambda^*= F(\Sigma_m)$.  Furthermore, $F$ preserves relation (5) in Definition \ref{def:wbrcat}. Now the theorem follows from Theorem \ref{thm:functor}. 
\end{proof}
\end{theorem}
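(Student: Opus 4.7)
The plan is to build $F$ by extending the Brauer-category functor of \thmref{thm:functor}. Theorem \ref{thm:functor} already supplies a tensor functor $F_0 : \CB(m) \to \CT_G(V)$ sending Brauer diagrams to the $G$-equivariant maps $P, \check C, \hat C, \id_V$; since $\sdim V = m$, the loop-removal relation $A \circ U = \delta$ is satisfied with $\delta = m$. To extend to $\wt\CB(m)$, I would set $F(\Delta_m) = \Lambda$ and $F(\Delta_m^*) = \Lambda^*$. By the universal nature of the presentation in \remref{def:wbrcat}, it then suffices to verify that relations (2)--(5) of \remref{def:wbrcat} hold in $\CT_G(V)$ after applying $F$; the relations (1) are exactly those already handled by $F_0$.

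Relations (2) and (3) are essentially formal. For (3), $F(X)$ in positions $r{+}1, r{+}2$ acts on $\Lambda = \sum_{\sigma \in \Sym_m} \ve(\sigma)\, e_{\sigma(1)}\otimes\cdots\otimes e_{\sigma(m)}$ by transposing two labels; the reindexing $\sigma \mapsto \sigma(r{+}1, r{+}2)$ changes the sign of $\ve(\sigma)$, giving $-\Lambda$. For (2) (harmonicity), contraction of $\Lambda$ via $F(A) = \hat{C}$ in positions $r{+}1, r{+}2$ produces $\sum_{\sigma} \ve(\sigma)\,\delta_{\sigma(r+1),\sigma(r+2)}\,(\cdots)$, which vanishes because $\sigma$ is a bijection. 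For (4), I would compute $\Lambda^*(e_{i_1}\otimes\cdots\otimes e_{i_m})$ using orthonormality: it equals $\ve(\tau)$ when $(i_1,\dots,i_m) = (\tau(1),\dots,\tau(m))$ and $0$ otherwise. A parallel computation shows $F(\Sigma_m)(e_{\tau(1)}\otimes\cdots\otimes e_{\tau(m)}) = \ve(\tau)\Lambda$, matching $(\Lambda \circ \Lambda^*)(e_{\tau(1)}\otimes\cdots\otimes e_{\tau(m)}) = \ve(\tau)\Lambda$.

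The main obstacle is relation (5), the cyclic identity $\Delta_m\otimes I\otimes \Delta_m = (c_{m+1}\otimes I^{\otimes m})\circ(\Delta_m\otimes\Delta_m\otimes I)$. After applying $F$, both sides become $G$-equivariant maps $V\to V^{\otimes 2m+1}$. I would verify the identity on $v = e_k$ by expanding in the orthonormal basis: the LHS is $\Lambda\otimes e_k\otimes\Lambda$, while the RHS cyclically shifts the first $m{+}1$ positions of $\Lambda\otimes\Lambda\otimes e_k$, moving $e_k$ into position $m{+}1$ and displacing one slot of the first $\Lambda$ into position $1$. After reindexing the double sum over $\Sym_m\times\Sym_m$, the antisymmetry of $\Lambda$ forces each surviving term to have distinct labels across the last $m$ positions, and a Laplace-type expansion identifies the result with $\Lambda\otimes e_k\otimes\Lambda$. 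A cleaner conceptual route is to note that both sides coincide with the unique (up to scalar) $G$-invariant map $V\to V^{\otimes 2m+1}$ of this form supported on the relevant Young symmetry type, and to fix the scalar by evaluating on a single basis vector.

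For the final compatibility $F(D^*) = F(D)^*$, I would invoke \lemref{lem:str}: any morphism of $\wt\CB(m)$ is either a Brauer diagram (handled by \lemref{lem:F-involution}) or is obtained by tensoring/composing Brauer morphisms with a single $\Delta_m$. Since $F(\Delta_m^*) = \Lambda^* = F(\Delta_m)^*$ by construction, and $*$ commutes with $\otimes$ and reverses $\circ$ in both $\wt\CB(m)$ and $\CT_G(V)$, the general compatibility follows by induction on the generators appearing in $D$.
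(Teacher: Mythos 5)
Your approach is the same as the paper's: extend the Brauer-category functor of Theorem~\ref{thm:functor} by sending $\Delta_m\mapsto\Lambda$, verify that the images of the generators satisfy the new defining relations of $\wt\CB(m)$, and conclude. Both you and the paper leave the verification of the cyclic relation (5) at a sketch level (the paper defers the detail to \cite{LZ8}), and your use of Lemmas~\ref{lem:str} and \ref{lem:F-involution} for the compatibility $F(D^*)=F(D)^*$ matches the intended argument.
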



\section{Invariant theory of classical groups and supergroups}

We develop the invariant theory of the general linear supergroup $\GL(V)$ and the orthosymplectic supergroup $\OSp(V)$ using the Baruer categories, obtaining the first and second fundamental theorems (FFT and SFT) in a categorical formulation in Theorem \ref{thm:fts-GL} and Theorem \ref{thm:fft-sft}. 
By Remark \ref{rem:super-classical}, we recover the classical FFTs and SFTs for classical groups from Theorem \ref{thm:fts-GL} and Theorem \ref{thm:fft-sft}. 

The Brauer category provides useful methods to study structures of the endomorphism algebras of the  orthogonal and symplectic groups. In particular, we will obtain presentations of the endomorphism algebras in  Section \ref{sect:ker-O} and Section \ref{sect:ker-Sp}.

Henceforth we assume that $\K$ is a field of characteristic zero.

\subsection{Schur-Weyl duality}

In this section we state the two fundamental theorems of invariant theory for the general linear supergroup $\GL(V)$ in a
form convenient for use in our context.

Given any two $\Z_2$-graded $\K$-vector spaces,  $U ,W$, we may form
the tensor products $U\ot_\C W$ and $W\otimes U^*\cong \Hom_\C(U,W)$. These
are  $\Z_2$-graded in the usual way. 

Let $V$ be the finite dimensional vector superspace of Section \ref{sect:functor}, we let  $\GL(V)$ be the general linear supergroup, again defined as an algebraic group. 
Then $\GL(V)$  acts on the super spaces 
$V^{\ot r}$ for $r=1,2,\dots$.
The super-permutation $P:V\ot V\lr V\ot V$, given by
\be\label{eq:tau}
P(v\ot w)=(-1)^{[v][w]}w\ot v,
\ee
for homogeneous $v, w\in V$, and extended linearly.
Then $P\in\End_{\GL(V)}(V\ot V)=(\End_\K(V\ot V))^{\GL(V)}$. 

We have a homomorphism of $\C$-algebras
\be\label{eq:sym}
\varpi_r:\K\Sym_r\lr \End (V^{\ot r})^{\GL(V)},
\ee
in which the simple transpositions in $\Sym_r$ are mapped to the endomorphisms $\tau$ of \eqref{eq:tau}, acting on the appropriate factors
of the product.

The following results are the content of \cite[Theorems 3.3 and 3.7]{BR}. 

\begin{theorem}[\cite{BR}]\label{thm:fftgl}\label{thm:BR2}
\begin{enumerate}
\item
The map $\varpi_r$ of \eqref{eq:sym} is surjective for all $r$.
\item Assume that $(\dim{V_{\bar0}}, \dim{V_{\bar1}})=(m, \ell)$.
If $r< (m+1)(\ell+1)$, then $\varpi_r$ 
 is an isomorphism of superalgebras. If $r\ge (m+1)(\ell+1)$,
then the kernel of $\varpi_r$ is the (two-sided) ideal of $\K\Sym_r$ generated by the Young
symmetriser of the partition with $m+1$ rows and $\ell+1$ columns.
\end{enumerate}
\end{theorem}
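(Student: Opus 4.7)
The plan is to establish the super analogue of classical Schur-Weyl duality, then to read off the kernel from the $\GL(V) \times \Sym_r$-decomposition of $V^{\otimes r}$. For part (1), I would first identify $\End(V^{\otimes r})^{\GL(V)}$ with the space of $\GL(V)$-invariants in $\End(V)^{\otimes r} \cong (V^* \otimes V)^{\otimes r}$, viewed as a super vector space with $\GL(V)$ acting by simultaneous conjugation. The super version of the first fundamental theorem of invariant theory for $\GL(V)$, applied to multilinear invariants, asserts that such invariants are spanned by products of super-traces along the cycles of permutations in $\Sym_r$. Under the identification, these correspond precisely to the endomorphisms $\varpi_r(\sigma)$ defined by the super-permutation action, yielding surjectivity.

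For part (2), semi-simplicity of $\K \Sym_r$ (characteristic zero) together with that of the polynomial representation category of $\GL(V)$, combined with the double centraliser property coming from (1), gives a decomposition
\[
V^{\otimes r} \;\cong\; \bigoplus_{\lambda} L_\lambda \boxtimes S^\lambda
\]
as $\GL(V) \times \Sym_r$-bimodule, where the sum runs over those partitions $\lambda \vdash r$ such that the simple polynomial $\GL(V)$-module $L_\lambda$ is non-zero. By the Berele-Regev classification, these are exactly the $(m,\ell)$-hook partitions, namely those $\lambda$ with $\lambda_{m+1} \le \ell$, equivalently those $\lambda$ not containing the rectangle $\mu := (\ell+1)^{m+1}$. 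It follows that
\[
\Ker(\varpi_r) \;=\; \bigoplus_{\lambda \,\supseteq\, \mu} V_\lambda,
\]
where $V_\lambda \subseteq \K \Sym_r$ is the isotypic component of $S^\lambda$. When $r < |\mu| = (m+1)(\ell+1)$, no partition of $r$ can contain $\mu$, and so $\varpi_r$ is an isomorphism.

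To finish the case $r \ge (m+1)(\ell+1)$, I would show that the two-sided ideal of $\K \Sym_r$ generated by the Young symmetriser $c_\mu$ (viewed inside $\K \Sym_{|\mu|} \subseteq \K \Sym_r$) coincides with $\bigoplus_{\lambda \supseteq \mu} V_\lambda$. The key lemma is that $c_\mu$ acts non-trivially on $S^\lambda$ if and only if $\mu \subseteq \lambda$; this follows from the Littlewood-Richardson branching rule for $\Sym_{|\mu|} \times \Sym_{r-|\mu|} \subseteq \Sym_r$, since $\Hom_{\Sym_{|\mu|}}(S^\mu, \Res S^\lambda) \neq 0$ precisely when $c^\lambda_{\mu \nu} > 0$ for some $\nu$, which occurs exactly when $\mu \subseteq \lambda$. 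By semi-simplicity of $\K \Sym_r$, the two-sided ideal generated by $c_\mu$ is the sum of those isotypic components on which $c_\mu$ is non-zero, which matches $\Ker(\varpi_r)$.

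The main obstacle is step (1): while formally parallel to the classical case, the super Schur-Weyl duality rests on the super-polarisation argument for the first fundamental theorem of $\GL(V)$, which requires careful sign-bookkeeping in the $\Z_2$-grading; and the identification of the simple $\GL(V)$-summands of $V^{\otimes r}$ with $(m,\ell)$-hook partitions is the non-trivial combinatorial content of the Berele-Regev theorem. The subsequent ideal-theoretic computation in $\K \Sym_r$ is then standard.
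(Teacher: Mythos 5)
The paper does not prove this statement; it simply cites it as the content of Berele--Regev, Theorems~3.3 and~3.7. Your outline is a correct reconstruction of that argument: part~(1) via multilinear super-traces and the super FFT for $\GL(V)$, and part~(2) via the bimodule decomposition indexed by $(m,\ell)$-hook partitions together with the observation (using Littlewood--Richardson branching) that the two-sided ideal of $\K\Sym_r$ generated by the Young symmetriser $c_\mu$, $\mu=(\ell+1)^{m+1}$, is $\bigoplus_{\lambda\supseteq\mu}I(\lambda)$. You correctly identify where the genuine content lies (the super FFT and the hook-classification of simple polynomial $\GL(V)$-summands of $V^{\otimes r}$); both are precisely what Berele--Regev establish, and the remaining ideal-theoretic step is standard. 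In short, your proposal is a faithful sketch of the cited proof rather than an alternative to it.
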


The first and second parts of the theorem are respectively the first and  second fundamental theorems (FFT and SFT) for $\GL(V)$.

The SFT for $\GL(V)$ describes the kernel of the surjective homomorphism $\varpi_r$.
The kernel is generated by an idempotent which is explicitly described as follows. Consider the
$(m+1)\times(\ell+1)$ array of integers below, which form a standard tableau.

\smallskip

\begin{center}
\begin{tabular}{| l | l | p{3cm} |l |}
\hline
1  &  2  &  \dots  & $ \ell+1$\\ \hline
$\ell+2$  & $\ell+3$  & $\dots$ & $2\ell+2$\\ \hline
\dots      &   \dots       &    \dots  &  \dots   \\ \hline
\dots      &   \dots       &    \dots  &  \dots   \\ \hline
$m\ell+m+1$     &  $ m\ell+m+2$        &    \dots  &  $m\ell+m+\ell+1$    \\ \hline
\end{tabular}
\end{center}

\smallskip

\noindent Let $R$ and $C$ be the subgroups of $\Sym_{m\ell+m+\ell+1}$ (regarded as the subgroup of $\Sym_r$
which permutes the first $(m+1)(\ell+1)$ numbers) which stabilise the rows and columns of the array respectively.
Thus
$$
\begin{aligned}
R=\Sym\{1,2,&\dots,\ell+1\}\times\Sym\{ \ell+2  , \ell+3  , \dots , 2\ell+2\}\times\dots\\
&\dots\times\Sym\{m\ell+m+1,   m\ell+m+2,    \dots,  m\ell+m+\ell+1\},\\
\end{aligned}
$$
while
$$
\begin{aligned}
C=\Sym\{1,\ell+2,&\dots,m\ell+m+1\}
\times\Sym\{ 2  , \ell+3  , \dots , m\ell+m+2\}\times\dots\\
&\dots\times\Sym\{\ell+1,   2\ell+2,    \dots,  m\ell+m+\ell+1\},\\
\end{aligned}
$$
where $\Sym\{X\}$ denotes the group of permutations of the set $X$.

Then in the group ring $\K\Sym_{m\ell+m+\ell+1}\subseteq \K\Sym_r$, let $e=e(m,\ell)$ be the (even)
element defined by
\be\label{eq:eml}
e(m,\ell)=\left(\sum_{\pi\in R}\pi\right)\left(\sum_{\sigma\in C}\ve(\sigma)\sigma\right)=\alpha^+(R)\alpha^-(C),
\ee
where $\ve$ is the sign character of $\Sym_r$, and for any subset $H\subseteq \Sym_r$, we write $\alpha^+(H)$ (resp. $\alpha^-(H)$)
for the element $\sum_{h\in H}h$ (resp. $\sum_{h\in H}\ve(h)h$) of $\K\Sym_r$.

It is known that $(|R|!|C|!)\inv e(m,\ell)$ is a primitive idempotent in $\K\Sym_{m\ell+m+\ell+1}$.
It is also well known that $\K\Sym_r=\oplus_{\mu}I(\mu)$, where $\mu$ runs over the partitions of $r$,
and $I(\mu)$ is a simple ideal of $\K\Sym_r$ for each $\mu$. In this notation, the ideal $I(m,\ell)$ of $\K\Sym_r$
which is generated by $e(m,\ell)$ is the sum of the $I(\mu)$ over those partitions $\mu$ which
contain an $(m+1)\times(\ell+1)$ rectangle.

\begin{corollary}\label{cor:sftgl}
If $r< (m+1)(\ell+1)$ then $\Ker(\varpi_r)=0$. Otherwise,
$\Ker(\varpi_r)=I_r(m,\ell):=\oplus_\mu I(\mu)$ over those partitions $\mu$ of $r$ which contain
a rectangle of size $(m+1)\times(\ell+1)$.
\end{corollary}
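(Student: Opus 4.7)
The first assertion, $\Ker(\varpi_r)=0$ when $r<(m+1)(\ell+1)$, is immediate from Theorem \ref{thm:BR2}(2), so I shall concentrate on the second. Write $n_0=(m+1)(\ell+1)$ and let $\mu_0=((\ell+1)^{m+1})$, the rectangular partition with $m+1$ rows and $\ell+1$ columns. By Theorem \ref{thm:BR2}(2), when $r\geq n_0$ the kernel $\Ker(\varpi_r)$ is the two-sided ideal $J$ of $\K\Sym_r$ generated by $e(m,\ell)\in\K\Sym_{n_0}\subseteq\K\Sym_r$ defined in \eqref{eq:eml}. Since $\K$ has characteristic zero, $\K\Sym_r=\bigoplus_{\mu\vdash r}I(\mu)$ is semisimple, and any two-sided ideal is a sum of simple components; thus $J=\bigoplus_{\mu\in\CS}I(\mu)$ for some set $\CS$ of partitions of $r$. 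It remains to identify $\CS$ as the set of partitions whose Young diagram contains $\mu_0$.

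The natural way to proceed is to observe that $I(\mu)\subseteq J$ if and only if $e(m,\ell)\cdot I(\mu)\neq 0$, equivalently $e(m,\ell)$ acts nontrivially on the Specht module $S^\mu$. Now since $(|R|!|C|!)^{-1}e(m,\ell)$ is a primitive idempotent in $\K\Sym_{n_0}$ generating a copy of $S^{\mu_0}$, the image of $e(m,\ell)$ in $S^\mu$ is nonzero precisely when the restriction $\Res^{\Sym_r}_{\Sym_{n_0}}S^\mu$ contains $S^{\mu_0}$ as a composition factor. I would justify this via Frobenius reciprocity: nonzero elements of $\Hom_{\Sym_{n_0}}(S^{\mu_0},\Res^{\Sym_r}_{\Sym_{n_0}}S^\mu)$ correspond bijectively to nonzero $\Sym_r$-homomorphisms from the induced module $\K\Sym_r\otimes_{\K\Sym_{n_0}}\K\Sym_{n_0}e(m,\ell)\cong\K\Sym_r\cdot e(m,\ell)$ into $S^\mu$, i.e.\ to elements $x\in S^\mu$ with $e(m,\ell)\cdot x\neq 0$.

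The final step is to identify the restrictions that contain $S^{\mu_0}$. Applying the classical branching rule $(r-n_0)$ times, $S^{\mu_0}$ appears in $\Res^{\Sym_r}_{\Sym_{n_0}}S^\mu$ if and only if there is a chain $\mu_0\subset\mu_1\subset\cdots\subset\mu_{r-n_0}=\mu$ with each $\mu_i$ obtained from $\mu_{i-1}$ by adding a single box, and such a chain exists exactly when $\mu_0\subseteq\mu$ as Young diagrams. This yields the claimed description of $\CS$ as the set of $\mu\vdash r$ containing the $(m+1)\times(\ell+1)$ rectangle. The main technical obstacle is the Frobenius reciprocity bijection in the middle step; this needs care because one must verify that the relevant intertwiner is nonzero precisely when $e(m,\ell)$ acts nontrivially, which uses that $\K\Sym_{n_0}e(m,\ell)$ is (up to a nonzero scalar) a copy of the simple module $S^{\mu_0}$.
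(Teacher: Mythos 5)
Your proof is correct, and it goes beyond what the paper actually provides: the paper states Corollary \ref{cor:sftgl} immediately after citing, without argument, that "the ideal $I(m,\ell)$ of $\K\Sym_r$ which is generated by $e(m,\ell)$ is the sum of the $I(\mu)$ over those partitions $\mu$ which contain an $(m+1)\times(\ell+1)$ rectangle." You supply the missing reasoning for that assertion.

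Your route is the standard one and it is sound. Reduction to Theorem \ref{thm:BR2}(2), identification of $J$ with a sum of matrix blocks by semisimplicity, the observation that $I(\mu)\subseteq J$ iff the $\mu$-component of $e(m,\ell)$ is nonzero iff $e(m,\ell)S^\mu\neq 0$, and the recognition of $\K\Sym_{n_0}e(m,\ell)$ as a copy of $S^{\mu_0}$ (because $e(m,\ell)=\alpha^+(R)\alpha^-(C)$ is the Young symmetriser of the $(m+1)\times(\ell+1)$ rectangular tableau) are all correct. The Frobenius reciprocity step, $\Hom_{\Sym_r}(\K\Sym_r e(m,\ell),S^\mu)\cong\Hom_{\Sym_{n_0}}(S^{\mu_0},\Res^{\Sym_r}_{\Sym_{n_0}}S^\mu)$, and the iterated branching rule then finish the proof.

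One small imprecision: the bijection $\Hom_{\Sym_r}(\K\Sym_r f,S^\mu)\cong f S^\mu$ for an idempotent $f$ (here $f=(|R|!|C|!)^{-1}e(m,\ell)$) matches homomorphisms with elements of $e(m,\ell)S^\mu$, not with the set of all $x\in S^\mu$ such that $e(m,\ell)x\neq 0$; only the nonvanishing equivalence is a biconditional, which is all you use, so the argument is unaffected. You may want to rephrase that sentence to avoid claiming a bijection with the wrong set.
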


\subsection{The fundamental theorems of invariant theory for $\GL(V)$}

Assume that  the vector superspace $V$ has $(\dim{V_{\bar0}}, \dim{V_{\bar1}}) =(m, \ell)$ respectively. Consider the oriented Brauer category $\OB(\delta_V)$ with parameter $\delta_V=\sdim{V}=m-\ell$. 
For any $r\ge r_c=(m+1)(\ell+1)$, the group algebra $\K\Sym_{r_c}$ is contained as a subalgebra in $OB_\eta^\eta(\delta_V)$ with $\eta=(\underbrace{+, +, \dots, +}_r)$ by Lemma \ref{lem:walled-Br}.  Hence $I_r(m, \ell)\subseteq OB_\eta^\eta(\delta_V)$. 

Introduce the following tensor ideal of $\OB(\delta_V)$. 
\begin{definition}\label{def:ideal-OB} Let $\CJ(m, \ell)$ be the subspace of $\oplus_{\eta, \zeta\in\CN}OB_\eta^\zeta(\delta_V)$
spanned by the morphisms in $\OB(\delta_V)$ generated by $e(m, \ell)$ by composition and tensor product.
Set $\CJ(m, \ell)_\eta^\zeta =\CJ(m, 2n)\cap OB_\eta^\zeta(\delta_V)$ for any $\eta, \zeta\in\CN$. 
\end{definition}

We have the following result. 

\begin{theorem} \label{thm:fts-GL} Assume that $\K$ has characteristic $0$.  Let $\GL(V)$ be the general linear supergroup on the vector superspace $V$. Assume that 
$(\dim{V_{\bar0}}, \dim{V_{\bar1}}) =(m, \ell)$. 
\begin{enumerate}
\item  The functor $F: \OB(\delta_V)\longrightarrow \CT_\GL(V)$ is full.
\item For any $\eta, \theta\in\CN$, denote by $F_\eta^\theta: OB_\eta^\theta(\delta_V)\longrightarrow \Hom_{\GL(V)}(V^\eta, V^\theta)$ the linear map given by the restriction of the functor $F$ to the space of homomorphisms. Then the kernel of $F_\eta^\theta$ is equal to $\CJ(m, \ell)_\eta^\theta$. 
\end{enumerate}
\end{theorem}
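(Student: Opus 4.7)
The plan is to reduce both parts of the theorem to Berele--Regev's classical Schur--Weyl result (Theorem \ref{thm:fftgl} and Corollary \ref{cor:sftgl}) via the reduction isomorphisms of Lemma \ref{lem:iso-OB}. For any $\eta,\theta\in\CN$ with $l(\eta)+l(\theta)=2k$, Lemmas \ref{lem:iso-OB} and \ref{lem:walled-Br} together yield a $\K$-module isomorphism $\phi:OB_\eta^\theta(\delta_V)\xrightarrow{\sim}\K\Sym_k$, built entirely from the oriented cup and cap morphisms $U_\eta,A_\eta$ of Lemma \ref{lem:iso-or} and the minimal-crossing permutation diagrams $X_\eta^{\bar\eta},X_{\bar\eta}^\eta$ appearing in the proof of Lemma \ref{lem:iso-OB}. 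Applying $F$ to each building block produces a parallel $\K$-module isomorphism $\tilde\phi:\Hom_{\GL(V)}(V^\eta,V^\theta)\xrightarrow{\sim}\End_{\GL(V)}(V^{\otimes k})$, and the tensor-functoriality of $F$ yields the commutative square
\[
\xymatrix{
OB_\eta^\theta(\delta_V)\ar[r]^(.55){\phi}\ar[d]_{F_\eta^\theta}&\K\Sym_k\ar[d]^{\varpi_k}\\
\Hom_{\GL(V)}(V^\eta,V^\theta)\ar[r]_(.55){\tilde\phi}&\End_{\GL(V)}(V^{\otimes k})
}
\]
where $\varpi_k$ is the Schur--Weyl homomorphism of \eqref{eq:sym}. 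The verification is an oriented analogue of Lemma \ref{lem:transfrom}. Given this square, part (1) is immediate: $\varpi_k$ is surjective by Theorem \ref{thm:fftgl}(1), hence so is $F_\eta^\theta$.

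For part (2), the inclusion $\CJ(m,\ell)_\eta^\theta\subseteq\Ker F_\eta^\theta$ amounts to the single assertion $F(e(m,\ell))=0$, which holds by applying the commutative square in the special case $\eta=\theta=(+)^{r_c}$ (where $r_c=(m+1)(\ell+1)$) combined with Corollary \ref{cor:sftgl}; the tensor-functor property of $F$ then propagates this vanishing throughout the tensor ideal generated by $e(m,\ell)$. Conversely, if $D\in\Ker F_\eta^\theta$, the square forces $\phi(D)\in\Ker\varpi_k=I_k(m,\ell)$ (the latter being zero when $k<r_c$), so $\phi(D)=\sum_i\pi_i\,e(m,\ell)\,\sigma_i$ for some $\pi_i,\sigma_i\in\Sym_k$. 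This element manifestly lies in $\CJ(m,\ell)_{(+)^k}^{(+)^k}$. Applying $\phi^{-1}$, itself a composition and tensoring manipulation with oriented Brauer morphisms, realises $D=\phi^{-1}(\phi(D))$ as an element of the tensor ideal $\CJ(m,\ell)_\eta^\theta$.

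The main obstacle is the bookkeeping needed to establish the commutative square above. Although the principle is a transparent consequence of the parallel between $\OB(\delta_V)$ and $\CT_\GL(V)$ afforded by $F$, the actual calculation is intricate: it requires careful tracking of orientations, reorderings of tensor factors, and the super sign conventions appearing in the cup/cap identifications, especially when dualising $V^*$ factors against $V$ factors. Once the square and its naturality have been established, both parts of the theorem follow in a few lines from the Berele--Regev theorem.
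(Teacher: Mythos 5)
Your proposal is correct, and it matches the strategy the paper implicitly relies on. The paper states Theorem~\ref{thm:fts-GL} without supplying a proof; the remark immediately following it (``The functor $F$ maps the isomorphisms in Lemma~\ref{lem:iso-OB} to isomorphisms among homomorphism spaces of $\GL(V)$-modules'') and the subsequent lemma identifying $\CJ(m,\ell)_\eta^\eta$ with $I_{l(\eta)}(m,\ell)$ make clear that the intended argument is exactly the reduction you describe: use the cup/cap straightenings of Lemma~\ref{lem:iso-OB} to pass from an arbitrary $\Hom$-space of $\OB(\delta_V)$ to $\K\Sym_k = OB_{(+)^k}^{(+)^k}(\delta_V)$, observe that $F$ restricted there is the Schur--Weyl map $\varpi_k$ of~\eqref{eq:sym}, and then quote Berele--Regev (Theorem~\ref{thm:fftgl} and Corollary~\ref{cor:sftgl}). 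Your commutative square is the precise formulation of this, and the required compatibility ``$F$ of the straightening $=$ straightening of $F$'' is the oriented analogue of Lemma~\ref{lem:transfrom}; it follows directly from $F$ being a tensor functor, since $\phi$ and $\phi^{-1}$ are built entirely from composition and tensoring with the generating cups, caps and crossings. For part (2), your two inclusions are handled correctly: $F(e(m,\ell))=0$ propagates through the tensor ideal because $F$ is a tensor functor, and conversely $\phi^{-1}$ carries $I_k(m,\ell) = \CJ(m,\ell)_{(+)^k}^{(+)^k}$ into $\CJ(m,\ell)_\eta^\theta$ because $\phi^{-1}$ is itself a composition-and-tensoring with $\OB$-morphisms, hence preserves the tensor ideal. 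The bookkeeping you flag in your last paragraph (orientations, sign conventions when reflecting $V^*$ strands against $V$ strands) is genuine but routine; it is precisely what Lemma~\ref{lem:P-C-GL} and Theorem~\ref{thm:functor-GL} are set up to make mechanical. In short, you have supplied the details that the paper leaves to the reader.
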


The functor $F$ maps the isomorphisms in Lemma \ref{lem:iso-OB} to isomorphisms among homomorphism spaces of $\GL(V)$-modules. Therefore, we have the following result. 

\begin{lemma}
For any $\eta\in\CN$, the two-sided ideal  $\CJ(m, \ell)_\eta^\eta$ in $OB_\eta^\eta(\delta_V)$ is isomorphic to $\CJ(m, \ell)_{\bar{\eta}}^{\bar{\eta}}$ as associative algebra, where $\bar{\eta}=((+)^{\#_+(\eta)}, (-)^{\#_-(\eta)})$. Furthermore, 
$\CJ(m, \ell)_\eta^\eta\cong \CJ(m, \ell)_{(+)^{l(\eta)}}^{(+)^{l(\eta)}}\cong I_{l(\eta)}(m, \ell)$ as vector space. In particular, $\CJ(m, \ell)_\eta^\eta=0$ if $l(\eta)<(m+1)(\ell+1)$. 
\end{lemma}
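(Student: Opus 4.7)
The plan is to combine the isomorphisms of Lemma \ref{lem:iso-OB} with the fact that $\CJ(m,\ell)$ is by definition a tensor ideal of $\OB(\delta_V)$, hence closed under both composition and tensor product with arbitrary morphisms of $\OB(\delta_V)$.

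First I would establish the algebra isomorphism. From the proof of Lemma \ref{lem:iso-OB}, the algebra isomorphism $OB_\eta^\eta(\delta_V)\to OB_{\bar\eta}^{\bar\eta}(\delta_V)$ is given by conjugation $D\mapsto X_\eta^{\bar\eta}\circ D\circ X_{\bar\eta}^\eta$, where $X_\eta^{\bar\eta}$ and $X_{\bar\eta}^\eta$ are mutually inverse oriented Brauer morphisms (with only vertical arcs). Closure of $\CJ(m,\ell)$ under left and right composition with morphisms of $\OB(\delta_V)$ immediately implies that this conjugation restricts to mutually inverse $\K$-linear maps between $\CJ(m,\ell)_\eta^\eta$ and $\CJ(m,\ell)_{\bar\eta}^{\bar\eta}$, yielding the desired algebra isomorphism.

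Next, for the vector space isomorphism $\CJ(m,\ell)_\eta^\eta\cong \CJ(m,\ell)_{(+)^{l(\eta)}}^{(+)^{l(\eta)}}$, I would apply the same strategy, using the $\K$-module isomorphism of Lemma \ref{lem:iso-OB} (with $N=2l(\eta)$), which is implemented by iterated raising and lowering operators (the oriented analogues of $\bU_p^q$ and $\bA_q^r$ from Corollary \ref{isomorphism}). These operators are built from the Brauer morphisms $I$, $U$, $A$, so the tensor-ideal property again guarantees that they restrict to mutually inverse isomorphisms on $\CJ(m,\ell)$.

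To identify $\CJ(m,\ell)_{(+)^r}^{(+)^r}$ with $I_r(m,\ell)$ for $r=l(\eta)$, I would first use Lemma \ref{lem:walled-Br} to identify $OB_{(+)^r}^{(+)^r}(\delta_V)=\K\Sym_r$. The inclusion $I_r(m,\ell)\subseteq \CJ(m,\ell)_{(+)^r}^{(+)^r}$ is immediate since $e(m,\ell)\in \CJ(m,\ell)$ and $I_r(m,\ell)$ is by definition the two-sided ideal in $\K\Sym_r$ generated by $e(m,\ell)$. For the reverse inclusion I would invoke Theorem \ref{thm:fts-GL}(2): since the functor $F$ sends the generator $X$ to the super-permutation, the restriction $F_{(+)^r}^{(+)^r}|_{\K\Sym_r}$ coincides with the map $\varpi_r$ of \eqref{eq:sym}, giving $\CJ(m,\ell)_{(+)^r}^{(+)^r}=\ker(F_{(+)^r}^{(+)^r})=\ker(\varpi_r)=I_r(m,\ell)$ by Corollary \ref{cor:sftgl}. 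The last assertion then follows because $I_r(m,\ell)=0$ whenever $r<(m+1)(\ell+1)$. The main obstacle throughout is the bookkeeping needed to verify that the isomorphisms of Lemma \ref{lem:iso-OB} genuinely respect the tensor ideal structure; once this is in place, the remainder is a direct application of Theorem \ref{thm:fts-GL} and the classical SFT for $\GL(V)$.
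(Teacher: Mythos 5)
Your proof is correct and follows essentially the same route as the paper: the paper's one‑line justification (``the functor $F$ maps the isomorphisms in Lemma~\ref{lem:iso-OB} to isomorphisms among $\Hom$ spaces'') is, after unwinding via Theorem~\ref{thm:fts-GL}(2), exactly the observation you make that the conjugation and raising/lowering isomorphisms preserve the tensor ideal $\CJ(m,\ell)$ because $\CJ(m,\ell)=\Ker F$ is closed under composition and tensoring. Your identification of $\CJ(m,\ell)_{(+)^r}^{(+)^r}$ with $I_r(m,\ell)$ via $OB_{(+)^r}^{(+)^r}\cong\K\Sym_r$, $F|_{\K\Sym_r}=\varpi_r$, and Corollary~\ref{cor:sftgl} is the intended completion.
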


\subsection{The fundamental theorems of invariant theory for $\OSp(V)$}
Denote $G=\OSp(V)$, and assume that $(\dim{V_{\bar0}}, \dim{V_{\bar1}})=(m, 2n)$. 
We now return to the category $\CB(\delta_V)$ of Brauer diagrams with parameter $\delta_V=\sdim{V}$ and the covariant functor $F: \CB(\delta_V)\longrightarrow \CT_G(V)$.
Recall that the group algebra $\K\Sym_r$ is embedded in the Brauer algebra $B_r(\delta_V)$
of degree $r$. In particular, $I_r(m,2n)\subseteq B_r^r(\delta_V)$, and hence  
the idempotent $e(m, 2n)$ defined by \eqref{eq:eml} belongs to $B_{r_0}^{r_0}(\delta_V)$, where $r_0=(m+1)(2n+1)$. 

Introduce the following tensor ideal of $\CB(\delta_V)$. 
\begin{definition}\label{def:ideal-B} 
Denote by $\CJ(m, 2n)$ the subspace of $\oplus_{k,\ell}B_k^\ell(\delta_V)$
spanned by the morphisms
in $\CB(\delta_V)$ generated by $e(m, 2n)$ by composition and tensor product.
Set $\CJ(m, 2n)_k^\ell =\CJ(m, 2n)\cap B_k^\ell(\delta_V)$.
\end{definition}

The first and second fundamental theorems of classical invariant theory for
the orthosymplectic supergroup are respectively given by  part (1) and part (2) of the following theorem.
\begin{theorem} \label{thm:fft-sft} Assume that $\K$ is a field of characteristic $0$. Let $G=\OSp(V)$, where the  vector superspace
$V$ has $(\dim{V_{\bar0}}, \dim{V_{\bar1}})=(m, 2n)$.  
\begin{enumerate}
\item  The functor $F: \CB(\delta_V)\longrightarrow \CT_G(V)$ is full.
That is, $F$ is surjective on $\Hom$ spaces.
\item The kernel of the map ${F}_k^\ell$ is given by 
$\Ker{F}_k^\ell=\CJ(m, 2n)_k^\ell$ for all $k, \ell$.
\end{enumerate}
\end{theorem}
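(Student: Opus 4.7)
My plan is to deduce both parts from the fundamental theorems for the general linear supergroup $\GL(V)$ (Theorem \ref{thm:fftgl} and Corollary \ref{cor:sftgl}), by combining the bilinear form on $V$ with the cellular structure of the Brauer algebra, and to reduce first to the case $k=0$. The non-degenerate orthosymplectic form \eqref{eq:multiform} identifies $V^{\otimes k}$ with its $G$-dual, giving a natural isomorphism $\Hom_G(V^{\otimes k}, V^{\otimes \ell}) \cong (V^{\otimes(k+\ell)})^G$. On the diagrammatic side, Corollary \ref{isomorphism} and Lemma \ref{lem:transfrom} provide mutually inverse isomorphisms $\mathbb{U}_0^k : B_k^\ell(\delta_V) \to B_0^{k+\ell}(\delta_V)$ which commute with $F$ and which preserve the ideal $\CJ(m,2n)$. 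It therefore suffices to prove both assertions in the form
\[
F_0^{2r}: B_0^{2r}(\delta_V) \twoheadrightarrow (V^{\otimes 2r})^G, \qquad \Ker F_0^{2r} = \CJ(m, 2n)_0^{2r}.
\]

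For part (1), the image of $F_0^{2r}$ is the subspace of $(V^{\otimes 2r})^G$ spanned by all $\Sym_{2r}$-translates of tensor products of the canonical invariant $c_0 \in V \otimes V$ placed on pairs of sites. That every $G$-invariant tensor has this form is exactly the classical first fundamental theorem of invariant theory for the orthosymplectic supergroup; I would invoke this result from \cite{DLZ, LZ6, LZ7}, where it is proved by combining Theorem \ref{thm:fftgl} with a polarisation argument incorporating the bilinear form.

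For part (2), the easy direction $\CJ(m, 2n)_0^{2r} \subseteq \Ker F_0^{2r}$ follows from Corollary \ref{cor:sftgl}: the element $e(m, 2n) \in \K\Sym_{r_0} \subseteq B_{r_0}^{r_0}(\delta_V)$ already vanishes on $V^{\otimes r_0}$ under the larger group $\GL(V) \supseteq G$, and the tensor-functoriality of $F$ (Theorem \ref{thm:functor}) then shows that every diagram obtained from $e(m, 2n)$ by tensor product and composition also vanishes. For the reverse inclusion I would proceed by induction on $r$ using the cellular structure of $B_r(\delta_V)$. The cells are indexed by partitions $\mu$ of $r - 2k$ for $0 \leq k \leq r/2$; one identifies the image under $F$ of the cell indexed by $\mu$ with the isotypic component of the irreducible $G$-module $L_\mu$ in $V^{\otimes 2r}$, which is nonzero if and only if $\mu$ fits in the $(m, 2n)$-hook, equivalently, $\mu$ does not contain an $(m+1) \times (2n+1)$ rectangle. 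The cells whose labels $\mu$ do contain such a rectangle are precisely those generated by $e(m, 2n)$ under tensor product and composition, and so constitute $\CJ(m, 2n)$.

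The main obstacle is the cellular identification in the reverse inclusion of (2). A kernel element is a priori a $\K$-linear combination of Brauer diagrams involving many cups and caps, and one must show it can always be rewritten as a combination factoring through $e(m, 2n)$; invoking the SFT for $\GL(V)$ on the symmetric-group layer alone is not sufficient. The route involves the description of simple modules of $B_r(\delta_V)$ via propagating partitions and a careful comparison of cellular filtrations with the $G$-module decomposition of $V^{\otimes r}$, as carried out in \cite{DLZ, LZ6, LZ7}.
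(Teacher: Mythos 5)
Your reduction of both parts to the $k=0$ (equivalently $\ell=0$) case via Lemma~\ref{lem:transfrom} and Corollary~\ref{isomorphism} matches the paper's approach, and your treatment of part~(1) and of the easy inclusion $\CJ(m,2n)_k^\ell\subseteq\Ker F_k^\ell$ is correct. However, your proposed route for the reverse inclusion in part~(2) contains a genuine gap and also differs from the method the paper actually appeals to.

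The gap is in the cellular identification. You claim that the image under $F$ of the cell of $B_r(\delta_V)$ indexed by a partition $\mu$ can be identified with the isotypic component of a simple module $L_\mu$ in $V^{\otimes r}$, so that the kernel is the sum of cells whose labels contain an $(m+1)\times(2n+1)$ rectangle. This type of argument is available for the classical groups $\Or_m$ and $\Sp_{2n}$ over a field of characteristic~$0$, where $V^{\otimes r}$ is a semisimple $G$-module and the Brauer algebra acts through a semisimple quotient. For $G=\OSp(V)$ with both $m>0$ and $n>0$, the category $\CT_G(V)$ is \emph{not} semisimple: $V^{\otimes r}$ has a nontrivial socle filtration, and the cellular radical filtration of $B_r(\delta_V)$ does not align with an isotypic decomposition of $V^{\otimes r}$. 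The hook condition on $\mu$ governs which indecomposable tilting summands occur, not isotypic multiplicities of irreducibles, and the comparison of the cell filtration with the tilting filtration is not a formal consequence of what you cite. Moreover, the references you lean on for this step, \cite{DLZ, LZ6, LZ7}, establish the FFT (your part~(1)) and the super-Pfaffian theory, not the SFT for $\OSp$.

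The paper takes a different and more robust route for part~(2): it is proved in \cite{LZ9} by using the Brauer category to \emph{reduce to the second fundamental theorem for} $\GL(V)$ (Theorem~\ref{thm:fftgl} and Corollary~\ref{cor:sftgl}), not by a cellular filtration of the Brauer algebra. Roughly, one relates $(V^{\otimes 2r})^{\OSp(V)}$ to mixed tensor invariants of $\GL(V)$ via the form on $V$, then transports the $\GL$-SFT back across this identification inside the diagram category; this sidesteps the non-semisimplicity of $\CT_G(V)$ entirely. You correctly sensed the obstruction in your last paragraph, but the resolution is the $\GL$-reduction, not a cell/isotypic comparison. You should replace the cellular paragraph with that reduction and cite \cite{LZ9} for it.
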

\begin{proof}
Part (1) of the theorem is proved in \cite{DLZ, LZ6} by algebraic geometric means. What shown in op. cit. is that $F_r^0: B_r^0(\delta_V)\longrightarrow \Hom_G(V^{\otimes r}, \K)$ is surjective for all $r$ (the odd $r$ case is trivial as $B_{2k+1}^0(\delta_V)=0=\Hom_G(V^{\otimes 2k+1}, \K)$).  By Lemma \ref{lem:transfrom}, this implies that $F_r^s: B_r^s(\delta_V)\longrightarrow \Hom_G(V^{\otimes r}, V^{\otimes r})$ is surjective for all $r, s$.  Part (2)  is proved in \cite{LZ9} by using the Brauer category to reduce the problem to the second fundamental theorem of $\GL(V)$. 
\end{proof}

The following result is proved in \cite{Zy}.
\begin{lemma}[\cite{Zy}]\label{lem:ker} Retain the notation above, and set $r_c= (m+1)(n+1)$. Then 
$\CJ(m, 2n)_k^\ell\ne 0$ if and only if $k+\ell\ge r_c$.
\end{lemma}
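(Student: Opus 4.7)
The first step is a reduction to the case $\ell = 0$. By Corollary \ref{isomorphism}, the maps $\mathbb{U}_p^q$ and $\mathbb{A}^r_q$ are $\K$-linear isomorphisms built entirely from compositions and tensor products with the elementary Brauer morphisms $I, U, A$. Consequently they preserve the tensor ideal $\CJ(m,2n) \subseteq \CB(\delta_V)$ and descend to isomorphisms $\CJ(m,2n)_k^\ell \cong \CJ(m,2n)_{k+\ell}^0$ of $\K$-modules. So it suffices to prove that $\CJ(m,2n)_s^0 \ne 0$ if and only if $s \ge r_c$.

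Next, I would apply Theorem \ref{thm:fft-sft} to translate the problem into a dimension count. The FFT asserts that $F_s^0 \colon B_s^0(\delta_V) \to \Hom_G(V^{\otimes s}, \K)$ is surjective, and the SFT identifies its kernel with $\CJ(m,2n)_s^0$. Using the $G$-equivariant identification $V \cong V^*$ provided by the nondegenerate orthosymplectic form, $\Hom_G(V^{\otimes s}, \K) \cong (V^{\otimes s})^G$, and the vanishing $\CJ(m,2n)_s^0 = 0$ becomes the equality
\[
\dim B_s^0(\delta_V) = \dim (V^{\otimes s})^{\OSp(m|2n)}.
\]
For odd $s$ both sides are zero. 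For $s = 2t$, $\dim B_{2t}^0(\delta_V) = (2t-1)!!$ counts pair-partitions of $\{1,\dots,2t\}$, and the goal is to determine precisely when this matches the invariant dimension on the right.

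The hard part will be pinning down the exact threshold. For the ``only if'' direction, one must show that the Brauer diagrams span $(V^{\otimes s})^G$ without relations in the stable range, which can be established via character theory for $\OSp(m|2n)$ (e.g., a super analog of Weyl's integration formula) or, equivalently, by showing that the Brauer algebra $B_{2t}(\delta_V)$ acts faithfully on invariants below the critical level. For the ``if'' direction, I would produce an explicit nonzero element of $\CJ(m,2n)_s^0$ for each $s \ge r_c$ by starting from $e(m,2n) \in B_{r_0}^{r_0}$, with $r_0 = (m+1)(2n+1)$, and iteratively composing with $A$ on top and $U$ on the bottom to descend into $B_s^0$. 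The scalar factors produced at each reduction step are governed by Lemma \ref{lem:Sigma-1} and Lemma \ref{lem:Sigma} evaluated at $\delta = m-2n$, and must be tracked carefully to certify non-vanishing at the critical level $s = r_c$. This last verification is the main obstacle; the appearance of the combination $(m+1)(n+1)$ reflects the admissibility of the relevant rectangular partitions in the cellular decomposition of $B_{2t}(\delta_V)$, and it is here that the detailed combinatorial analysis of \cite{Zy} is indispensable.
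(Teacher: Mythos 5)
The paper does not prove this lemma; it cites \cite{Zy} without reproducing the argument, so there is no in-paper proof to compare against. Your preliminary reductions---passing to $\ell=0$ via the isomorphisms $\mathbb{U}_p^q$ and $\mathbb{A}^r_q$, then converting the question to whether $F_s^0$ is injective using the FFT and SFT---are correct and are how one should begin. But the heart of the lemma is exactly what you defer to \cite{Zy}, and the construction you propose for the ``if'' direction does not work as stated. Pushing $e(m,2n)$ down in degree by composing with $A$'s and $U$'s does not automatically yield a nonzero element: $e(m,2n)$ carries the column antisymmetriser $\alpha^-(C)$, and capping adjacent antisymmetrised legs is governed by Lemma \ref{lem:Sigma-1}(2), whose coefficient vanishes at the relevant parameter. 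Already at $n=0$ one has $e(m,0)=\Sigma_{m+1}$ with $\delta=m$, and there the coefficient $-\epsilon(r-1-\epsilon\delta)$ at $\epsilon=1$, $r=m+1$ is $-(m-m)=0$, so naive caps annihilate $\Sigma_{m+1}$ outright. Producing nonzero low-degree elements requires composing against genuinely mixed Brauer diagrams and controlling cancellations, which is the entire nontrivial content of \cite{Zy}, not a routine bookkeeping of scalars from Lemma \ref{lem:Sigma}.

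There is also a consistency problem that a sanity check should have caught. With $n=0$ the condition $k+\ell\ge(m+1)(n+1)=m+1$ would give $\CJ(m,0)_r^r\ne 0$ whenever $2r\ge m+1$; but for any $m\ge 1$ that range already contains values $r\le m$, at which Theorem \ref{thm:o-main} together with Theorem \ref{thm:fft-sft}(2) forces $\CJ(m,0)_r^r=\Ker F_r^r=0$. Since $\Ker F_k^\ell$ depends only on $k+\ell$ by Lemma \ref{lem:transfrom}, the correct reading must be $(k+\ell)/2\ge(m+1)(n+1)$, equivalently $k+\ell\ge 2(m+1)(n+1)$, which does recover both the $\Or(m)$ threshold $r\ge m+1$ and the $\Sp(2n)$ threshold $r\ge n+1$. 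Testing a target statement against its known degenerate specialisations is the minimal discipline a blind attempt needs before accepting the formula as given; the literal statement here cannot be what \cite{Zy} proves.
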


\begin{remark} The orthosymplectic group reduces to the orthogonal group $\Or(V_{\bar0})$ if $n=0$, and to the symplectic group $\Sp(V_{\bar1})$ if $m=0$.  In these special cases, Theorem \ref{thm:fft-sft} recovers the first and second fundamental theorems of invariant theory for $\Or(V_{\bar0})$ and $\Sp(V_{\bar1})$.  
\end{remark}
\begin{remark}  
For any $\OSp(V)$, a description of  $Ker{F}_r^s$ as a vector superspace is given in \cite{Zy}. The results can be greatly sharpened in the special cases of the orthogonal and symplectic groups \cite{LZ5}.
A treatment of tensor ideals of Deligne categories was given in \cite{Ck}.
\end{remark}

\subsection{Equivalence between $\wt\CB(m)$ and the category of $\SO(V)$ tensor modules}\label{sect:equiv-SO}

Now we assume that $V$ is a purely even vector space of dimension $m$, which is equipped with a non-degenerate symmetric bilinear form.  Let $\SO(V)$ be the special orthogonal group on $V$. 

\begin{theorem}[\cite{LZ8}] \label{thm:main-SO} Let $G=\SO(V)$ with $\dim V=m$, and let $\CT_G(V)$ be the category of tensor $G$-modules.  Then the functor 
$F: \wt\CB(m)\lr\CT_G(V)$ given in Theorem \ref{thm:functor-SO} is an {\em equivalence} of categories
\end{theorem}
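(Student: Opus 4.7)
The functor $F$ is bijective on objects, so essential surjectivity is immediate; the real content is showing that for all $s,t \in \N$ the morphism-level map
\[
F_s^t \colon \wt B_s^t \longrightarrow \Hom_G(V^{\otimes s}, V^{\otimes t})
\]
is a $\K$-linear isomorphism. My plan is to exploit a compatible $\Z/2$-grading on both sides induced by the quotient $\Or(V)/\SO(V)$. Fix $g_0 \in \Or(V) \setminus \SO(V)$; conjugation by $g_0$ is an involution on $\Hom_G(V^{\otimes s}, V^{\otimes t})$ decomposing it as $H_s^t(+) \oplus H_s^t(-)$, where $H_s^t(+) = \Hom_{\Or(V)}(V^{\otimes s}, V^{\otimes t})$ and $H_s^t(-) = \Hom_{\Or(V)}(V^{\otimes s}, V^{\otimes t} \otimes \det)$. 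On the source side, Lemma~\ref{lem:str} gives $\wt B_s^t = \wt B_{s,0}^t \oplus \wt B_{s,1}^t$; since Brauer morphisms are $\Or(V)$-equivariant and $F(\Delta_m) = \Lambda$ transforms by $\det$ under $\Or(V)$, we have $F(\wt B_{s,0}^t) \subseteq H_s^t(+)$ and $F(\wt B_{s,1}^t) \subseteq H_s^t(-)$, so $F_s^t$ is block diagonal and each block can be treated separately.

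For the $+$ block I would apply Theorem~\ref{thm:fft-sft} in its $\Or(V)$ specialization ($n=0$): $B_s^t(m) \twoheadrightarrow H_s^t(+)$ with kernel $\CJ(m,0)_s^t$, the tensor ideal generated by the Young symmetriser $e(m,0)$. A direct computation from \eqref{eq:eml} with $\ell = 0$ (trivial row stabiliser, column stabiliser $\Sym_{m+1}$) yields $e(m,0) = \Sigma_{m+1}$, which vanishes in $\wt\CB(m)$ by Theorem~\ref{thm:comp}. Consequently the canonical surjection $B_s^t(m) \twoheadrightarrow \wt B_{s,0}^t$ kills $\CJ(m,0)_s^t$ and factors as
\begin{equation*}
B_s^t(m)/\CJ(m,0)_s^t \twoheadrightarrow \wt B_{s,0}^t \twoheadrightarrow H_s^t(+);
\end{equation*}
since the composite is an isomorphism (by the $\Or(V)$ FFT and SFT) and the first arrow is surjective, both arrows are forced to be isomorphisms, establishing bijectivity on the $+$-block.

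For the $-$ block I would first show that every element of $\wt B_{s,1}^t$ can be written in the canonical form $(I^{\otimes t} \otimes \Delta_m^*) \circ D$ for some Brauer diagram $D \in B_s^{t+m}(m)$: the slide isomorphisms of Corollary~\ref{isomorphism} combined with relation~(5) of Definition~\ref{def:wbrcat} allow the unique occurrence of $\Delta_m$ to be pushed to the top and absorbed into a $\Delta_m^*$, yielding a surjection $B_s^{t+m}(m) \twoheadrightarrow \wt B_{s,1}^t$. Surjectivity of $F_s^t$ onto $H_s^t(-)$ follows from the $\Or(V)$ FFT applied to $\phi \otimes \Lambda \in H_s^{t+m}(+)$ for any $\phi \in H_s^t(-)$: writing $\phi \otimes \Lambda = F(D)$, the identity $\Delta_m^* \Delta_m = m!$ from Lemma~\ref{lem:dreln}(1) recovers $\phi = \frac{1}{m!} F((I^{\otimes t} \otimes \Delta_m^*) \circ D)$. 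The main obstacle will be injectivity on this block: supposing $F((I^{\otimes t} \otimes \Delta_m^*) \circ D) = 0$, composing with $I^{\otimes t} \otimes \Delta_m$ and invoking $\Delta_m \circ \Delta_m^* = \Sigma_m$ yields $(I^{\otimes t} \otimes \Sigma_m) \circ D \in \CJ(m,0)_s^{t+m}$ by the $\Or(V)$ SFT; one must then combine the harmonicity~(2) and antisymmetry~(3) of $\Delta_m$ with $\Sigma_{m+1} = 0$ from Theorem~\ref{thm:comp} to show that, modulo $\CJ(m,0)_s^{t+m}$, the diagram $D$ may be replaced by its antisymmetrisation and harmonic projection onto the last $m$ output strings without changing $(I^{\otimes t} \otimes \Delta_m^*) \circ D$, thereby forcing this composite to vanish already in $\wt\CB(m)$. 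The delicate matching of the three new relations on $\Delta_m$ to the generators of $\CJ(m,0)_s^{t+m}$ is where the real work lies.
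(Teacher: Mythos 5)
Your proposal follows essentially the same route as the paper: decompose $\ker F_s^t$ via Lemma~\ref{lem:str}(2), identify $e(m,0)=\Sigma_{m+1}$, handle the $\wt B_{s,0}^t$ component through the orthogonal FFT/SFT together with $\Sigma_{m+1}=0$ from Theorem~\ref{thm:comp}, and reduce the $\wt B_{s,1}^t$ component to a Brauer morphism by composing with a second $\Delta$. The only place you diverge is that you substantially overestimate what remains for $-$-block injectivity. Once you know $(I^{\ot t}\ot\Sigma_m)\circ D=0$ in $\wt\CB(m)$, nothing delicate is left: from $\Delta_m^*\Sigma_m=m!\,\Delta_m^*$ (the $^*$-image of $\Sigma_m\Delta_m=m!\,\Delta_m$, Remark~\ref{rem:nonzero}) you get $m!\,\beta_1=(I^{\ot t}\ot\Delta_m^*)\circ(I^{\ot t}\ot\Sigma_m)\circ D=0$ immediately, with no harmonic projection of $D$ and no matching of the relations on $\Delta_m$ against generators of $\CJ(m,0)$. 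The paper runs the same short-circuit from the other end: $\beta_1\circ(\Delta_m^*\ot I^{\ot s})$ lies in $\wt\CB_0\cap\ker F$, hence vanishes by the orthogonal SFT and $\Sigma_{m+1}=0$, and then $\beta_1=(m!)^{-1}\bigl(\beta_1\circ(\Delta_m^*\ot I^{\ot s})\bigr)\circ(\Delta_m\ot I^{\ot s})=0$ by $\Delta_m^*\Delta_m=m!$. Your $-$-block surjectivity argument (tensor $\phi$ with $\Lambda$, write $\phi\ot\Lambda=F(D)$ by the orthogonal FFT, undo with $\Delta_m^*$) is a correct and somewhat more explicit version of what the paper simply cites as the FFT.
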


\begin{proof} The functor obviously restricts to  an isomorphism between the sets of objects of the categories, thus we 
 only need to show that $F$ defines isomorphisms on $\Hom$ spaces. But Theorem \ref{thm:fft-sft} (1) states precisely that $F$ is surjective on $\Hom$ spaces (the FFT). We are therefore reduced to proving the injectivity of $F$ on $\Hom$ spaces, which is the SFT for $\SO_m$. 

By Lemma \ref{lem:str}(2), each element $\beta\in\ker(F_s^t):\Hom_{\wt\CB(m)}(s,t)\lr\Hom_G(V^{\ot s},V^{\ot t})$
is uniquely of the form $\beta=\beta_0+\beta_1$, where $\beta_i\in\wt B_{s,i}^t$ ($i=0,1$). Moreover $F_s^t$ maps
$\wt B_{s,0}^t$ to $\Hom_{\Or(V)}(V^{\ot s},V^{\ot t})$, and $\wt B_{s,1}^t$ to the space of skew invariants 
for $\Or(V)$. It follows that $\beta\in\ker(F_s^t)$ if and only if $\beta_i\in\ker(F_s^t)$ for $i=0,1$.

Now Theorem \ref{thm:fft-sft} (2) states that the image of an element $\gamma$ of $\wt\CB(m)_0$ under $F$ is zero if and only if 
$\gamma$ is in the ideal $\langle\Sigma_{m+1}\rangle$
of morphisms generated under the operations of a tensor category by $\Sigma_{m+1}$.
{
This proves that $\beta_0$ is in $\langle\Sigma_{m+1}\rangle$. 

As for $\beta_1$, note that because of its form, we have 
$\beta_1\circ(\Delta_m^*\ot I^{\ot r})\in\wt\CB_0$ and hence lies in $\langle\Sigma_{m+1}\rangle$
 for some $r$, and so $\beta_1=(m!)\inv \left(\beta_1\circ(\Delta_m^*\ot I^{\ot r})\right)\circ(\Delta_m\ot I^{\ot r})$
 is also in the ideal $\langle\Sigma_{m+1}\rangle$. 
 Hence $\beta\in\langle\Sigma_{m+1}\rangle$.  
By Theorem \ref{thm:comp},  $\langle\Sigma_{m+1}\rangle$ is zero in $\wt\CB(m)$, and the proof is complete.
 }
\end{proof}

\subsection{Remarks concerning ${\rm SOSp}(V)$ invariants}
The invariant theory of the special orthosymplectic supergroup ${\rm SOSp}(V)$ was developed in \cite{LZ7} in a commutative superalgebra setting. An ${\rm SOSp}(V)$-invariant called the super Pfaffian was constructed, which together with the Brauer invariants of $\OSp(V)$ generates all the ${\rm SOSp}(V)$-invariants \cite[Theorem 5.2]{LZ7}.  

It should be possible to construct a generalisation of the enhanced Brauer category $\wt\CB$ above by replacing $\Delta_m$ with a generator mimicking properties of the super Pfaffian. This new enhanced Brauer category is expected to be equivalence to the category of tensor modules for ${\rm SOSp}(V)$.  This is indeed the case for the supergroup ${\rm SOSp}(\C^{1|2n})$.

\part{Endomorphism algebras of orthogonal and symplectic groups}

\section{Endomorphism algebras of the orthogonal group}\label{sect:ker-O}

Recall from Section \ref{sect:Brauer-algebra} that $B_r^r(\delta_V)$ is the Brauer algebra of degree $r$.
Thus $\Ker{F_r^r}$ is a two-sided ideal of $B_r^r(\delta_V)$, and $B_r^r(\delta_V)/Ker{F}_r^r$
is canonically isomorphic to the endomorphism algebra $\End_G(V^{\ot r})$ by Theorem \ref{thm:fft-sft}(2).
In order to understand the algebraic structure of $\End_G(V^{\ot r})$, we need to understand that
of $Ker{F}_r^r$.  
The diagrammatic techniques presented above allow us to do this in the special cases of the orthogonal group $\Or(V)$ and symplectic group $\Sp(V)$ \cite{LZ5}. This provides the solution of a problem raised by the work of Brauer \cite{B} and Weyl \cite{W}.  

In this section and Section \ref{sect:ker-Sp}, we describe the ideals $Ker{F}_r^r$, and develop presentations of  
endomorphism algebras of the orthogonal and symplectic groups.  

In the remainder of this section, we assume that $V=V_{\bar0}$ is purely even with $\delta_V=m$. Thus the orthosymplectic supergroup reduces to the orthogonal group $\SO(V)$.  

The material below is taken from \cite{LZ5}. 

\subsection{Generators of the kernel}

The two-sided ideal $I(m, 0)$ (see Lemma \ref{cor:sftgl}) in $\K\Sym_r$  is the sum of two-sided simple ideals 
$\oplus_\mu I(\mu)$ over those partitions $\mu$ of $r$ which contain
the partition $(1, 1, \dots, 1)$  of $m+1$  (corresponding to the Young diagram with only one column of $m+1$ boxes). 

Set $\epsilon=1$ and consider $\Sigma_+(m+1)$. 
For $p=0, 1, \dots, m+1$, let $E_{m+1-p}$ denote the element of the
Brauer algebra $B_{m+1}^{m+1}(m)$ of degree $m+1$ shown in Figure \ref{Ep}.
\begin{figure}[h]
\begin{center}
\begin{picture}(100, 60)(0,0)
\put(-35, 27){$E_{m+1-p}=$}

\put(25, 40){\line(0, 1){20}}
\put(33, 50){...}
\put(50, 40){\line(0, 1){20}}

\qbezier(58, 40)(100, 90)(108, 0)
\qbezier(72, 40)(95, 70)(97, 0)
\put(97, 7){...}
\put(98, -5){\tiny$p$}

\put(20, 20){\line(1, 0){60}}
\put(20, 20){\line(0, 1){20}}
\put(80, 20){\line(0, 1){20}}
\put(20, 40){\line(1, 0){60}}
\put(40, 28){\tiny${m+1}$}

\put(25, 20){\line(0, -1){20}}
\put(33, 10){...}
\put(50, 20){\line(0, -1){20}}

\qbezier(58, 20)(95, -30)(108, 60)
\qbezier(72, 20)(95, -10)(97, 60)
\put(97, 53){...}
\put(98, 60){\tiny$p$}
\end{picture}
\end{center}
\caption{}
\label{Ep}
\end{figure}

\begin{lemma}\label{lem:E-Z}
For all $0\le k\le m+1$, the elements $E_k$ are linear combinations of
Brauer diagrams over $\Z$.
\end{lemma}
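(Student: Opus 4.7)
The plan is to exploit the fact that $\Sigma_{m+1}=\sum_{\sigma\in\Sym_{m+1}}(-1)^{|\sigma|}\sigma$ is manifestly a $\Z$-linear combination of permutations, together with the observation that the construction of $E_{m+1-p}$ from $\Sigma_{m+1}$ shown in Figure \ref{Ep} simply attaches external bending arcs to the rightmost $p$ strands on the top and bottom of the $\Sigma_{m+1}$-box. By $\K$-bilinearity of composition and tensor product in $\CB(\delta_V)$, this bending can be performed on each permutation $\sigma$ separately, so if $\tilde\sigma$ denotes the diagram obtained by substituting a single $\sigma$ for the $\Sigma_{m+1}$-box in Figure \ref{Ep}, then
\[
E_{m+1-p}=\sum_{\sigma\in\Sym_{m+1}}(-1)^{|\sigma|}\tilde\sigma.
\]

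The key step will be to check that each $\tilde\sigma$ is a single $(m+1,m+1)$ Brauer diagram with no free loops, so that no factor of $\delta_V$ is produced in the evaluation. This is a short strand-tracing exercise: the bending arcs bijectively relabel the rightmost $p$ top endpoints of $\sigma$ as the new bottom-right vertices of $E_{m+1-p}$ and the rightmost $p$ bottom endpoints of $\sigma$ as the new top-right vertices, while the leftmost $m+1-p$ endpoints on each side are untouched. Every strand of the permutation $\sigma$ therefore becomes an arc of $\tilde\sigma$ whose two endpoints are distinct boundary vertices of $E_{m+1-p}$, so no closed component can arise during the evaluation.

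Granted the loop-freeness, the displayed formula exhibits $E_{m+1-p}$ as a signed sum of Brauer diagrams with coefficients in $\{\pm 1\}$; collecting coincident $\tilde\sigma$ leaves all coefficients in $\Z$. Letting $k=m+1-p$ range over $\{0,1,\dots,m+1\}$ gives the lemma. The only real obstacle is the loop-freeness verification, which amounts to careful bookkeeping of how the four types of strands of $\sigma$ (both endpoints unbent, only top bent, only bottom bent, both bent) are re-routed by the outer arcs; everything else is formal manipulation in $\CB(\delta_V)$.
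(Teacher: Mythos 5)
Your argument is correct and matches what the paper intends: the paper's proof is simply ``This is evident from the definition of these elements,'' and your proposal spells out exactly why it is evident --- $\Sigma_{m+1}=\Sigma_+(m+1)$ is a signed $\Z$-sum of permutations, the bending arcs act $\K$-bilinearly term by term, and tracing strands shows each $\tilde\sigma$ is a genuine loop-free $(m+1,m+1)$ Brauer diagram, so no powers of $\delta_V$ appear. One small extra observation, which you do not need but which simplifies your last paragraph: since distinct permutations $\sigma$ give distinct pairings $\tilde\sigma$ (changing $\sigma(i)$ for some $i$ changes which boundary vertex is paired with the image of box-bottom $i$), there is no coincidence to collect and the coefficients of $E_{m+1-p}$ are literally $\pm 1$.
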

This is evident from the definition of these elements.
They also have the following properties.
\begin{lemma}\label{lemma:Ep-1}
\begin{enumerate}
\item $*E_p = E_{m+1-p}$ for all $p$.
\item $F_p E_p = E_p F_p=p!(m+1-p)! E_p$.
\item $e_i E_p = E_p e_i=0$ for all $i\le m$.
\end{enumerate}
\end{lemma}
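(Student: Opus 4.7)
For part (1), the plan is to apply the involution $*\circ\sharp$ (reflection in a horizontal line followed by reflection in a vertical line, see Section \ref{ssec:involutions}) to the diagram of $E_p$ in Figure \ref{Ep}. The total antisymmetriser $\Sigma_{m+1}=\sum_{\pi\in\Sym_{m+1}}(-1)^{|\pi|}\pi$ is manifestly invariant under $*$ (inversion preserves length) and under $\sharp$ (conjugation by the longest element preserves length), hence is fixed by the composite $180^\circ$ rotation. The rotation of the remaining outer structure of $E_p$ simultaneously swaps top and bottom halves and left and right halves, so that the $p$ straight through-strands and the $m+1-p$ bent strands interchange their positions and roles, yielding precisely the diagram of $E_{m+1-p}$.

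For part (2), I interpret $F_p$ as the antisymmetriser over the Young subgroup of $\Sym_{m+1}$ whose block partition matches the division into straight and bent strands in $E_p$; via Lemma \ref{lem:brprops}(2) it sits inside $B_{m+1}^{m+1}(\delta)$. The essential input is the identity $\sigma\Sigma_{m+1}=\Sigma_{m+1}\sigma=(-1)^{|\sigma|}\Sigma_{m+1}$ for $\sigma\in\Sym_{m+1}$, which yields
\[
F_p\Sigma_{m+1}=\Sigma_{m+1}F_p=p!(m+1-p)!\,\Sigma_{m+1}.
\]
Since each permutation in $F_p$ preserves the straight/bent block division of $E_p$, stacking $F_p$ above or below $E_p$ lets every constituent permutation be transported through the outer structure to act directly on $\Sigma_{m+1}$ (from above on the top-of-box side, or from below on the bottom-of-box side). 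The scalar identity then propagates to $F_pE_p=E_pF_p=p!(m+1-p)!\,E_p$.

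For part (3), the key algebraic input is the vanishing $\Sigma_{m+1}e_i=e_i\Sigma_{m+1}=0$ in $B_{m+1}(\delta)$ for each $i\in\{1,\dots,m\}$. This is immediate from Lemma \ref{lem:brprops}: the Brauer relations $s_ie_i=e_i=e_is_i$ combined with $s_i\Sigma_{m+1}=-\Sigma_{m+1}$ give
\[
\Sigma_{m+1}e_i=\Sigma_{m+1}s_ie_i=(\Sigma_{m+1}s_i)e_i=-\Sigma_{m+1}e_i,
\]
forcing $\Sigma_{m+1}e_i=0$ in characteristic zero, and the other vanishing is dual. To deduce $e_iE_p=E_pe_i=0$ I would use the sliding and straightening relations \eqref{eq:slide}--\eqref{eq:straight} of Theorem \ref{thm:presentation} to transport an $e_i$ appended to $E_p$ through the straight and bent strands until its cup/cap meets $\Sigma_{m+1}$, where it is absorbed to zero. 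The main obstacle I anticipate is the boundary case in which the positions $i,i+1$ straddle the divide between the straight through-strand region and the bent-strand region of $E_p$; the cup/cap of $e_i$ must then first be dragged across a bent strand using the sliding relation, requiring careful bookkeeping to ensure that each resulting configuration reduces to $\Sigma_{m+1}e_j$ for some $j\in\{1,\dots,m\}$.
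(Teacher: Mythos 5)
Your treatment of parts (1) and (2) runs along the same pictorial lines as the paper, and the argument for part~(2) is sound. For part~(3), however, your proposed resolution of the boundary case $i=p$ contains a genuine gap. You anticipate that after dragging $e_p$ through the frame the configuration will "reduce to $\Sigma_{m+1}e_j$ for some $j$," but it does not. In $E_p$ the top node at position $p$ is attached to the \emph{top} of the $\Sigma_{m+1}$ box (a through-strand), while the top node at position $p+1$ is attached to the \emph{bottom} of the box (a wrapping strand). Hence the arc of $e_p$, once straightened, joins a top node of $\Sigma_{m+1}$ to a bottom node — it closes a single strand into a partial (Markov) trace of $\Sigma_{+1}(m+1)$. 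This is not of the form $\Sigma_{m+1}e_j$, and its vanishing comes from a different source: by Lemma~\ref{lem:Sigma-1}(2) with $\epsilon=1$, $r=m+1$, the one-strand trace of $\Sigma_{+1}(m+1)$ carries the scalar $-(m-\delta)$, which is zero precisely because $\delta=m$. That fact — which is exactly what the paper cites — is nowhere in your plan, so the case $i=p$ is unproved. (For $i\neq p$ your sliding argument does work, since both ends of the arc land on the same side of the box, yielding $e_j\Sigma_{m+1}=0$ or $\Sigma_{m+1}e_j=0$; the paper shortcuts this case via part~(2) together with $e_iF_p=F_pe_i=0$.)

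A secondary remark on part~(1): the $180^\circ$ rotation of Figure~\ref{Ep} does not literally reproduce the figure for $E_{m+1-p}$. The rotated picture of $E_p$ still has $m+1-p$ wrapping strands (now carried over to the left), whereas $E_{m+1-p}$ is drawn with $p$ wrapping strands on the right; when $p\neq m+1-p$ these are different diagrams of the same morphism. Reconciling them requires the extra observation that $\Sigma_{m+1}$ is also fixed by the horizontal reflection $^*$, so that partially flipping a set of strands of $\Sigma_{m+1}$ agrees with the partial flip on the complementary set. The paper's own one-line justification glosses over this too, but your phrase "the straight and bent strands interchange their positions and roles" does not accurately describe the effect of the rotation: it swaps positions but not roles, and the missing step is the one just indicated.
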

\begin{proof} Both (1) and (2) follow easily from the pictorial representation of
$E_p$ given in Figure \ref{Ep}. If $i\ne p$, then $e_i F_p=F_p e_i =0$. Thus (3)
holds for all $i\ne p$. The $i=p$ case of (3) follows from the fact that
\[
\begin{picture}(80, 60)(0, -30)
\put(0, 10){\line(1, 0){60}}
\put(0, -10){\line(1, 0){60}}
\put(0, 10){\line(0, -1){20}}
\put(60, 10){\line(0, -1){20}}
\put(18, -3){$m+1$}

\put(10, 10){\line(0, 1){15}}
\put(20, 15){$\cdots$}
\put(40, 10){\line(0, 1){15}}

\qbezier(50, -10)(60, -40)(65, 0)
\qbezier(50, 10)(60, 40)(65, 0)

\put(10, -10){\line(0, -1){15}}
\put(20, -20){$\cdots$}
\put(40, -10){\line(0, -1){15}}

\put(75, -3){$=0$,}
\end{picture}
\]
which is implied by Lemma \ref{lem:Sigma-1}(2) when $r=m+1$ and $\epsilon=1$.
\end{proof}

The arguments used in the proof of \cite[Corollary 5.13]{LZ5} lead to
\begin{corollary}\label{cor:ann}
Let $D$ be any diagram in $B_{m+1}^{m+1}(n)$ which has fewer
than $m+1$ through strings. Then $D E_i = E_i D = 0$ for all $i$.
\end{corollary}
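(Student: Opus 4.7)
The plan is to exploit the fact that $E_p$ contains the antisymmetrizer $\Sigma_{m+1}$ as a sub-diagram (see Figure \ref{Ep}), together with the observation that, at the parameter $\delta = m$, the antisymmetrizer is annihilated by any ``pinching'' that identifies two of its boundary strings. My first step will be to establish the \emph{identification lemma}: for any choice of two boundary vertices of the $\Sigma_{m+1}$ box (both on the top, both on the bottom, or one on each), attaching a cap/cup/partial-trace joining those two vertices yields the zero morphism. When the vertices are adjacent and lie on the top-right and bottom-right, this is exactly Lemma \ref{lem:Sigma-1}(2) with $r = m+1$, $\epsilon = +1$ and $\delta = m$, which produces the coefficient $-(r-1-\delta) = 0$. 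For an arbitrary pair of vertices I will invoke the sign-anti-invariance $\sigma\circ\Sigma_{m+1} = \Sigma_{m+1}\circ\sigma = \mathrm{sgn}(\sigma)\Sigma_{m+1}$ for $\sigma\in\Sym_{m+1}$, conjugating by a suitable permutation to reduce to the adjacent case at the cost of a sign. For the ``cup-on-top'' variant, a direct pairing argument also works: expanding $\Sigma_{m+1} = \sum_{\sigma\in\Sym_{m+1}}\mathrm{sgn}(\sigma)\sigma$ and grouping $\sigma$ with $\sigma\cdot(a,b)$, the two capped diagrams coincide while their signs cancel.

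My second step will be to recognize the composition $DE_p$ as exactly such a pinching of $\Sigma_{m+1}$. Since the bottom of $D$ is glued to the top of $E_p$, any horizontal arc on the bottom of $D$ produces an identification of two top-of-$E_p$ vertices. Tracing through the strings of $E_p$---the leftmost $m+1-p$ run straight up/down through the $\Sigma_{m+1}$ box, while the rightmost $p$ curve around to the opposite side of the diagram---this forces an identification of two boundary vertices of the $\Sigma_{m+1}$ box, which is precisely one of the three configurations treated by the identification lemma. Since $D$ has $<m+1$ through strings it necessarily has such an arc on its bottom, whence $DE_p = 0$. The equality $E_pD = 0$ then follows by applying the anti-involution $\ast$ of \S\ref{ssec:involutions} and using $\ast E_p = E_{m+1-p}$ from Lemma \ref{lemma:Ep-1}(1): we have $\ast(E_pD) = \ast D\circ E_{m+1-p}$, and since $\ast D$ also has $<m+1$ through strings, the previous step yields $\ast D\circ E_{m+1-p} = 0$.

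I expect the main obstacle to be the ``partial-trace'' case of the identification lemma, namely when the cup on the bottom of $D$ connects one endpoint in the straight region of $E_p$'s top with one endpoint in the curving region: this produces a pinching that identifies a top vertex of $\Sigma_{m+1}$ with a bottom vertex. Here the sign-anti-invariance reduction must be applied symmetrically on both the top and the bottom of the box, and one must carefully verify that Lemma \ref{lem:Sigma-1}(2) gives $0$ after the reduction. The two ``pure'' cases (two top vertices or two bottom vertices of $\Sigma_{m+1}$ identified) follow more directly---either from the same reduction or, as noted, from a clean pairing argument on $\Sym_{m+1}$.
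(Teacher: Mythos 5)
Your proposal is correct, and it is essentially the argument the paper is pointing to (the paper itself only cites [LZ5, Cor.~5.13] rather than proving the statement). The two ingredients you isolate are exactly the ones used in the proof of Lemma~\ref{lemma:Ep-1}(3): capping two top (or two bottom) legs of $\Sigma_{m+1}$ vanishes by the pairing $\sigma\leftrightarrow\sigma\cdot(a,b)$, with no condition on $\delta$, while the mixed top--bottom partial trace vanishes exactly when $\delta=m$ by Lemma~\ref{lem:Sigma-1}(2) with $\epsilon=+1$, $r=m+1$; conjugating by permutations on top and bottom reduces an arbitrary pinching to these adjacent cases at the cost of a sign. Your anti-involution step for $E_iD=0$ is also correct, since $\ast$ preserves the number of through strings and $\ast E_p=E_{m+1-p}$. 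One point worth making explicit, since it is a plausible shortcut that fails: one cannot in general factor $D=D'\circ e_j$ when the bottom arc of $D$ is non-adjacent (e.g.\ a diagram in $B^3_3$ with bottom arc $\{1,3\}$ admits no such factorisation), so quoting Lemma~\ref{lemma:Ep-1}(3) directly is not enough; the correct factorisation is $D=D_1\circ\bigl(\text{single cap at }\{a,b\}\bigr)$ with $D_1\in B_{m-1}^{m+1}$, followed by your general pinching lemma applied to the capped $E_p$.
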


Note that $E_0=E_{m+1}=\Sigma_{+1}(m+1)$.
\begin{proposition}\label{lem:o-generat}
Assume $r>m$. As a two-sided ideal of the Brauer algebra $B_r^r(m)$, $\Ker{F}_r^r$
is generated by $E_p$ for all $0\le p\le m+1$.
\end{proposition}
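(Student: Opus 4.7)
The plan is to identify $\Ker F_r^r$ via the SFT for $\OSp$ and then translate the tensor-ideal description into one involving only the specific diagrams $E_p$.

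When $\ell=0$, the idempotent $e(m,0)$ of \eqref{eq:eml} simplifies drastically: its row-stabiliser $R$ is trivial and its column-stabiliser $C$ is all of $\Sym_{m+1}$, so $e(m,0)=\sum_{\sigma\in\Sym_{m+1}}\varepsilon(\sigma)\sigma=\Sigma_{+1}(m+1)=E_0=E_{m+1}$. Hence by Theorem \ref{thm:fft-sft}(2), $\Ker F_r^r=\CJ(m,0)_r^r$, the $(r,r)$-part of the tensor ideal of $\CB(m)$ generated by $E_0$. Since Figure \ref{Ep} exhibits each $E_p$ as a composition and tensor product of $E_0$ with elementary Brauer diagrams (specifically cups and caps that bend $p$ of the strands), each $E_p\otimes I^{\otimes r-m-1}$ lies in this tensor ideal and hence in $\Ker F_r^r$. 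Denoting by $J\subseteq B_r^r(m)$ the two-sided ideal generated by $\{E_p\otimes I^{\otimes r-m-1}:0\le p\le m+1\}$, this gives the containment $J\subseteq\Ker F_r^r$.

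For the reverse inclusion, a general element of $\CJ(m,0)_r^r$ is a linear combination of morphisms of the form
\[
\beta = D_1 \circ (I^{\otimes a}\otimes E_0\otimes I^{\otimes b})\circ D_2,
\]
with $D_1\in B_n^r(m)$, $D_2\in B_r^n(m)$ and $n=a+b+m+1\ge m+1$. The key observation is that capping any two strands of $E_0=\Sigma_{+1}(m+1)$ yields zero: Lemma \ref{lem:Sigma-1}(2) with $\epsilon=+1$, $\delta=m$ and $r=m+1$ gives the coefficient $-\epsilon(r-1-\epsilon\delta)=-((m+1)-1-m)=0$, and by $\Sym_{m+1}$-antisymmetry of $E_0$ the same conclusion holds for any pair of strands (a single transposition argument shows $A_{ij}\circ E_0=-A_{ij}\circ E_0$, hence $0$ in characteristic zero). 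Consequently, if $D_1$ has a cup joining two of the middle bottom nodes (those in positions $a+1,\dots,a+m+1$), or $D_2$ has an analogous cap at the top, then $\beta=0$, so we may discard such terms.

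In the remaining case, among the $m+1$ middle bottom nodes of $D_1$ some number $p$ lie on cups connecting to outside nodes and the remaining $m+1-p$ are through-strings to $D_1$'s top; a symmetric statement holds for $D_2$. Using permutations (inside $B_r^r(m)$) to collect the bent strands into the rightmost positions of the central block, and applying the bending isomorphisms $\bU_p^q$ and $\bA_q^r$ of Corollary \ref{isomorphism} to absorb the cups and caps that touch the central symmetriser into it, one rewrites
\[
\beta=\tilde D_1\circ(I^{\otimes a'}\otimes E_p\otimes I^{\otimes b'})\circ\tilde D_2,
\]
with $a'+b'+m+1=r$ and $\tilde D_1,\tilde D_2\in B_r^r(m)$, where the central block has exactly the form of $E_p$ in Figure \ref{Ep}. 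Thus $\beta\in J$, completing the proof.

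The main obstacle is the diagrammatic bookkeeping in this final step: tracking exactly how the cups and caps of $D_1,D_2$ that meet the central $m+1$ strands rearrange, via permutations and the bending isomorphisms, into the standard form of $E_p$, while ensuring that the residual factors $\tilde D_1,\tilde D_2$ indeed remain in $B_r^r(m)$. This is tractable only because the vanishing observation above eliminates the most troublesome configurations (those with cups/caps internal to the central block), leaving a uniformly ``bent'' structure that matches precisely one of the $E_p$ on the nose; the full range $0\le p\le m+1$ is needed to accommodate every possible value of the number of bent strands.
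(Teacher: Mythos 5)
Your reduction of the problem via the SFT (Theorem~\ref{thm:fft-sft}(2)), so that $\Ker F_r^r=\CJ(m,0)_r^r$ with $e(m,0)=E_0=\Sigma_{+1}(m+1)$, and the containment $J\subseteq\Ker F_r^r$ from the observation that each $E_p$ is a bent $E_0$, are both correct.  The vanishing of any effective cap, cup or partial trace on $E_0$ is also the right mechanism, and you cite the right ingredients (antisymmetry of $E_0$ and Lemma~\ref{lem:Sigma-1}(2) with $\ve=+1$, $\delta=m$, $r=m+1$).  This is a genuinely different route from the paper's, which works in the $(2r,0)$ framing: it starts from a Brauer $(2r,0)$ diagram $A$, forms the antisymmetrisation $A_S=\sum_{\pi\in\Sym_S}(-1)^{|\pi|}A\circ\pi$ over an $(m+1)$-subset $S$, conjugates by a permutation in $\Sym_r\times\Sym_r$ to put the $S$-nodes into a contiguous block, and then reads off the canonical form of Figure~\ref{Ep-1}.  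That canonical form is engineered so that configurations where the numbers of ``bends'' from the top and bottom of the $\Sigma_{+1}(m+1)$ block differ automatically carry internal caps on the block and are killed at a glance.

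However, your ``remaining case'' analysis has a genuine gap.  After discarding terms in which $D_1$ has a cap joining two of the middle bottom nodes (or $D_2$ has a cup joining two middle top nodes), you assume each remaining middle bottom node of $D_1$ (and symmetrically of $D_2$) either escapes to an outer identity strand or is a through-string, and proceed to rewrite $\beta$ as an $E_p$ sandwich.  But two kinds of configurations are missed.  (i) There are terms with no cap or cup internal to $D_1$ or $D_2$ in which, nevertheless, a chain of arcs through $D_1$, the identity strands $I^{\otimes a}\otimes I^{\otimes b}$, and $D_2$ connects two ports of the $E_0$ block in the composite $\beta$ --- either two top ports (an effective cap, killed by antisymmetry), two bottom ports (an effective cup), or a top port to a bottom port (an effective partial trace, killed by Lemma~\ref{lem:Sigma-1}(2) with $\delta=m$).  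These also force $\beta=0$, but your case-split does not detect them: they look like ``$p$ cups to outside nodes'' from the viewpoint of $D_1$ alone.  (ii) Once all such configurations are genuinely excluded, the numbers $p$ and $q$ of bent strands emanating from the top and from the bottom of the $E_0$ block need not be equal; the asserted rewriting $\beta=\tilde D_1\circ(I^{\otimes a'}\otimes E_p\otimes I^{\otimes b'})\circ\tilde D_2$ therefore does not follow from a straightforward slide into the ``standard form of $E_p$'', since every $E_p$ has an equal number of bends on each side.  One can in fact always realise the asymmetric routing through some $E_{p'}$ with additional routing done inside $\tilde D_1,\tilde D_2$, but showing this is exactly the delicate combinatorial bookkeeping your sketch defers.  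The required vanishing lemmas are all in place; what is missing is the careful diagram chase that the paper sidesteps by its choice of canonical form.
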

\begin{proof}
The proof of Proposition \ref{lem:sp-generat} can easily be modified
to prove the assertion above.  The two required modifications are that for
any $(2r, 0)$ Brauer diagram $A$ with
associated invariant functional $\gamma=F(A)$,
(i) the definition \eqref{eq:AS} of $A_S$ needs to be changed to
\[
A_S= \sum_{\pi\in\Sym_S} (-1)^{|\pi|}A\circ\pi;
\]
(ii) we only need to consider subsets $S$ of $[1, 2r]$ which will not lead to
the trivial vanishing of $A_S$.
With these modifications, the arguments following \eqref{eq:AS}
may be repeated verbatim, leading to the conclusion that
$\Ker{F}_r^r$ is generated as a two-sided ideal of
$B_r^r(-2n)$  by elements of the form Figure \ref{Ep-1}.
\begin{figure}[h]
\begin{center}
\begin{picture}(100, 60)(-20,0)

\put(20, 20){\line(1, 0){60}}
\put(20, 20){\line(0, 1){20}}
\put(80, 20){\line(0, 1){20}}
\put(20, 40){\line(1, 0){60}}
\put(42, 28){\tiny${m+1}$}

\qbezier(30, 40)(10, 70)(10, 0)
\qbezier(45, 40)(5, 90)(-5,0)
\put(0, 10){...}
\put(0, 0){\tiny$p$}

\put(50, 40){\line(0, 1){20}}
\put(70, 40){\line(0, 1){20}}
\put(55, 50){...}

\put(30, 20){\line(0, -1){20}}
\put(35, 10){...}
\put(50, 20){\line(0, -1){20}}

\qbezier(55, 20)(95, -30)(105, 60)
\qbezier(70, 20)(90, -10)(90, 60)
\put(92, 53){...}
\put(95, 60){\tiny$p$}
\end{picture}
\end{center}
\caption{}
\label{Ep-1}
\end{figure}

Post-multiplying the diagram in Figure \ref{Ep-1} by the invertible element $X_{m+1-p, p}$ , we obtain
Figure \ref{Ep} up to a sign. This completes the proof.
\end{proof}

\begin{remark}
Figure \ref{Ep-1} is the $p=q$ analogue of Figure \ref{Dpq-1}.
In the present case,  diagrams of the form Figure \ref{Dpq-1} with $p>q$ vanish identically,
since $\Sigma_{+1}(m+1)$ is the total antisymmetriser in $\Sym_{m+1}$.
\end{remark}

\subsection{Formulae for the $E_i$}

If $k,l$ are integers such that $1\leq k<l$, write
$A(k,l):=\Sigma_{+1}(\Sym_{\{k,k+1,\dots,l\}})$ for the total antisymmetriser
in $\Sym_{\{k,k+1,\dots,l\}}$.
By convention,  $A(k,l)=1$ if $k\geq l$.
Represent $A(1, t)A(t+1, t+s)$ in $B_{t+s}^{t+s}(m)$ pictorially by
\[
\begin{picture}(120, 80)(0, -40)
\put(5, 10){\line(1, 0){45}}
\put(5, -10){\line(1, 0){45}}
\put(5, 10){\line(0, -1){20}}
\put(50, 10){\line(0, -1){20}}
\put(25, -3){$t$}

\put(10, 10){\line(0, 1){25}}
\put(45, 10){\line(0, 1){25}}
\put(23, 25){...}

\put(10, -10){\line(0, -1){25}}
\put(45, -10){\line(0, -1){25}}
\put(23, -30){...}

\put(60, 10){\line(1, 0){45}}
\put(60, -10){\line(1, 0){45}}
\put(60, 10){\line(0, -1){20}}
\put(105, 10){\line(0, -1){20}}
\put(80, -3){$s$}

\put(100, -10){\line(0, -1){25}}
\put(80, -30){...}
\put(65, -10){\line(0, -1){25}}
\put(100, 10){\line(0, 1){25}}
\put(65, 10){\line(0, 1){25}}
\put(80, 25){...}
\put(110, -5){.}
\end{picture}
\]
The lemma below is \cite[Lemma 6.6.]{LZ5}, which is the graphical reformulation of
some of the computations in the proofs of \cite[Corollary 5.2]{LZ4} and
\cite[Theorem 5.10]{LZ4}.

\begin{lemma}[\cite{LZ5}]\label{lem:AfU}
For all $k=0, 1, \dots, i$
\begin{equation}\label{eq:AfU}
\begin{aligned}
\begin{picture}(120, 80)(5, -40)
\put(5, 10){\line(1, 0){45}}
\put(5, -10){\line(1, 0){45}}
\put(5, 10){\line(0, -1){20}}
\put(50, 10){\line(0, -1){20}}
\put(23, -3){\tiny$i$}

\qbezier(45, 10)(55, 30) (65, 10)
\put(10, 10){\line(0, 1){25}}
\put(35, 10){\line(0, 1){25}}
\put(18, 25){...}

\put(10, -10){\line(0, -1){25}}
\put(25, -10){\line(0, -1){25}}
\put(13, -30){...}

\qbezier(30, -10)(55, -50) (80, -10)
\qbezier(45, -10)(55, -30) (65, -10)
\put(55, -29){.}
\put(55, -23){.}
\put(55, -26){.}
\put(55, -40){\tiny$k$}

\put(60, 10){\line(1, 0){45}}
\put(60, -10){\line(1, 0){45}}
\put(60, 10){\line(0, -1){20}}
\put(105, 10){\line(0, -1){20}}
\put(67, -3){\tiny${m+1-i}$}

\put(100, -10){\line(0, -1){25}}
\put(88, -30){...}
\put(85, -10){\line(0, -1){25}}
\put(100, 10){\line(0, 1){25}}
\put(75, 10){\line(0, 1){25}}
\put(83, 25){...}
\put(110, -5){$=$}
\end{picture}
\begin{picture}(120, 80)(-5, -40)
\put(-6, -5){$k^2$}
\put(5, 10){\line(1, 0){45}}
\put(5, -10){\line(1, 0){45}}
\put(5, 10){\line(0, -1){20}}
\put(50, 10){\line(0, -1){20}}
\put(20, -3){\tiny${i-1}$}

\put(10, 10){\line(0, 1){25}}
\put(45, 10){\line(0, 1){25}}
\put(23, 25){...}

\put(10, -10){\line(0, -1){25}}
\put(25, -10){\line(0, -1){25}}
\put(13, -30){...}

\qbezier(30, -10)(55, -50) (80, -10)
\qbezier(45, -10)(55, -30) (65, -10)
\put(55, -29){.}
\put(55, -23){.}
\put(55, -26){.}
\put(53, -40){\tiny${k-1}$}

\put(60, 10){\line(1, 0){45}}
\put(60, -10){\line(1, 0){45}}
\put(60, 10){\line(0, -1){20}}
\put(105, 10){\line(0, -1){20}}
\put(72, -3){\tiny${m-i}$}

\put(100, -10){\line(0, -1){25}}
\put(88, -30){...}
\put(85, -10){\line(0, -1){25}}
\put(100, 10){\line(0, 1){25}}
\put(65, 10){\line(0, 1){25}}
\put(80, 25){...}
\put(110, -5){$+$}
\end{picture}
\begin{picture}(120, 80)(-20, -30)
\put(-15, 5){$\zeta_{i, k}$}
\put(0, -20){\line(1, 0){40}}  \put(55, -20){\line(1, 0){40}}
\put(0, -5){\line(1, 0){40}}   \put(55, -5){\line(1, 0){40}}
\put(0, 20){\line(1, 0){45}}   \put(50, 20){\line(1, 0){45}}
\put(0, 35){\line(1, 0){45}}   \put(50, 35){\line(1, 0){45}}
\put(45, 20){\line(0, 1){15}}  \put(95, 20){\line(0, 1){15}}
\put(0, 20){\line(0, 1){15}}   \put(50, 20){\line(0, 1){15}}
\put(0, -20){\line(0, 1){15}}  \put(55, -20){\line(0, 1){15}}
\put(40, -20){\line(0, 1){15}} \put(95, -20){\line(0, 1){15}}

\put(15, 25){\tiny${i-1}$}    \put(70, 25){\tiny${m-i}$}
\put(10, -15){\tiny${i-k}$} \put(70, -15){\tiny$j$}

\put(5, 35){\line(0, 1){10}}    \put(55, 35){\line(0, 1){10}}
\put(18, 40){...}               \put(68, 40){...}
\put(40, 35){\line(0, 1){10}}   \put(90, 35){\line(0, 1){10}}

\put(5, 20){\line(0, -1){25}}   \put(20, 20){\line(0, -1){25}}
\put(75, 20){\line(0, -1){25}}  \put(90, 20){\line(0, -1){25}}
\put(8, 5){...}                 \put(78, 5){...}

\put(5, -20){\line(0, -1){10}}  \put(90, -20){\line(0, -1){10}}
\put(35, -20){\line(0, -1){10}} \put(60, -20){\line(0, -1){10}}
\put(18, -25){...}                \put(67, -25){...}

\qbezier(25, 20)(48, -10) (70, 20)
\qbezier(40, 20)(48, 10) (55, 20)
\put(48, 12){.}
\put(48, 9){.}
\put(48, 6){.}
\put(52, 9){\tiny$k$}
\qbezier(25, -5)(48, 5) (70,-5)

\put(105, -5){,}
\end{picture}
\end{aligned}
\end{equation}
where $j={m+1-i-k}$ and $\zeta_{i, k}= \frac{1}{(i-k-1)!(m-i-k)!}$.
\end{lemma}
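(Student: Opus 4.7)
The plan is to prove the identity by induction on $k$, exploiting the recursive structure of the total antisymmetriser encoded in Lemma \ref{lem:Sigma-1}(1) together with the vanishing $\Sigma_{+1}(m+1)\circ U=0$ that follows from Lemma \ref{lem:Sigma-1}(2) at $r=m+1$, $\delta=m$, $\epsilon=1$. Note that the parameter is indeed $\delta=m$ here, which is exactly what makes the $(r-1-\epsilon\delta)$ coefficient vanish when $r=m+1$; this will be the crucial arithmetic fact making the relation work.

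For the base case $k=0$ the two antisymmetrisers are disjoint and the identity is a direct comparison of normalisation constants (the $k^2$ factor kills the first term on the right-hand side). For the inductive step, I would apply Lemma \ref{lem:Sigma-1}(1) with $\epsilon=1$ to $A(i+1,m+1)$ in order to peel off the strand attached to the leftmost of the $k$ cups. This produces two terms: one in which the right-hand antisymmetriser shrinks to size $m-i$ while one through-strand is inserted, and a correction term in which an extra cap-cup pair is created inside $A(i+1,m+1)$. Applying the same decomposition to $A(1,i)$ on the opposite side of this cup, and then invoking the inductive hypothesis on the resulting diagrams with $k-1$ cups, recovers the two summands on the right-hand side. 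The factor $k^2$ arises as the product of the two combinatorial contributions from peeling off strands from both antisymmetrisers, since by the $\Sym_{i-1}$- and $\Sym_{m-i}$-antisymmetry of the reduced diagrams one can freely choose which strand to peel.

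The coefficient $\zeta_{i,k}=\frac{1}{(i-k-1)!\,(m-i-k)!}$ of the second term will come from the cumulative factorial denominators appearing in the second summand of Lemma \ref{lem:Sigma-1}(1), which have the form $(r-2)!^{-1}$; after $k$ successive applications coupled with the reabsorption of caps via Lemma \ref{lem:Sigma-1}(3), these combine to $\frac{1}{(i-k-1)!\,(m-i-k)!}$. At each inductive step, one further uses Lemma \ref{lem:Sigma-1}(2) to recognise that summands in which a full antisymmetriser of size $m+1$ meets a cup simply vanish, which is precisely the mechanism eliminating the numerous intermediate terms that would otherwise appear.

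The main obstacle will be the bookkeeping of coefficients, not the identification of the two diagram types on the right-hand side. In particular, one must carefully verify that the extra cap produced by Lemma \ref{lem:Sigma-1}(1) in the first summand of the inductive step recombines with a pre-existing cup to produce exactly a $k$-fold cup-cap structure of the form shown in the second diagram on the right-hand side, rather than some more complicated configuration. This is where Lemma \ref{lem:Sigma-1}(3) enters as the tool for moving caps past the antisymmetriser at the cost of a controlled sum. Checking that the alternating signs from $\epsilon=1$ and the factorials combine to give $\zeta_{i,k}$ rather than a more elaborate expression is the only step that genuinely requires a careful computation, and it is where I would expect to spend most of the technical effort.
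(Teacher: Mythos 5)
Your high-level strategy --- induction on $k$ driven by Lemma \ref{lem:Sigma-1}(1) --- is the right one and matches the paper's, but the auxiliary facts you invoke are wrong, and one of them reflects a genuine misconception. You declare the vanishing $\Sigma_{+1}(m+1)\circ U = 0$ (from Lemma \ref{lem:Sigma-1}(2) at $r=m+1$, $\delta=m$, $\epsilon=1$) to be ``the crucial arithmetic fact making the relation work.'' But no antisymmetriser of size $m+1$ ever appears in the proof of \eqref{eq:AfU}: the inputs are $A(1,i)$ and $A(i+1,m+1)$, of sizes $i$ and $m+1-i$, and repeated applications of Lemma \ref{lem:Sigma-1}(1) only \emph{shrink} antisymmetrisers, so a block of size $m+1$ cannot arise. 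The specialisation $\delta=m$ does enter, but through a different mechanism: applying Lemma \ref{lem:Sigma-1}(1) once to each of the two blocks and then contracting the middle cap-cup produces a free loop (hence a factor $\delta$) from the leading $\otimes$-term, while the two cross-terms each subtract one; these assemble to $\psi_{t,s}=\delta+2-t-s=m+2-t-s$, which, since $t+s=m+1$, collapses to the coefficient $k^2=1$ at $k=1$. The vanishing you cite is the engine behind Lemma~\ref{lemma:Ep-1}(3), not of this lemma, and chasing it here would send you down a blind alley.

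Your appeal to Lemma \ref{lem:Sigma-1}(3) is also misplaced; that identity is used in Lemma \ref{lem:Sigma} (the $\epsilon=-1$ case), not here --- the proof of \eqref{eq:AfU} uses only Lemma \ref{lem:Sigma-1}(1). The paper is also organised differently from what you sketch: it first proves a $k=1$-type identity \eqref{eq:AfU-1} for arbitrary block sizes $t,s$ via two applications of Lemma \ref{lem:Sigma-1}(1), and the inductive step does \emph{not} re-peel strands from the outer blocks $A(1,i)$ and $A(i+1,m+1)$; instead it post-composes the $k$-level identity with $I_{i-k-1}\otimes U\otimes I_{m-i-k}$ and applies \eqref{eq:AfU-1} to the \emph{bottom half} of the second right-hand-side diagram, shrinking the inner blocks. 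Finally, your base-case reasoning is inaccurate: at $k=0$ the antisymmetrisers are not disjoint --- the top cap still links them --- and the identity holds because the absorption relations $\Sigma(i-1)\Sigma(i)=(i-1)!\,\Sigma(i)$ and $\Sigma(m-i)\Sigma(m+1-i)=(m-i)!\,\Sigma(m+1-i)$ precisely cancel the prefactor $\zeta_{i,0}=\frac{1}{(i-1)!\,(m-i)!}$, not because of some trivial normalisation comparison.
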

\begin{proof} When $k=0$, \eqref{eq:AfU} is an identity.

We use Lemma \ref{lem:Sigma-1}(1) twice to obtain
\begin{eqnarray}\label{eq:AfU-1}
\begin{aligned}
\begin{picture}(130, 80)(-10, -40)
\put(5, 10){\line(1, 0){45}}
\put(5, -10){\line(1, 0){45}}
\put(5, 10){\line(0, -1){20}}
\put(50, 10){\line(0, -1){20}}
\put(25, -3){$t$}

\put(10, 10){\line(0, 1){25}}
\put(35, 10){\line(0, 1){25}}
\put(18, 25){...}

\put(10, -10){\line(0, -1){25}}
\put(35, -10){\line(0, -1){25}}
\put(18, -30){...}

\put(100, -10){\line(0, -1){25}}
\put(83, -30){...}
\put(75, -10){\line(0, -1){25}}
\put(100, 10){\line(0, 1){25}}
\put(75, 10){\line(0, 1){25}}
\put(83, 25){...}

\put(60, 10){\line(1, 0){45}}
\put(60, -10){\line(1, 0){45}}
\put(60, 10){\line(0, -1){20}}
\put(105, 10){\line(0, -1){20}}
\put(80, -3){$s$}

\qbezier(45, 10)(55, 45)(65, 10)
\qbezier(45, -10)(55, -45)(65, -10)
\put(110, -5){$=$}
\end{picture}
\begin{picture}(130, 80)(-15, -40)
\put(-15, -5){$\psi_{t, s}$}
\put(5, 10){\line(1, 0){45}}
\put(5, -10){\line(1, 0){45}}
\put(5, 10){\line(0, -1){20}}
\put(50, 10){\line(0, -1){20}}
\put(20, -3){$t-1$}

\put(10, 10){\line(0, 1){25}}
\put(45, 10){\line(0, 1){25}}
\put(23, 25){...}

\put(10, -10){\line(0, -1){25}}
\put(45, -10){\line(0, -1){25}}
\put(23, -30){...}

\put(60, 10){\line(1, 0){45}}
\put(60, -10){\line(1, 0){45}}
\put(60, 10){\line(0, -1){20}}
\put(105, 10){\line(0, -1){20}}
\put(72, -3){$s-1$}

\put(100, -10){\line(0, -1){25}}
\put(80, -30){...}
\put(65, -10){\line(0, -1){25}}
\put(100, 10){\line(0, 1){25}}
\put(65, 10){\line(0, 1){25}}
\put(80, 25){...}
\put(110, -5){$+$}
\end{picture}
\begin{picture}(120, 80)(-25, -30)
\put(-15, 5){$\phi_{t, s}$}
\put(0, -20){\line(1, 0){45}}  \put(50, -20){\line(1, 0){45}}
\put(0, -5){\line(1, 0){45}}   \put(50, -5){\line(1, 0){45}}
\put(0, 20){\line(1, 0){45}}   \put(50, 20){\line(1, 0){45}}
\put(0, 35){\line(1, 0){45}}   \put(50, 35){\line(1, 0){45}}
\put(45, 20){\line(0, 1){15}}  \put(95, 20){\line(0, 1){15}}
\put(0, 20){\line(0, 1){15}}   \put(50, 20){\line(0, 1){15}}
\put(0, -20){\line(0, 1){15}}  \put(50, -20){\line(0, 1){15}}
\put(45, -20){\line(0, 1){15}} \put(95, -20){\line(0, 1){15}}

\put(10, 25){$t-1$}       \put(60, 25){$s-1$}
\put(10, -15){$t-1$}      \put(60, -15){$s-1$}

\put(5, 35){\line(0, 1){10}}    \put(55, 35){\line(0, 1){10}}
\put(18, 40){...}               \put(68, 40){...}
\put(40, 35){\line(0, 1){10}}   \put(90, 35){\line(0, 1){10}}

\put(5, -20){\line(0, -1){10}}  \put(90, -20){\line(0, -1){10}}

\put(5, 20){\line(0, -1){25}}   \put(20, 20){\line(0, -1){25}}
\put(75, 20){\line(0, -1){25}}  \put(90, 20){\line(0, -1){25}}
\put(8, 5){...}                 \put(78, 5){...}

\put(40, -20){\line(0, -1){10}} \put(55, -20){\line(0, -1){10}}
\put(18, -28){...}                \put(67, -28){...}

\qbezier(30, 20)(48, -5) (65, 20)
\qbezier(30, -5)(48, 10) (65,-5)

\put(105, -5){,}
\end{picture}
\end{aligned}
\end{eqnarray}
where
\[
\psi_{t, s}=m+2-t-s, \quad \phi_{t, s}=\frac{1}{(t-2)!(s-2)!}.
\]
The case $k=1$ of \eqref{eq:AfU} can be obtained by setting $t=i$ and $s=m+1-i$.

Now use induction on $k$. Post-composing $I_{i-k-1}\otimes U\otimes I_{m-i-k}$
to \eqref{eq:AfU} we obtain
\begin{equation*}
\begin{aligned}
\begin{picture}(120, 80)(5, -40)
\put(5, 10){\line(1, 0){45}}
\put(5, -10){\line(1, 0){45}}
\put(5, 10){\line(0, -1){20}}
\put(50, 10){\line(0, -1){20}}
\put(23, -3){\tiny$i$}

\qbezier(45, 10)(55, 30) (65, 10)
\put(10, 10){\line(0, 1){25}}
\put(35, 10){\line(0, 1){25}}
\put(18, 25){...}

\put(10, -10){\line(0, -1){25}}
\put(25, -10){\line(0, -1){25}}
\put(13, -30){...}

\qbezier(30, -10)(55, -50) (80, -10)
\qbezier(45, -10)(55, -30) (65, -10)
\put(55, -29){.}
\put(55, -23){.}
\put(55, -26){.}
\put(50, -40){\tiny${k+1}$}

\put(60, 10){\line(1, 0){45}}
\put(60, -10){\line(1, 0){45}}
\put(60, 10){\line(0, -1){20}}
\put(105, 10){\line(0, -1){20}}
\put(67, -3){\tiny${m+1-i}$}

\put(100, -10){\line(0, -1){25}}
\put(88, -30){...}
\put(85, -10){\line(0, -1){25}}
\put(100, 10){\line(0, 1){25}}
\put(75, 10){\line(0, 1){25}}
\put(83, 25){...}
\put(110, -5){$=$}
\end{picture}
\begin{picture}(120, 80)(-5, -40)
\put(-6, -5){$k^2$}
\put(5, 10){\line(1, 0){45}}
\put(5, -10){\line(1, 0){45}}
\put(5, 10){\line(0, -1){20}}
\put(50, 10){\line(0, -1){20}}
\put(20, -3){\tiny${i-1}$}

\put(10, 10){\line(0, 1){25}}
\put(45, 10){\line(0, 1){25}}
\put(23, 25){...}

\put(10, -10){\line(0, -1){25}}
\put(25, -10){\line(0, -1){25}}
\put(13, -30){...}

\qbezier(30, -10)(55, -50) (80, -10)
\qbezier(45, -10)(55, -30) (65, -10)
\put(55, -29){.}
\put(55, -23){.}
\put(55, -26){.}
\put(55, -40){\tiny$k$}

\put(60, 10){\line(1, 0){45}}
\put(60, -10){\line(1, 0){45}}
\put(60, 10){\line(0, -1){20}}
\put(105, 10){\line(0, -1){20}}
\put(72, -3){\tiny${m-i}$}

\put(100, -10){\line(0, -1){25}}
\put(88, -30){...}
\put(85, -10){\line(0, -1){25}}
\put(100, 10){\line(0, 1){25}}
\put(65, 10){\line(0, 1){25}}
\put(80, 25){...}
\put(110, -5){$+$}
\end{picture}
\begin{picture}(120, 80)(-20, -30)
\put(-15, 5){$\zeta_{i, k}$}
\put(0, -20){\line(1, 0){40}}  \put(55, -20){\line(1, 0){40}}
\put(0, -5){\line(1, 0){40}}   \put(55, -5){\line(1, 0){40}}
\put(0, 20){\line(1, 0){45}}   \put(50, 20){\line(1, 0){45}}
\put(0, 35){\line(1, 0){45}}   \put(50, 35){\line(1, 0){45}}
\put(45, 20){\line(0, 1){15}}  \put(95, 20){\line(0, 1){15}}
\put(0, 20){\line(0, 1){15}}   \put(50, 20){\line(0, 1){15}}
\put(0, -20){\line(0, 1){15}}  \put(55, -20){\line(0, 1){15}}
\put(40, -20){\line(0, 1){15}} \put(95, -20){\line(0, 1){15}}

\put(15, 25){\tiny${i-1}$}    \put(70, 25){\tiny${m-i}$}
\put(10, -15){\tiny${i-k}$} \put(70, -15){\tiny$j$}

\put(5, 35){\line(0, 1){10}}    \put(55, 35){\line(0, 1){10}}
\put(18, 40){...}               \put(68, 40){...}
\put(40, 35){\line(0, 1){10}}   \put(90, 35){\line(0, 1){10}}

\put(5, 20){\line(0, -1){25}}   \put(20, 20){\line(0, -1){25}}
\put(75, 20){\line(0, -1){25}}  \put(90, 20){\line(0, -1){25}}
\put(8, 5){...}                 \put(78, 5){...}

\put(5, -20){\line(0, -1){10}}  \put(90, -20){\line(0, -1){10}}
\put(20, -20){\line(0, -1){10}} \put(75, -20){\line(0, -1){10}}
\put(10, -25){...}                \put(78, -25){...}

\qbezier(25, 20)(48, -10) (70, 20)
\qbezier(40, 20)(48, 10) (55, 20)
\put(48, 12){.}
\put(48, 9){.}
\put(48, 6){.}
\put(52, 9){\tiny$k$}
\qbezier(25, -5)(48, 5) (70,-5)
\qbezier(25, -20)(48, -35) (70,-20)

\put(105, -5){.}
\end{picture}
\end{aligned}
\end{equation*}
By using \eqref{eq:AfU-1} in the bottom half of the second diagram on the right hand side, we
obtain \eqref{eq:AfU} for $k+1$, completing the proof.
\end{proof}

Following \cite[\S 4.2]{LZ4}, we introduce the elements of $B_{m+1}^{m+1}(m)$ below.
For $p=0,1,\dots,m+1$, let
\[
F_p:=A(1,p)A(p+1,m+1),
\]
where $F_0$ is interpreted as $A(1,m+1)$.
For $j=0,1,2,\dots, i$, define $e_i(j)=e_{i,i+1}e_{i-1,i+2}\dots e_{i-j+1,i+j}$.
Note that $e_i(0)=1$ by convention.
We have the following formulae for the $E_i$.
\begin{lemma}\label{lem:Ep}
For $i=0, 1, \dots, m+1$, let $min_i=min(i, m+1-i)$. Then
\begin{eqnarray}\label{eq:E-formula}
E_i=\sum_{j=0}^{min_i}(-1)^j c_i(j) \Xi_i(j) \quad\text{with}\quad \Xi_i(j)=F_i e_i(j) F_i,
\end{eqnarray}
where $c_i(j)=\left((i-j)!(m+1-i-j)!(j!)^2 \right)^{-1}$.
\end{lemma}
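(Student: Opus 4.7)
The plan is to prove the identity by induction on $p := m+1-i$, the number of cap--cup pairs on the right of $E_i$ as depicted in Figure \ref{Ep}. The base case $p=0$ (equivalently $i=m+1$) is immediate: here $E_{m+1}=\Sigma_+(m+1)$ and the right-hand side of \eqref{eq:E-formula} reduces to $c_{m+1}(0)F_{m+1}^2$; using the convention $A(m+2,m+1)=1$, one has $F_{m+1}=A(1,m+1)=\Sigma_+(m+1)$ and $F_{m+1}^2=(m+1)!\,F_{m+1}$, so $c_{m+1}(0)F_{m+1}^2=\frac{1}{(m+1)!}(m+1)!\,\Sigma_+(m+1)=\Sigma_+(m+1)$, as required. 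The symmetric case $i=0$ (i.e.\ $p=m+1$) can either be treated directly or deduced from the $i=m+1$ case via the anti-involution $\ast$ of Lemma \ref{lemma:Ep-1}(1), which exchanges $E_i$ and $E_{m+1-i}$.

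For the inductive step, I would exhibit $E_{i-1}$ as obtained from $E_i$ by adjoining one additional cap--cup pair on the right, together with an appropriate repositioning of the block boundary in $F_i$. The crucial technical tool is Lemma \ref{lem:AfU}, which describes precisely how a block antisymmetrizer $F_{i'}$ combines with an adjacent nested arc: the equation \eqref{eq:AfU} rewrites such a configuration as a $k^2$-weighted copy of the same type together with a $\zeta_{i',k}$-weighted ``contraction'' term in which one layer of nesting is absorbed into a further reduction of the block structure. Applying \eqref{eq:AfU} once for each existing layer of nesting in the inductive expression for $E_i$, and collecting the resulting diagrams of the form $F_{i-1}e_{i-1}(j)F_{i-1}$, yields the signed linear combination claimed in \eqref{eq:E-formula} for $E_{i-1}$.

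The principal obstacle is the combinatorial matching of the coefficients and signs. The factor $(j!)^2$ in $c_i(j)=((i-j)!(m+1-i-j)!(j!)^2)^{-1}$ accounts for the $j!$ ways each of the $j$ nested arcs of $e_i(j)$ can be absorbed symmetrically by $F_i$ on either side, while $(i-j)!$ and $(m+1-i-j)!$ track the residual factorials in the blocks $A(1,i)$ and $A(i+1,m+1)$ after $j$ strands of each have been contracted. The alternating signs $(-1)^j$ ultimately originate from $\Sigma_+(m+1)$ itself: grouping the signed sum over $\Sym_{m+1}$ according to the number $j$ of strands that cross between the left block $\{1,\ldots,i\}$ and the right block $\{i+1,\ldots,m+1\}$ assigns net sign $(-1)^j$ to the aggregate contribution landing in $\Xi_i(j)$. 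Verifying that the two counts telescope correctly under the inductive step amounts to checking a finite rational identity in $i$, $j$, and $m$, which can be done by a direct calculation once the inductive relation between $E_i$ and $E_{i-1}$ is properly set up; the annihilation properties of Corollary \ref{cor:ann} and the absorption identity $F_i E_i = E_i F_i = i!(m+1-i)!E_i$ from Lemma \ref{lemma:Ep-1}(2) provide useful consistency checks throughout.
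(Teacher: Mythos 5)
Your base case is correct, and your use of the anti-involution $\ast$ to halve the range of $i$ agrees with the paper. However, the inductive step as described is not a proof: there is no simple diagrammatic operation transforming $E_i$ into $E_{i-1}$, and you never actually write one down. Passing from $E_i$ to $E_{i-1}$ simultaneously (a) shrinks the left block $A(1,i)$ by one strand, (b) grows the right block $A(i+1,m+1)$ by one strand, and (c) adds a cap--cup pair that crosses over the entire right side; this is not a composition or tensoring with any single elementary morphism, and composing $E_i$ with $e_i$ just gives $0$ by Lemma~\ref{lemma:Ep-1}(3). The phrase ``applying~\eqref{eq:AfU} once for each existing layer of nesting and collecting the result'' presupposes the answer rather than deriving it, and the paragraph on the coefficients is a plausibility narrative, not a calculation.

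The paper's route is different in structure and closes precisely the gap you leave open. It first establishes, by an edge-type/valency analysis of the Brauer diagrams actually occurring in $E_i$ together with the absorption property of $F_i$, that $E_i$ lies in the span of the $\Xi_i(j)$ for $j=0,\dots,\mathrm{min}_i$; you never address why such a decomposition should exist at all. It then fixes $x_i(0)$ directly by isolating the terms of $A(1,m+1)$ not involving $s_i$. Finally, and this is the key step you are missing, it uses the relation $e_i E_i = 0$ (Lemma~\ref{lemma:Ep-1}(3), which you only mention as a ``consistency check''), together with Lemma~\ref{lem:AfU}, to produce a first-order recursion in $j$: $(k+1)^2 x_i(k+1) + (i-k)!(m+1-i-k)!\,\zeta_{i,k}\, x_i(k) = 0$. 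Solving this from $x_i(0)=c_i(0)$ pins down all the coefficients, including the alternating sign. The recursion is in the nesting depth $j$, not in $i$; that is the correct variable to induct on here, and your proposal has it backwards. If you want a salvageable version of your plan, reorganize it as: (1) existence of the expansion; (2) base $j=0$; (3) recursion in $j$ from $e_iE_i=0$.
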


\begin{remark}
For $0\le i\le \left[\frac{m+1}{2}\right]$, the lemma states that the $E_i$ are the elements defined
in \cite[Definition 4.2]{LZ4} with the same notation.
\end{remark}

\begin{proof}
We have $*\Xi_i(j)=\Xi_{m+1-i}(j)$. For $i\le\left[\frac{m}{2}\right]$,
\[
*\left(\sum_{j=0}^{min_i}(-1)^j c_i(j)\Xi_i(j)\right) = \sum_{j=0}^{min_i}(-1)^j c_{m+1-i}(j)\Xi_{m+1-i}(j),
\]
since $c_i(j)=c_{m+1-i}(j)$. Therefore, equation \eqref{eq:E-formula} will hold for
all $i$ by Lemma \ref{lemma:Ep-1}(1), if we can show that it
holds for $0\le i\le \left[\frac{m}{2}\right]$.
This will be done in two steps.

(i).
We first show that for each $i\le \left[\frac{m}{2}\right]$, there exist scalars $x_i(j)$ such that
\[
\begin{aligned}
E_i=\sum_{j=0}^i x_i(j)\Xi_i(j).
\end{aligned}
\]

The case $i=0$ is obvious as we have $E_0=A(1, m+1)$. Thus we only need to consider the case with $i\ge 1$.

Let us label the vertices of $E_i$ (see Figure \ref{Ep}) in the bottom row by $1, 2, \dots, m+1$
from left to right,
and those in the top row by $1', 2', \dots, (m+1)'$ from left to right.
Let $L=\{1, 2, \dots, i\}$, $R=\{i+1, i+2, \dots, m+1\}$,
$L'=\{1', 2', \dots, i'\}$ and $R'=\{(i+1)', (i+2)', \dots, (m+1)'\}$.
Since $A(1, m+1)$ has through strings only,
a Brauer diagram in $E_i$ can only have the following types of edges
(an edge is represented by its pair of vertices)
\[
\begin{aligned}
(a, t)  \in L\times R, \quad
(a', t') \in L'\times R',\\
(a', b) \in L'\times L, \quad
(s', t)\in R'\times R,
\end{aligned}
\]
and the numbers of edges in $L\times R$ and in $L'\times R'$
must be equal.
Thus it follows Lemma \ref{lemma:Ep-1}(2) and
the antisymmetrising property of $A(1, i)$ and $A(i+1, m+1)$ that
$E_i$ is a linear combination of $\Xi_i(j)$.

(ii). To determine the scalar $x_i(0)$, we observe that
the terms in $A(1, m+1)$ which do not contain $s_i$ make up
$F_i=A(1, i)A(i+1, m+1)$. Note that
\[
\begin{picture}(80, 60)(0,0)

\put(0, 20){\line(1, 0){40}}
\put(0, 40){\line(1, 0){40}}
\put(0, 20){\line(0, 1){20}}
\put(40, 20){\line(0, 1){20}}
\put(18, 28){\tiny$p$}

\qbezier(10, 40)(55, 80)(70, 0)
\qbezier(30, 40)(45, 60)(50, 0)
\put(55, 5){...}

\qbezier(10, 20)(55, -20)(70, 60)
\qbezier(30, 20)(45, -0)(50, 60)
\put(55, 48){...}

\put(80, 25){$=$}
\end{picture}
\begin{picture}(50, 60)(-20,0)

\put(0, 20){\line(1, 0){40}}
\put(0, 40){\line(1, 0){40}}
\put(0, 20){\line(0, 1){20}}
\put(40, 20){\line(0, 1){20}}
\put(18, 28){\tiny$p$}

\put(10, 40){\line(0, 1){20}}
\put(15, 50){...}
\put(30, 40){\line(0, 1){20}}
\put(10, 20){\line(0, -1){20}}
\put(15, 10){...}
\put(30, 20){\line(0, -1){20}}

\put(45, 25){.}
\end{picture}
\]
Thus $x_i(0)\Xi_i(0)=A(1, i)A(i+1, m+1)$, and hence $x_i(0)= (i!(m+1-i)!)^{-1}=c_i(0)$.

Now we determine the $x_i(k)$ for all $k>0$. By Lemma \ref{lemma:Ep-1}(3), $e_i E_i=0$.
Using \eqref{eq:AfU} in this relation,
we obtain
\[
(k+1)^2 x_i(k+1) + (i-k)! (m+1-i-k)! \zeta_{i, k} x_i(k)=0, \quad  0\le k\le i.
\]
The recurrent relation with $x_i(0)=c_i(0)$ yields $x_i(k)=(-1)^j c_i(k)$.
\end{proof}

The following result is an easy consequence of Lemma \ref{lem:Ep}.
Recall the elements $X_{s, t}\in\Sym_{s+t}$ shown in Figure \ref{X}.
\begin{corollary}\label{cor:XEX}
For all $i=0, 1, \dots, m+1$, we have $X_{i, m+1-i} E_i X_{m+1-i, i}= E_{m+1-i}$.
\end{corollary}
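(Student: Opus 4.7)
The plan is to exploit the explicit formula of Lemma \ref{lem:Ep}, namely
\[
E_i=\sum_{j=0}^{\min_i}(-1)^j c_i(j)\,F_i\,e_i(j)\,F_i,
\]
and conjugate term by term by $X:=X_{i,m+1-i}$ (with inverse $X^{-1}=X_{m+1-i,i}$). The idea is that $X$ intertwines the ``decomposed form'' of $E_i$ with the analogous decomposition of $E_{m+1-i}$, so that once the analysis is reduced to the $F$- and $e$-factors separately, the symmetry is visible.

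First I would compute $XF_iX^{-1}$. The block swap $X$ realizes the permutation $\sigma$ with $\sigma(k)=k+(m+1-i)$ for $k\le i$ and $\sigma(k)=k-i$ for $k>i$, so it carries $\{1,\ldots,i\}$ to $\{m+2-i,\ldots,m+1\}$ and $\{i+1,\ldots,m+1\}$ to $\{1,\ldots,m+1-i\}$. Hence conjugation sends $A(1,i)$ to $A(m+2-i,m+1)$ and $A(i+1,m+1)$ to $A(1,m+1-i)$, giving $XF_iX^{-1}=F_{m+1-i}$ (the two factors commute).

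Next I would analyse $Xe_i(j)X^{-1}$. Since the arcs of $e_i(j)$ are the disjoint pairs $\{i-k+1,i+k\}$ for $k=1,\ldots,j$, conjugation yields a Brauer diagram whose arcs are $\{\sigma(i-k+1),\sigma(i+k)\}=\{m+2-k,k\}$. Each such arc joins a position in the left block $L:=\{1,\ldots,m+1-i\}$ to one in the right block $R:=\{m+2-i,\ldots,m+1\}$, and similarly for $e_{m+1-i}(j)$, whose arcs $\{m+2-i-k,m+1-i+k\}$ also straddle the boundary between $L$ and $R$. With this in hand, the key step is to show
\[
F_{m+1-i}\bigl(Xe_i(j)X^{-1}\bigr)F_{m+1-i}=F_{m+1-i}\,e_{m+1-i}(j)\,F_{m+1-i}.
\]
This follows because the two Brauer diagrams on either side differ by conjugation by a block-preserving permutation $\tau\in\Sym_L\times\Sym_R$: one can match the left-block endpoints $\{1,\ldots,j\}$ of $Xe_i(j)X^{-1}$ with the left-block endpoints $\{m+2-i-j,\ldots,m+1-i\}$ of $e_{m+1-i}(j)$ (respecting the pairing with the right-block endpoints), and then $\tau$ extends by any bijection on the through-strand positions. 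Since $F_{m+1-i}\tau=\ve(\tau)F_{m+1-i}=\tau F_{m+1-i}$, the two signs cancel, yielding the claimed equality.

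Finally, assembling the pieces and noting the manifest symmetry $c_i(j)=c_{m+1-i}(j)$ in the formula $c_i(j)=\bigl((i-j)!(m+1-i-j)!(j!)^2\bigr)^{-1}$, one obtains
\[
XE_iX^{-1}=\sum_{j=0}^{\min_i}(-1)^j c_{m+1-i}(j)\,F_{m+1-i}\,e_{m+1-i}(j)\,F_{m+1-i}=E_{m+1-i},
\]
again by Lemma \ref{lem:Ep}. The main obstacle is the bookkeeping in the middle step: one must check that a block-preserving permutation $\tau$ with the required arc-matching and through-strand-matching properties genuinely exists for each $j\le\min_i$, and that the sign $\ve(\tau)$ produced by the two antisymmetrizer absorptions really does appear squared and thus cancel.
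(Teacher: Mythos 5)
Your proposal is correct and follows essentially the same path as the paper's: conjugate the explicit decomposition $E_i=\sum_j(-1)^jc_i(j)\Xi_i(j)$ of Lemma \ref{lem:Ep} term by term, show $X_{i,m+1-i}\Xi_i(j)X_{m+1-i,i}=\Xi_{m+1-i}(j)$, and invoke the symmetry $c_i(j)=c_{m+1-i}(j)$. The paper dispatches the middle step as "easy to show pictorially"; you have simply made the pictorial argument explicit — identifying $X F_i X^{-1}=F_{m+1-i}$, matching the arcs of $Xe_i(j)X^{-1}$ and $e_{m+1-i}(j)$ by a permutation $\tau$ in the Young subgroup $\Sym_L\times\Sym_R$, and noting that the two signs $\ve(\tau)$ arising from absorption into the flanking $F_{m+1-i}$'s cancel — all of which checks out for $j\le\min_i$.
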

\begin{proof}
It is easy to show pictorially that $X_{i, M+1-i}\Xi_i(j) X_{m+1-i, i}= \Xi_{m+1-i}(j)$
for all $j\le i$. Since $c_i(j)=c_{m+1-i}(j)$, this proves the claim of the corollary.
\end{proof}

\subsection{Presentation of endomorphism algebras}

The following theorem is \cite[Theorem 6.10]{LZ5}, which is a diagrammatic reformulation of \cite[Theorem 4.3]{LZ4}.
\begin{theorem}[\cite{LZ4, LZ5}] \label{thm:o-main}
The algebra map  ${F}_r^r: B_r^r(m)\longrightarrow \End_{\Or(V)}(V^{\otimes r})$
is injective if $r\le m$. If
$r>m$, the two-sided ideal $\Ker{F}_r^r$ of the Brauer algebra $B_r^r(m)$ is generated by
the element $E=E_\ell$ with $\ell=\left[\frac{m+1}{2}\right]$.
\end{theorem}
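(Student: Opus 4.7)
The proof splits into two claims matching the two assertions of the theorem. By Theorem \ref{thm:fft-sft}(2), $\Ker F_r^r$ coincides with the ideal $\CJ(m,0)_r^r \subseteq B_r^r(m)$ generated in $\CB(m)$ by the antisymmetriser $e(m,0) = \Sigma_{m+1}$ via composition and tensor product. The entire theorem therefore reduces to computing this ideal for the relevant ranges of $r$.

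For the injectivity claim when $r \le m$, the goal is to show $\CJ(m,0)_r^r = 0$. The most direct route is dimension counting: the classical Schur--Weyl duality for $\Or_m$ (essentially Brauer's original calculation) yields $\dim \End_{\Or_m}(V^{\otimes r}) = (2r-1)!! = \dim B_r^r(m)$ for $r \le m$, and this combined with the surjectivity of $F_r^r$ from Theorem \ref{thm:fft-sft}(1) immediately forces $F_r^r$ to be an isomorphism. A purely diagrammatic alternative invokes the harmonicity of $\Sigma_{m+1}$, namely $(I^{\otimes i} \otimes A \otimes I^{\otimes m-1-i}) \circ \Sigma_{m+1} = 0$, which follows from Lemma \ref{lem:Sigma-1}(2) specialised to $r = m+1$, $\epsilon = 1$, $\delta = m$. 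A string-count then shows that in any composition producing an element of $B_r^r(m)$ with $r \le m$, at least two strands of the antisymmetriser block must be capped together, making the composition vanish.

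For the generation claim when $r > m$, Proposition \ref{lem:o-generat} already tells us that $\Ker F_r^r$ is generated as a two-sided ideal of $B_r^r(m)$ by $\{E_p : 0 \le p \le m+1\}$, and Corollary \ref{cor:XEX} shows that $E_p$ and $E_{m+1-p}$ are conjugate via the invertible Brauer element $X_{p, m+1-p}$, hence generate the same two-sided ideal. It therefore suffices to prove that $E_\ell$ with $\ell = \lfloor (m+1)/2 \rfloor$ generates $E_p$ as a two-sided ideal for every $0 \le p < \ell$. The plan is to construct, for each such $p$, explicit Brauer morphisms $L_p, R_p \in B_{m+1}^{m+1}(m)$ such that $L_p E_\ell R_p = \lambda_p E_p$ for some nonzero scalar $\lambda_p$. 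Such morphisms should, roughly, convert $\ell - p$ of the caps appearing in each summand $\Xi_\ell(j)$ into antisymmetrised vertical strings, which can be achieved using carefully chosen cups and caps on the outside of the antisymmetriser blocks $F_\ell$.

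The main obstacle is combinatorial: tracking coefficients through these operations. Starting from the expansion $E_\ell = \sum_j (-1)^j c_\ell(j) \Xi_\ell(j)$ of Lemma \ref{lem:Ep}, applying $L_p, R_p$ and using the key identity of Lemma \ref{lem:AfU} produces a double sum of $\Xi_p(j')$ terms weighted by the factorial coefficients $\zeta_{i,k}$. One then has to verify that these sums telescope exactly into $\lambda_p \sum_{j'} (-1)^{j'} c_p(j') \Xi_p(j')$, which by Lemma \ref{lem:Ep} is the expansion of $\lambda_p E_p$. This verification is a recurrence argument parallel to the one in the proof of Lemma \ref{lem:Ep}, and it is where the combinatorial heart of the proof lies.
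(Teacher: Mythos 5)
Your outline of the reduction is correct: you rightly invoke Theorem~\ref{thm:fft-sft}(2) to identify $\Ker F_r^r$ with the tensor ideal generated by the antisymmetriser, the dimension count $(2r-1)!!$ versus $\dim\End_{\Or_m}(V^{\ot r})$ is a valid (and classical) route to injectivity when $r\le m$, and for $r>m$ you correctly quote Proposition~\ref{lem:o-generat} and Corollary~\ref{cor:XEX} to cut the generating set down to $\{E_0,\dots,E_\ell\}$.

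However, the heart of the theorem — that $E_p\in\langle E_\ell\rangle$ for $p<\ell$ — is not actually proved in your proposal, and the specific plan you sketch has a structural problem. You propose finding $L_p,R_p\in B_{m+1}^{m+1}(m)$ with $L_pE_\ell R_p=\lambda_pE_p$. But Corollary~\ref{cor:ann} says that any diagram in $B_{m+1}^{m+1}(m)$ with fewer than $m+1$ through-strings annihilates every $E_i$. So in $L_pE_\ell R_p$ only the permutation parts of $L_p$ and $R_p$ survive, and any element of $\K\Sym_{m+1}\,E_\ell\,\K\Sym_{m+1}$ has the same distribution of through-string counts (namely $m+1-2j$ for $j=0,\dots,\ell$) as $E_\ell$ does. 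But by Lemma~\ref{lem:Ep}, $E_p$ is supported only on through-string counts $m+1-2j$ with $j\le p<\ell$; so your proposed equality would require non-trivial cancellations among the permutation conjugates that you have not verified, and the naive idea of ``converting caps into antisymmetrised vertical strings'' cannot be realised by left/right multiplication inside $B_{m+1}^{m+1}(m)$ at all, since cup/cap factors die against $E_\ell$. You would have to work in $B_r^r(m)$ for $r$ strictly larger than $m+1$, which your proposal does not allow for. Compounding this, the asserted telescoping of the $\zeta_{i,k}$-recurrence is simply stated as a plan and never checked, and you acknowledge as much. The paper's own proof sidesteps all of this: it delegates the step $E_{i-1}\in\langle E_i\rangle$ (for $i=1,\dots,\ell$) to \cite[\S7]{LZ4}, where it is established one index at a time using Corollary~\ref{cor:ann} and the representation theory of $\Sym_{m+1}$, rather than by a single jump from $E_\ell$ down to each $E_p$. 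So while your high-level reduction matches the paper, the crucial generation step is genuinely missing from your argument.
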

\begin{proof} Only the second part of the theorem needs explanation.
By Proposition \ref{lem:o-generat} and Corollary \ref{cor:XEX}, the elements
$E_i$ with $i=0, 1, \dots \ell=\left[\frac{m+1}{2}\right]$ generates $\Ker{F}_r^r$.
Using some general properties of the
symmetric group and Corollary \ref{cor:ann},  we showed in \cite[\S 7]{LZ4} that
$E_{i-1}$ is contained in the ideal generated by $E_i$ for each $i=1, \dots,\ell$.
The theorem follows.
\end{proof}

%
%

\section{Endomorphism algebras of the symplectic group}\label{sect:ker-Sp}

In this section, we assume that $V=V_{\bar1}$ with $\delta_V=\sdim{V}=-2n$.  Thus 
$G=\Sp(V)$.   We study the structure of the endomorphism algebra $\End_G(V^{\ot r})$ for all $r$.  The material is taken from \cite{LZ5} (see also \cite{HX}). 

\subsection{Generators of the kernel}

The two-sided ideal $I(0, 2n)$ (see Lemma \ref{cor:sftgl}) in $\K\Sym_r$  is the sum of two-sided simple ideals 
$\oplus_\mu I(\mu)$ over those partitions $\mu$ of $r$ which contain
the partition $(2n+1)$  (that is, the Young diagram with only one row of $2n+1$ boxes).

Let $\epsilon=-1$, and denote $\Sigma_{-1}(r)$ by $\Sigma(r)$.  Then $I(0, 2n)$ is generated by 
$\Sigma(2n+1)$.

For any $s<r$, there is a natural embedding $B_s^s(-2n)\hookrightarrow B_r^r(-2n)$,
$b\mapsto b\ot I_{r-s}$,
of the Brauer algebra of degree $s$ in that of degree $r$ as associative algebras. Thus we may
regard $B_s^s(-2n)$ as the subalgebra of $B_r^r(-2n)$ consisting of elements of the form
$b\ot I_{r-s}$.

Let $D(p, q)$ denote the element of the Brauer algebra $B_k^k(-2n)$ of degree $k=2n+1-p+q$ shown in
Figure \ref{Dpq}.

\begin{figure}[h]
\begin{center}
\begin{picture}(100, 60)(0,0)
\put(-50, 30){$D(p, q)=$}
\put(5, 40){\line(0, 1){20}}
\put(6, 50){...}
\put(18, 40){\line(0, 1){20}}

\put(30, 58){\tiny${p-q}$}
\qbezier(22, 40)(36, 70)(52, 40)
\put(35, 52){.}
\put(35, 50){.}
\put(35, 48){.}
\qbezier(30, 40)(36, 54)(44, 40)

\qbezier(58, 40)(100, 90)(108, 0)
\qbezier(72, 40)(95, 70)(97, 0)
\put(97, 7){...}
\put(98, -5){\tiny$q$}

\put(0, 20){\line(1, 0){80}}
\put(0, 20){\line(0, 1){20}}
\put(80, 20){\line(0, 1){20}}
\put(0, 40){\line(1, 0){80}}
\put(30, 28){\tiny${2n+1}$}

\put(5, 20){\line(0, -1){20}}
\put(20, 10){...}
\put(50, 20){\line(0, -1){20}}

\qbezier(58, 20)(95, -30)(108, 60)
\qbezier(72, 20)(95, -10)(97, 60)
\put(97, 53){...}
\put(98, 60){\tiny$p$}
\end{picture}
\end{center}
\caption{}
\label{Dpq}
\end{figure}

\begin{proposition}\label{lem:sp-generat}
Assume that $r>n$. As a two-sided ideal of the Brauer algebra $B_r^r(-2n)$,
$\Ker{F}_r^r$  is generated by
$D(p, q)$ and $\ast D(p, q)$ with $p+q\le r$ and $p\le n$.
\end{proposition}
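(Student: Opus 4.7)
The plan is to translate the problem to an invariant-theoretic one using the Brauer category isomorphisms of Corollary \ref{isomorphism}, then apply the second fundamental theorem for $\Sp(V)$ (Theorem \ref{thm:fft-sft}(2)) to obtain an explicit spanning set for the kernel, and finally reduce this spanning set to the canonical diagrams $D(p, q)$ and $*D(p, q)$.

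First I would pass from $B_r^r(-2n)$ to $B_{2r}^0(-2n)$ via the isomorphism $\bA^0_r$ of Corollary \ref{isomorphism}; by Lemma \ref{lem:transfrom}, this carries $\Ker F_r^r$ bijectively onto $\Ker F_{2r}^0$. In $B_{2r}^0(-2n)$ every element is a linear combination of perfect matchings of $[1, 2r]$. Since $e(0, 2n) = \Sigma(2n+1)$ is the row symmetrizer $\sum_{\sigma \in \Sym_{2n+1}} \sigma$, Theorem \ref{thm:fft-sft}(2) gives $\Ker F_{2r}^0 = \CJ(0, 2n) \cap B_{2r}^0(-2n)$, which is spanned by elements
\begin{equation}\label{eq:AS}
A_S := \sum_{\pi \in \Sym_S} A \circ \pi,
\end{equation}
for $A \in B_{2r}^0(-2n)$ and $S \subseteq [1, 2r]$ of size $2n+1$; indeed $A_S = A \circ \sigma_S$ with $\sigma_S \in B_{2r}^{2r}(-2n)$ denoting $\Sigma(2n+1)$ placed at positions $S$.

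Next I would analyze the diagrams $A_S$. Classify the arcs of the perfect matching $A$ as (i) internal to $S$, (ii) having one endpoint in $S$ and one in $S^c := [1, 2r] \setminus S$, and (iii) internal to $S^c$. Using the absorption $\sigma_S s = \sigma_S$ for adjacent transpositions $s \in \Sym_S$ together with the reduction identities of Lemmas \ref{lem:Sigma-1} and \ref{lem:Sigma}, I would reduce $A_S$ by induction on the number of arcs of type (i) to a scalar multiple of a standard diagram: the symmetrizer $\Sigma(2n+1)$ at fixed adjacent positions, with external arcs attached on each side determined by two nonnegative integers $p$ and $q$. Transporting this standard diagram back to $B_r^r(-2n)$ through the inverse isomorphism $\bU_r^r$, the result is precisely a diagram of the shape in Figure \ref{Dpq}, namely $D(p, q)$ or its reflection $*D(p, q)$, tensored with an identity to land in $B_r^r(-2n)$. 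Since the $\bU/\bA$ isomorphisms intertwine the two-sided $B_r^r$-action with the $\Sym_{2r}$-action on positions in $B_{2r}^0$, the spanning statement in $B_{2r}^0$ becomes the two-sided generation statement in $B_r^r$. The bounds come out naturally: $p + q \leq r$ is needed to fit the $p$ arcs into the $r$ top external vertices and the $q$ arcs into the $r$ bottom external vertices, while $p \leq n$ follows because the involution $*$ interchanges the roles of $p$ and $q$ in a predictable way (compare the orthogonal analogue Lemma \ref{lemma:Ep-1}(1)), so whenever $p > n$ one may replace $D(p, q)$ by a suitable $*D(q', p')$ with $p' \leq n$.

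The main obstacle is the combinatorial reduction step. In contrast to the orthogonal case (where the antisymmetrizer automatically annihilates any $A_S$ in which $S$ contains both endpoints of an arc of $A$), the row symmetrizer $\Sigma(2n+1)$ does not annihilate such configurations; instead it produces nontrivial scalar multiples of smaller symmetrizers via Lemma \ref{lem:Sigma-1}(2)--(3) and Lemma \ref{lem:Sigma}. Keeping track of these scalars and the resulting cascade of smaller auxiliary diagrams, while showing that every term still lies in the two-sided ideal generated by the $D(p, q)$ and $*D(p, q)$ with $p+q\le r$ and $p\le n$, is the technical heart of the argument.
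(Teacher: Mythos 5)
Your overall framework (pass from $B_r^r$ to $B_{2r}^0$ via the bending isomorphism, invoke the SFT to get the spanning set $\{A_S\}$, then reduce $A_S$ to a standard form and transport back) matches the paper's. However, the combinatorial reduction step you propose is genuinely different from what the paper does, and there is a gap in it.

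The paper's reduction does not touch the symmetrizer $\Sigma(2n+1)$ at all. Instead it exploits the fact that only the action of the parabolic subgroup $\Sym_r\times\Sym_r\subset\Sym_{2r}$ corresponds (through $\natural$) to a piece of the two-sided $B_r^r$-action on $B_r^r$. One chooses $\sigma=(\sigma_1,\sigma_2)$ in this parabolic sending $S$ to a consecutive interval $S'$; after conjugating, $A\circ\sigma^{-1}$ has the specific shape of Figure \ref{Asigma}, and $(A_S\circ\sigma^{-1})^\natural$ is read off directly as a tensor factor $D_1\ot D_2$ with $D_2$ visibly equal to a $D(p,q)$, its $*$-image, or something containing $\Sigma(2n+1)$ as a tensor factor. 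No reduction of the symmetrizer's size occurs.

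Your proposal to induct on the number of arcs internal to $S$ using Lemmas \ref{lem:Sigma-1}(2)--(3) and \ref{lem:Sigma} has two problems. First, those identities replace $\Sigma(2n+1)$ by smaller symmetrizers $\Sigma(2n-1),\Sigma(2n-3),\ldots$, producing a cascade of terms; you acknowledge that one would then have to establish that each of those terms lies in the two-sided ideal generated by the $D(p,q)$ and $*D(p,q)$, but this is precisely what needs proving and is not addressed. The sum $A_S$ lies in $\Ker F_r^r$, but individual terms of a linear rewriting need not lie in the ideal one is trying to describe, so the reduction does not close the argument. Second, and more fundamentally, no such elimination of internal arcs is wanted: the generator $D(p,q)$ with $p>q$ in Figure \ref{Dpq} itself retains $p-q$ arcs that start and end on the top of the $\Sigma(2n+1)$ box, i.e. arcs of your type (i). The paper keeps these in the generating set and never removes them. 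Aiming to reduce them away would produce a different (and harder to analyze) family of elements. Finally, your statement that the bound $p+q\le r$ arises from ``fitting arcs into external vertices'' is imprecise: the direct constraint coming from the construction is $2n+1-p+q\le r$ (so that $D(p,q)\in B_k^k$ with $k\le r$), which, together with $p\le n$, implies $p+q\le r$.
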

\begin{proof}
Let $A$ be a single $(2r, 0)$ Brauer diagram with $r>n$. Then $F(A)$ is some functional $\gamma$
on $V^{\ot 2r}$.  For any $\pi\in\Sym_{2r}\subset B_{2r}^{2r}(-2n)$,
$A\circ \pi$ is defined. Note that $A$ has only one row of vertices at the bottom,
which will be labelled $1, 2, \dots, 2r$ from left to right.
Choose a subset $S$ of $[1, 2r]$
of cardinality $2n+1$, and consider
$\Sym_S\subset \Sym_{2r}\subset B_{2r}^{2r}(-2n)$. Define
\begin{eqnarray}\label{eq:AS}
A_S=\sum_{\pi\in\Sym_S} A\circ\pi.
\end{eqnarray}
Then by Theorem \ref{thm:fft-sft}(2),
$\Ker{F}_{2r}^0$ is spanned by $A_S$ for all $A$ and $S$. Given $A_S$, we define
\[
A_S^\natural=A_S\circ(I_r\otimes U_r)\in B_r^r(-2n).
\]
Then $\Ker{F}_{r}^r$ is spanned by $A_S^\natural$ for all $A$ and $S$ by Lemma \ref{lem:transfrom}(2).

We can considerably simplify the description of $\Ker{F}_{2r}^0$ and $\Ker{F}_{r}^r$.
There exist elements $\sigma=(\sigma_1, \sigma_2)$ in the parabolic subgroup $\Sym_r\times\Sym_r$ of $\Sym_{2r}$, which map $S$ to $S'=\{i+1, i+2, \dots, i+2n+1\}\subset[1, 2r]$ for  some $i\le 2r-2n-1$. Let $\sigma_2^{-\tau}=\ast(\sigma_2^{-1})$, where $\ast$ is the anti-involution of $B_r^r(-2n)$. Then
\begin{eqnarray}
&\sigma_2^{-\tau}\circ A_S^\natural\circ \sigma_1^{-1}=(A_S\circ \sigma^{-1})^\natural,\nonumber\\
&A_S\circ \sigma^{-1}=\sum_{\pi\in\Sym_{S'}}(A\circ\sigma^{-1})\circ\pi. \label{eq:symmS}
\end{eqnarray}

By appropriately choosing $\sigma$, we can ensure that $A\circ\sigma^{-1}$ is of the
form shown in Figure \ref{Asigma}.
\begin{figure}[h]
\begin{center}
\begin{picture}(290, 60)(0,0)
\qbezier(-3, 0)(140, 106)(283, 0)
\qbezier(15, 0)(140, 85)(265, 0)
\put(140, 50){.}
\put(140, 48){.}
\put(140, 46){.}

\qbezier(25, 0)(35, 15)(35, 0)
\qbezier(50, 0)(60, 35)(65, 0)
\put(40, 3){...}\put(42, -4){\tiny$t$}

\qbezier(215, 0)(220, 35)(230, 0)
\qbezier(245, 0)(245, 15)(255, 0)
\put(232, 3){...}\put(234, -4){\tiny$t'$}

\qbezier(70, 0)(100, 65)(120, 0) \put(118, -2){$\bullet$}
\qbezier(85, 0)(110, 65)(155, 0)\put(153, -2){$\bullet$}
\put(75, 3){...}

\qbezier(145, 0)(155, 70)(210, 0)
\qbezier(135, 0)(155, 60)(195, 0)
\put(133, -2){$\bullet$}\put(143, -2){$\bullet$}
\put(125, 3){...}
\put(196, 3){...}

\qbezier(105, 0)(135, 50)(170, 0)
\put(103, -2){$\bullet$}\put(168, -2){$\bullet$}
\put(90, 3){...}
\put(176, 3){...}
\end{picture}
\end{center}
\caption{}
\label{Asigma}
\end{figure}
The vertices labelled by $\bullet$ are those in $S'$, which all appear in the middle, and the other vertices
all appear at the left end and right end. Here $t$ denotes the number of edges in $A\circ \sigma^{-1}$
with both vertices in $\{1, 2, \dots, i\}$,
and $t'$ that of the edges with both vertices in $\{i+2n+2, i+2n+3, \dots, 2r\}$.
Note that after such a $\sigma$ is chosen, $\pi\in \text{Sym}_{S'}$ acting on $A\circ \sigma^{-1}$
permutes only vertices labeled by $\bullet$.  Thus every term on the right hand side of \eqref{eq:symmS}
is of the form Figure \ref{Asigma} with the same $t$ and $t'$.

Now $(A_S\circ \sigma^{-1})^\natural$ can be expressed as
$D_1\otimes D_2$, where $D_1\in B_{r_1}^{r_1}(-2n)$ for $r_1$ maximal, $D_2\in B_k^k(-2n)$
with $k>n$ satisfying $r_1+k=r$.
There are several possibilities for $D_2$ depending on
$i$, $t$ and $t'$. Assume $i+2n+1> r$.  If
$t=t'$, then $D_2$ is as shown in Figure \ref{Dpq-1}.
\begin{figure}[h]
\begin{center}
\begin{picture}(100, 60)(-20,0)

\put(0, 20){\line(1, 0){80}}
\put(0, 20){\line(0, 1){20}}
\put(80, 20){\line(0, 1){20}}
\put(0, 40){\line(1, 0){80}}
\put(30, 28){\tiny${2n+1}$}

\qbezier(5, 40)(-15, 60)(-18, 0)
\qbezier(18, 40)(-20, 80)(-32,0)
\put(-28, 10){...}
\put(-28, 0){\tiny$q$}

\put(30, 58){\tiny${p-q}$}
\qbezier(22, 40)(36, 70)(52, 40)
\put(35, 52){.}
\put(35, 50){.}
\put(35, 48){.}
\qbezier(30, 40)(36, 54)(44, 40)

\put(58, 40){\line(0, 1){20}}
\put(73, 40){\line(0, 1){20}}
\put(61, 50){...}

\put(5, 20){\line(0, -1){20}}
\put(20, 10){...}
\put(50, 20){\line(0, -1){20}}

\qbezier(55, 20)(93, -30)(103, 60)
\qbezier(70, 20)(90, -10)(91, 60)
\put(92, 53){...}
\put(95, 60){\tiny$p$}
\end{picture}
\end{center}
\caption{}
\label{Dpq-1}
\end{figure}
If $t<t'$, then $D_2=E\circ(I_s\otimes D_3)$ for some $s$, where $D_3$ is as shown in Figure \ref{Dpq-1},
and $E$ is the product of some $e_i$'s composed with a permutation in
$\Sym_{2n+1+q-p}$ ($D_3$ and $E$ may not be unique). Analogously, $D_2=(D_3\otimes I_{s})\circ E$ if $t>t'$.
Assume that $i+2n+1\le r$. Then $D_2=E\circ(I_{s_1}\otimes \Sigma(2n+1)\otimes I_{s_2})$ for some
$E$ in $B_k^k(-2n)$, and fixed nonnegative integers $s_1$ and $s_2$ satisfying $s_1+s_2+2n+1=k$.

Therefore, $\Ker{F}_r^r$  is generated as a two sided ideal of $B_r^r(-2n)$ by elements
of the form of Figure \ref{Dpq-1} with $2n+1+q-p\le r$.
If $p>n$, we apply the anti-involution $\ast$ of $B_k^k(-2n)$ to the element of Figure \ref{Dpq-1} to obtain
the element shown in Figure \ref{Dpq-2},
\begin{figure}[h]
\begin{center}
\begin{picture}(100, 60)(-20,0)

\put(0, 20){\line(1, 0){80}}
\put(0, 20){\line(0, 1){20}}
\put(80, 20){\line(0, 1){20}}
\put(0, 40){\line(1, 0){80}}
\put(30, 28){\tiny${2n+1}$}

\put(5, 40){\line(0, 1){20}}
\put(20, 40){\line(0, 1){20}}
\put(10, 50){...}
\put(10, 55){\tiny$q$}

\put(35, 58){\tiny${p-q}$}
\qbezier(27, 40)(41, 70)(57, 40)
\put(40, 52){.}
\put(40, 50){.}
\put(40, 48){.}
\qbezier(35, 40)(41, 54)(49, 40)

\qbezier(62, 40)(90, 80)(102, 0)
\qbezier(75, 40)(85, 60)(90, 0)
\put(90, 10){...}

\put(30, 20){\line(0, -1){20}}
\put(75, 20){\line(0, -1){20}}
\put(48, 10){...}
\put(48, 0){\tiny$p$}

\qbezier(5, 20)(-5, -5)(-10, 60)
\qbezier(20, 20)(-15, -25)(-25, 60)
\put(-22, 50){...}

\end{picture}
\end{center}
\caption{}
\label{Dpq-2}
\end{figure}
which we denote by $D$. Recall the element $X_{s, t}$ of Figure \ref{X},
which belongs to $\Sym_{s+t}$, where $\Sym_{s+t}$ is regarded as embedded in $B_{s+t}^{s+t}(-2n)$.
Then $X_{2n+1-p, q}\circ D\circ X_{2n+1-2p+q, p}$ is
of the form shown in Figure \ref{Dpq-1}, but with $p$ replaced by $2n+1-p\le n$.

Therefore, we only need to consider Figure \ref{Dpq-1} with $p\le n$ and its $\ast$ image.
Post-composing $X_{2n+1-p, q}$ to Figure \ref{Dpq-1} turns the latter into the form shown in Figure \ref{Dpq}.
Since $X_{2n+1-p, q}$ is invertible in $B_r^r(-2n)$,
$\Ker{F}_r^r$ as a two-sided ideal of $B_r^r(-2n)$ is generated by elements of
$D(p, q)$ and $\ast D(p, q)$ with $2n+1+q-p\le r$ and $p\le n$.
\end{proof}

\subsection{The element $\Phi$}

For each $k$ such that $0\le k\le \left[\frac{n+1}{2}\right]$,
define the element $E(k)= \prod_{j=1}^k e_{n+2-2j}$ of
$B_{n+1}^{n+1}(-2n)$, where $E(0)$ is the identity by convention.
Then define
\[
\Xi_k= \Sigma(n+1) E(k) \Sigma(n+1),
\]
which may be represented pictorially as

\begin{center}
\begin{picture}(100, 100)(0,-40)
\put(5, 40){\line(0, 1){20}}
\put(23, 50){...}
\put(50, 40){\line(0, 1){20}}

\put(0, 20){\line(1, 0){55}}
\put(0, 20){\line(0, 1){20}}
\put(55, 20){\line(0, 1){20}}
\put(0, 40){\line(1, 0){55}}
\put(20, 28){\tiny${n+1}$}

\put(5, 20){\line(0, -1){20}}
\put(6, 10){...}
\put(16, 20){\line(0, -1){20}}

\qbezier(19, 20)(24, 2)(29, 20)
\put(31, 16){...}
\qbezier(42, 20)(47, 2)(52, 20)
\put(33, 8){\tiny$k$}
\qbezier(19, 0)(24, 18)(29, 0)
\put(31, 4){...}
\qbezier(42, 0)(47, 18)(52, 0)

\put(0, 0){\line(1, 0){55}}
\put(0, 0){\line(0, -1){20}}
\put(55, 0){\line(0, -1){20}}
\put(0, -20){\line(1, 0){55}}
\put(20, -12){\tiny${n+1}$}
\put(5, -20){\line(0, -1){20}}
\put(23, -30){...}
\put(50, -20){\line(0, -1){20}}
\put(55, -40){.}
\end{picture}
\end{center}
Now define the following element of $B_{n+1}^{n+1}(-2n)$.
\begin{eqnarray}\label{Phi}
\begin{aligned}
\Phi=\sum_{k=0}^{\left[\frac{n+1}{2}\right]} a_k \Xi_k \quad \text{with} \quad
a_k =\frac{1}{(2^k k! )^2 (n+1-2k)!}.
\end{aligned}
\end{eqnarray}

\begin{lemma}\label{lem:Phi-Z}
The element $\Phi$ is precisely the sum of all the Brauer diagrams in  $B_{n+1}^{n+1}(-2n)$.
In particular $\Phi$ is defined over the ring $\Z$ of integers.
\end{lemma}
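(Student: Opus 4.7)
The plan is to expand $\Xi_k=\Sigma(n+1)E(k)\Sigma(n+1)$ as a double sum over $\Sym_{n+1}\times\Sym_{n+1}$ and apply the orbit--stabiliser theorem to identify the coefficient of each Brauer diagram.

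First, since $\epsilon=-1$ gives $(-\epsilon)^{|\sigma|}=1$, we have $\Sigma(n+1)=\sum_{\sigma\in\Sym_{n+1}}\sigma$, the unsigned group sum. Hence
\[
\Xi_k=\sum_{\tau,\sigma\in\Sym_{n+1}}\tau\cdot E(k)\cdot\sigma,
\]
a $\Z$-linear combination of Brauer diagrams in $B_{n+1}^{n+1}(-2n)$. The key geometric observation is that under the natural action of $\Sym_{n+1}\times\Sym_{n+1}$ by top-row and bottom-row relabelling on $(n+1,n+1)$ Brauer diagrams, the orbit of $E(k)$ is exactly the set $\cD_k$ of diagrams having $k$ horizontal arcs in each of the top and bottom rows (and therefore $n+1-2k$ through-strings). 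Indeed, the action clearly preserves the number of horizontal arcs on each row; conversely, for any $D\in\cD_k$ one constructs suitable $\tau,\sigma$ by first matching the horizontal arcs on each row and then pairing up the remaining through-string endpoints. By orbit--stabiliser, every $D\in\cD_k$ appears in $\Xi_k$ with the same coefficient, namely $|\mathrm{Stab}(E(k))|$.

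I would then compute this stabiliser directly from the pictorial form of $E(k)$. A pair $(\tau,\sigma)$ fixes $E(k)$ iff (a) $\tau$ permutes the $k$ top horizontal arcs $\{n+2-2j,n+3-2j\}$ and their endpoints, which puts $\tau$ in the hyperoctahedral subgroup of order $2^k k!$ on the last $2k$ top vertices; (b) $\sigma$ similarly lies in a hyperoctahedral subgroup of order $2^k k!$ on the last $2k$ bottom vertices; and (c) the through-strings $i\leftrightarrow i'$ ($1\le i\le n+1-2k$) are preserved, which forces $\tau$ and $\sigma$ to induce the same permutation on the first $n+1-2k$ positions, contributing a factor of $(n+1-2k)!$. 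Thus $|\mathrm{Stab}(E(k))|=(2^k k!)^2(n+1-2k)!=1/a_k$.

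Combining these steps yields $a_k\,\Xi_k=\sum_{D\in\cD_k}D$. Since the sets $\cD_k$ for $0\le k\le[(n+1)/2]$ partition the Brauer diagrams of $B_{n+1}^{n+1}(-2n)$, summing over $k$ gives $\Phi=\sum_D D$, the sum of all such diagrams, each with coefficient $1\in\Z$; in particular, $\Phi$ is defined over $\Z$. The only step requiring care is the stabiliser count, but it is routine given the explicit form of $E(k)$.
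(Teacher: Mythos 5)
Your proof is correct and follows essentially the same route as the paper: expand $\Xi_k$ as $\sum_{\tau,\sigma}\tau E(k)\sigma$, identify the orbit as the diagrams with $n+1-2k$ through-strings, and invoke orbit-stabiliser with $|\mathrm{Stab}(E(k))| = (2^k k!)^2(n+1-2k)! = a_k^{-1}$. The only difference is that you spell out the stabiliser computation explicitly (where the paper just asserts the order is "evidently" $a_k^{-1}$); note that the through-string constraint actually forces $\tau|_T=(\sigma|_T)^{-1}$ rather than $\tau|_T=\sigma|_T$, but this does not change the count of $(n+1-2k)!$, so your conclusion stands.
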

\begin{proof}
Note that $\Xi_k=\sum_{(\pi,\sigma)\in(\Sym_{n+1})^2}\pi E(k)\sigma$ is simply the sum
of all the diagrams with $t=n+1-2k$ through strings, each one occurring with coefficient equal to
the order of the centraliser in $ (\Sym_{n+1})^2$ of $E(k)$. But this order is evidently $a_k\inv$.
\end{proof}

We have the following result.
\begin{lemma}\label{lem:Phi}
The element $\Phi$ has the following properties:
\begin{enumerate}
\item \label{lem:Phi-1} $e_i\Phi=\Phi e_i=0$ for all $e_i\in B_{n+1}^{n+1}(-2n)$;
\item \label{lem:Phi-2} $\Phi^2 = (n+1)!\Phi$;
\item \label{lem:Phi-3} $\ast\Phi=\Phi$;
\item \label{lem:Phi-4} $\Phi\in \Ker{F}_{n+1}^{n+1}$.
\end{enumerate}
\end{lemma}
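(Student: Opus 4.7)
My plan starts from Lemma \ref{lem:Phi-Z}, which identifies $\Phi$ with the sum $\sum_D D$ over all Brauer diagrams in $B_{n+1}^{n+1}(-2n)$. This description makes part (3) immediate, since the anti-involution $\ast$ permutes the set of Brauer diagrams; by the same bijection principle, left or right multiplication by any $\sigma\in\Sym_{n+1}$ fixes the sum, giving $\sigma\Phi=\Phi=\Phi\sigma$, a fact I shall use repeatedly.

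For part (1), I compute $e_i\Phi=\sum_D e_iD$ by collecting terms according to the output diagram $D'$. Every nonzero summand $e_iD$ is a scalar multiple of a Brauer diagram carrying a cup at top positions $(i,i+1)$. A given such $D'$ receives contributions of exactly two kinds: the diagram $D=D'$ itself, which has a cup at top $(i,i+1)$ and produces $e_iD=\delta\,D'$ by loop removal; and $2n$ other inputs $D$ in which top $i$ and top $i+1$ are joined to the two endpoints of one of the $n$ remaining arcs of $D'$ (two pre-images per arc, distinguished by which endpoint attaches to $i$). The coefficient of $D'$ in $e_i\Phi$ is therefore $\delta+2n=-2n+2n=0$, so $e_i\Phi=0$; the identity $\Phi e_i=0$ follows by applying $\ast$ and using part (3). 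For part (2), I expand via the formula $\Phi=\sum_k a_k\,\Sigma(n+1)E(k)\Sigma(n+1)$ and use $\Sigma(n+1)\Phi=(n+1)!\,\Phi$ (a consequence of $\sigma\Phi=\Phi$) to obtain $\Phi^2=(n+1)!\sum_k a_k\,\Sigma(n+1)E(k)\Phi$; for $k\ge 1$ the rightmost factor of $E(k)$ is an $e_j$, so $E(k)\Phi=0$ by part (1), and only the $k=0$ term survives, contributing $(n+1)!\,a_0\,(n+1)!\,\Phi=(n+1)!\,\Phi$ since $a_0=1/(n+1)!$.

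The main obstacle is part (4). Parts (1) and (2) together show that $F(\Phi)/(n+1)!$ is an idempotent endomorphism commuting with $\Sp(V)$, whose image $W\subseteq V^{\otimes(n+1)}$ is an $\Sp(V)$-submodule. The identity $\sigma\Phi=\Phi$ becomes $P_\sigma F(\Phi)=F(\Phi)$ under $F$, where $P_\sigma$ is the super-permutation; since $V=V_{\bar 1}$ is purely odd, $P_\sigma=(-1)^{|\sigma|}\sigma_{\mathrm{usual}}$, forcing $W$ into the usual antisymmetric subspace $\Lambda^{n+1}V$. The identity $e_i\Phi=0$, combined with $F(e_i)=\check C\circ\hat C$, shows that the contraction $\hat C$ by the symplectic form at positions $(i,i+1)$ vanishes on $W$, and hence by $\Sym_{n+1}$-invariance at every pair of positions. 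Consequently $W$ lies in the primitive subspace $\Lambda^{n+1}_0V$. But $\Sp(V)$ has rank $n$, and the fundamental representations $\Lambda^k_0V$ vanish for $k>n$, so $\Lambda^{n+1}_0V=0$, giving $W=0$ and $F(\Phi)=0$. The combinatorial steps (1)--(3) are routine once the diagrammatic picture is set up; the crucial ingredient for (4) is this structural fact about fundamental representations of $\Sp_{2n}$.
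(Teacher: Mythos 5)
Your proof is correct but follows a genuinely different route from the paper's. For parts (1)--(3) you argue directly from Lemma \ref{lem:Phi-Z}, so that (3) is a bijection count, (2) needs only $\sigma\Phi=\Phi$ together with $a_0=1/(n+1)!$, and for (1) your coefficient count of a fixed output $D'$ under $e_i\circ(-)$ (one preimage contributing $\delta$, two per remaining arc of $D'$ contributing $+1$, total $\delta+2n=0$) replaces the paper's derivation via the antisymmetriser relation of Lemma \ref{lem:Sigma}. The substantive divergence is in (4). The paper evaluates $\tr\bigl(F(\Phi)/(n+1)!\bigr)$ diagrammatically and reduces to an alternating sum of products of binomial coefficients that is identically zero. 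You instead locate the image $W$ of the idempotent: since $V$ is purely odd, $F$ sends each simple transposition to the negative of the ordinary swap, so $W$ lies in the ordinary $(n+1)$-th exterior power of $V$; factoring $F(e_i)$ through the injective insertion $\check{C}$ shows every symplectic contraction kills $W$; and you then invoke the vanishing of primitive forms in degree above the rank of $\Sp_{2n}$. That last step is correct Lefschetz-type $\fsl_2$-theory, but it imports a structural fact about symplectic representations that the paper neither proves nor uses --- its trace computation stays entirely in the diagram calculus (modulo citing one binomial identity). Your argument is not circular, since the Lefschetz decomposition of $\Lambda^{\bullet}V$ is established independently of the fundamental theorems of invariant theory; but you should flag the external dependence explicitly, as a reader following the paper's diagram-first development will not otherwise have that ingredient available.
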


\begin{proof}
Part (\ref{lem:Phi-3}) follows from the fact that
$\ast\Xi_k=\Xi_k$ for all $k$. Part (\ref{lem:Phi-2}) immediately follows from (\ref{lem:Phi-1}).

Since $*(e_i\circ\Phi)=\Phi\circ e_{n+1-i}$, we only need to show that $e_i\circ\Phi=0$
for all $i$ in order to prove part (\ref{lem:Phi-1}).
In view of the symmetrising property of $\Sigma(n+1)$,
it suffices to show that $e_n\circ\Phi=0$. Consider
$(I_{n-1}\otimes A_1)\circ \Xi_k$, which can be shown to be equal to
\begin{eqnarray}\label{eq:ASigma}
\begin{aligned}
\begin{picture}(100, 100)(20,-40)
\put(-30, 8){$-4k^2$}
\put(5, 40){\line(0, 1){20}}
\put(23, 50){...}
\put(50, 40){\line(0, 1){20}}

\put(0, 20){\line(1, 0){55}}
\put(0, 20){\line(0, 1){20}}
\put(55, 20){\line(0, 1){20}}
\put(0, 40){\line(1, 0){55}}
\put(20, 28){\tiny${n-1}$}

\put(5, 20){\line(0, -1){20}}
\put(6, 10){...}
\put(16, 20){\line(0, -1){20}}

\qbezier(19, 20)(24, 2)(29, 20)
\put(31, 16){...}
\qbezier(42, 20)(47, 2)(52, 20)
\put(30, 8){\tiny$k$-1}
\qbezier(19, 0)(24, 18)(29, 0)
\put(31, 4){...}
\qbezier(42, 0)(47, 18)(52, 0)

\qbezier(57, 0)(62, 18)(67, 0)

\put(0, 0){\line(1, 0){70}}
\put(0, 0){\line(0, -1){20}}
\put(70, 0){\line(0, -1){20}}
\put(0, -20){\line(1, 0){70}}
\put(20, -12){\tiny${n+1}$}
\put(5, -20){\line(0, -1){20}}
\put(28, -30){$\dots$}
\put(65, -20){\line(0, -1){20}}

\put(80, 8){$+$}
\end{picture}
\begin{picture}(100, 100)(-90,-40)
\put(-110, 8){$(n+1-2k)(n-2k)$}
\put(5, 40){\line(0, 1){20}}
\put(23, 50){...}
\put(50, 40){\line(0, 1){20}}

\put(0, 20){\line(1, 0){55}}
\put(0, 20){\line(0, 1){20}}
\put(55, 20){\line(0, 1){20}}
\put(0, 40){\line(1, 0){55}}
\put(20, 28){\tiny${n-1}$}

\put(5, 20){\line(0, -1){20}}
\put(6, 10){...}
\put(16, 20){\line(0, -1){20}}

\qbezier(19, 20)(24, 2)(29, 20)
\put(31, 16){...}
\qbezier(42, 20)(47, 2)(52, 20)
\put(33, 8){\tiny$k$}
\qbezier(19, 0)(24, 18)(29, 0)
\put(31, 4){...}
\qbezier(42, 0)(47, 18)(52, 0)

\qbezier(57, 0)(62, 18)(67, 0)

\put(0, 0){\line(1, 0){70}}
\put(0, 0){\line(0, -1){20}}
\put(70, 0){\line(0, -1){20}}
\put(0, -20){\line(1, 0){70}}
\put(20, -12){\tiny${n+1}$}
\put(5, -20){\line(0, -1){20}}
\put(28, -30){$\dots$}
\put(65, -20){\line(0, -1){20}}
\end{picture}
\end{aligned}
\end{eqnarray}
by using Lemma \ref{lem:Sigma} with $\delta=-2n$. Note that each Brauer diagram summand of the
first term has $n+1-2k$ through strings, while the summands in the
second term have $n-1-2k$ through strings.
Using \eqref{eq:ASigma} one shows by simple calculation that
\[
\sum a_k (I_{n-1}\otimes A_1)\circ \Xi_k=0.
\]
Hence $(I_{n-1}\otimes A_1)\circ\Phi=0$, which implies statement (1).

To prove part (\ref{lem:Phi-4}), we note that the trace of $\frac{F(\Phi)}{(n+1)!}$ is equal to
the dimension of the subspace $F(\Phi)(V^{\otimes(n+1)})$, since
$\frac{F(\Phi)}{(n+1)!}$ is an idempotent by part (\ref{lem:Phi-2}).
In order to evaluate $tr\left(\frac{F(\Phi)}{(n+1)!}\right)$, we first consider
$tr\left(\frac{F(\Xi_k)}{(n+1)!}\right)$,
which is given by
\[
\begin{picture}(145, 60)(0, -30)
\put(0, 0) {$(-1)^{n+1}$}
\put(50, 10){\line(1, 0){70}}
\put(50, -10){\line(1, 0){70}}
\put(50, 10){\line(0, -1){20}}
\put(120, 10){\line(0, -1){20}}
\put(70, -3){$n+1$}

\qbezier(55,10)(60,40)(65, 10)
\qbezier(85,10)(90,40)(95, 10)
\qbezier(105, 10)(135, 40)(140, 0)
\qbezier(115, 10)(130, 30)(130, 0)
\put(70, 15){...}
\put(73, 20){\tiny$k$}

\qbezier(55,-10)(60,-40)(65, -10)
\qbezier(85,-10)(90,-40)(95, -10)
\qbezier(105, -10)(135, -40)(140, 0)
\qbezier(115, -10)(130, -30)(130, 0)
\put(70, -18){...}
\put(73, -25){\tiny$k$}

\put(132, 0){\tiny{...}}

\put(145, 0){$=$}
\end{picture}
\begin{picture}(150, 60)(-45, -30)
\put(-30, 0) {$(-1)^{n+1}\frac{(2n-2k)!}{(n-1)!}$}
\put(50, 10){\line(1, 0){50}}
\put(50, -10){\line(1, 0){50}}
\put(50, 10){\line(0, -1){20}}
\put(100, 10){\line(0, -1){20}}
\put(70, -3){$2k$}

\qbezier(55,10)(60,40)(65, 10)
\qbezier(85,10)(90,40)(95, 10)
\put(70, 15){...}
\put(73, 20){\tiny$k$}

\qbezier(55,-10)(60,-40)(65, -10)
\qbezier(85,-10)(90,-40)(95, -10)
\put(70, -18){...}
\put(73, -25){\tiny$k$}
\put(105, -10){,}
\end{picture}
\]
where the last step uses Lemma \ref{lem:Sigma-1}(2) with $\epsilon=-1$.
Using \eqref{eq:k-1}, one can show that
\[
\begin{picture}(150, 60)(50, -30)
\put(50, 10){\line(1, 0){50}}
\put(50, -10){\line(1, 0){50}}
\put(50, 10){\line(0, -1){20}}
\put(100, 10){\line(0, -1){20}}
\put(70, -3){$2k$}

\qbezier(55,10)(60,40)(65, 10)
\qbezier(85,10)(90,40)(95, 10)
\put(70, 15){...}
\put(73, 20){\tiny$k$}

\qbezier(55,-10)(60,-40)(65, -10)
\qbezier(85,-10)(90,-40)(95, -10)
\put(70, -18){...}
\put(73, -25){\tiny$k$}
\put(105, 0){$=$}
\put(115, 0){$(-1)^k 2^{2k} \frac{n! k!}{(n-k)!}$}
\put(190, -7){.}
\end{picture}
\]
Putting these formulae together, we arrive at
\[
\begin{aligned}
tr\left(\frac{F(\Phi)}{(n+1)!}\right) &= \frac{n!}{(n-1)!}\sum_{k=0}^{\left[\frac{n+1}{2}\right]}
a_k (-1)^k 2^{2 k} \frac{k! (2n-2k)!}{(n-k)!}\\
&=\sum_{k=0}^{\left[\frac{n+1}{2}\right]}
(-1)^k \begin{pmatrix}n \\ k\end{pmatrix}
\begin{pmatrix}2n-2k \\ n-1\end{pmatrix}.
\end{aligned}
\]
There is a binomial coefficient identity stating that the far right hand side is equal to zero.
Hence $F(\Phi)$ is the zero map on $V^{\otimes(n+1)}$.
\end{proof}

The corollary below follows from Lemma \ref{lemma:Ep-1}
and the fact that $\pi \Sigma(n+1)\pi'= \Sigma(n+1)$  for all $\pi, \pi'\in\Sym_{n+1}$.

\begin{corollary}\label{cor:uniqueness-Phi}
The element $\Phi/(n+1)!$ is the central
idempotent in $B_{n+1}^{n+1}(-2n)$ which corresponds to the trivial representation $\rho_1$
of $B_{n+1}^{n+1}(-2n)$, defined by $\rho_1(s_i)=1$ and $\rho_1(e_i)=0$ for all $i$.
It generates a $1$-dimensional two-sided ideal of $B_{n+1}^{n+1}(-2n)$.
\end{corollary}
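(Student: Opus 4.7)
The plan is to verify in turn the four assertions packaged into the corollary: (i) $e:=\Phi/(n+1)!$ is an idempotent, (ii) $e$ is central, (iii) the two-sided ideal $B_{n+1}^{n+1}(-2n)\,e$ is one-dimensional, and (iv) as a module, this ideal realises the trivial representation $\rho_1$. Step (i) is immediate from Lemma \ref{lem:Phi}(\ref{lem:Phi-2}): $e^2=\Phi^2/((n+1)!)^2=\Phi/(n+1)!=e$.

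For step (ii), I use the defining expression $\Phi=\sum_{k}a_k\,\Sigma(n+1)E(k)\Sigma(n+1)$ from \eqref{Phi}, together with the observation that, since $\epsilon=-1$, the element $\Sigma(n+1)=\sum_{\sigma\in\Sym_{n+1}}\sigma$ is the full symmetriser of $\Sym_{n+1}$, and therefore $\pi\,\Sigma(n+1)=\Sigma(n+1)\,\pi=\Sigma(n+1)$ for every $\pi\in\Sym_{n+1}$. Applied to the generators $s_i\in\Sym_{n+1}\subset B_{n+1}^{n+1}(-2n)$, this yields $s_i\Phi=\Phi s_i=\Phi$. Combined with Lemma \ref{lem:Phi}(\ref{lem:Phi-1}), which gives $e_i\Phi=\Phi e_i=0$, we conclude that $\Phi$ commutes with all algebra generators, hence is central.

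For step (iii), I will argue that every Brauer diagram $D\in B_{n+1}^{n+1}(-2n)$ satisfies either $D\Phi=\Phi$ or $D\Phi=0$. If $D$ has $n+1$ through-strings, then $D$ is a permutation $\pi\in\Sym_{n+1}$ and $\pi\Phi=\Phi$ by the centralising property just established. Otherwise $D$ has strictly fewer than $n+1$ through-strings, and so, by the standard expression of such a diagram, $D=\pi\,e_{i_1}e_{i_2}\cdots e_{i_\ell}\,\pi'$ for some $\pi,\pi'\in\Sym_{n+1}$ and indices $i_1,\dots,i_\ell$. Then $\pi'\Phi=\Phi$ absorbs $\pi'$, and $e_{i_\ell}\Phi=0$ from Lemma \ref{lem:Phi}(\ref{lem:Phi-1}) forces $D\Phi=0$. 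Consequently $B_{n+1}^{n+1}(-2n)\,\Phi=\K\Phi$, a one-dimensional two-sided ideal (nonzero because, by Lemma \ref{lem:Phi-Z}, $\Phi$ is a sum of distinct Brauer diagrams over $\Z$, hence nonzero over $\K$).

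Finally, for step (iv), the computation of step (iii) is precisely the statement that the left action of $B_{n+1}^{n+1}(-2n)$ on the line $\K\Phi$ sends $s_i\mapsto 1$ and $e_i\mapsto 0$, which is the definition of $\rho_1$; the idempotent generating this one-dimensional block is therefore the central idempotent associated to $\rho_1$, and comparing traces (or just using idempotency) identifies it as $e=\Phi/(n+1)!$. The only mildly subtle point I expect to have to justify carefully is the ``normal form'' assertion in step (iii) used to reduce an arbitrary Brauer diagram with fewer through-strings to one ending in some $e_{i}$; this is folklore for Brauer algebras, but must be invoked explicitly so that $e_{i_\ell}\Phi=0$ may be applied at the end.
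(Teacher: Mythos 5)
Your proof is correct and takes essentially the same route as the paper: the paper's one-line proof cites the absorption property $\pi\,\Sigma(n+1)\,\pi'=\Sigma(n+1)$ (so $s_i\Phi=\Phi s_i=\Phi$) together with $e_i\Phi=\Phi e_i=0$ from Lemma~\ref{lem:Phi}(1), and you have simply spelled out the same argument, including the standard normal form $D=\pi\,e_{i_1}\cdots e_{i_\ell}\,\pi'$ for a diagram with fewer than $n+1$ through-strings to deduce $D\Phi=0$. (Note: the paper's citation of Lemma~\ref{lemma:Ep-1} appears to be a typo for Lemma~\ref{lem:Phi}.)
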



%
%
\subsection{Presentation of endomorphism algebras}

Recall the natural embedding of the Brauer algebra of degree $s$ in that of degree $t$ for any $t>s$.
\begin{definition}[\cite{LZ5}]\label{def:ideal}
For each $r>n$, let $\langle \Phi\rangle_r$ be the two-sided ideal in the Brauer algebra
$B_r^r(-2n)$ generated by $\Phi$.
\end{definition}

\begin{remark}
A priori, elements such as $(I_{r-q}\ot A_q\ot I_q)(z\ot X_{q, q})(I_{r-q}\ot U_q\ot I_q)$
are not included in $\langle \Phi\rangle_r$ even if $z\in \langle \Phi\rangle_r$.
\end{remark}

We have the following result.
\begin{lemma}\label{lem:Sigma-Phi}
The element $\Sigma(2n+1)$ belongs to $\langle \Phi\rangle_{2n+1}$.
\end{lemma}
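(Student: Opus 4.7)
The approach is to exhibit $\Sigma(2n+1)$ as an explicit element of $\langle\Phi\rangle_{2n+1}$ by bi-averaging $\Phi\otimes I_n$ against $\Sigma(2n+1)$, and then handling the residue coming from diagrams with fewer through strings.

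By Lemma \ref{lem:Phi-Z}, $\Phi$ is the sum of all Brauer diagrams in $B_{n+1}^{n+1}(-2n)$, so writing $\Phi=\Sigma(n+1)+R$, where $R$ collects those diagrams containing at least one cup-cap pair, we have
\[
\Phi\otimes I_n=(\Sigma(n+1)\otimes I_n)+(R\otimes I_n)
\]
in $B_{2n+1}^{2n+1}(-2n)$, with $R\otimes I_n$ supported on diagrams having at most $2n-1$ through strings. Applying the classical identity $\Sigma(H)\Sigma(G)=|H|\Sigma(G)$ with $H=\Sym_{n+1}\subset G=\Sym_{2n+1}$, one obtains
\[
\Theta:=\Sigma(2n+1)(\Phi\otimes I_n)\Sigma(2n+1)=(n+1)!(2n+1)!\,\Sigma(2n+1)+Z,
\]
where $Z:=\Sigma(2n+1)(R\otimes I_n)\Sigma(2n+1)$ is $\Sym_{2n+1}\times\Sym_{2n+1}$-bi-invariant and supported on diagrams with at most $2n-1$ through strings. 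Since $\Theta\in\langle\Phi\rangle_{2n+1}$ by construction, it remains to prove $Z\in\langle\Phi\rangle_{2n+1}$; once that is in hand, $\Sigma(2n+1)=\bigl((n+1)!(2n+1)!\bigr)^{-1}(\Theta-Z)\in\langle\Phi\rangle_{2n+1}$ and we are done.

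To handle $Z$, I would decompose the bi-invariant subspace of $B_{2n+1}^{2n+1}(-2n)$ into the $(n+1)$-dimensional basis of orbit sums $\Omega_j$ ($0\le j\le n$), where $\Omega_j$ is the sum of all Brauer diagrams with exactly $j$ top cups and $j$ bottom caps (in particular $\Omega_0=\Sigma(2n+1)$). One then writes $Z=\sum_{j\ge 1}c_j\Omega_j$ for explicit positive scalars $c_j$. The task is to produce further bi-invariant elements of $\langle\Phi\rangle_{2n+1}$ which, together with $\Theta$, span $\Omega_0,\dots,\Omega_n$; such elements arise from more elaborate averaged products like $\Sigma(2n+1)(\Phi\otimes I_n)T_j(\Phi\otimes I_n)\Sigma(2n+1)$ for carefully chosen middle elements $T_j\in B_{2n+1}^{2n+1}(-2n)$, and their bi-invariant expansions can be computed using the explicit formula \eqref{Phi} for $\Phi$ together with the reduction identities of Lemma \ref{lem:Sigma}.

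The hard part will be arranging this last step so that the resulting $(n+1)\times(n+1)$ coefficient matrix, expressing the averaged products as combinations of the $\Omega_j$, is invertible; this requires delicate combinatorial bookkeeping with the cap-cup configurations produced by the factors $E(k)$ in $\Phi$. A more elegant route I would attempt in parallel uses Corollary \ref{cor:uniqueness-Phi}, which identifies $\Phi/(n+1)!$ as the primitive central idempotent for the trivial simple $B_{n+1}^{n+1}(-2n)$-module: one might hope that $\Phi\otimes I_n$ similarly detects the trivial simple $B_{2n+1}^{2n+1}(-2n)$-module, whose associated projector has $\Sigma(2n+1)/(2n+1)!$ as its permutation part. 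Making that approach rigorous in the non-semisimple regime $\delta=-2n<0$ is itself nontrivial, but would yield the conclusion immediately.
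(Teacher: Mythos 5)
Your opening reduction is sound and coincides with the paper's first step: $\Phi = \Sigma(n+1) + R$ with $R$ collecting the $k\geq 1$ terms of \eqref{Phi}, so that $\Sigma(2n+1)(\Phi\otimes I_n)\Sigma(2n+1) = (n+1)!(2n+1)!\,\Sigma(2n+1) + Z$ with $Z$ bi-invariant and supported on diagrams with at most $2n-1$ through strings. In the paper this is the identity
$$r!(n+1)!\,\Sigma(r) = \Sigma(r)\Bigl(\bigl(\Phi - \textstyle\sum_{k\geq 1}a_k\Xi_k\bigr)\otimes I_{r-n-1}\Bigr)\Sigma(r),$$
which says precisely $\Sigma(r)\in\Upsilon(r)_{\geq 1}$.

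The gap is everything after that. You are left needing $Z\in\langle\Phi\rangle_{2n+1}$, and at that point your two proposed routes are both unargued. The ``invertible $(n+1)\times(n+1)$ matrix'' plan asks for bi-invariant averaged products spanning $\Omega_0,\dots,\Omega_n$; since $\langle\Phi\rangle_{2n+1}\subseteq\Ker F_{2n+1}^{2n+1}$, this implicitly requires each $\Omega_j$ to already lie in $\Ker F$, which you would have to verify separately, and in any case you offer no candidate middle elements $T_j$ nor any mechanism to control the coefficient matrix. The central-idempotent alternative you yourself flag as heuristic in the non-semisimple regime $\delta=-2n$. As it stands the proposal correctly reduces the lemma but then labels the essential part as ``the hard part'' without supplying it.

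What the paper does instead, and what you are missing, is a recursion together with a vanishing phenomenon. Rather than trying to realize each $\Omega_j$ in the ideal, the paper iterates the identity above on the inner $\Sigma(r-2k)$ factor of $\Upsilon(r)_k$, pushing the lower bound on $k$ up: $\Upsilon(r)_k\in\Upsilon(r)_{\geq k+1}$ whenever $r-2k>n$, hence $\Sigma(r)\in\Upsilon(r)_{\geq\lfloor (r+1-n)/2\rfloor}$. For $r=2n+1$ this gives $\Sigma(2n+1)\in\Upsilon_{\geq 1+\lfloor n/2\rfloor}$. The paper then examines $\Sigma(2n+1)^2$, which is a scalar multiple of $\Sigma(2n+1)$, and expands both factors via this bound. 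The cross terms have the form $\Sigma(2n+1)E(i)\Sigma(2n+1)E(j)\Sigma(2n+1)$ with $i,j\geq 1+\lfloor n/2\rfloor$, and after a rewriting they carry a factor $\Psi_{ij}=(I\otimes A_i)\Sigma(2n+1)(I\otimes U_j)$; the crucial point is that $\Psi_{ij}=0$ in this range because capping more than half the strands of a symmetrizer in this way kills it. That vanishing is the engine of the proof, and nothing in your sketch plays its role.

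In short: your reduction is fine, but the step you defer is the whole lemma. If you want to pursue your bi-invariant spanning strategy, you would at minimum need an explicit family of $T_j$'s and a proof that the resulting coefficient matrix is invertible, together with a prior check that the $\Omega_j$ you aim at are consistent with $\Ker F$. It is more efficient to pursue the recursion and identify the vanishing $\Psi_{ij}=0$, which closes the argument in one stroke.
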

\begin{proof}
Consider $B_r^r(-2n)$ for $r>n$. Let $E_r^r(k)= \prod_{j=1}^k e_{r-2j+1}$, and define
\[
\begin{aligned}
\Upsilon(r)_k&=\Sigma(r) E_r^r(k) \Sigma(r), \quad k\ge 1,\\
\Upsilon(r)_{\ge k}&=\text{linear span of $\langle \Phi\rangle_r\cup\{\Upsilon(r)_i\mid i\ge k\}$}.
\end{aligned}
\]
We first want to show that
\begin{eqnarray}\label{eq:step}
\Sigma(r)\in \Upsilon(r)_{\ge\left[\frac{r+1-n}{2}\right]}.
\end{eqnarray}
From the formula for $\Phi$, we obtain
\[
\begin{aligned}
r!(n+1)! \Sigma(r)& =\Sigma(r)\left(\left(\Phi - \sum_{k=1}^{\left[\frac{n+1}{2}\right]}a_k
\Xi_k\right)\otimes I_{r-n-1}\right)\Sigma(r).
\end{aligned}
\]
Thus $\Sigma(r)\in \Upsilon(r)_{\ge 1}$.

Note that for any $z\in \Upsilon(r-2k)_{\ge 1}$,
$\Sigma(r)(z\otimes I_{2k}) E_r^r(k) \Sigma(r)$ belongs to $\Upsilon(r)_{\ge k+1}$.
We can always re-write $\Upsilon(r)_k$ as
\[
\Upsilon(r)_k=\frac{1}{(r-2k)!}\Sigma(r)(\Sigma(r-2k)\otimes I_{2k}) E_r^r(k) \Sigma(r).
\]
If $r-2k>n$, then $\Sigma(r-2k)\in \Upsilon(r-2k)_{\ge 1}$. This implies that
$\Upsilon(r)_k\in \Upsilon(r)_{\ge k+1}$ if $r-2k>n$. Hence
$
\Upsilon(r)_{\ge 1}=\Upsilon(r)_{\ge 2} = \dots = \Upsilon(r)_{\ge\left[\frac{r+1-n}{2}\right]},
$
 and \eqref{eq:step} is proved.

Now consider $\Sigma(2n+1)$.
It follows from \eqref{eq:step} that $\Sigma(2n+1)^2$ can be expressed as a linear combination of
elements in $\langle \Phi\rangle_{2n+1}$ and also elements of the form
\[
\Sigma(2n+1) E_{2n+1}^{2n+1}(i) \Sigma(2n+1) E_{2n+1}^{2n+1}(j) \Sigma(2n+1),
\quad i, j\ge 1+\left[\frac{n}{2}\right].
\]
Using the symmetrising property of $\Sigma(2n+1)$, we can write this element
as $\Sigma(2n+1)(I_{2n+1-2i}\otimes U_i) \Psi_{i j}(I_{2n+1-2j}\otimes A_j) \Sigma(2n+1)$ with
\[
\Psi_{i j}=
(I_{2n+1-2i}\otimes A_i)\Sigma(2n+1)(I_{2n+1-2j}\otimes U_j).
\]
Now $\Psi_{i j}=0$ for all  $i, j\ge 1+\left[\frac{n}{2}\right]$. Hence
$\Sigma(2n+1)^2$ belongs to $\langle \Phi\rangle_{2n+1}$, and so does also $\Sigma(2n+1)$.
\end{proof}

The following results describe the endomorphism algebras $\End_{\Sp(V)}(V^{\otimes r})$ in terms of generators and relations. 
\begin{theorem}[\cite{LZ5}]\label{thm:sp-main}
The  homomorphism  ${F}_r^r: B_r^r(-2n)\longrightarrow \End_{\Sp(V)}(V^{\otimes r})$
of algebras is injective if $r\le n$. If $r\ge n+1$,
then $\Ker{F}_r^r$ is the two-sided ideal of the Brauer algebra $B_r^r(-2n)$ which is generated by
the element $\Phi$ defined by \eqref{Phi}.
\end{theorem}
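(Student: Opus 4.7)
The plan is to reduce to the generator description of $\Ker F_r^r$ provided by Proposition \ref{lem:sp-generat} and then exploit Lemma \ref{lem:Sigma-Phi} to express each generator in terms of $\Phi$. For the injectivity when $r \leq n$, note that Proposition \ref{lem:sp-generat} realises $\Ker F_r^r$ as the two-sided ideal in $B_r^r(-2n)$ generated by (suitable embeddings of) $D(p,q)$ and $*D(p,q)$ with $p \leq n$ and $k := 2n+1-p+q \leq r$; the latter condition forces $q \leq r-n-1 < 0$ once $r \leq n$, leaving no admissible generator, so $\Ker F_r^r = 0$.

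For $r \geq n+1$ the inclusion $\langle \Phi \rangle_r \subseteq \Ker F_r^r$ is a one-line consequence of Lemma \ref{lem:Phi}(4): since $F$ is a tensor functor and $F(\Phi) = 0$, the element $\Phi \otimes I_{r-n-1}$ and hence every $a \Phi b$ with $a, b \in B_r^r(-2n)$ lie in $\Ker F_r^r$. The substantive direction is $\Ker F_r^r \subseteq \langle \Phi \rangle_r$. Using $*\Phi = \Phi$ from Lemma \ref{lem:Phi}(3), the ideal $\langle \Phi \rangle_r$ is $*$-stable, so it suffices to place each unstarred generator $D(p,q)$ inside $\langle \Phi \rangle_r$. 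Each $D(p,q) \in B_k^k(-2n)$ contains the anti-symmetriser $\Sigma(2n+1)$ as an internal block (see Figure \ref{Dpq}), and Lemma \ref{lem:Sigma-Phi} gives $\Sigma(2n+1) = \sum_i x_i \Phi y_i$ in $B_{2n+1}^{2n+1}(-2n)$ (with $\Phi$ embedded as $\Phi \otimes I_n$). Substituting this identity inside $D(p,q)$, tensoring with $I_{r-k}$ to pass to $B_r^r(-2n)$, and conjugating each resulting $\Phi$ block to the standard leftmost position by a permutation in $\Sym_r$ should yield a decomposition
\[
D(p,q) \otimes I_{r-k} = \sum_i \tilde a_i \, (\Phi \otimes I_{r-n-1}) \, \tilde b_i
\]
with $\tilde a_i, \tilde b_i \in B_r^r(-2n)$, as required.

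The principal obstacle will be carrying out this last substitution rigorously. The difficulty is that, as emphasised in the remark following Definition \ref{def:ideal}, $\langle \Phi \rangle_r$ is strictly smaller than the corresponding tensor-categorical ideal: one is allowed only multiplications by elements of $B_r^r(-2n)$, not arbitrary compositions and tensor products in the Brauer category. Tracking precisely where the internal $\Sigma(2n+1)$ sits inside $D(p,q)$, exploiting the fact that $\Phi$ at any position among the $r$ strings is conjugate via $\Sym_r \subseteq B_r^r(-2n)$ to $\Phi \otimes I_{r-n-1}$, and verifying that the bookkeeping of caps, cups, and permutations stays inside $B_r^r(-2n)$, are all essential.
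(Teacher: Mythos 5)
There is a genuine gap, and it is precisely at the point you flag as ``the principal obstacle.'' The difficulty is structural and is not repaired by conjugation in $\Sym_r$. The block $\Sigma(2n+1)$ sits inside $D(p,q)\in B_k^k(-2n)$ (with $k=2n+1-p+q$) not as a subword with $B_k^k$-factors on either side, but as a factorisation through the object $2n+1$: schematically $D(p,q)=g\circ\Sigma(2n+1)\circ h$ with $g\colon 2n+1\to k$ and $h\colon k\to 2n+1$. Substituting $\Sigma(2n+1)=\sum_i x_i(\Phi\otimes I_n)y_i$ from Lemma \ref{lem:Sigma-Phi} therefore sandwiches $\Phi$ between morphisms $g\circ x_i\colon 2n+1\to k$ and $y_i\circ h\colon k\to 2n+1$ of the wrong valency. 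After tensoring with $I_{r-k}$, the middle object has $2n+1+r-k=r+p-q\ge r$ strands, with strict inequality whenever $p>q$, so the factorisation passes through an object strictly larger than $r$. For the base case $r=n+1$ (where $p=n$, $q=0$) there is simply no room: $\Sigma(2n+1)\otimes I_{\bullet}$ does not lie in $B_{n+1}^{n+1}(-2n)$ at all. Even for $r\ge 2n+1$, the caps in $g\otimes I_{r-k}$ would have to be commuted past the $\Phi$-block to land in a genuine $B_r^r\cdot(\Phi\otimes I_{r-n-1})\cdot B_r^r$ decomposition, and there is no relation in the Brauer algebra that permits this. Conjugation by $\Sym_r$ only relabels which $n+1$ through-strands carry $\Phi$; it cannot convert cap/cup routing into identity strands. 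So the ``should yield'' step is exactly where the proof breaks.

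The paper's proof avoids this by a two-stage argument that never substitutes for $\Sigma(2n+1)$ directly. For the base case $r=n+1$, it observes that $D(n,0)$ is totally antisymmetric on its $n+1$ bottom strands (since it factors through $\Sigma(2n+1)$), so $D(n,0)=D(n,0)\Sigma(n+1)/(n+1)!$; it then substitutes the formula $\Sigma(n+1)=\Phi-\sum_{k\ge1}a_k\Xi_k$ (an identity in $B_{n+1}^{n+1}$, so no valency mismatch) and shows $D(n,0)\Xi_k=0$ for $k\ge1$, concluding $D(n,0)=D(n,0)\Phi/(n+1)!\in\langle\Phi\rangle_{n+1}$. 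For $r>n+1$ it proceeds by induction on $r$: writing $D(p,q)=D(p,q)\circ\Sigma(2n+1-p)/(2n+1-p)!$, substituting the $\Phi$-formula for $\Sigma(2n+1-p)$ (again a well-typed identity in $B_{2n+1-p}^{2n+1-p}\subset B_r^r$), and recognising the correction terms $D(p,q;k)$ as products of $D'\otimes I$ with a genuine Brauer-algebra element, where $D'\in\Ker F_{r-1}^{r-1}$ so that the induction hypothesis applies. You should adopt this inductive structure rather than the one-shot substitution.

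A smaller point: you invoke Proposition \ref{lem:sp-generat} to prove injectivity when $r\le n$, but that proposition is stated and proved only for $r>n$ (its proof chooses a $(2n+1)$-element subset of $[1,2r]$, which requires $2r\ge 2n+1$). The injectivity for $r\le n$ is not obtained from the generator list becoming empty; it is a separate classical fact, which the paper treats as already established (``Only the second statement requires proof'').
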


\begin{proof} Only the second statement requires proof. Thus we assume that $r\ge n+1$.
Consider first the case $r=n+1$. Then there is only one $D(p, q)$ with $p=n$ and $q=0$ (see Figure \ref{Dpq}).
Using $\Sigma(n+1)=\Phi - \sum_{k=1}^{\left[\frac{n+1}{2}\right]}a_k \Xi_k$, we have
\[
D(n, 0)=\frac{D(n, 0)\Phi}{(n+1)!} - \sum_{k=1}^{\left[\frac{n+1}{2}\right]} a_k\frac{D(n, 0)\Xi_k}{(n+1)!}.
\]
Note that

\begin{center}
\begin{picture}(100, 80)(0,-30)
\put(-65, 7){$\frac{D(n, 0)\Xi_k}{(n+1)!}=$}
\put(5, 40){\line(0, 1){20}}

\put(30, 58){\tiny$n$}
\qbezier(15, 40)(45, 75)(75, 40)
\qbezier(35, 40)(45, 54)(55, 40)
\put(45, 54){.}
\put(45, 52){.}
\put(45, 50){.}

\put(0, 20){\line(1, 0){80}}
\put(0, 20){\line(0, 1){20}}
\put(80, 20){\line(0, 1){20}}
\put(0, 40){\line(1, 0){80}}
\put(30, 28){\tiny${2n+1}$}

\put(5, 20){\line(0, -1){20}}
\put(6, 15){...}
\put(16, 20){\line(0, -1){20}}

{\color{red}\qbezier[60](-8, 10)(56, 10)(120, 10)}

\qbezier(19, 20)(24, 3)(29, 20)
\put(31, 16){...}
\qbezier(42, 20)(47, 3)(52, 20)
\put(33, 8){\tiny$k$}
\qbezier(19, 0)(24, 17)(29, 0)
\put(31, 4){...}
\qbezier(42, 0)(47, 17)(52, 0)

\qbezier(60, 20)(90, -100)(108, 60)
\qbezier(75, 20)(87, -75)(97, 60)
\put(97, 53){...}
\put(100, 60){\tiny$n$}

\put(0, 0){\line(1, 0){55}}
\put(0, 0){\line(0, -1){20}}
\put(55, 0){\line(0, -1){20}}
\put(0, -20){\line(1, 0){55}}
\put(20, -12){\tiny${n+1}$}
\put(5, -20){\line(0, -1){20}}
\put(20, -30){...}
\put(50, -20){\line(0, -1){20}}
\end{picture}
\end{center}
where the dotted-line indicates that the diagram is the composition of
the two diagrams above and below the line. The diagram above the dotted line is the tensor product
of an element in $\langle \Sigma(2n+1)\rangle_{n+1-2k}^1$ with $I_n$.  Since
$\langle \Sigma(2n+1)\rangle_{n+1-2k}^1=0$ for all $k\ge 1$, we have $\frac{D(n, 0)\Xi_k}{(n+1)!}=0$.
This proves $D(n, 0)\in \langle \Phi\rangle_{n+1}$.

Now we use induction on $r$ to show that the theorem holds for $r>n+1$.
If $p=0$, the diagram corresponds to $\Sigma(2n+1)$, which
belongs to $\langle \Phi\rangle_{2n+1}$ by Lemma \ref{lem:Sigma-Phi}.
Assume $n\ge p\ge 1$, and let $r=2n+1-p+q$. Consider $D(p, q)\circ\Sigma(2n+1-p)$
by using the the formula
\[
\Sigma(2n+1-p)=\left(\left(\Phi - \sum_{k=1}^{\left[\frac{n+1}{2}\right]}a_k \Xi_k\right)
\otimes I_{n-p}\right)\frac{\Sigma(2n+1-p)}{(n+1)!}.
\]
We obtain an expression for $D(p, q)$ of the form
\begin{eqnarray}\label{eq:D-Phi}
 D(p, q) = \sum_{k\ge 1} c_k D(p, q; k) + D^0,
\end{eqnarray}
where  $c_k$ are scalars, $D^0\in \langle \Phi\rangle_r$, and

\begin{center}
\begin{picture}(100, 80)(0,0)
\put(-65, 20){$D(p, q; k) =$}
\put(5, 40){\line(0, 1){20}}
\put(8, 50){...}
\put(20, 40){\line(0, 1){20}}

\put(30, 58){\tiny${p-q}$}
\qbezier(22, 40)(36, 70)(52, 40)
\put(35, 52){.}
\put(35, 50){.}
\put(35, 48){.}
\qbezier(30, 40)(36, 54)(44, 40)

\qbezier(55, 40)(100, 90)(108, 0)
\qbezier(70, 40)(95, 70)(97, 0)

\put(0, 20){\line(1, 0){80}}
\put(0, 20){\line(0, 1){20}}
\put(80, 20){\line(0, 1){20}}
\put(0, 40){\line(1, 0){80}}
\put(30, 28){\tiny${2n+1}$}

\put(5, 20){\line(0, -1){20}}
\put(6, 10){...}
\put(16, 20){\line(0, -1){20}}

\qbezier(19, 20)(24, 2)(29, 20)
\put(31, 16){...}
\qbezier(42, 20)(47, 2)(52, 20)
\put(33, 8){\tiny$k$}
\qbezier(19, 0)(24, 18)(29, 0)
\put(31, 4){...}
\qbezier(42, 0)(47, 18)(52, 0)

\qbezier(55, 20)(95, -30)(108, 60)
\qbezier(70, 20)(95, -10)(97, 60)
\put(97, 53){...}
\put(98, 60){\tiny$p$}

\put(0, 0){\line(1, 0){55}}
\put(0, 0){\line(0, -1){20}}
\put(55, 0){\line(0, -1){20}}
\put(0, -20){\line(1, 0){55}}
\put(15, -12){\tiny${2n+1-p}$}
\put(5, -20){\line(0, -1){20}}
\put(20, -30){...}
\put(50, -20){\line(0, -1){20}}

\put(97, 0){\line(0, -1){40}}
\put(108, 0){\line(0, -1){40}}

\put(98, -30){...}
\put(100, -36){\tiny$q$}
\end{picture}
\end{center}

\vspace{2cm}

The diagram $D(p, q; k)$ is the composition of

\begin{center}

\begin{picture}(100, 80)(0,0)
\put(-50, 30){$D'\otimes |=$}
\put(5, 40){\line(0, 1){20}}
\put(8, 50){...}
\put(20, 40){\line(0, 1){20}}

\put(30, 58){\tiny${p-q}$}
\qbezier(22, 40)(36, 70)(52, 40)
\put(35, 52){.}
\put(35, 50){.}
\put(35, 48){.}
\qbezier(30, 40)(36, 54)(44, 40)

\qbezier(55, 40)(100, 90)(108, 0)
\qbezier(70, 40)(95, 70)(97, 0)
\put(97, 7){...}
\put(98, -5){\tiny$q$}

\put(0, 20){\line(1, 0){80}}
\put(0, 20){\line(0, 1){20}}
\put(80, 20){\line(0, 1){20}}
\put(0, 40){\line(1, 0){80}}
\put(30, 28){\tiny${2n+1}$}

\put(5, 20){\line(0, -1){20}}
\put(14, 10){...}
\put(33, 20){\line(0, -1){20}}

\qbezier(36, 20)(43, -3)(52, 20)

\qbezier(55, 20)(95, -30)(108, 60)
\qbezier(70, 20)(95, -10)(97, 60)
\put(97, 53){...}
\put(97, 62){\tiny${p-1}$}
\put(115, 60){\line(0, -1){60}}
\end{picture}
\end{center}

\bigskip

\noindent
with the following element of $B_r(-2n)$

\begin{center}
\begin{picture}(100, 80)(-5, -35)
\qbezier(19, 20)(24, 2)(29, 20)
\put(31, 16){...}
\put(31, 8){\tiny$k$}
\qbezier(19, 0)(24, 18)(29, 0)
\put(31, 4){...}
\qbezier(42, 0)(47, 18)(52, 0)

\put(5, 20){\line(0, -1){20}}
\put(6, 10){...}
\put(16, 20){\line(0, -1){20}}

\put(5, 20){\line(0, 1){20}}
\put(16, 20){\line(0, 1){20}}

\put(19, 20){\line(0, 1){20}}
\put(29, 20){\line(0, 1){20}}
\put(42, 20){\line(0, 1){20}}

\qbezier(42, 20)(47, -5)(80, 40)

\put(0, 0){\line(1, 0){55}}
\put(0, 0){\line(0, -1){20}}
\put(55, 0){\line(0, -1){20}}
\put(0, -20){\line(1, 0){55}}
\put(15, -12){\tiny${2n+1-p}$}
\put(5, -20){\line(0, -1){20}}
\put(20, -30){...}
\put(50, -20){\line(0, -1){20}}

\put(65, -15){...}
\put(70, -25){\tiny$q$}

\put(52, 40){\line(1, -5){16}}
\put(65, 40){\line(1, -5){16}}
\end{picture}
\end{center}

\noindent
Note that $D'$ belongs to $\ker{{F}_{r-1}}$.
Thus $D'\in \langle \Phi\rangle_{r-1}$ by the
induction hypothesis and it follows that
$D(p, q; k)\in \langle \Phi\rangle_r$.
This completes the proof.
\end{proof}

\section{Remarks concerning the invariant theory of quantum groups}

In this section we give a brief indication as to how the ideas we have discussed may be extended to quantised 
enveloping algebras, both in the classical and super cases. General references for the material below are 
\cite{L,Ja,Ka}, while some references for applications in our context are \cite{LZ1,LZZ1, LZZ2,LZ5}.

\subsection{Quantum groups and $R$-matrices} Let $\fg$ be a complex semi-simple Lie algebra of rank $r>0$.
Associated to $\fg$ we have its universal enveloping algebra $\U(\fg)$ as well as its $q$-deformation $\U_q(\fg)$, 
which is an infinite dimensional Hopf algebra over a field $\K$ which is a finite extension of the function field
$\C(q)$, where $q$ is an indeterminate over $\C$. Unlike $\U(\fg)$, $\U_q(\fg)$ is not co-commutative, but 
suitably interpreted, we have $\lim_{q\to 1}\U_q(\fg)=\U(\fg)$.

Let $\fh$ be a Cartan subalgebra of $\fg$ and $\fb\supseteq \fh$ a Borel subalgebra, with $\Phi\subset\fh^*$ the root system
og $\fg$ with respect to $\fh$, and $\Pi=\{\alpha_1,\dots\alpha_r\}\subset \Phi$ the set of simple roots of $\Phi$ corresponding to the choice of $\fb$.
If $W$ is the Weyl group, there is a unique inner product $(-,-)$ on $\fh^*$ which is $W$-invariant and satisfies the condition
$( \alpha,\alpha)=2$ for short roots $\alpha\in\Phi$.

The algebra $\U_q(\fg)$ has generators $\{E_i,F_i,K_i^{\pm 1}\mid i=1,\dots, r\}$ and relations which include the following
(see, e.g., \cite[\S 6]{LZ1}).
\be\label{eq:defq}
\begin{aligned}
&K_iK_j=K_jK_i,\quad  K_iK_i\inv=K_i\inv K_i=1,\\
&K_iE_jK_i\inv=q^{(\alpha_i,\alpha_j)}E_j,\quad  K_iF_jK_i\inv=q^{-(\alpha_i,\alpha_j)}F_j,\\
&[E_i,F_j]=\delta_{ij}\frac{K_i-K_i\inv}{q-q\inv},\quad \text{and}\\
&\text{the quantum Serre relations.}
\end{aligned}
\ee
It has the structure of a Hopf algebra, with the coproduct defined by
\be\label{eq:cop}
\Delta(K_i)=K_i\ot K_i,\;\;\;\Delta(E_i)=E_i\ot K_i+1\ot E_i,\;\;\;\Delta(F_i)=F_i\ot 1+K_i\inv\ot F_i.
\ee

Corresponding to each simple Lie superalgebra $\fg$, there exists a similarly defined quantum supergroup $\U_q(\fg)$ \cite{BGZ, Z93, Z98}, which has the structure of a Hopf superalgebra. 

It was proved by Drinfeld \cite{D} for his version \cite{D} of $\U_q(\fg)$ over the formal power series ring that there is an invertible element $R\in{\U_q(\fg)\widehat\ot\U_q(\fg)}$, a suitably defined
``completion'' of $\U_q(\fg)\ot\U_q(\fg)$, which satisfies, among others, the following relations 
\be\label{eq:yb}
\begin{aligned}
R\Delta(u)=&\Delta'(u)R, \\ 
R_{12}R_{13}R_{23}=&R_{23}R_{13}R_{12},\\
\end{aligned}
\ee
where $\Delta'$ is the opposite coproduct. This element is called the universal $R$-matrix, and the second relation above is known as the Yang-Baxter equation.  

As explained in \cite{LZ1}, when suitably interpreted, the universal $R$ matrix leads to an $R$-matrix for each pair of locally $\U_q(\fb)$-finite modules of type-${\bf 1}$  \cite{Ja} for the Jimbo version \cite{Ji} of  $\U_q(\fg)$ over $\K$. 
Hereafter we will consider this version of the quantum group and its type-${\bf 1}$ representations only. 

If $V$ and $W$ are $\U_q(\fg)$-modules, which are locally finite for the action of $\U_q(\fb)$, then $R$ defines a $\K$-linear map $R_{V, W}: V\ot W\lr V\ot W$.
Moreover if $\tau_{V, W}: V\ot W\lr W\ot V$, $v\ot w\mapsto w\ot v$,  is the functorial linear map and we write $\check R_{V, W}:=\tau_{V, W}\circ R_{V, W}$, the equations \eqref{eq:yb}
translate into

\begin{theorem}\label{thm:rmat}
Let $V_1,V_2$ and $V_3$ be any highest weight $\U_q(\fg)$-modules. Then

(1) $\check R$ defines a $\U_q(\fg)$-isomorphism: $V_1\ot V_2\lr V_2\ot V_1$.

(2) There is the following equality of isomorphisms $V_1\ot V_2\ot V_3\lr V_3\ot V_2\ot V_1$.
\[
\begin{aligned}
&(\check{R}_{V_1, V_2}\ot\id_{V_3}) ( \id_{V_2}\ot \check{R}_{V_1, V_3})(\check{R}_{V_2, V_3}\ot\id_{V_1})\\
&=( \id_{V_1}\ot \check{R}_{V_2, V_3})  (\check{R}_{V_1, V_3}\ot\id_{V_2})  ( \id_{V_1}\ot \check{R}_{V_1, V_2}).
\end{aligned}
\]
\end{theorem}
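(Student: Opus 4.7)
The plan is to deduce both parts directly from the two defining identities of the universal $R$-matrix in \eqref{eq:yb}, combined with the functoriality of the tensor flip $\tau$ and the fact that the flips of adjacent tensor positions in $V_1\ot V_2\ot V_3$ satisfy the braid relation of $\Sym_3$.

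For part (1), I would first observe that the flip $\tau_{V,W}\colon V\ot W\to W\ot V$ satisfies
\[
\tau_{V,W}\circ\Delta(u)|_{V\ot W}=\Delta'(u)|_{W\ot V}\circ\tau_{V,W},\qquad
\tau_{V,W}\circ\Delta'(u)|_{V\ot W}=\Delta(u)|_{W\ot V}\circ\tau_{V,W},
\]
which follow directly on pure tensors from $\Delta'(u)=\sum u_{(2)}\ot u_{(1)}$. Specialising the first identity of \eqref{eq:yb} to $V_1\ot V_2$ gives $R_{V_1,V_2}\circ\Delta(u)=\Delta'(u)\circ R_{V_1,V_2}$, hence
\[
\check R_{V_1,V_2}\circ\Delta(u)=\tau_{V_1,V_2}\circ\Delta'(u)\circ R_{V_1,V_2}=\Delta(u)\circ\tau_{V_1,V_2}\circ R_{V_1,V_2}=\Delta(u)\circ\check R_{V_1,V_2}
\]
as $\K$-linear maps $V_1\ot V_2\to V_2\ot V_1$. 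Invertibility of $\check R_{V_1,V_2}$ is then immediate from the invertibility of both $R$ and $\tau$.

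For part (2), the idea is to specialise the Yang--Baxter equation in \eqref{eq:yb} to $V_1\ot V_2\ot V_3$, giving the identity $R_{12}R_{13}R_{23}=R_{23}R_{13}R_{12}$, where $R_{ij}$ denotes the operator $R$ inserted at tensor positions $i$ and $j$ with the appropriate pair of modules. Writing each factor appearing on the two sides of the desired braid identity as a composition $(\tau_i\ot\id)\circ R$ (or $(\id\ot\tau_i)\circ R$) and commuting all flips to one end of the product using the naturality identity $\tau_i R_{jk}\tau_i\inv=R_{\tau_i(j),\tau_i(k)}$, one finds that the accumulated products of flips on the two sides agree, since $\tau_1\tau_2\tau_1=\tau_2\tau_1\tau_2$ (both equal the longest element of $\Sym_3$). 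What remains after the flips have been straightened and cancelled is precisely the Yang--Baxter identity above, which completes the proof.

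The main obstacle is the careful bookkeeping required in part (2): because each application of $\check R$ permutes the modules occupying the three tensor positions, the individual $R$-factors occurring in the two unwrapped compositions are $R_{V_i,V_j}$ for differing pairs $(i,j)$, and the passage from the ``long-range'' factor $R_{13}$ to adjacent-position $R$-factors via conjugation by $\tau_1$ or $\tau_2$ must be tracked through the whole argument. Once these module-coloured versions of the three $R$-factors are shown to align on both sides, a single application of the Yang--Baxter equation on $V_1\ot V_2\ot V_3$ finishes the proof.
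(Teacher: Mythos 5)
The paper does not supply a proof of Theorem~\ref{thm:rmat}; it is recorded as a standard consequence of the universal $R$-matrix axioms in \eqref{eq:yb}. Your proposal supplies precisely the proof that the authors leave implicit, and it is correct. Part~(1) follows, as you say, by combining the intertwining law $R\Delta(u)=\Delta'(u)R$ with the observation that the flip $\tau_{V_1,V_2}$ exchanges $\Delta$ and $\Delta'$; this gives $\check R_{V_1,V_2}\circ\Delta(u)=\Delta(u)\circ\check R_{V_1,V_2}$, and invertibility is inherited from $R$ and $\tau$. Part~(2) is the standard unwinding of $\check R = \tau R$ in the Yang--Baxter equation: writing each $\check R$--factor as a flip composed with an $R$--factor, conjugating all the flips to one end of each side via $\tau_i R_{jk}\tau_i^{-1}=R_{\tau_i(j),\tau_i(k)}$, and noting that the flip--products $\tau_1\tau_2\tau_1$ and $\tau_2\tau_1\tau_2$ coincide (they both give the permutation reversing the three tensor slots, the longest element of $\Sym_3$) reduces the braided hexagon identity exactly to $R_{12}R_{13}R_{23}=R_{23}R_{13}R_{12}$. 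The one subtlety you flag --- namely that for distinct $V_1,V_2,V_3$ the flips are isomorphisms between \emph{different} tensor products, so the module colours attached to each $R$--factor must be carried through the conjugations --- is real, but your description of how to handle it is correct; the naturality of the flip functor $\tau$ is exactly what guarantees the colours land where they should. In short, the argument is sound and is the one the cited authorities (e.g.\ \cite{Ka}) would give.
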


\subsection{Braid group actions on tensor spaces} Let $\B_n$ be the $n$-string braid group. It is well known that 
$\B_n$ has a presentation $\B_n=\langle \sigma_1,\dots,\sigma_{n-1}\mid \sigma_i\sigma_j=\sigma_j\sigma_i\text{ if }
|i-j|\geq 2,\;\; \sigma_i\sigma_{i+1}\sigma_i=\sigma_{i+1}\sigma_i\sigma_{i+1}\;\;\forall i  \rangle$. This group, also
known as the Artin braid group of type $A_{n-1}$, has a 
well known depiction in terms of braid diagrams.

A closely related group is the Artin braid group $\Gamma_n$ of type $B_n$, which has generators $\xi,\sigma_1,\dots,\sigma_{n-1}$.
The relations are those already given for the $\sigma_i$ in the presentation of $\B_n$ with additional relations: $\xi\sigma_1\xi\sigma_1=
\sigma_1\xi\sigma_1\xi$ and $\xi\sigma_i=\sigma_i\xi$ for $i>1$. The group $\Gamma_n$ has a well known depiction in terms of
either ``cylindrical braids'' \cite{GL03} or ``polar braids'', that is, braids which may encircle a ``pole'' \cite{ILZ2}. 

\begin{proposition}\label{prop:braids}
\begin{enumerate}
\item The $\sigma_i$, $i=1,\dots,n-1$ generate a subgroup of $\Gamma_n$ which is isomorphic to $\B_n$.
\item Let $\B_{n+1}=\langle\sigma_0,\sigma_1,\dots,\sigma_{n-1}  \rangle$ be the $(n+1)$-string braid group.
Then the map $\iota:\B_{n+1}\lr\Gamma_n$ defined by $\sigma_0\mapsto\xi^2, $ and $\sigma_i\mapsto \sigma_i\in\Gamma_n$
for $i=1,2,\dots,n-1$ defines a monomorphism of groups.
\end{enumerate}
\end{proposition}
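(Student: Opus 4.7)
The plan is to handle (1) and (2) in turn, with (1) providing a useful tool for (2).

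For (1), the defining braid and commutation relations of $\B_n$ are already satisfied by $\sigma_1,\dots,\sigma_{n-1}$ inside $\Gamma_n$, so there is a canonical surjection $\pi\colon \B_n \twoheadrightarrow H := \langle\sigma_1,\dots,\sigma_{n-1}\rangle \subseteq \Gamma_n$. To prove $\pi$ is injective, I would appeal to the classical embedding $j\colon\Gamma_n \hookrightarrow \B_{n+1}$ coming from the polar/cylindrical picture: view a cylindrical braid on $n$ strands as a disk braid on $n+1$ strands in which one designated ``pole'' strand is held fixed. On generators this map sends $\xi\mapsto \sigma_0^2$ and $\sigma_i\mapsto \sigma_i$ for $i\geq 1$. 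Under $j$ the subgroup $H$ is carried isomorphically onto the standard subgroup $\langle\sigma_1,\dots,\sigma_{n-1}\rangle$ of $\B_{n+1}$, which is a faithful copy of $\B_n$ by the strand-removal retraction $\B_{n+1}\twoheadrightarrow \B_n$. This forces $\pi$ to be injective.

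For (2), I would first verify that $\iota$ is a well-defined homomorphism, which reduces to checking that the defining relations of $\B_{n+1}$ hold between the images $\iota(\sigma_0)=\xi^2$ and $\iota(\sigma_i)=\sigma_i$. The commutation relations $\sigma_0\sigma_j=\sigma_j\sigma_0$ for $j\geq 2$ translate to $\xi^2\sigma_j = \sigma_j\xi^2$, which is immediate from $\xi\sigma_j=\sigma_j\xi$ for $j\geq 2$; the relations $\sigma_i\sigma_j = \sigma_j\sigma_i$ and $\sigma_i\sigma_{i+1}\sigma_i = \sigma_{i+1}\sigma_i\sigma_{i+1}$ for $i,j\geq 1$ are inherited verbatim from $\Gamma_n$. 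The only non-trivial relation to verify is the $i=0$ braid relation, which becomes
\[
\xi^2\sigma_1\xi^2 \;=\; \sigma_1\xi^2\sigma_1 \quad\text{in }\Gamma_n.
\]
This identity must be derived purely from the $B_2$-type relation $\xi\sigma_1\xi\sigma_1=\sigma_1\xi\sigma_1\xi$. A useful preparatory observation is that $\xi$ commutes with $B := \sigma_1\xi\sigma_1$, which is simply a rearrangement of the $B_2$-relation as $\xi B = B\xi$; one plans to exploit this commutation, together with the dual symmetry $x\mapsto x^{*}$ (reflection in the horizontal), to rewrite both sides of the desired identity in a common normal form. This verification is the principal technical obstacle of the whole argument.

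Once $\iota$ is established as a homomorphism, I would obtain injectivity by composing with the embedding $j$ of part (1): the composite $j\circ\iota\colon\B_{n+1}\to\B_{n+1}$ sends $\sigma_0\mapsto\sigma_0^4$ and $\sigma_i\mapsto\sigma_i$ for $i\geq 1$. I would then argue that this endomorphism is injective, either by a normal-form argument within $\B_{n+1}$, or (more robustly) by invoking a faithful linear representation of $\B_{n+1}$ such as the Lawrence--Krammer representation; alternatively one may appeal generically to the R-matrix representation of Theorem~\ref{thm:rmat} on a sufficiently large tensor space $V^{\otimes (n+1)}$, on which $\B_{n+1}$ acts faithfully for generic $q$. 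Injectivity of $\iota$ then follows from that of $j$. The single hard step in the whole proposition is the verification of the enlarged braid relation $\xi^2\sigma_1\xi^2 = \sigma_1\xi^2\sigma_1$ from the $B_2$-relation; all remaining ingredients are essentially bookkeeping together with standard facts about Artin braid groups of types $A$ and $B$ and their natural inclusions.
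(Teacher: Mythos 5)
Your argument for part~(1) is fine modulo standard facts about Artin groups; either your route through $j\colon\Gamma_n\hookrightarrow\B_{n+1}$, or van der Lek's theorem on standard parabolic subgroups applied to the $A_{n-1}$ sub-diagram, gives $\langle\sigma_1,\dots,\sigma_{n-1}\rangle\cong\B_n$. The gap is in part~(2), and it sits exactly where you flag ``the principal technical obstacle of the whole argument'': you never actually establish the relation $\xi^2\sigma_1\xi^2=\sigma_1\xi^2\sigma_1$, saying only that you \emph{plan} to derive it from the type~$B_2$ relation via normal-form manipulations exploiting $\xi(\sigma_1\xi\sigma_1)=(\sigma_1\xi\sigma_1)\xi$. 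No such derivation can exist, because the identity is not a consequence of the defining relations. For instance, the assignment
\[
\xi\longmapsto\begin{pmatrix}1&1\\0&1\end{pmatrix},\qquad\sigma_1\longmapsto\begin{pmatrix}1&0\\-2&1\end{pmatrix}
\]
sends both $\xi\sigma_1\xi\sigma_1$ and $\sigma_1\xi\sigma_1\xi$ to $-I$, hence defines a representation of $\langle\xi,\sigma_1\mid\xi\sigma_1\xi\sigma_1=\sigma_1\xi\sigma_1\xi\rangle$, yet
\[
\xi^2\sigma_1\xi^2\longmapsto\begin{pmatrix}-3&-4\\-2&-3\end{pmatrix},\qquad\sigma_1\xi^2\sigma_1\longmapsto\begin{pmatrix}-3&2\\4&-3\end{pmatrix},
\]
and these are distinct; since $\langle\xi,\sigma_1\rangle$ is a standard parabolic of $\Gamma_n$, the relation therefore fails in $\Gamma_n$ for every $n\geq 2$.

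This also collapses your injectivity step: the putative composite $j\circ\iota\colon\B_{n+1}\to\B_{n+1}$ with $\sigma_0\mapsto\sigma_0^4$, $\sigma_i\mapsto\sigma_i$ is not a group homomorphism, because $\sigma_0^4\sigma_1\sigma_0^4\neq\sigma_1\sigma_0^4\sigma_1$ already in $\B_3$ (map $\sigma_0\mapsto\left(\begin{smallmatrix}1&1\\0&1\end{smallmatrix}\right)$, $\sigma_1\mapsto\left(\begin{smallmatrix}1&0\\-1&1\end{smallmatrix}\right)$ into $SL_2(\Z)$ and compare); so there is no map whose faithfulness could be tested by Lawrence--Krammer or an $R$-matrix action. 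Before attempting a repair, check the statement against the sources: what the cylindrical/polar-braid picture establishes is the monomorphism $j\colon\Gamma_n\hookrightarrow\B_{n+1}$, $\xi\mapsto\sigma_0^2$, $\sigma_i\mapsto\sigma_i$, identifying $\Gamma_n$ with the strand-stabiliser of index $n+1$ in $\B_{n+1}$ --- that is, the inclusion running in the opposite direction to the $\iota$ you set out to construct.
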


Proofs of these statements may be found in \cite{GL03,GL04,ILZ2}. The above statements lead to the following invariant theoretic 
observation.

\begin{corollary}\label{cor:braid-tensor}
Let $V,W$ be highest weight $\U_q(\fg)$-modules, and assume that $V$ is finite dimensional. 
\begin{enumerate}
\item 
There are algebra homomorphisms
\be\label{eq:eta}
\eta_n:\K\B_n\lr \End_{\U_q(\fg)}(V^{\ot n}),\;\;n=1,2,\dots
\ee
defined by $\sigma_i\mapsto (\id_V)^{\ot(i-1)}\ot \check R_{V, V}\ot(\id_V)^{\ot(n-i-1)}$
for $i=1,2,\dots, n-1$.

\item
There are algebra homomorphisms
\be\label{eq:nu}
\nu_n:\K\Gamma_n\lr \End_{\U_q(\fg)}(W\ot V^{\ot n}),\;\; n=1,2,\dots
\ee
defined by $\xi\mapsto \check R_{V,W}\circ\check R_{W,V}$ and $\sigma_i\mapsto \id_W\ot (\id_V)^{\ot(i-1)}\ot \check R_{V, V}\ot(\id_V)^{\ot(n-i-1)}$
for $i=1,2,\dots, n-1$.
\end{enumerate}
\end{corollary}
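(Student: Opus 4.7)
The plan is to verify that each of the prescribed assignments extends to an algebra homomorphism by checking the defining relations of $\B_n$ and $\Gamma_n$ respectively. For both parts the essential inputs are Theorem~\ref{thm:rmat}(1), which guarantees that each $\check R_{X,Y}$ is $\U_q(\fg)$-linear, and Theorem~\ref{thm:rmat}(2), which supplies the three-object braid identity.

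For part (1) I would first observe that $\check R_{V,V}\in\End_{\U_q(\fg)}(V\ot V)$ by Theorem~\ref{thm:rmat}(1) applied with $V_1=V_2=V$, so each $\eta_n(\sigma_i)$ indeed lies in $\End_{\U_q(\fg)}(V^{\ot n})$. The far-commutation relations $\eta_n(\sigma_i)\eta_n(\sigma_j)=\eta_n(\sigma_j)\eta_n(\sigma_i)$ for $|i-j|\ge 2$ are immediate, since the two operators act non-trivially on disjoint pairs of tensor slots. The only substantive check is the three-strand braid relation; restricting attention to the three consecutive tensor factors $i,i+1,i+2$, it reduces directly to Theorem~\ref{thm:rmat}(2) applied with $V_1=V_2=V_3=V$. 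These exhaust the defining relations of $\B_n$, so $\eta_n$ extends.

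For part (2), the generators $\sigma_1,\dots,\sigma_{n-1}$ and the relations among them are handled exactly as in part (1), with an innocuous $\id_W$ tensored on the left. That $\nu_n(\xi)\in\End_{\U_q(\fg)}(W\ot V^{\ot n})$ follows from Theorem~\ref{thm:rmat}(1) applied twice, since $\check R_{W,V}$ and $\check R_{V,W}$ are both $\U_q(\fg)$-isomorphisms and their composite is therefore a $\U_q(\fg)$-endomorphism of $W\ot V$, to which one tensors $\id_V^{\ot(n-1)}$. The mixed commutation relations $\nu_n(\xi)\nu_n(\sigma_i)=\nu_n(\sigma_i)\nu_n(\xi)$ for $i\ge 2$ are again immediate from the tensor-factor structure, since in that case $\xi$ and $\sigma_i$ act on disjoint tensor slots.

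The hard part will be the type~$B$ braid relation
\[
\nu_n(\xi)\nu_n(\sigma_1)\nu_n(\xi)\nu_n(\sigma_1)=\nu_n(\sigma_1)\nu_n(\xi)\nu_n(\sigma_1)\nu_n(\xi).
\]
This is an identity in $\End_{\U_q(\fg)}(W\ot V\ot V)$ after restriction to the first three tensor slots; writing $A=\check R_{W,V}$, $B=\check R_{V,W}$, $T=B\circ A$ and $S=\check R_{V,V}$, it takes the form $T_{12}\,S_{23}\,T_{12}\,S_{23}=S_{23}\,T_{12}\,S_{23}\,T_{12}$, a six-crossing identity in $A$, $B$ and $S$. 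I would prove it by applying Theorem~\ref{thm:rmat}(2) twice: once to the triple $(V,V,W)$, producing a braid identity intertwining $S$ and $\check R_{V,W}$ on $V\ot V\ot W$, and once to $(W,V,V)$, producing the companion identity for $\check R_{W,V}$ and $S$ on $W\ot V\ot V$. Equivalently and more conceptually, one may invoke the hexagon axiom to express $\check R_{V\ot V,W}$ and $\check R_{W,V\ot V}$ as composites in $\check R_{V,W}$ and $\check R_{W,V}$, and then apply naturality of the braiding with respect to $S$ to conclude that the double braiding $\check R_{V\ot V,W}\check R_{W,V\ot V}$ commutes with $\id_W\ot S$; unpacking this commutation via the hexagon decomposition yields exactly the desired six-factor identity. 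Once the type~$B$ relation is established, all defining relations of $\Gamma_n$ are verified and $\nu_n$ extends to the asserted algebra homomorphism.
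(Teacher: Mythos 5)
Your overall strategy—verifying the defining relations of $\B_n$ and $\Gamma_n$ directly from Theorem~\ref{thm:rmat}—is the right one, and is exactly what the paper leaves implicit. Part~(1) and the easy parts of~(2) (far commutations, membership in the invariant endomorphism algebras) are fine. However, the two routes you sketch for the type~$B$ relation are both incomplete in the same place, and the missing step is a genuine one, not a bookkeeping slip.

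Write $A=\check R_{W,V}\colon W\ot V\to V\ot W$, $B=\check R_{V,W}\colon V\ot W\to W\ot V$, $S=\check R_{V,V}$, $T=B\circ A$. The YBE of Theorem~\ref{thm:rmat}(2) applied to $(W,V,V)$ gives $(S\ot\id_W)(\id_V\ot A)(A\ot\id_V)=(\id_V\ot A)(A\ot\id_V)(\id_W\ot S)$, and applied to $(V,V,W)$ gives $(B\ot\id_V)(\id_V\ot B)(S\ot\id_W)=(\id_W\ot S)(B\ot\id_V)(\id_V\ot B)$. These are the two instances you cite. Combined they show that
$\Xi:=(B\ot\id_V)(\id_V\ot B)(\id_V\ot A)(A\ot\id_V)=\check R_{V^{\ot2},W}\check R_{W,V^{\ot2}}$
commutes with $\id_W\ot S$, which is your ``conceptual'' conclusion. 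But this commutation identity involves the factors $B,B,A,A$ in that order, not two blocks of $T=BA$: since $\check R_{V,W}$ is not the inverse of $\check R_{W,V}$ in a braided (non-symmetric) category, you cannot rearrange $\Xi(\id_W\ot S)=(\id_W\ot S)\Xi$ into $T_{12}S_{23}T_{12}S_{23}=S_{23}T_{12}S_{23}T_{12}$ simply by ``unpacking the hexagon.'' They are different identities.

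The piece that is missing is the YBE for the triple $(V,W,V)$, namely
\[
(A\ot\id_V)\,(\id_W\ot S)\,(B\ot\id_V)\;=\;(\id_V\ot B)\,(S\ot\id_W)\,(\id_V\ot A)
\]
on $V\ot W\ot V$. This is the rule that lets you move $S_{23}$ \emph{through a single $T_{12}$}. With it, a direct computation shows both $T_{12}S_{23}T_{12}S_{23}$ and $S_{23}T_{12}S_{23}T_{12}$ reduce to $(\id_W\ot S)\,\Xi\,(\id_W\ot S)$: insert the $(V,W,V)$ identity into the inner block $(A\ot\id_V)(\id_W\ot S)(B\ot\id_V)$, then absorb the resulting $S\ot\id_W$ using the $(V,V,W)$ identity on one side and the $(W,V,V)$ identity on the other. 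So in fact three instances of Theorem~\ref{thm:rmat}(2) are needed (or two instances together with naturality of the braiding), not two. Alternatively, your commutation of $\Xi$ with $\id_W\ot S$ \emph{is} equivalent to the type~$B$ relation once one has independently established $T_{12}S_{23}T_{12}S_{23}=(\id_W\ot S)\Xi(\id_W\ot S)$—but that prior reduction itself requires the $(V,W,V)$ instance, so it cannot be skipped. As it stands, a reader following either of your sketches would find that the identity obtained differs from the one required, and would be stuck without the $(V,W,V)$ Yang--Baxter relation.
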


The homomorphisms $\eta_n$ and $\nu_n$ have been the subject of much literature. They play an extremely important role in the area of quantum topology (see, e.g.,  \cite{RT,  T1,  ZGB}). 
From the viewpoint of invariant theory, the very first natural questions which arise are: 
\begin{description}
\item[Question A]
For which $\fg, V, W$ are the maps $\mu_n$ and $\nu_n$ surjective? 
\item[Question B] Which algebras arise as images of $\mu_n$ and $\nu_n$, for simple or affine Kac-Moody Lie algebras and superalgebras $\fg$?
\end{description}
These questions provide a context for the Hecke algebras, 
BMW \cite{BW} algebras and their affine analogues, and for problems with applications in quantum computing.

\subsection{Relating classical and quantum invariant theory and roots of unity} In relation to Question A, it is shown in \cite{LZ1}
that $\mu_n$ is surjective for all $n$ when $V$ is a strongly multiplicity free module for $\U_q(\fg)$. It is beyond the scope of this survey 
to go into details of this result, or even the definition of strongly multiplicity free modules. However we note that the proof
involves a comparison of the quantum and classical cases by means of a ``base change'' argument.

This involves taking $A$-forms of $\U_q(\fg)$ and of the modules $V$ and $V^{\ot n}$, where $A$ is a suitable subring of $\K$.
One then uses careful deformation arguments to compare the dimensions (over the respective fields $\K$ and $\C$) of the endomorphism 
algebras in the classical and quantum cases (cf. \cite[\S 7]{LZ1}).

Question A for quantum supergroups \cite{BGZ, Z93, Z98}  is addressed in 
\cite{LZZ2} by using results from deformation quantisation. Quantum analogues of FFTs given in Theorem \ref{thm:fts-GL} and Theorem \ref{thm:fft-sft} are proved, establishing full tensor functors from categories of oriented and non-oriented tangles to categories of tensor representations of the quantum general linear supergroup and quantum orthosymplectic supergroup.  This includes the quantum groups associated with classical Lie algebras as special cases, thus gives an independent proof of the main results of \cite{LZ1} discussed above by a different method.  The work \cite{LZZ2} is also closely related to papers \cite{Z93, Z95}  on the construction of quantum supergroup invariants of knots and $3$-manifolds.

Question B may also be addressed in this way. A particular case when the method works well is if there is a finite dimensional cellular algebra
$\CA_n$ through which $\eta_n$ factors, for example the BMW algebra in the case when $V$ is the natural module for  $\Or_m$ or $\Sp_{2r}$. The cellularity can then be used to explicitly use the representation theory of the relevant algebra  to compare the classical and quantum cases. This is explained in \cite[\S 9]{LZ5}

Integral forms of the quantum group and its modules also permit an analysis of the invariants when $q$ is specialised to a root of unity. This is explained in \cite{DPS, ALZ}.

\subsection{Some category equivalences} There are very few known cases when we have an actual equivalence of categories
between a subcategory of the category of representations of a Lie algebra or its quantum analogue on the one hand, and a category
of diagrams on the other. One example we have seen in the classical case, is the full tensor subcategory of representations of the 
special orthogonal group $\SO_m(\C)$ generated by the natural representation $V$,
 and the enhanced Brauer category $\wt\CB(m)$ discussed in \S \ref{s:eb}. It is proved in Theorem \ref{thm:main-SO} (see also \cite{LZ8}) that these categories are equivalent.

In the quantum case, it is well known (e.g., through Khovanov's theory of categorification for $\U_q(\fsl_2)$)
that there is a quotient category $\TL(q)$ of the tangle category \cite{FY}, called the Temperley-Lieb category,  
such that if $V$ is the two-dimensional Weyl module for $\U_q(\fsl_2)$, and $Rep^0(\fsl_2)$
is the tensor category of $\U_q(\fsl_2)$-representations generated by $V$, then there is an equivalence
of categories: $\CF:\TL(q)\lr Rep^0(\fsl_2)$ (see \cite{LZ2,LZ3}).

The other case where such an equivalence is known is given in the recent work \cite{ILZ2}, where it is shown that
there is a family of categories $\TLB(q,Q)$, such that $\TL(q)$ is a subcategory of $\TLB(q,Q)$ for each $Q$. If 
$M(m)$ is the (projective) Verma module for $\U_q(\fsl_2)$ with highest weight $m$, and the category of representations
with objects $M(m)\ot V^{\ot n}$, $n=0,1,2,\dots$ is denoted $Rep(\fsl_2)$, then there exists an equivalence of categories
\[
\wt\CF: \TLB(q,q^m)\lr Rep(\fsl_2)
\]
whose restriction to $\TL(q)$ is the classical equivalence $\CF$ of the last paragraph.

It would be desirable to have more such equivalences, since they make potentially complex questions about representations
amenable  to diagrammatic methods, which could be essentially combinatorial.


\appendix
\section{An algebraic proof of the presentation for the Brauer category}\label{sect:proof-presentation}

In principle, we can deduce a proof of Theorem \ref{thm:presentation} from 
 \cite{FY}.  Here  we provide an independent  proof, which is taken from \cite{LZ5}.

\begin{proof}[Proof of Theorem \ref{thm:presentation}]
 We first prove (1). The fact that the elementary Brauer diagrams $I,X$, $A$ and $U$
generate all Brauer diagrams under the operations of $\circ$ and $\ot$ may be seen as follows.
Fix the nodes of an arbitrary diagram $D$ from $k$ to $\ell$, and draw all the arcs as
piecewise smooth curves, in such a way
that there are at most two arcs through any point, and that no two crossings
or turning points have the same vertical coordinate. We may now draw a set of horizontal lines
(possibly after a small perturbation of the diagram) such that

(i) each line is not tangent to any of the arcs,

(ii) between successive lines there is precisely one crossing or turning point.

Then the part of the diagram between successive lines may be thought of as the $\ot$-product
of the four generators, all except one being equal to $I$.
Thus we have exhibited $D$ as a word in the generators, of the form
$D=D_1\circ D_2\circ\dots\circ D_n$, where each $D_i$ is of the form
\be\label{eq:elem}
D_i=I^{\ot r}\ot Y\ot I^{\ot s},
\ee
with $Y$ being one of $A,U$ or $X$. Such an expression will be called
a {\em regular expression}, and the
factors $D_i$ {\em elementary diagrams}.
A product of elementary diagrams
in which $Y=X$ for each factor will be called a {\em permutation diagram}.
An example of a particular regular expressions is given in Figure \ref{fig:Ts}.

\begin{figure}[h]
\begin{center}
\begin{picture}(130, 70)(-10,0)
\qbezier(0, 70)(0, 30)(0, 0)
\put(3, 10){$...$}
\qbezier(15,70)(15, 30)(15, 0)

{\color{red}\qbezier[60](-10, 16)(56, 16)(130, 16)}

\qbezier(30, 70)(30, 30)(30, 15)
\qbezier(30, 15)(35, -10)(45, 15)
\qbezier(45, 15)(53, 25)(90, 70)

{\color{red}\qbezier[60](-10, 33)(56, 33)(130, 33)}


\qbezier(45, 70)(53, 30)(60, 15)
\qbezier(60, 0)(60, 10)(60, 15)

{\color{red}\qbezier[60](-10, 50)(56, 50)(130, 50)}

\qbezier(60, 70)(68, 30)(75, 15)
\qbezier(75, 0)(75, 10)(75, 15)


\qbezier(75, 70)(83, 30)(90, 15)
\qbezier(90, 0)(90, 10)(90, 15)

\put(77, 10){$...$}

\qbezier(105, 70)(105, 30)(105, 0)
\put(108, 10){$...$}
\qbezier(120, 70)(120, 30)(120, 0)
\end{picture}
\end{center}
\caption{Regular expression}
\label{fig:Ts}
\end{figure}

This completes the proof of (1).

We now turn to the proof that the stated relations form a complete set.
Observe first that any expression for a diagram $D$ as a word in the generators
provides a regular expression for $D$ by repeated use of the relation
\eqref{eq:identity} and its dual. Accordingly we say that two regular expressions
$\mf{D},\mf{D}'$ are {\em equivalent}, and write $\mf D\sim \mf D'$ if
one can be obtained from the other by a sequence of applications of the relations in part (2)
of the Theorem. This is clearly an equivalence relation on regular expressions.

However, a word in the generators does not in general yield a Brauer diagram, but
rather a diagram multiplied by $\delta^k$ for some nonnegative integer $k$,
where $k$ is the number of deleted loops.
For any Brauer diagram $D$ and any $N\in\Z_+$, the above argument shows that
we can always represent $\delta^N D$
as a word in the generators, and hence also as a regular expression. We therefore
need to work with morphisms of the form
$\delta^N D$, where $D$ is a diagram. We refer to such a morphism
as  a {\em scaled Brauer diagram}, or simply a {\em scaled diagram}.
Every Brauer diagram is clearly a scaled diagram.

The discussion above shows that to prove the theorem, it will suffice to show that
\be\label{eq:task}
\text{Any two regular expressions for a scaled diagram are equivalent.}
\ee

We shall extend the notion of equivalence to any expression of the form $D_1\circ\dots\circ D_n$,
where the $D_i$ are diagrams.

\begin{definition}\label{def:equiv}
The two compositions $D_1\circ\dots\circ D_n$ and $D_1'\circ\dots\circ D_m'$ are said to be equivalent
if one can be obtained from the other using only the relations
in Theorem \ref{thm:presentation} (2), and the properties of $\circ$ and $\ot$.
\end{definition}

To prove \eqref{eq:task} we require some analysis of regular expressions and equivalence.
We shall return to the proof after carrying this out.
\end{proof}

\begin{definition}\label{def:valency}
\begin{enumerate}
\item The {\em valency} of scaled diagram $D\in B_k^l$ is the pair $(k,l)$.
\item If $D=I^{\ot r}\ot Y\ot I^{\ot s}$ is elementary, the {\em abscissa}
$a(D)$ of $D$ is $r+1$, while the {\em type} $t(D)=Y$ $(=A,U$ or $X)$.
\item The {\em length} of a regular expression $E_1\circ\dots\circ E_n$ is $n$.
\end{enumerate}
\end{definition}

We shall repeatedly apply the following elementary observation, which we refer to
as the ``commutation principle''.
\begin{remark}\label{rem:commute}
\begin{enumerate}
\item Let $E_1,E_2$ be elementary diagrams such that $E_1\circ E_2$ makes sense.
If $|a(E_1)-a(E_2)|>1$ then $E_1\circ E_2\sim E_1'\circ E_2'$,
where $t(E_1')=t(E_2)$ and $t(E_2')=t(E_1)$.
\item If $D,D'$ are scaled diagrams of valency $(k,l)$ and $(k',l')$ respectively,
then $D\ot D'=(I^{\ot l}\ot D')\circ (D\ot I^{\ot k'})=(D\ot I^{\ot l'})\circ (I^{\ot k}\ot D')$.
\end{enumerate}
\end{remark}
Part (2) of the Remark states the obvious relations among diagrams depicted in Figure \ref{fig:commute}.
\begin{figure}[h]
\begin{center}
\begin{picture}(100, 60)(70,0)

\qbezier(16, 40)(16, 50)(16, 60)
\qbezier(29, 40)(29, 50)(29, 60)
\put(18, 55){$...$}

\qbezier(10, 40)(20, 40)(35, 40)
\qbezier(10, 25)(20, 25)(35, 25)
\put(18, 30){$D$}
\qbezier(10, 40)(10, 15)(10, 30)
\qbezier(35, 40)(35, 15)(35, 30)

\qbezier(16, 25)(16, 15)(16, 5)
\qbezier(29, 25)(29, 15)(29, 5)
\put(18, 10){$...$}


\qbezier(46, 40)(46, 50)(46, 60)
\qbezier(59, 40)(59, 50)(59, 60)
\put(48, 55){$...$}

\qbezier(40, 40)(50, 40)(65, 40)
\qbezier(40, 25)(60, 25)(65, 25)
\put(48, 30){$D'$}
\qbezier(40, 40)(40, 15)(40, 30)
\qbezier(65, 40)(65, 15)(65, 30)

\qbezier(46, 25)(46, 15)(46, 5)
\qbezier(59, 25)(59, 15)(59, 5)
\put(48, 10){$...$}

\put(70, 30){$=$}

\qbezier(96, 30)(96, 50)(96, 60)
\qbezier(109, 30)(109, 50)(109, 60)
\put(98, 55){$...$}

\qbezier(90, 30)(110, 30)(115, 30)
\qbezier(90, 15)(110, 15)(115, 15)
\put(98, 18){$D$}
\qbezier(90, 30)(90, 15)(90, 15)
\qbezier(115, 30)(115, 15)(115, 15)

\qbezier(96, 15)(96, 10)(96, 5)
\qbezier(109, 15)(109, 10)(109, 5)
\put(98, 10){$...$}


\qbezier(126, 50)(126, 50)(126, 60)
\qbezier(139, 50)(139, 50)(139, 60)
\put(128, 55){$...$}

\qbezier(120, 50)(130, 50)(145, 50)
\qbezier(120, 35)(140, 35)(145, 35)
\put(128, 40){$D'$}
\qbezier(120, 50)(120, 45)(120, 35)
\qbezier(145, 50)(145, 45)(145, 35)

\qbezier(126, 35)(126, 15)(126, 5)
\qbezier(139, 35)(139, 15)(139, 5)
\put(128, 10){$...$}

\put(150, 30){$=$}

\qbezier(176, 50)(176, 55)(176, 60)
\qbezier(189, 50)(189, 55)(189, 60)
\put(178, 55){$...$}

\qbezier(170, 50)(180, 50)(195, 50)
\qbezier(170, 35)(180, 35)(195, 35)
\put(178, 40){$D$}
\qbezier(170, 50)(170, 45)(170, 35)
\qbezier(195, 50)(195, 45)(195, 35)

\qbezier(176, 35)(176, 15)(176, 5)
\qbezier(189, 35)(189, 15)(189, 5)
\put(178, 10){$...$}


\qbezier(206, 30)(206, 40)(206, 60)
\qbezier(219, 30)(219, 40)(219, 60)
\put(208, 55){$...$}

\qbezier(200, 30)(210, 30)(225, 30)
\qbezier(200, 15)(210, 15)(225, 15)
\put(208, 20){$D'$}
\qbezier(200, 30)(200, 20)(200, 15)
\qbezier(225, 30)(225, 20)(225, 15)

\qbezier(206, 15)(206, 10)(206, 5)
\qbezier(219, 15)(219, 10)(219, 5)
\put(208, 10){$...$}
\end{picture}
\end{center}
\caption{Commutativity}
\label{fig:commute}
\end{figure}

This follows from the fact that $(A\ot B)\circ (A'\ot B')\sim (A\circ A')\ot(B\circ B')$
for $A,A',B,B'$ of appropriate valency,
and the relation \eqref{eq:identity}.

The next two results will be used in the reduction of the proof of Theorem \ref{thm:presentation}
(2) to a single case.

\begin{lemma}\label{lem:red-perm} Let $P,Q$ be permutation diagrams of valency
$(l,l)$ and $(k,k)$ respectively and let $D\in B_k^l$ be a scaled diagram.
If any two regular expressions for ${P}\circ{D}\circ{Q}$
are equivalent, then so are any two regular expressions for ${D}$.
\end{lemma}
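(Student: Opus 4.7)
The plan is to exploit the invertibility of permutation diagrams: since $P$ and $Q$ are built from the generator $X$, and $X \circ X = I$ holds by \eqref{eq:XX}, the permutations have inverses $P^{-1}$ and $Q^{-1}$ which are also permutation diagrams, and we can hope to multiply through by these to cancel $P$ and $Q$.

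Suppose $\mathfrak{D}_1$ and $\mathfrak{D}_2$ are two regular expressions for $D$. Fix regular expressions $\mathfrak{P}$ and $\mathfrak{Q}$ for $P$ and $Q$ respectively, and regular expressions $\mathfrak{P}'$ and $\mathfrak{Q}'$ for $P^{-1}$ and $Q^{-1}$. Then $\mathfrak{P} \circ \mathfrak{D}_i \circ \mathfrak{Q}$ ($i=1,2$) are regular expressions for $P \circ D \circ Q$, and are equivalent by hypothesis. Composing on the left with $\mathfrak{P}'$ and on the right with $\mathfrak{Q}'$, we obtain
\[
\mathfrak{P}' \circ \mathfrak{P} \circ \mathfrak{D}_1 \circ \mathfrak{Q} \circ \mathfrak{Q}' \;\sim\; \mathfrak{P}' \circ \mathfrak{P} \circ \mathfrak{D}_2 \circ \mathfrak{Q} \circ \mathfrak{Q}'.
\]
Thus the proof reduces to showing that for any permutation diagram $R$ of valency $(n,n)$ and any regular expressions $\mathfrak{R}$ and $\mathfrak{R}'$ for $R$ and $R^{-1}$, the composition $\mathfrak{R}' \circ \mathfrak{R}$ is equivalent to the trivial regular expression $I^{\otimes n}$; applying \eqref{eq:identity} will then allow us to absorb the identity factors and conclude $\mathfrak{D}_1 \sim \mathfrak{D}_2$.

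The claim $\mathfrak{R}' \circ \mathfrak{R} \sim I^{\otimes n}$ is essentially Matsumoto's theorem for the symmetric group. Concretely, $\mathfrak{R}' \circ \mathfrak{R}$ is a word in elementary diagrams of the form $I^{\otimes (i-1)} \ot X \ot I^{\otimes (n-i-1)}$, which we abbreviate $s_i$; these generate $\Sym_n$ acting on the top $n$ nodes. The relation \eqref{eq:XX} gives $s_i \circ s_i \sim I^{\otimes n}$; the relation \eqref{eq:braid} gives $s_i \circ s_{i+1} \circ s_i \sim s_{i+1} \circ s_i \circ s_{i+1}$; and the commutation principle in Remark \ref{rem:commute}(1), applied when $|i-j| \geq 2$, gives $s_i \circ s_j \sim s_j \circ s_i$. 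These are precisely the Coxeter relations for $\Sym_n$, and by the well-known fact that any two expressions of the same element of $\Sym_n$ as a product of the generators $s_i$ can be transformed into one another using these relations, the word $\mathfrak{R}' \circ \mathfrak{R}$ — which represents the identity permutation — can be reduced to the empty word, i.e., to $I^{\otimes n}$, using only the relations available in the definition of $\sim$.

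The main obstacle is a bookkeeping one, namely verifying carefully that the Coxeter-style reduction in the last step is entirely carried out by the relations \eqref{eq:identity}, \eqref{eq:XX}, \eqref{eq:braid} together with the commutation principle (which is itself a consequence of the properties of $\ot$ and $\circ$ together with \eqref{eq:identity}). No other relation from Theorem \ref{thm:presentation}(2) is needed, since only $X$-type generators appear, and the well-known solution of the word problem for the symmetric group in terms of its Coxeter presentation gives the reduction to the empty word. With that verified, combining the steps above yields $\mathfrak{D}_1 \sim \mathfrak{D}_2$, completing the proof.
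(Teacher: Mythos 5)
Your proof is correct, but it takes a genuinely different route from the paper and brings in heavier machinery than is actually needed. The paper handles this lemma by first proving the elementary case $P=I^{\otimes r}\otimes X\otimes I^{\otimes s}$, $Q=\operatorname{id}$: given two regular expressions $\mathfrak D,\mathfrak D'$ for $D$, the hypothesis makes $P\circ\mathfrak D\sim P\circ\mathfrak D'$, hence $P\circ P\circ\mathfrak D\sim P\circ P\circ\mathfrak D'$, and one application of \eqref{eq:XX} collapses $P\circ P$, giving $\mathfrak D\sim\mathfrak D'$. This is then iterated one elementary factor of $P$ at a time, and similarly for $Q$. The only relation used is $X\circ X=I$.

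You instead compose with inverse permutations on both sides at once and therefore need to cancel the whole product $\mathfrak P'\circ\mathfrak P$. You do this by appealing to the fact that $\Sym_n$ has the Coxeter presentation in the $s_i$ (equivalently, that any word representing the identity reduces to the empty word via $s_i^2=1$, braid moves and commutations). That fact is true, and your argument carries through, but it is a nontrivial classical theorem being imported as a black box, whereas the paper's induction is self-contained. Note also that the appeal is avoidable inside your own framework: if you \emph{choose} $\mathfrak P'$ to be the reversal of $\mathfrak P$ (using $E_i^{-1}=E_i$ for each elementary crossing), then $\mathfrak P'\circ\mathfrak P$ telescopes to $I^{\otimes n}$ by repeated use of \eqref{eq:XX} alone, with no Coxeter theory, and your proof becomes essentially equivalent to the paper's.
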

\begin{proof} Let $\mf D$, $\mf D'$ be two regular expressions for $D$,
and suppose for the moment that $P$ is an elementary permutation diagram.
Then $P\circ\mathfrak{D}$ and $P\circ\mathfrak{D}'$ are regular expressions
for $P\circ D$, and hence are equivalent by hypothesis. Now
$P\circ P\circ\mathfrak{D}$ is a regular expression, and
it is evident that $P\circ P\circ\mathfrak{D}$ is equivalent
to $P\circ P\circ\mathfrak{D}'$. But from \eqref{eq:XX}, $P\circ P\circ\mathfrak{D}\sim \mf{D}$
and $P\circ P\circ\mathfrak{D}'\sim\mathfrak{D}'$, whence
$\mf D$ and $\mf D'$ are equivalent. This proves the Lemma for elementary $P$ and $Q=\id$.

Applying the above statement repeatedly, we see that for any permutation diagram $P$,
if any two regular expressions for $P\circ D$ are equivalent, the same is true for
$D$. A similar argument applies to prove the corresponding statement for $D\circ Q$,
for any permutation diagram $Q$.
\end{proof}

It follows that in proving \eqref{eq:task}, we may pre- and post-multiply
$D$ by arbitrary permutation diagrams, and replace $D$ by the resulting scaled diagram.

For the second reduction, we require the following definitions.
\begin{definition}\label{def:raising}
\begin{enumerate}
\item Define $R:B_k^l\to B_{k-1}^{l+1}$ (for $k\geq 1$) (the {\em raising operator}) by
$R(D)=(D\ot I)\circ (I^{\ot (k-1)}\ot U)$, and (the {\em lowering operator})
$L:B_k^l\to B_{k+1}^{l-1}$ by
$L(D)=(I^{\ot (l-1)}\ot A)\circ(D\ot I)$.
\item If $\mf D=D_1\circ D_2\circ\dots\circ D_n$ is a regular expression for the
scaled diagram $D\in B_k^l$, define the regular expression $R(\mf D)$ for $R(D)$ by
$R(\mf D)=(D_1\ot I)\circ (D_2\ot I)\circ\dots\circ (D_n\ot I)\circ (I^{\ot k-1}\ot U)$,
and similarly define the regular expression $L(\mf D)$ for $L(D)$.
Note that if $E$ is elementary, then so is $E\ot I$, so that the above definition makes sense.
\end{enumerate}
\end{definition}

\begin{lemma}\label{lem:red-raise}
\begin{enumerate}
\item For any regular expression $\mf D$ for a scaled diagram $D\in B_k^l$,
we have $R\circ L(\mf D)\sim \mf D$ and $L\circ R(\mf D)\sim\mf D$.
\item Suppose $D$ is a scaled diagram of valence $(k,l)$ with $k\geq 1$.
The regular expressions $\mf D,\mf D'$ for $D$ are equivalent if and only if
$L(\mf D)$ and $L(\mf D')$ (or $R(\mf D)$ and $R(\mf D')$) are equivalent.
\end{enumerate}
\end{lemma}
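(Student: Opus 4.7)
The proof of (1) is essentially a direct computation using the commutation principle (Remark \ref{rem:commute}) together with one application of the straightening relation \eqref{eq:straight} (or its $^\sharp$-transform). I first expand the definition:
\[
L(R(\mf D))=(I^{\ot l}\ot A)\circ (D_1\ot I^{\ot 2})\circ\dots\circ (D_n\ot I^{\ot 2})\circ (I^{\ot(k-1)}\ot U\ot I).
\]
Observing that $D_i\ot I^{\ot 2}$ leaves the rightmost two strands as straight lines, the interchange identity of Remark \ref{rem:commute}(2) yields $(I^{\ot w_{i-1}}\ot A)\circ(D_i\ot I^{\ot 2})=D_i\circ(I^{\ot w_i}\ot A)$, where $w_i$ is the bottom width of $D_i$. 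This allows me to move the cap $A$ downward past each $D_i\ot I^{\ot 2}$, one factor at a time. After $n$ such commutations, the expression becomes
\[
D_1\circ D_2\circ\dots\circ D_n\circ (I^{\ot k}\ot A)\circ (I^{\ot(k-1)}\ot U\ot I).
\]
The last two factors can be rewritten as $I^{\ot(k-1)}\ot\bigl((I\ot A)\circ(U\ot I)\bigr)$, and the inner composite equals $I$ by the $^\sharp$-transform of the straightening relation \eqref{eq:straight}. Collapsing via \eqref{eq:identity} returns $\mf D$. The relation $R\circ L(\mf D)\sim\mf D$ follows by the mirror argument: push a $U$ upward past each $D_i\ot I^{\ot 2}$ and apply \eqref{eq:straight} at the top.

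For (2), the forward implications are by induction on the length of the chain of relations from $\mf D$ to $\mf D'$. For the base case, observe that each defining relation in Theorem \ref{thm:presentation} equates two expressions of the same width; when $\mf D$ is replaced throughout by $L(\mf D)$ each elementary factor $D_i$ becomes $D_i\ot I$, which has exactly the same type and abscissa as $D_i$, and the newly prefixed factor $(I^{\ot(l-1)}\ot A)$ operates on different strands, so any relation applied inside $\mf D$ can be applied verbatim inside $L(\mf D)$. The argument for $R$ is identical.

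The backward implications follow immediately from (1). Suppose $L(\mf D)\sim L(\mf D')$. Applying the forward direction (already proved) to the operator $R$ gives $R(L(\mf D))\sim R(L(\mf D'))$; by part (1), the left side is equivalent to $\mf D$ and the right side to $\mf D'$, whence $\mf D\sim \mf D'$. The same chain of reasoning handles the case of $R$, using $L\circ R(\mf F)\sim\mf F$.

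The principal obstacle is bookkeeping the intermediate widths $w_i$ during the downward sliding of $A$ in the proof of (1): the abscissas of the elementary factors $D_i\ot I^{\ot 2}$ depend on the types of $D_{i+1},\dots,D_n$, and one must be careful that the factorisation $D_i\ot A=D_i\circ(I^{\ot w_i}\ot A)$ used at each step is a legitimate consequence of \eqref{eq:identity} and the interchange law, rather than being assumed. Once this is verified inductively, the remainder of the argument is mechanical.
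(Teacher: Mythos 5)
Your proof is correct and takes essentially the same route as the paper: part (1) is a direct computation using the interchange law of Remark \ref{rem:commute}(2) to slide the cap past the regular factors until it meets the cup, then apply a transform of the straightening relation \eqref{eq:straight}; part (2) deduces the forward implication by re-applying the same chain of moves to $L(\mf D)$ or $R(\mf D)$, and the converse from part (1) combined with the forward implication for the opposite operator. (The only cosmetic difference is that you compute $L\circ R(\mf D)\sim\mf D$ by sliding the $A$ downward and invoking the $^\sharp$-transform of \eqref{eq:straight}, whereas the paper slides the $U$ upward in the computation of $R\circ L(\mf D)$ and invokes \eqref{eq:straight} itself.)
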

\begin{proof}
To prove (1), let $\mf D=E_1\circ\dots\circ E_n$ be a regular expression for $D\in B_k^l$. Then
$$
\begin{aligned}
R\circ L&(\mf D)=R\left((I^{\ot(l-1)}\ot A)\circ(E_1\ot I)\dots\circ (E_n\ot I)\right)\\
&=(I^{\ot(l-1)}\ot A\ot I)\circ(E_1\ot I\ot I)\dots\circ(E_n\ot I)\circ (I^{\ot k}\ot U)\\
&\sim(I^{\ot(l-1)}\ot A\ot I)\circ(I^{\ot l}\ot U)\circ E_1\circ\dots\circ E_n
\text{ by several applications of }\ref{rem:commute}\\
&\sim I^{\ot l}\circ E_1\circ\dots\circ E_n\text {by }\eqref{eq:straight}\\
&\sim E_1\circ\dots\circ E_n\text{ by }\eqref{eq:identity}\\
&=\mf D.\\
\end{aligned}
$$

This shows that $R\circ L(\mf D)\sim\mf D$, and the proof that $L\circ R(\mf D)\sim\mf D$ is similar.

Now to prove (2), suppose first that $\mf D, \mf D'$ are equivalent regular expressions
for $D$. Then the same sequence of moves using the relations in Theorem
\ref{thm:presentation} (2) which convert $\mf D$ into $\mf D'$ may be applied to
$L(\mf D)$ to convert it into $L(\mf D')$. This shows that if $\mf D,\mf D'$ are equivalent
regular expressions for $D$, then $L(\mf D),L(\mf D')$ are equivalent regular expressions
for $L(D)$. A similar argument proves the corresponding statement for $R(D)$.

To prove the converse,
suppose that any two regular expressions for $R(D)$ are equivalent,
and that $\mf D_1$ and $\mf D_2$ are two regular expressions for $D$. Then $R(\mf D_1)$ and
$R(\mf D_2)$ are two regular expressions for $R(D)$, and hence by hypothesis are
equivalent. Hence by the above, $L\circ R(\mf D_1)$ and $L\circ R(\mf D_2)$ are two
equivalent regular expressions for $L\circ R(D)$, which is equal to $D$ by (1).
But by (1), $L\circ R(\mf D_1)\sim \mf D_1$ and $L\circ R(\mf D_2)\sim\mf D_2$, whence
$\mf D_1\sim\mf D_2$.
\end{proof}

The following lemma is the key computation involving the relations in
Theorem \ref{thm:presentation} (2).

\begin{lemma}\label{lem:stack}
Let $\mf T_s:=E_s\circ E_{s-1}\circ\dots\circ E_0$ be a regular expression, where
$t(E_0)=U$, $a(E_0)=a$, $t(E_i)=X$ and $a(E_i)=a+i$ for $i\geq 1$. The diagram $\mf T_s$
is shown in Figure \ref{fig:Ts}.
Let $E$ be an elementary diagram of type $A$ or $X$ which does not `commute with'
$E_s\circ E_{s-1}\circ\dots\circ E_0$, i.e. such that $a-1\leq a(E)\leq a+s+1$.
Then
\begin{enumerate}
\item If $t(E)=A$, then $E\circ\mf T_s$ is equivalent to a shorter regular expression unless
$s=0$ and $a(E)=a(E_0)$. In the latter case, $E\circ\mf T_s$ is the identity multiplied by $\delta$.
\item Suppose $t(E)=X$; then

(i) if $a+1\leq a(E)\leq a+s-1$, then $E\circ\mf T_s\sim \mf T_s\circ E'$ for
an elementary diagram $E'$ of type $X$. (Thus $E$ may be `moved through' $E\circ\mf T_s$).

(ii) if $a(E)=a$ or $a+s$, then $E\circ\mf T_s$ is equivalent to
a shorter regular expression.

(iii) if $a(E)=a-1$ or $a+s+1$ then $E\circ \mf T_s\sim \mf T_{s+1}$.

\item Let $\mf T_s$ be as above and let $E$ be elementary of type $A$ or $X$.
Then $E\circ \mf T_s$ is equivalent to a shorter regular expression
(possibly multiplied by $\delta$) or to $\mf T_s\circ E'$
for some elementary $E'$, or to $\mf T_{s+1}$.
\end{enumerate}
\end{lemma}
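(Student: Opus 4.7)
The proof proceeds by case analysis on $t(E)$ and $a(E)$, combining the commutation principle (Remark \ref{rem:commute}) with the defining relations of Theorem \ref{thm:presentation}(2). The general strategy in each case is first to use commutation to slide $E$ downward past those $E_i$'s whose abscissa differs from $a(E)$ by at least $2$, until $E$ meets an element with which it genuinely interacts, and then to apply the appropriate local relation to simplify or rearrange the result.

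For part (1), $t(E) = A$: the subcase $a(E) = a$, $s = 0$ is direct, $A_a \circ U_a = \delta\cdot I$ by \eqref{eq:AU}. The subcase $a(E) = a+s$ uses \eqref{eq:AX} to give $A_{a+s} \circ E_s = A_{a+s}$, eliminating one factor. The subcase $a(E) = a$ with $s \geq 1$ is handled by commuting $A_a$ past $E_s, \ldots, E_2$ and then applying the slide relation \eqref{eq:slide} to obtain $A_a \circ X_{a+1} \circ U_a = A_{a+1} \circ X_a \circ U_a$; the $^*$-transform of \eqref{eq:AX} then yields $X_a \circ U_a = U_a$, and the $^*$-transform of \eqref{eq:straight} gives $A_{a+1} \circ U_a = I$, collapsing three factors to none. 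The intermediate cases $a(E) = a+k$ with $1 \leq k \leq s-1$ use \eqref{eq:slide} iteratively to slide the cap sideways until it either meets an $X$ at its own abscissa (where \eqref{eq:AX} applies) or reaches the cup, where \eqref{eq:straight} takes effect. The boundary cases $a(E) = a-1$ and $a(E) = a+s+1$ are handled by an analogous combination of \eqref{eq:slide}, \eqref{eq:AX}, and \eqref{eq:straight}, with induction on $s$ if necessary.

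For part (2), $t(E) = X$: case (iii) is essentially by definition, since if $a(E) = a+s+1$ then $E \circ \mf T_s = \mf T_{s+1}$ directly; for $a(E) = a-1$, commute $E$ downward past $E_s, \ldots, E_1$ (whose abscissas all differ from $a-1$ by at least $2$) until it meets $E_0 = U_a$, then apply the $^*$-transform of \eqref{eq:slide}, namely $(X \ot I)\circ(I \ot U) = (I \ot X)\circ(U \ot I)$, to recognise the result as $\mf T_{s+1}$ with base abscissa $a-1$. Case (ii) splits: $a(E) = a+s$ gives $X_{a+s} \circ E_s = I$ by \eqref{eq:XX}, leaving $\mf T_{s-1}$; $a(E) = a$ requires commuting $E$ past $E_s, \ldots, E_2$ and then applying the auxiliary identity $X_a \circ X_{a+1} \circ U_a = U_{a+1}$, which follows from the $^*$-transform of \eqref{eq:slide} together with \eqref{eq:XX}, collapsing three factors to one and leaving $\mf T_{s-1}$ with base abscissa $a+1$. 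Case (i), the most delicate, invokes the braid relation \eqref{eq:braid}: with $a(E) = a+k$ and $1 \leq k \leq s-1$, commute $E$ past $E_s, \ldots, E_{k+2}$ to place the triple $X_{a+k} \circ X_{a+k+1} \circ X_{a+k}$ adjacent, then replace it by $X_{a+k+1} \circ X_{a+k} \circ X_{a+k+1}$ via \eqref{eq:braid}; the resulting extra bottom crossing $X_{a+k+1}$ has abscissa differing by at least $2$ from each of $E_{k-1}, \ldots, E_1$ and from $E_0 = U_a$, so it can be commuted all the way to the bottom, emerging (by passage through the cup) at abscissa $a+k-1$, giving $\mf T_s \circ X_{a+k-1}$ with $E' = X_{a+k-1}$.

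Part (3) is an immediate consequence of (1) and (2), since any elementary $E$ of type $A$ or $X$ with $a-1 \leq a(E) \leq a+s+1$ falls into one of the cases analysed above, and any $E$ with $a(E)$ outside this range commutes entirely with $\mf T_s$ by Remark \ref{rem:commute}. The principal obstacle is the abscissa bookkeeping in case (2)(i), where one must verify not only that the newly produced bottom crossing can be commuted past the cup, but also that its abscissa shifts by exactly $-2$ (because the cup introduces two new top positions to its immediate left), so that $E'$ ends up with abscissa $a+k-1$ as claimed.
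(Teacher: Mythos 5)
Your proof is correct and follows essentially the same strategy as the paper: slide $E$ downward via the commutation principle until it meets an elementary factor it genuinely interacts with, then apply the appropriate local relation (\eqref{eq:AU}, \eqref{eq:AX}, \eqref{eq:slide}, \eqref{eq:straight}, \eqref{eq:braid}, \eqref{eq:XX}) to shorten, rearrange, or produce $\mf T_{s+1}$. Two points worth noting: your treatment of case (2)(iii) with $a(E)=a-1$, yielding $X_{a-1}\circ U_a = X_a\circ U_{a-1}$ and hence $\mf T_{s+1}$ rebased at $a-1$, is what the lemma's statement requires and quietly corrects a slip in the paper's own write-up of that subcase; and your explicit observation that a crossing's abscissa drops by exactly $2$ in passing below a cup is precisely the bookkeeping the paper leaves implicit in producing $E'=X_{a+k-1}$ in case (2)(i).
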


\begin{proof} Consider first the case where $t(E)=A$.

If $s=0$ and $a(E)=a(E_0)$, the claim follows from the loop removal relation \eqref{eq:AU}.

If $a(E)=a+s+1$, then
applying \eqref{eq:slide}, $E\circ E_s\sim E'\circ E_s'$, where $t(E')=t(E)=A$,
$t(E_s')=t(E_s)=X$, $a(E')=a+s$ and $a(E_s')=a+s+1$. It now follows by repeated application
of Remark \ref{rem:commute} about commutation, that $E\circ\mf T_s\sim E''\circ \mf T_{s-1}\circ E'''$,
where $t(E'')=A$ and $a(E'')=a+s$. Repeating this argument $s$ times, we see that
$E\circ \mf T_s$ is equivalent to a regular expression
of length $s+1$ which includes $F\circ E_0$ as a subexpression, where $t(F)=A$
and $a(F)=a+1$. Applying \eqref{eq:straight}, we see that $F\circ E\sim I^{\ot k}$ for some $k$,
and hence $E\circ\mf T_s$ is equivalent to a regular expression of length $s-1$.

If $a(E)=a+s$, then by \eqref{eq:AX}, $E\circ E_s\sim E$, and we have again shortened $E\circ\mf T_s$.

If $a\leq a(E)\leq a+s-1$, then by commutation, $E\circ\mf T_s$ is equivalent to a regular
expression with a subexpression of the form $E\circ E_i\circ E_{i-1}$, where $t(E_i)=X$
and $a(E)=a(E_i)-1$. Applying \eqref{eq:straight}, this is equivalent
to an expression $E'\circ E_i'\circ E_{i-1}$, where $a(E_i')=a(E_{i-1})$, and $t(E_i')=X$.
Using either \eqref{eq:XX} (if $i>1$) or the $^*$ of \eqref{eq:AX}, we again reduce the
length to show that $E\circ\mf T_s$ is equivalent to a shorter regular expression.

Finally if $a(E)=a-1$, we use commutation to show that $E\circ\mf T_s$ is equivalent
to a regular expression of length $s+1$ with a subexpression of the form
$E'\circ E_0$, where $t(E')=A$ and $a(E')=a-1=a(E_0)-1$. Applying \eqref{eq:straight},
we see that $E'\circ E_0\sim I^{\ot k}$ for some $k$, and this completes the proof of (1).

Now consider the case where $t(E)=X$.

If $a+1\leq a(E)\leq a+s-1$, then after applying the commutation rule, $E\circ\mf T_s$
is equivalent to a regular expression of length $s+1$ which has a subexpression of
the form $E\circ E_{a(E)+1}\circ E_{a(E)}$. But using the braid relation \eqref{eq:braid},
this is equivalent to $E'\circ E_{a(E)}\circ E_{a(E)+1}$, where $E'=E_{a(E)+1}$.
Again using commutation, we may now move the last factor below $E_0$ (since $a(E)+1\geq a+2$).
It follows that $E\circ\mf T_s\sim \mf T_s\circ E'$, where $t(E')=X$. This proves (i).

If $a(E)=a+s+1$ then evidently $E\circ\mf T_s= \mf T_{s+1}$.
If $a(E)=a+s$, the relation $X\circ X=I\ot I$ \eqref{eq:XX} shows that $E\circ E_s\sim I^{\ot r}$
for some $r$, and hence $E\circ\mf T_s$ is equivalent to a shorter regular expression.
If $a(E)=a-1$, then we may use commutation to see that
$E\circ\mf T_s\sim E_s\circ\dots\circ E_1\circ E\circ E_0$. Using the relation \eqref{eq:slide}
we see that this is equivalent to $E_s\circ\dots\circ E_1\circ E_1\circ E_0'$, where $t(E_0')=U$.
Applying \eqref{eq:XX}, we see that $E\circ\mf T_s$ is equivalent to a shorter regular expression.
Finally, if $a(E)=a$, we again use commutation to see that $E\circ\mf T_s$ is equivalent to
$E_s\circ E_{s-1}\circ\dots\circ E\circ E_1\circ E_0$. Again applying \eqref{eq:slide},
we obtain a factor $E\circ E$, and applying \eqref{eq:XX}, we again shorten the regular
expression $E\circ\mf T_s$. This completes the proof of (2).

The statement (3) is a summary of the previous two statements.
\end{proof}

\begin{proof}[Completion of the proof of Theorem \ref{thm:presentation} (2)]
It remains to prove \eqref{eq:task}.
It follows from Lemmas \ref{lem:red-raise} and \ref{lem:red-perm} that to complete the
proof of the theorem, it suffices to prove \eqref{eq:task} for any scaled diagram which can be
obtained from $D$ by raising or lowering, or multiplication by a permutation diagram.
It follows that we may take $D$ to be the scaled diagram
$D=\delta^N U^{\ot r}$ $(N\in\Z_+)$. Hence we shall be done if we prove
the following result.
\be\label{eq:main}
\text {Any two regular expressions for $D=\delta^N U^{\ot r}$ are equivalent.}
\ee

We shall prove \eqref{eq:main} by induction on $r$, starting with $r=0$.
For convenience, we adopt the following local convention:
\begin{enumerate}
\item scaled diagrams will
be simply called ``diagrams";
\item a regular expression $\mf D$ is said to be ``$\delta$-equivalent" to another regular expression
$\mf D'$ if it can be changed to $\delta^k \mf D'$ for some $k\in\Z_+$ by the relations in
Theorem \ref{thm:presentation} (2).
\end{enumerate}

Let $r=0$ and suppose $\mf D:=D_1\circ\dots\circ D_n$ is a regular expression for the empty
scaled diagram $\delta^N$ in $B_0^0$. We need to show that $\mf D$ is $\delta$-equivalent to the empty regular
expression; we do this by showing that every non-empty regular expression for the
empty scaled diagram is $\delta$-equivalent to one of shorter length.

Now by valency considerations, we must have $D_1=A$ and $D_n=U$. Let $i$ be the least
integer such that $t(D_i)=U$; then for all $j<i$, $t(D_j)=A$ or $X$. Applying
Lemma \ref{lem:stack} repeatedly, we see that since at least one of the $D_j$
for $j<i$ is of type $A$, $\mf D$ is $\delta$-equivalent to a shorter regular expression.
This proves the result for $r=0$

Now take $r>0$ and let $\mf D=D_1\circ\dots\circ D_n$ be a regular
expression for $D$. Then since at least $r$ of the $D_i$ must have type $U$, we
have $n\geq r$. Moreover if $n=r$, which happen only if $N=0$,
then the $D_i$ are all of type $U$, and have odd
abscissa, and any such regular expression represents $D$.
Any two such regular expressions (which will be called minimal) are equivalent
by the commutation rule (see Remark \ref{rem:commute}).

It therefore suffices to show that if $n>r$, then $\mf D$ is $\delta$-equivalent to a shorter
regular expression.

Clearly we have $t(D_n)=U$; if $t(D_1)=U$ then $\mf D':=D_2\circ\dots\circ D_n$ is a
regular expression for $U^{\ot (r-1)}$, and we conclude by induction on $r$ that $\mf D'$
is $\delta$-equivalent to a shorter regular expression. Thus we are finished.
Let $p=p(\mf D)$ be the least index such that $D_p$ is of type $U$. We have seen that if $p=1$
then we are finished by induction. It will therefore suffice to show that
$\mf D$ is either equivalent to a regular expression $\mf D'$ with $p(\mf D')<p(\mf D)$,
or is $\delta$-equivalent to a shorter regular expression $\mf D'$.

Thus we take $p>1$; then $t(D_p)=U$, and $t(D_i)=A$ or $X$ for $i<p$.
We now apply Lemma \ref{lem:stack} to conclude that either
we may commute one of the $D_i$ ($i<p$) past $D_p$, or $D_1\circ\dots\circ D_p
\sim \mf T_{p-1}$ or at least one of the $D_i$ ($i<p$) is of type $A$.
In the first case, we obtain a regular expression with small $p$-value;
in the second case, in the diagram $D_1\circ\dots\circ D_n$ if the nodes are
numbered $1,2,\dots,2r$ from left to right, node $a(D_p)$ would be joined to
node $a(D_p)+p$. Hence $p=1$, which has been excluded.

In the third case, suppose $i$ is the largest index such that $1\leq i\leq p-1$
and $D_i$ is of type $A$. Then either some $D_j$ ($i\leq j\leq p-1$)
can be commuted past $D_p$ by application of Remark (\ref{rem:commute}), or else
we are in the situation of Lemma \ref{lem:stack} (1). In the former case, we have reduced
$p$; in the latter, by {\it loc. cit.} $D_i\circ\dots\circ D_p$ is $\delta$-equivalent to a shorter
regular expression.

We have now shown that either $\mf D$ is $\delta$-equivalent to a shorter regular expression,
or equivalent to a regular expression which has the same length as $\mf D$ but a smaller $p$ value.

This completes the proof of \eqref{eq:main}, and hence of Theorem \ref{thm:presentation}.
\end{proof}

\begin{remark}
We note that to prove part (2) of the theorem, we could
have proceeded by regarding $\CB(\delta)$ as a quotient category of
the category of (unoriented) tangles (see Remark \ref{rem:quotient-cat}) and deduce
the relations among the generators of Brauer diagrams
from a complete set of relations among the generators of tangles
given in \cite[\S 3.2]{T1} (suppressing information about orientation).
This way we obtain all relations except the one which
enforces the removal of free loops and multiplication by powers of $\delta$,
i.e., \eqref{eq:AU}.
\end{remark}



\end{document}